\definecolor{darkgreen}{rgb}{0,0.45,0}
\numberwithin{equation}{section}
\crefname{equation}{}{}
\crefname{lem}{Lemma}{Lemmas}
\crefname{thm}{Theorem}{Theorems}
\crefname{defi}{Definition}{Definitions}
\crefname{conj}{Conjecture}{Conjectures}
\crefname{ex}{Example}{Examples}
\crefname{sec}{Section}{Sections}
\crefname{prop}{Proposition}{Propositions}
\crefname{section}{Section}{Sections}
\newcommand{\note}[3][]{\def\auth{#1}\textcolor{#2}{{\ifx\auth\empty\else\auth: \fi{#3}}}}
\theoremstyle{plain}
\newtheorem{theorem}{Theorem}[section]
\newtheorem{corollary}[theorem]{Corollary}
\newtheorem{lemma}[theorem]{Lemma}
\newtheorem{proposition}[theorem]{Proposition}
\theoremstyle{definition}
\newtheorem{definition}[theorem]{Definition}
\newtheorem{hypothesis}[theorem]{Hypothesis}
\newtheorem{remark}[theorem]{Remark}
\newtheorem{convention}[theorem]{Convention}
\newtheorem*{replemmax}{\reptitle}
 {\end{replemmax}}
\newcommand{\ie}{\text{i.e.\ }}
\newcommand{\eg}{\text{e.g.\ }}
\newcommand{\cf}{\text{cf.\ }}
\newcommand{\myemph}{\textit} 
\newcommand{\defeq}{=_{\operatorname{def}}}
\newcommand{\co}{\colon}
\newcommand{\op}{\mathrm{op}}
\newcommand{\coend}{\int}
\newcommand{\cat}[1]{\mathcal{#1}}
\newcommand{\catE}{\cal{E}}
\newcommand{\catK}{\cat{K}}
\newcommand{\catL}{\cat{L}}
\newcommand{\psh}{\mathsf{Psh}}
\newcommand{\pshA}{\psh(A)}
\newcommand{\pshB}{\psh(B)}
\newcommand{\cal}[1]{\mathcal{#1}}
\newcommand{\Psh}{\mathsf{Psh}}
\newcommand{\id}{1}
\newcommand{\CoMon}[1]{\mathsf{CMon}(#1)}
\newcommand{\BraCoMon}[1]{\mathsf{BraCMon}(#1)}
\newcommand{\SymCoMon}[1]{\mathsf{SymCMon}(#1)}
\newcommand{\Mon}[1]{\mathsf{Mon}(#1)}
\newcommand{\BraMon}[1]{\mathsf{BraCMon}(#1)}
\newcommand{\SymMon}[1]{\mathsf{SymMon}(#1)}
\newcommand{\com}{n}
\newcommand{\cou}{e}
\newcommand{\Em}[1]{{#1}^\bang}
\newcommand{\Kl}[1]{{#1}_\bang}
\newcommand{\unit}{I}
\newcommand{\bra}{\mathsf{r}}
\newcommand{\bang}{\oc}
\newcommand{\bbang}{\oc \oc}
\newcommand{\bbbang}{\oc \oc \oc}
\newcommand{\mon}{\mathsf{m}}
\newcommand{\monn}{\mon^2}
\newcommand{\moni}{\mon^0}
\newcommand{\dig}{\mathsf{p}}
\newcommand{\der}{\mathsf{d}}
\newcommand{\con}{\mathsf{c}}
\newcommand{\weak}{\mathsf{w}}
\newcommand{\cocon}{\bar{\con}}
\newcommand{\coweak}{\bar{\weak}}
\newcommand{\coder}{\bar{\der}}
\newcommand{\seel}{\mathsf{s}}
\newcommand{\seell}{\seel^2}
\newcommand{\seeli}{\seel^0}
\newcommand{\term}{\top}
\newcommand{\linhom}{\multimap}
\newcommand{\Set}{\mathsf{Set}}
\newcommand{\Rel}{\mathsf{Rel}}
\newcommand{\Cat}{\mathsf{Cat}}
\newcommand{\CAT}{\mathsf{CAT}}
\newcommand{\Prof}{\mathsf{Prof}}
\newcommand{\SMonCoc}{\mathsf{SMonCoc}}
\newcommand{\Sym}{\mathsf{CatSym}}
\newcommand{\PP}{\mathsf{P}}
\tikzset{tick/.style={postaction={decorate,decoration={markings,mark=at
position 0.5 with {\draw[-] (0,.4ex) -- (0,-.4ex);}}}}}
\tikzset{bigtick/.style={postaction={decorate,decoration={markings,mark=at
position 0.5 with {\draw[-] (0,.6ex) -- (0,-.6ex);}}}}}
\newcommand{\sbul}{\scriptstyle\bullet}
\tikzset{bul/.style={postaction={decoration={markings,mark=at position 0.5 with
{\node{$\sbul$};}},decorate}}}
\tikzset{Rightarrow/.style={double equal sign distance,>={Implies},->},
triple/.style={-,preaction={draw,Rightarrow}}}
\newcommand{\Two}{\scriptstyle\Downarrow}
\newcommand{\TwoHor}{\scriptstyle\Rightarrow}
\newcommand{\arghole}{-}
\def\ignorespacesandallpars{%
  \@ifnextchar\par
    {\expandafter\ignorespacesandallpars\@gobble}%
    {}%
}
\def\lam#1{{\lambda}\@lamarg#1:\@endlamarg\@ifnextchar\bgroup{.\,\lam}{.\,}}
\def\@lamarg#1:#2\@endlamarg{\if\relax\detokenize{#2}\relax #1\else\@lamvar{\@lameatcolon#2},#1\@endlamvar\fi}
\def\@lamvar#1,#2\@endlamvar{(#2\,{:}\,#1)}
\def\@lameatcolon#1:{#1}
\def\lamu#1{{\lambda}\@lamuarg#1:\@endlamuarg\@ifnextchar\bgroup{.\,\lamu}{.\,}}
\def\@lamuarg#1:#2\@endlamuarg{#1}
\numberwithin{equation}{section}
\title[Monoidal bicategories, differential linear logic, and analytic functors]{Monoidal bicategories, differential linear logic,\\ and analytic functors }
\begin{document}

\begin{abstract}
We develop further the theory of monoidal bicategories by introducing and studying bicategorical counterparts of the notions of a linear exponential comonad, as considered in the study of linear logic, and of a codereliction transformation, introduced to study differential linear logic via differential categories. As an application, we extend the differential calculus of
Joyal's analytic functors to analytic functors between presheaf categories, just as ordinary calculus extends from a
single variable to many variables. 
\end{abstract}

\author[M. Fiore]{Marcelo Fiore}
\address{Department of Computer Science and Technology, University of Cambridge}
\email{Marcelo.Fiore@cl.cam.ac.uk}

\author[N. Gambino]{Nicola Gambino}
\address{Department of Mathematics, University of Manchester}
\email{nicola.gambino@manchester.ac.uk}

\author[M. Hyland]{Martin Hyland}
\address{Department of Pure Mathematics and Mathematical Statistics, University of Cambridge}
\email{M.Hyland@dpmms.cam.ac.uk}

\date{\today}

\subjclass{18N10, 18M45, 18F40, 18D60, 18M80.}
\keywords{Monoidal bicategory, differential linear logic, profunctor, symmetric sequence, analytic functor}

\maketitle

\tableofcontents

\section{Introduction}
\label{sec:intro}

\subsection*{Context and motivation} 
The aim of this paper is to develop  and connect two apparently distant strands of research: low-dimensional category theory and differential linear logic. Let us begin by providing some context and motivation for our work.

By low-dimensional category theory we mean here the study of two-dimensional and  three-dimensional categories \cf \cite{BenabouJ:intb,KellyG:revetc} and~\cite{GordonR:coht,GurskiN:cohtdc} respectively. The subject has grown enormously in the last decades, with motivation coming both from within category theory itself and from other parts of mathematics. Indeed, just as it is useful to study standard set-based mathematical structures (such as groups and vector spaces) by assembling them into categories, it is natural to investigate category-based mathematical structures (\eg monoidal categories and Grothendieck toposes) by forming appropriate two-dimensional categories (see~\cite{BlackwellR:twodmt} for example), and so on. Furthermore, low-dimensional categorical structures find applications in algebra, algebraic topology, topological quantum field theory, \eg to obtain more informative invariants for mathematical objects (\eg knots), via the research programme known as categorification~\cite{BaezJ:cat}. One of the key advances 
in this area has been the development of the theory of monoidal bicategories, initiated by Breen in~\cite{BreenL:letpds} and by Kapranov and Voevodsky in~\cite{KapranovM:bramcm,KapranovM:2czt}, and continued by many others, \eg in~\cite{BaezJ:higdab,CransS:gencbs,DayStreet,ElguetaJ:cohdtm,GurskiN:looscm,GurskiN:inlsc}.  This promises to serve a role as important as that of monoidal categories, \eg in the study of topological quantum field theories, \cf \cite{KongL:cenmcd,Schommer-PriesC:clatdc}.

The other area involved in this paper is differential linear logic, introduced by Ehrhard and Reignier in~\cite{EhrhardT:intdll,EhrhardRegnier}, as an extension of linear logic, introduced by Girard in~\cite{GirardJ:linl}.
The subject arose from the observation that many models of linear logic, such as those based on categories of topological vector spaces~\cite{EhrhardT:kotssl}, possess a well-behaved notion of differentiation. Remarkably, not only  is it possible to introduce a syntactic counterpart of it, but the idea leads to interesting applications to the $\lambda$-calculus, obtained by introducing a Taylor series expansion for $\lambda$-terms~\cite{EhrhardT:bohtkm}. 

The study of differential linear logic led also to the introduction of differential categories~\cite{BluteR:difcr,SeelyEtAl}, which provide categorical models of differential linear logic.
Since  differential categories axiomatise the essential categorical structure required to have a well-behaved operation of differentiation on the maps of a category, they have a wide range of examples across various areas of mathematics. 
Their axioms are formulated on top of those for a categorical model of linear logic, which is given by a symmetric monoidal category $(\catE, \otimes, \unit)$ equipped with a linear exponential comonad $\bang(-) \co \catE \to \catE$,  \ie a symmetric monoidal comonad satisfying some additional axioms~\cite{MelliesPA:catsll}. In this setting, we think of maps $A \to B$ as being `linear' and of Kleisli maps $\bang A \to B$ as being `non-linear'.  Following the idea that the derivative of a function is a function in two arguments, linear in one and non-linear in the other, the derivative of a map~$f \co \bang A \to B$ in a differential category has the form~$\mathrm{d} f \co A \otimes \bang A \to B$. In the presence of sufficient structure on the ambient category, this operation is equivalent to having a natural transformation, which is referred to in the literature either as a codereliction transformation~\cite{BluteR:difcr,EhrhardT:intdll} or as a creation map~\cite{FioreM:difsmi}, with components of the form~$\coder_A \co A \to \bang A$, and subject to a few axioms. Remarkably, these axioms allow us to derive counterparts of all the
basic results on differentiation~\cite{BluteR:difcr}. 

Here, we apply the idea of categorification to the study of models of linear logic and differential linear logic and contribute to developing a new theory, unavoidably more subtle and complex than the existing one, based on symmetric monoidal bicategories. This is of interest for logic and theoretical computer science since the additional layer of structure present in bicategories, namely 2-cells (\ie morphisms between morphisms), can be used to model computational rewriting steps between terms~\cite{PowerJ:absfr,SeelyRAG:modc2}.

The origin of this line of investigation can be traced back to categorifications of the relational model of linear logic~\cite{CattaniG:proomb}. Recall that the relational model arises by considering the monad on the category of sets and functions whose algebras are commutative monoids and extending it to a monad on the category of sets and relations, written~$\Rel$ here. The duality available on~$\Rel$ allows us to turn this monad into a comonad~$\bang(-) \co \Rel \to \Rel$, which can then be shown to satisfy all the axioms for a linear exponential comonad, so that its Kleisli category is cartesian closed.

\begin{table}[htb]
\begin{tabular}{|c|c|}\hline
\textsf{Standard version} & \textsf{Categorified version} \\ \hline
$\Set$ & $\Cat$ \\  \hline 
$\Rel$ & $\Prof$ \\ \hline
\makecell{
Free commutative monoid \\ monad 
$\wn(-)\co \Set \to \Set$} & \makecell{
Free symmetric strict monoidal category \\ 2-monad 
$\wn(-) \co \Cat \to \Cat$} \\   \hline
\makecell{Linear exponential \\
comonad $\bang(-) \co \Rel \to \Rel$} &
\makecell{Linear exponential \\
pseudocomonad $\bang(-) \co \Prof \to \Prof$}  \\
\hline
\makecell{
Kleisli category $\Kl{\Rel}$} & 
\makecell{
Kleisli bicategory $\Kl{\Prof}$} \\ \hline
\end{tabular} \medskip
\caption{The relational model and its categorification.} 
\label{tab:rel-prof}
\end{table}

These ideas were categorified in~\cite{CattaniG:proomb} by replacing sets with small categories and relations with profunctors, also known as distributors or bimodules~\cite{BenabouJ:intb,BenabouJ:disw}. As typical  in the process of categorification, there is now additional freedom, since there are several 2-monads on the 2-category of small categories and functors, written $\Cat$ here, that may be considered to take the place of the monad for commutative monoids. 

An example of particular interest, illustrated in~\cref{tab:rel-prof}, arises by considering the 2-monad on $\Cat$ whose strict algebras are symmetric strict monoidal categories. By a form of pseudodistributivity \cite{FioreM:klebrp}, this 2-monad extends to a pseudomonad on the bicategory of profunctors, written 
$\Prof$, where one obtains a pseudocomonad by duality, as before.
The associated Kleisli bicategory, called the bicategory of \emph{categorical symmetric sequences} here,
and written~$\Sym$, was originally introduced in~\cite{FioreM:carcbg}, where it was shown to be cartesian closed. This bicategory was investigated further in connection with the theory of operads in~\cite{GambinoN:opebaf}, and shown to admit rich additional structure in~\cite{GalalZ:fixo2c,GambinoN:monkba}.

The distinction between linear and non-linear maps acquires particular significance in this example: a linear map here is a map $F \co A \to B$ in $\Prof$, \ie a functor
$F \co B^\op \times A \to \Set$. Such a functor $F$ determines canonically a functor $F^\dag\co \pshA \to \pshB$ between presheaf categories, defined by a coend formula which is reminiscent of the expression for the linear map associated to a matrix:
\[
F^\dag(X,b) = \int^{a \in A} F[b, a] \times X(a) .
\]
By contrast, a non-linear map is a map $F \co  A \to B$ in $\Sym$, \ie a profunctor $F \co \bang A \to B$.
Such a profunctor~$F$ induces a functor $F^\ddag \co \pshA \to \pshB$ between presheaf categories, 
defined by a formula similar to the one for the 
Taylor series expansion of an analytic function: 
\[
F^\ddag(X,b) = \int^{\alpha \in \bang A} F[b, \alpha] \times X^{\alpha} .
\]
Here, $X^{\alpha} = X(a_1) \times \ldots \times X(a_n)$, for $X \in \pshA$ and $\alpha = \langle a_1, \ldots, a_n \rangle \in \bang A$.
We call such functors \emph{analytic} since they generalise the analytic functors on $\Set$ introduced by Joyal in~\cite{JoyalA:fonaes} as part of his approach to enumerative combinatorics~\cite{BergeronF:comstl,JoyalA:thecsf}.
(Joyal's analytic functors arise when $A = B = \mathsf{1}$.)

These ideas led to  a new line of research, outlined in~\cite{HylandM:somrgd}, aimed at extending theory and examples of categorical models of linear logic to the two-dimensional setting. This provides a so-called `quantitative semantics' for a variety of logical systems which are of interest also in theoretical computer science, \cf \cite{GalalZ:bicocl,FioreM:stapss,JacqC:catcnis,OlimpieriF:inttrc,OngL:quaslc,TsukadaT:gensrr,TsukadaT:spepte}.

The first goal of this paper is to establish the analogy between the relational and profunctor models on a more precise basis, by  developing a bicategorical counterpart of the standard theory of models of linear logic, which recovers the results on profunctors as a special case. The motivation for creating such a theory can readily be seen by observing that, while many facts about the relational model follow from the theory of linear exponential comonads, analogous results in the two-dimensional context have been proved on a case-by-case basis. While some first steps towards a bicategorical theory have been taken recently~\cite{GalalZ:bicocl,JacqC:catcnis,MirandaA:paper-1,OlimpieriF:inttrc}, much foundational work remains yet to be done. Here, we address this issue by considering counterparts of some key notions 
and results on models of linear logic, in particular those related to the notion of a linear exponential comonad considered in~\cite{HylandM:gluoml}.

The second goal of this work is to provide a bicategorical counterpart of a special class of differential categories~\cite{BluteR:difcr,SeelyEtAl} and show that the bicategory of profunctors is an example of this new notion.
The motivation for this is twofold. On the one hand we wish to make precise the analogy with the relational model of differential linear logic; on the other hand, we want to provide a clean approach to extending the differential calculus for analytic functors on $\Set$ developed in~\cite{JoyalA:fonaes} to analytic functors between presheaf categories, analogously to how calculus in a single variable extends to many variables,
or to the theory of differentiation in the distinct context of polynomial functors~\cite{AltenkirchT:delfd}.
 To the best of our knowledge, this is the first example of a genuinely two-dimensional model of differential linear logic.

\subsection*{Main contributions}

Our first main contribution is the definition and study of the notion of a linear exponential pseudocomonad, which we introduce in \cref{thm:linear-exponential-comonad} as a bicategorical counterpart of the notion of a linear exponential comonad  in~\cite{HylandM:gluoml}. We support this definition by generalising several facts about one-dimensional models of linear logic to the two-dimensional setting.  We then prove results (\cref{thm:case-1} and \cref{thm:case-2}) that help us to construct linear exponential pseudocomonads in many cases of interest, including in our application to the bicategory of profunctors. 

Our second main contribution is the exploration of consequences of the assumption of additional structure and properties on the ambient bicategory, offering a modular development of the theory.  In particular, under the additional assumption of existence of finite products, we construct the so-called Seely equivalences and show that they provide the structure of a sylleptic strong monoidal 2-functor (\cref{thm:seely-equivalences-monoidal}). Because of coherence issues, a direct approach seems daunting. We therefore offer a more conceptual treatment, inspired by a passing remark in~\cite{GarnerR:hyplem}, which allows us to handle coherence in an efficient way. Overall, we prove bicategorical counterparts of all the diagrams considered in~\cite{EhrhardT:intdll} and ~\cite{FioreM:difsmi}, starting from our notion of linear exponential pseudocomonad.

The third main contribution of this paper is the definition of a bicategorical
counterpart of the codereliction transformation, given in \cref{thm:codereliction}, which provides one possible way to define a derivation operation. The definition of a codereliction transformation considered here is a two-dimensional version of the one introduced in~\cite{FioreM:difsmi}. While the notion considered therein (under the name of creation map) is equivalent to the one considered in the context of differential categories in the one-dimensional setting, as shown in~\cite{BluteR:difcr},  it offers some advantages when transported to the two-dimensional setting since, as discussed below, it allows us to deal effectively with coherence issues. In order to do this, we work under additional assumptions, including that the ambient bicategory has biproducts and that the induced convolution structure is a coproduct. These hold in our application.

Finally, we show that the bicategory of categorical symmetric sequences admits a codereliction (\cref{thm:sym-has-differentiation}), which allows us to extend the derivative operation to analytic functors between presheaf categories. To achieve this result, we show that the pseudomonad obtained by extending to $\Prof$ the free symmetric strict monoidal category 2-monad on $\Cat$ is a linear exponential pseudocomonad in our sense (\cref{thm:prof-degenerate-model}) and that the additional hypotheses underpinning our definition of a codereliction transformation are satisfied (\cref{thm:sym-has-differentiation}).
This then determines the desired operation of differentiation for analytic functors between presheaf categories.

\subsection*{Technical aspects} The development of the paper involved overcoming a number of conceptual and technical challenges. First of all, the bicategorical definitions underpinning the subject are rather complex, as they involve a significant amount of data and a large number of coherence conditions. Because of this, extending results from the one-dimensional to the two-dimensional setting involves a combination of trivial and non-trivial aspects. While it is generally possible to guess what the desired statements are, their proofs usually  require long calculations. A good illustration of this point that can be found in the existing literature is the statement that a bicategory with finite products admits a canonical symmetric monoidal structure, with tensor product given by binary products~\cite[Theorem~2.15]{CarboniA:carbii}. Thankfully, known strictification theorems help us to reduce the complexity of the notions that we need to use, while maintaining sufficient generality to cover the intended examples.

Issues of coherence had to be faced also when introducing our key notions, that of a linear exponential pseudocomonad and that of a codereliction transformation. For the former, we are able to arrive at a definition for which coherence axioms are completely determined by the fundamental notions in the theory of monoidal bicategories by taking as our starting point the definition formulated  in the one-dimensional setting in~\cite{HylandM:gluoml}.
It has been pleasing to observe how the coherence conditions for these notions are exactly what is required to prove the desired facts, which we hope provides corroboration for the robustness of the coherence conditions in~\cite{DayStreet}. Given the troubled evolution of coherence in monoidal bicategories (see~\cite[Appendix~A]{CarmodyS:cobc} and~\cite[Section~2.1]{Schommer-PriesC:clatdc} for some details), we believe this experience will be helpful for the future development of the theory.

The definition of a codereliction presented additional challenges, in that it is not known whether its axioms can be expressed purely in terms of the basic concepts of the theory of monoidal categories. For this reason, if  we were to require the presence of invertible 2-cells in the diagrams that are part of the definition of a codereliction transformation in~\cite{BluteR:difcr}, we would then be facing the question of what coherence axioms to impose on them, which appears to be a difficult question.  To resolve this problem, we develop a careful analysis  by first showing that the diagrams
expressing the axioms for a codereliction transformation in~\cite{FioreM:difsmi} (under the name of a creation map) are  filled by canonical 2-cells. The definition of a codereliction transformation can then be simply stated as requiring these 2-cells to be invertible, thus sidestepping any issue of coherence. We hope that our results provide guidance for the formulation of coherence conditions in the future. It should also be pointed out that, since our theory uses non-invertible 2-cells in a crucial way, it is not immediately subsumed by the theory of $(\infty, 1)$-categories, although it may be possible to develop an $(\infty, 2)$-categorical counterpart of it.

\subsubsection*{Outline of the paper} \cref{sec:prelim} reviews the basic definitions and theorems of the theory of monoidal bicategories that will be needed in the paper. In \cref{sec:mon-comon-bialg} we study symmetric pseudocomonoids. We introduce and study linear exponential pseudocomonads in \cref{sec:linear-exponential}.
We explore our definitions further  in \cref{sec:products}, under the assumption  that the ambient bicategory has finite products, and in \cref{sec:biproducts}, with the hypothesis that it has finite biproducts. We introduce our bicategorical notion of codereliction in~\cref{sec:codereliction}. We conclude the paper in \cref{sec:prof} by showing how the bicategory of analytic functors can be equipped with a codereliction operator, thereby modelling differential linear logic.

\subsubsection*{References and conventions.}  In order to keep the paper at a reasonable length and avoid duplication of material that is now standard, we refer to~\cite{BaezJ:higdab,DayStreet,GurskiN:looscm} for the coherence conditions of the notions that we use.  Many of our proofs will construct the relevant 2-cells and indicate how to prove the required coherence conditions in text. This is similar to how standard diagram-chasing arguments are outlined in one-dimensional category theory. Readers who wish to fill in the details of the proofs are advised to keep the references above at hand.  For differential linear logic and differential categories, our main references are~\cite{BluteR:difcr,EhrhardT:intdll,FioreM:difsmi}. The notation for differential linear logic and differential categories used here follows closely that in~\cite{EhrhardT:intdll}, as summarised~in \cref{tab:structural-maps}.

\begin{table}[htb]
\fbox{\parbox{\textwidth}{
\begin{align*}
\text{Weakening} & \quad \weak_A \co \bang A \to \unit  & 
\text{Coweakening} &  \quad  \coweak_A \co \unit \to  \bang A \\
\text{Contraction} & \quad  \con_A \co  \bang A  \to  \bang A \otimes \bang A   &  
\text{Cocontraction} & \quad  \cocon_A \co \bang A \otimes \bang A \to \bang A \\ 
\text{Dereliction} & \quad  \der_A \co \bang A \to A   & 
\text{Codereliction} & \quad  \coder_A \co  A \to \bang A \\
\text{Promotion} & \quad \dig_A \co \bang A  \to \bbang A & 
\end{align*}}}
\caption{The structural maps.}
\label{tab:structural-maps}
\end{table}

\section{Preliminaries} 
\label{sec:prelim}

We assume that readers are familiar with the key notions of two-dimensional category theory~\cite{JohnsonN:twodc,LackS:a2cc}. Bicategories will be denoted by letters $\catK, \catL, \ldots$. When working with a bicategory $\catK$, we use upper-case letters~$A, B, C, \ldots$ for objects, lower-case letters $f \co A \to B$, $g \co B \to C, \ldots$ for maps, and lower-case Greek letters $\alpha \co f \Rightarrow f'$ for 2-cells. Composition is written simply by juxtaposition and the identity map on~$A \in \catK$ is written $\id_A \co A \to A$.  If $f \co A \to B$ is an adjoint equivalence in $\catK$, we write 
$f^\bullet \co B \to A$ for its adjoint. 

When we say that a bicategory $\catK$ has finite products (or finite coproducts), this is intended in the bicategorical sense~\cite{StreetR:fibb}, even when $\catK$ is a 2-category. For binary products, this means that for every $A, B \in \catK$, we have an object $A \with B \in \catK$ and projections  $\pi_1 \co A \with B \to A$ and $\pi_2 \co A \with B \to B$ which are universal, \ie for every $X \in \catK$, composition with $\pi_1$ and $\pi_2$ induces an adjoint equivalence
\[
\begin{tikzcd}
\catK[X, A \with B] \ar[r, "(  \pi_1(-) {,} \pi_2 (-))"]  &[8ex] \catK[X,A] \times \catK[A, B] .
\end{tikzcd}
\]
The pairing of $f \co X \to A$ and $g \co X \to B$ is written $(f, g) \co X \to A \with B$ as usual. The diagonal map of an object $A \in \catK$ is written $\Delta_A \co A \to A \with A$.
A terminal object is an object $\top \in \catK$ such that the unique functor
\[
\catK(X, \top) \to \mathsf{1}
\]
is an equivalence for every $X \in \catK$, where $\mathsf{1}$ is the category with a single object 
and only an identity map on it. If $\catK$ has finite coproducts, we write~$A + B$ for the coproduct of~$A, B \in \catK$, and~$\iota_1 \co A \to A + B $ and~$\iota_2 \co B \to A + B$ for the coprojections. The initial object is denoted $0$. The codiagonal map of an object $A \in \catK$ is written~$\nabla_A \co A + A \to A$.

If $\catK$ and $\catL$ are bicategories with finite products and $F \co \catK \to \catL$ is a pseudofunctor,
we say that $F$ preserves finite products if  the canonical maps 
$F(A \with B) \to FA \with FB$, for $A, B \in \catK$,
and $F(\term) \to \term$ are equivalences. We adopt this definition also when $\catK$ and $\catL$
are 2-categories and $F$ is a 2-functor. Preservation of finite coproducts is formulated dually.

\subsection*{Monoidal bicategories and Gray monoids} Recall from~\cite[Theorem~8.4.1]{JohnsonN:twodc} the strictification theorem for bicategories, asserting that every bicategory is biequivalent to a 2-category. 
This result gives as a scholium the strictification theorem for monoidal categories, asserting that every monoidal category is equivalent to a strict one,
since a monoidal category is a bicategory with a single object and a strict monoidal category is a 2-category with a single object.

A similar pattern arises one dimension up, with tricategories and Gray-categories, defined as in~\cite{GordonR:coht,GurskiN:cohtdc}, replacing bicategories and 2-categories, respectively. 
The strictification theorem for tricategories asserts that every tricategory is triequivalent to a Gray-category, \cf \cite[Theorem~8.1]{GordonR:coht} and \cite[Corollary~9.15]{GurskiN:cohtdc}. 
This gives a strictification result for monoidal bicategories since a monoidal bicategory is a tricategory with a single object and a Gray monoid is a Gray-category with a single 
object~\cite[Chapter~12]{JohnsonN:twodc}. Below, we recall some of this material. 
The notion of a Gray monoid is recalled in \cref{def:gray-monoid} using the notion of a cubical pseudofunctor in~\cite[Section~4.1]{GordonR:coht} or \cite[Definition~12.2.12]{JohnsonN:twodc}, and then partially unfolded.

\begin{definition} \label{def:gray-monoid}
A \myemph{Gray monoid} is a 2-category $\catK$ equipped with
\begin{itemize}
\item a cubical pseudofunctor $(\arghole) \otimes (=) \co \catK \times \catK \to \catK$, called the \emph{tensor product},
\item an object $\unit \in \catK$, called the \emph{unit}, 
\end{itemize}
which satisfy the associativity and unit conditions strictly.
\end{definition}

Let $\catK$ be a Gray monoid.\footnote{We refer to a Gray monoid by its underlying 2-category if this does not cause confusion.
An analogous convention is adopted for other kinds of structures throughout the paper.}
Recall that part of the pseudofunctoriality for the tensor product
involves, a natural isomorphism whose components are invertible 2-cells
\[
\begin{tikzcd}[column sep = large] 
A \otimes B \ar[d,"\id_A \otimes g"']\ar[dr,phantom,"\Two\phi_{f,g}"]\ar[r,"f \otimes \id_B"] & A' \otimes B \ar[d,"\id_{A'} \otimes g"] \\
A \otimes B' \ar[r,"f \otimes \id_{B'}"'] & A' \otimes B' ,
\end{tikzcd}
\]
where $f \co A \to A'$ and $g \co B \to B'$, subject to appropriate compatibility conditions~\cite[page~101]{DayStreet}. As in~\cite{DayStreet}, we take $f \otimes g \co A \otimes B \to A' \otimes B'$ to be the composite
\[
\begin{tikzcd} 
A \otimes B \ar[r, "\id_{A} \otimes g" ] &
A \otimes B' \ar[r, "f \otimes \id_{B'}"] &
A' \otimes B' .
\end{tikzcd} 
\]
We adopt an analogous convention to define $\alpha \otimes \beta \co f \otimes g \to f' \otimes g'$ for $\alpha \co f \Rightarrow f'$ and $\beta \co g \Rightarrow g'$. The other choice is canonically isomorphic and we shall often suppress mention of  these isomorphisms for brevity, since they are essentially unique~\cite{DiVittorioN:gracpt,DiVittorioN:somrig}.
Note that the associativity and unit axioms hold strictly and so in particular $A \otimes (B \otimes C) = (A \otimes B) \otimes C$ for every $A, B, C \in \catK$, and~$A \otimes  \unit  = A =  \unit \otimes A$, for all $A \in \catK$.

In preparation for the material on linear exponential pseudocomonads in \cref{sec:linear-exponential}, we recall the notions of a lax monoidal pseudofunctor, monoidal pseudonatural transformation, and monoidal modification. Here, a lax monoidal pseudofunctor is what is called a weak monoidal homomorphism in~\cite[Definition~2]{DayStreet}.

\begin{definition} \label{thm:lax-monoidal-pseudofunctor}
Let $\catK$ and $\catL$ be Gray monoids. A \myemph{lax monoidal pseudofunctor} from $\catK$ to~$\catL$ is a pseudofunctor $F \co \catK \to \catL$ equipped with
\begin{itemize}
\item a pseudonatural transformation with components on objects 
\[
\monn_{A, B} \co FA \otimes FB \to F(A \otimes B),
\] 
for $A, B \in \catK$, 
\item a map $\moni \co  \unit \to F( \unit)$, 
\item an invertible modification with components 
\[
\begin{tikzcd}[column sep=huge]
FA \otimes FB \otimes FC \ar[d,"\monn_{A,B} \otimes \id_{FC}"']\ar[dr,phantom,"\Two\omega_{A,B,C}"]\ar[r,"\id_{FA} \otimes \monn_{B,C}"] & FA \otimes F(B \otimes C) \ar[d,"\monn_{A, B \otimes C}"] \\
F(A  \otimes B) \otimes FC  \ar[r,"\monn_{A \otimes B, C}"'] & F(A \otimes B \otimes C),
\end{tikzcd}
\]
for $A, B, C \in \catK$, which we call the \emph{associativity constraint} of $F$,
\item two invertible modifications with components 
\[
 \begin{tikzcd}[column sep = tiny]
  & 
F  \unit \otimes FA
	\ar[d, phantom, description, "\Two \zeta_A"]
	\ar[dr, "\monn_{\unit, A}"]   & 
   \\
\unit \otimes FA
 	\ar[ur, "\moni \otimes \id_{FA}"] 
 	\ar[rr, "\id_{FA}"']  &
 \phantom{} & 
FA , 
 \end{tikzcd} \qquad
\begin{tikzcd}[column sep = tiny]
  & 
FA \otimes F  \unit
	\ar[d, phantom, description, "\Two \kappa_A"]
	\ar[dr, "\monn_{A,  \unit}"]   & 
   \\
FA \otimes \unit 
 	\ar[ur, "\id_{FA} \otimes \moni"] 
 	\ar[rr, "\id_{FA}"']  &
 \phantom{} & 
FA ,
 \end{tikzcd} 
\]
 for $A, B \in \catK$, which we call the left and right \emph{unitality constraints} of $F$, 
\end{itemize}
which satisfy the two coherence conditions in~\cite[Definition 2]{DayStreet}.

We say that $F$ is \myemph{strong monoidal} if the components of~$\monn$ and  $\moni$ are equivalences in~$\catL$.
\end{definition}

\begin{definition} \label{thm:monoidal-pseudonatural-transformation}
Let $F, G \co \catK \to \catL$ be lax monoidal pseudofunctors. A \myemph{monoidal pseudonatural transformation} from $F$ to $G$ is a 
pseudonatural transformation $p \co F \Rightarrow G$  equipped with
\begin{itemize}
\item an invertible modification with components
\[
\begin{tikzcd}[column sep = large]
FA \otimes FB \ar[d, "p_A \otimes p_B"'] \ar[r, "\monn_{A,B}"] \ar[dr,phantom,"\Two p^2_{A,B}"] & F(A \otimes B) \ar[d, "p_{A\otimes B}"]  \\
GA \otimes GB \ar[r, "\monn_{A, B}"'] & G(A \otimes B)  ,
\end{tikzcd}
\]
for $A, B \in \catK$, 
\item an invertible 2-cell
\[
 \begin{tikzcd}
  & 
F \unit
	\ar[d, phantom, description, "\Two p^0"]
	\ar[dr, "p_\unit"]   & 
   \\
 \unit
 	\ar[ur, "\moni"] 
 	\ar[rr, "\moni"']  &
 \phantom{} & 
G \unit ,
 \end{tikzcd}
\]
\end{itemize} 
which satisfy the three coherence conditions  in~\cite[Definition 3]{DayStreet}.
\end{definition}

\begin{definition} \label{thm:monoidal-modification}
Let $p, q \co F \Rightarrow F'$ be monoidal pseudonatural transformations.
A \myemph{monoidal modification} from $p$ to $q$ is a modification $\phi \co p \Rrightarrow q$ which satisfies the two coherence conditions in~\cite[Definition 3]{DayStreet}.
\end{definition} 

Note that a monoidal modification does not require additional structure on top of that of a modification.

The strictification theorem for monoidal bicategories, stated below for emphasis, follows as a scholium of the strictification theorem for tricategories~\cite{GordonR:coht,GurskiN:cohtdc}.

\begin{theorem}[Gordon, Power and Street; Gurski] \label{thm:strictification-monoidal}
Every monoidal bicategory is biequivalent, as a monoidal bicategory, to a Gray monoid.
\end{theorem}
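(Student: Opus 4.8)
The plan is to deduce the statement as a scholium of the strictification theorem for tricategories, exactly as flagged in the text preceding it, by carefully exploiting the dimension-shifting dictionary already recalled: a monoidal bicategory is the same thing as a tricategory with a single object, and a Gray monoid is the same thing as a Gray-category with a single object~\cite[Chapter~12]{JohnsonN:twodc}. Granting this dictionary, what remains is to check that (i) the strictification of a one-object tricategory can be taken to be again a one-object tricategory, hence a Gray monoid, and (ii) a triequivalence between one-object tricategories is precisely what one means by a biequivalence of monoidal bicategories.

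For (i), I would invoke the strictification theorem for tricategories in its standard form~\cite[Theorem~8.1]{GordonR:coht},~\cite[Corollary~9.15]{GurskiN:cohtdc}: to each tricategory $\mathcal{T}$ it associates a Gray-category $\mathrm{st}(\mathcal{T})$ together with a triequivalence $e_{\mathcal{T}} \co \mathcal{T} \to \mathrm{st}(\mathcal{T})$. The point to extract from the construction is that it is the identity on objects: the passage $\mathcal{T} \mapsto \mathrm{st}(\mathcal{T})$ only rectifies the higher-dimensional structure, replacing the hom-bicategories by biequivalent 2-categories and the composition trihomomorphisms by cubical pseudofunctors, while leaving the underlying set of objects untouched. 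Consequently, if $\mathcal{T}$ has a single object then so does $\mathrm{st}(\mathcal{T})$, and a one-object Gray-category is by definition a Gray monoid.

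For (ii), I would unwind the notion of triequivalence in the one-object case. Comparing the data and coherence axioms term by term, a trihomomorphism between one-object tricategories is exactly a strong monoidal pseudofunctor between the corresponding monoidal bicategories, in the sense of~\cite[Definition~2]{DayStreet} recalled in~\cref{thm:lax-monoidal-pseudofunctor}; likewise tritransformations and trimodifications restrict to monoidal pseudonatural transformations and monoidal modifications. A trihomomorphism is a triequivalence when it is biessentially surjective on objects and a biequivalence on each hom-bicategory; for one-object tricategories the first condition is vacuous and the second says precisely that the underlying pseudofunctor is a biequivalence of bicategories. Thus, when $\mathcal{T}$ has a single object, $e_{\mathcal{T}}$ becomes a strong monoidal pseudofunctor whose underlying pseudofunctor is a biequivalence, from the given monoidal bicategory onto the Gray monoid $\mathrm{st}(\mathcal{T})$, which is the assertion to be proved.

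The main obstacle is not a single hard step but the sheer bookkeeping involved in verifying the dictionary of (ii): one must match, term by term, the data and the (numerous) coherence axioms defining a trihomomorphism against those defining a monoidal pseudofunctor, and similarly for the higher cells. This is routine but lengthy, and is precisely the sort of verification whose troubled history is recounted in~\cite[Appendix~A]{CarmodyS:cobc} and~\cite[Section~2.1]{Schommer-PriesC:clatdc}. A secondary, more genuinely mathematical point is the observation in (i) that the strictification construction preserves the object set; this is the one place where one appeals to a specific feature of the construction rather than to a purely formal argument.
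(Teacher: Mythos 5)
Your proposal follows exactly the route the paper takes: the paper gives no independent proof but derives the statement as a scholium of the strictification theorem for tricategories (\cite[Theorem~8.1]{GordonR:coht}, \cite[Corollary~9.15]{GurskiN:cohtdc}), using precisely the dictionary that a monoidal bicategory is a one-object tricategory and a Gray monoid is a one-object Gray-category, with the strictification being identity on objects. Your write-up just makes this scholium explicit, so it is correct and essentially the same argument.
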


\subsection*{Symmetric monoidal bicategories and symmetric Gray monoids} We shall now recall the definition of a symmetric Gray monoid and of the relevant counterparts of lax monoidal pseudofunctors, monoidal transformations, and monoidal modifications. For this, it is convenient to 
arrive at a symmetric Gray monoid by progressively introducing layers of structure and properties, 
via the notions of a braided and sylleptic Gray monoid, as illustrated in \cref{tab:mon-bicat}.

\begin{table}[htb]
\begin{tabular}{|c|c|c|c|}
\hline
  0-cells   &  1-cells & 2-cells &   3-cells   \\  \hline
 \makecell{Gray monoid} &  \makecell{Lax monoidal  \\ pseudofunctors}  &  \makecell{Monoidal \\ transformations}   & \makecell{Monoidal \\ modifications} 
  \\  \hline
 \makecell{Braided \\ Gray monoids}   & \makecell{Braided lax monoidal \\ pseudofunctors} &   \makecell{Braided monoidal \\
transformations}   & ''   \\  \hline
 \makecell{Sylleptic \\ Gray monoids} & \makecell{Sylleptic lax monoidal \\
pseudofunctors}  & ''  & ''  \\  \hline
 \makecell{Symmetric \\ Gray monoids}   & ''  & '' &  '' \\ \hline
\end{tabular}
\medskip
\caption{Overview of Gray monoids.}
\label{tab:mon-bicat}
\end{table}

The definition of a braided Gray monoid in \cref{def:symmetric-gray-monoid} is equivalent to the one in~\cite[Definition~12]{DayStreet}, but we state it in terms of the data considered in~\cite{BaezJ:higdab,GurskiN:looscm,GurskiN:inlsc}, \cf \cite[Remark, page~118]{DayStreet}. This is a direct categorification of the notion of a braided monoidal category~\cite[Definition~2.1]{JoyalA:bratc}, in that the two axioms for a braiding in the one-dimensional setting are replaced by two invertible modifications, written $\beta^1$ and $\beta^2$ below.

\begin{definition} \label{def:symmetric-gray-monoid} \leavevmode
\begin{itemize}
\item A \emph{braided Gray monoid} is a Gray monoid $\catK$ equipped with a pseudonatural equivalence with components on objects 
\[
\bra_{A, B} \co A \otimes B \to B \otimes A ,
\]
for $A, B \in \catK$, called the \emph{braiding}, and invertible modifications with components
\begin{gather*} 
\begin{tikzcd}[ampersand replacement=\&, column sep = small]
  \& 
B \otimes A \otimes C 
	\ar[d, phantom, description, "\Two \beta^1_{A,B,C}"]
	\ar[dr, "\id_B \otimes \bra_{A,C}"]   \& 
   \\
 A \otimes B \otimes C  
 	\ar[ur, "\bra_{A, B} \otimes \id_C"] 
 	\ar[rr, "\bra_{A, B \otimes C}"']  \&
 \phantom{} \& 
B \otimes C \otimes A ,
 \end{tikzcd} \\
\begin{tikzcd}[ampersand replacement=\&, column sep = small]
  \& 
A \otimes C \otimes B
	\ar[d, phantom, description, "\Two \beta^2_{A, B, C}"]
	\ar[dr, "\bra_{A, C} \otimes \id_B"]   \& 
   \\
 A \otimes B \otimes C  
 	\ar[ur, "\id_A \otimes \bra_{B, C}"] 
 	\ar[rr, "\bra_{A \otimes B, C}"']  \&
 \phantom{} \& 
C \otimes A \otimes B  ,
 \end{tikzcd}
 \end{gather*}
 for $A, B \in \catK$, called the \emph{braiding constraints}, such that 
 $\bra_{A, \unit} = \bra_{\unit, A} = \id_A$
for all $A \in \catK$, and which satisfy the coherence conditions in~\cite[Definition, pages 4234-4235]{GurskiN:looscm}, which include the four coherence 
 conditions in~\cite[Definition~6]{BaezJ:higdab}.
\item A \myemph{sylleptic Gray monoid}  is a braided Gray monoid $\catK$  equipped with an invertible modification with components
\[
\begin{tikzcd}
  & 
B \otimes A
	\ar[d, phantom, description, "\Two \sigma_{A, B}"]
	\ar[dr, "\bra_{B, A}"]   & 
   \\
 A \otimes B
 	\ar[rr, "\id_{A \otimes B}"'] 
 	\ar[ur, "\bra_{A, B}"]  &
 \phantom{} & 
A \otimes B  ,
 \end{tikzcd}
\]
for $A, B \in \catK$, called the \emph{syllepsis}, which satisfies the two coherence conditions in~\cite[Definition~15]{DayStreet}.
 \item A \myemph{symmetric Gray monoid} is a sylleptic Gray monoid such that the syllepsis $\sigma$ satisfies the additional condition for it to be a symmetry~\cite[Definition~18]{DayStreet}.
\end{itemize} 
\end{definition}

Next, we consider the counterparts of lax monoidal pseudofunctors for braided and sylleptic Gray monoids.

\begin{definition} \label{thm:sym-mon-functor} \leavevmode
\begin{itemize}
\item Let $\catK$ and $\catL$ be braided Gray monoids. A  \myemph{braided lax monoidal pseudofunctor} from $\catK$ to $\catL$ is a lax monoidal pseudofunctor
$F  \co \catK \to \catL$ equipped with 
an invertible modification with components
\[
\begin{tikzcd}[column sep = large]
FA \otimes FB \ar[r, "\bra_{FA,FB}"] \ar[d, "\monn_{A, B}"'] \ar[dr,phantom,"\Two"]   & FB \otimes FA \ar[d, "\monn_{B,A}"] \\
F(A \otimes B) \ar[r, "F(\bra_{A, B})"'] & F(B \otimes A) ,
\end{tikzcd}
\]
 for $A, B \in \catK$, which satisfies the two axioms in~\cite[Definition~14]{DayStreet}.
\item Let $\catK$ and $\catL$ be sylleptic Gray monoids. A \myemph{sylleptic lax monoidal pseudofunctor} from $\catK$ to $\catL$ is a braided lax monoidal pseudofunctor 
 $\catK \to \catL$ which satisfies the additional axiom in~\cite[Definition~16]{DayStreet} or \cite[Definition~1.5]{GurskiN:inlsc}.
\end{itemize}
\end{definition}

Note that the appropriate morphisms between symmetric  Gray monoids are sylleptic lax monoidal pseudofunctors, since a symmetric Gray monoid is merely a sylleptic Gray monoid which satisfies an additional property,  \cf \cite[Definition~1.5]{GurskiN:inlsc}.

\begin{definition} \label{thm:bra-mon-transformation}
Let $F, G \co \catK \to \catL$ be braided lax monoidal pseudofunctors. A \myemph{braided monoidal pseudonatural transformation} from $F$ to $G$ 
is a monoidal pseudonatural transformation $F \Rightarrow G$ which satisfies the additional coherence condition in ~\cite[pages 125-126]{DayStreet}.
\end{definition}

Analogously to what we said above for pseudofunctors, the appropriate morphisms between
sylleptic lax monoidal pseudofunctors are braided monoidal pseudonatural transformations,
since a sylleptic lax monoidal pseudofunctor is merely a  braided lax monoidal
pseudofunctor satisfying an additional property. Moreover, since these are just monoidal transformations which satisfy an additional
property, the appropriate morphisms between them are the monoidal modifications of \cref{thm:monoidal-modification}.

The next strictification result is recalled from~\cite[Theorem 1.29]{GurskiN:inlsc}.

\begin{theorem}[Gurski and Osorno] 
\label{thm:strictification-symmetric-monoidal} 
Every symmetric monoidal bicategory is equivalent, as a symmetric monoidal bicategory, to a symmetric  Gray monoid. 
\end{theorem}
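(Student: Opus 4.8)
The plan is to bootstrap from \cref{thm:strictification-monoidal} and transport the braided, sylleptic, and symmetric structure along the resulting monoidal biequivalence, treating the three layers of \cref{tab:mon-bicat} in turn. The underlying observation is that a (braided, sylleptic, symmetric) Gray monoid is nothing but a (braided, sylleptic, symmetric) pseudomonoid in the Gray-category $\mathbf{Gray}$ of $2$-categories, $2$-functors, pseudonatural transformations, and modifications, whose associativity and unit data happen to be strict; so the statement is an instance of the slogan that coherence data transports across an equivalence.

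First I would fix a symmetric monoidal bicategory $\catK$, with braiding $\bra$, braiding constraints $\beta^1,\beta^2$, and syllepsis/symmetry $\sigma$, and apply \cref{thm:strictification-monoidal} to its underlying monoidal bicategory, obtaining a Gray monoid $\catL$ together with a monoidal biequivalence $F \co \catK \to \catL$; I would choose a pseudo-inverse $G \co \catL \to \catK$ together with monoidal adjoint-equivalence data relating $GF$ and $FG$ to the respective identities. I then transport the braiding: for $A, B \in \catL$ I define $\bra'_{A,B} \co A \otimes B \to B \otimes A$ as the pasting of $F(\bra_{GA,GB})$ with the lax-monoidal constraints $\monn$ of $F$ and of $G$ and with the unit and counit equivalences of the biequivalence. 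Since every ingredient is a pseudonatural equivalence, so is $\bra'$; and since $\bra_{\unit,-} = \bra_{-,\unit} = \id$ in $\catK$ and all the constraints are normalised at $\unit$, one gets $\bra'_{\unit,-} = \bra'_{-,\unit} = \id$ in $\catL$, as required.

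Next I would transport the braiding constraints $\beta^1,\beta^2$, the syllepsis $\sigma$, and (when present) the symmetry witness in the same way, by conjugating the corresponding modifications of $\catK$ with $F$, $G$, and the biequivalence data. The substantive work is to verify that $(\catL,\otimes,\unit,\bra',\beta^{1\prime},\beta^{2\prime})$ is a braided Gray monoid, that the transported $\sigma'$ promotes it to a sylleptic one, and that $\sigma'$ is a symmetry --- i.e. the coherence conditions of \cite{GurskiN:looscm,BaezJ:higdab} together with those of \cite[Definitions~15 and~18]{DayStreet}. Each such condition is a pasting equality of $2$-cells in $\catL$; after substituting the definitions of $\bra'$, $\beta^{i\prime}$, $\sigma'$ and using pseudofunctoriality of $F$ (so that $F$ of a pasting is the pasting of $F$'s), pseudonaturality of $\monn$, and the triangle identities of the biequivalence, each side acquires the same outer layer of biequivalence data wrapped around $F$ of an inner pasting in $\catK$, so the equality descends to the corresponding coherence condition \emph{in $\catK$}, which holds by hypothesis. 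Finally I would promote $F$ to a sylleptic (hence, by the remark after \cref{thm:sym-mon-functor}, symmetric) monoidal biequivalence: by construction of $\bra'$ the modification comparing $F(\bra_{A,B})$ with $\bra'_{FA,FB}$ is built directly from the biequivalence data, and its two axioms of \cite[Definition~14]{DayStreet} and the sylleptic axiom of \cite[Definition~16]{DayStreet} again reduce to identities already available in $\catK$.

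The main obstacle is precisely this reduction step: there is no off-the-shelf ``transport of structure along an equivalence'' theorem at the tricategorical level covering braidings, syllepses, and symmetries, so one must either verify the (numerous and large) pasting axioms by hand, or isolate the general principle that a biequivalence in $\mathbf{Gray}$ transports symmetric-pseudomonoid structure to symmetric-pseudomonoid structure --- using that a symmetric Gray monoid is exactly such a pseudomonoid with strict associativity and unit, together with a coherence theorem for pseudomonoids in a Gray-category. Organising the bookkeeping so that every axiom in $\catL$ visibly descends to the one in $\catK$, with the biequivalence data cancelling symmetrically on both sides, is where essentially all the effort lies; the rest is formal.
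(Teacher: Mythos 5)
The paper does not actually prove this statement: it is recalled verbatim from \cite[Theorem~1.29]{GurskiN:inlsc}, so there is no internal argument to compare yours against; the question is only whether your transport-of-structure sketch would deliver the theorem, and as written it would not.

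There are two concrete gaps. First, your claim that the transported braiding satisfies $\bra'_{\unit,A} = \bra'_{A,\unit} = \id_A$ strictly does not hold. In the given symmetric monoidal bicategory $\catK$ the braiding is \emph{not} normalised at the unit --- that equation is part of the strict structure \cref{def:symmetric-gray-monoid} asks you to produce, not an input; indeed $\unit \otimes A$ need not even equal $A$ in $\catK$ --- and the transported $1$-cell $\bra'_{\unit,A}$ is by your own definition a composite of $F(\bra_{G\unit,GA})$ with constraint cells $\monn$ and unit/counit data of the biequivalence, which is only \emph{equivalent} to the identity, never equal to it. The same problem affects the further quasi-strictness equations that the paper invokes immediately after the theorem (those of \cite[Definition~1.3]{GurskiN:inlsc}). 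Removing these discrepancies requires an additional strictification of the braiding and syllepsis beyond \cref{thm:strictification-monoidal}, and that is precisely where the cited proof does its work; it is not obtained by conjugation alone. Second, the ``descent'' of the coherence axioms is not the formal step you make it out to be: the axioms involve the braiding at tensor objects, e.g.\ $\bra'_{A\otimes B,C}$, which is related to $F(\bra_{GA\otimes GB,\,GC})$ only through the comparison cells between $G(A\otimes B)$ and $GA\otimes GB$ (and similarly for $F$); these cells do not sit as a common outer layer on the two sides of an axiom, so they do not visibly cancel, and proving that they cancel coherently is essentially equivalent to the coherence theorem for braided/sylleptic/symmetric monoidal bicategories that Gurski and Osorno establish by analysing free and quasi-strict structures in stages. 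Likewise, the opening slogan that a symmetric Gray monoid is ``just a symmetric pseudomonoid in $\mathbf{Gray}$ with strict associativity and unit'' is at best heuristic: the braiding constraints $\beta^1,\beta^2$, the syllepsis, and their axioms are not captured by an off-the-shelf pseudomonoid notion, so there is no general transport principle you can quote to shortcut the verification.
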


Without loss of generality it is possible to assume that the additional equations for a strict symmetric monoidal 2-category, in the sense of~\cite[Definition~1.3]{GurskiN:inlsc}, hold as well.
These include those for a braided monoidal 2-category in the sense of~\cite[Definition~2.2]{CransS:gencbs}. In the following, we will use \cref{thm:strictification-monoidal} and \cref{thm:strictification-symmetric-monoidal} and  the associated principles of transport of structure to limit ourselves to consider (symmetric) Gray monoids. Provided
that the structure and properties under consideration are invariant under (symmetric) monoidal biequivalence, one obtains results on (symmetric) monoidal bicategories (\cf remarks at the
start of \cref{sec:prof}).

\begin{convention} \label{thm:coh-numb-convention}
In the following, we shall frequently refer to coherence conditions. We will do so using
the order in which they appear in the reference provided in the definition given here. For example, 
the first coherence axiom for a lax monoidal pseudofunctor (\cf \cref{thm:sym-mon-functor}) means the first
coherence axiom in~\cite[Definition~2]{DayStreet}, which involves an
equality between a pasting diagram
involving $\omega$ and $\zeta$ and a pasting diagram involving $\kappa$.
\end{convention}

% Section 3

\section{Pseudocomonoids and pseudobialgebras}
\label{sec:mon-comon-bialg}

\subsection*{Pseudocomonoids} 
In preparation for the material on linear exponential pseudocomonads in \cref{sec:linear-exponential}, we recall some basic material on pseudocomonoids from~\cite{DayStreet} and prove some auxiliary results about them, which we could not find in the existing literature and may be of independent interest.
The definitions of a  pseudocomonoid, braided pseudocomonoid and symmetric pseudocomonoid are analogous to the classical ones of monoidal category, braided monoidal category and symmetric monoidal category, respectively~\cite{JoyalA:bratc}.
Accordingly, the definitions of a pseudocomonoid morphism and of a braided pseudocomonoid morphism are analogous to those of strong monoidal functor and braided monoidal functor. Finally, pseudocomonoid 2-cells are
counterparts of monoidal transformations. 
In particular, an analogous pattern of distinction between structure and property arises, \cf \cref{tab:mon-cat}.

Let $\catK$ be a Gray monoid.

\begin{definition}  \label{thm:comonoid}
A \myemph{pseudocomonoid} in $\catK$ is an object $A \in \catK$ equipped with
\begin{itemize}
\item a map $n \co A \to A \otimes A$, called the \emph{comultiplication},
\item a map $e \co A \to  \unit$, called the \emph{counit},
\item invertible 2-cells
\begin{equation*}
\begin{tikzcd}[column sep = large]
A \ar[r, "n"] \ar[d, "n"']  \ar[dr,phantom,"\Two \alpha"] & A \otimes A \ar[d, "n \otimes \id_A "] \\
A \otimes A \ar[r, "\id_A \otimes n"'] & A \otimes A \otimes A  ,
\end{tikzcd}
\end{equation*}
called the \emph{associativity constraint}, and 
\begin{equation*}
\begin{tikzcd}[column sep = {1.5cm,between origins}]
  & 
A \otimes A
	\ar[d, phantom, description, "\Two \lambda"]
	\ar[dr, "e \otimes \id_A"]   & 
   \\
A
 	\ar[ur, "n"] 
 	\ar[rr, "\id_A"']  &
 \phantom{} & 
A ,
 \end{tikzcd} \qquad
\begin{tikzcd}[column sep = {1.5cm,between origins}]
  & 
A \otimes A
	\ar[d, phantom, description, "\Two \rho"]
	\ar[dr, "\id_A \otimes e"]   & 
   \\
A
 	\ar[ur, "n"] 
 	\ar[rr, "\id_A"']  &
 \phantom{} & 
A  ,
 \end{tikzcd}
 \end{equation*}
called the \emph{left and right unitality constraints}, respectively,
\end{itemize}
which satisfy the duals of the two coherence conditions in~\cite[Section~3]{DayStreet}. (These are analogous to those for a monoidal category.)
\end{definition}

In the following, we use the letters $n$ and $e$ to denote the comultiplication and counit of different pseudocomonoids when this does not cause confusion.

\begin{definition} Let $A$ and $B$ be pseudocomonoids in $\catK$. A  \myemph{pseudocomonoid morphism} from $A$ to $B$ is a
map $f \co A \to B$ equipped with
\begin{itemize}
\item an invertible 2-cell
\[
\begin{tikzcd}
A \ar[r, "f"] \ar[d, "\com"']  \ar[dr,phantom,"\Two \bar{f}"] & B \ar[d, "\com"]  \\
A \otimes A \ar[r, "f \otimes f"'] & B \otimes B ,
\end{tikzcd}
\]
\item an invertible 2-cell
\[
\begin{tikzcd} 
A \ar[d, "\cou"'] \ar[r, "f"]    \ar[dr,phantom,"\Two \tilde{f}"] & B \ar[d, "\cou"] \\
 \unit \ar[r, "\id_\unit"'] &  \unit ,
\end{tikzcd}
\]
\end{itemize}
which satisfy three coherence conditions. (These are analogous to those for a lax monoidal functor~\cite[page~25]{JoyalA:bratc}.)
\end{definition}

\begin{definition} \label{thm:pseudocomonoid-2-cell}
Let $f, g \co A \to B$ be  pseudocomonoid morphisms in $\catK$. A \emph{pseudocomonoid $2$-cell} from~$f$ to~$g$
is a 2-cell $f \Rightarrow g$ which satisfies two coherence conditions. (These are analogous to those for a monoidal natural
transformation~\cite[page~25]{JoyalA:bratc}.)
\end{definition}

We write $\CoMon{\catK}$ for the 2-category of pseudocomonoids, pseudocomonoid morphisms, and pseudocomonoid 2-cells in~$\catK$. 
Let us now assume that  $\catK$ is braided. 

\begin{definition} \label{thm:braided-comonoid}
A \myemph{braided pseudocomonoid} is a pseudocomonoid $A$ equipped with an invertible 2-cell
\begin{equation}
\label{equ:braiding-comonoid}
\begin{tikzcd}[column sep = {1.5cm,between origins}]
  & 
A \otimes A
	\ar[d, phantom, description, "\Two \gamma"]
	\ar[dr, "\bra_{A,A}"]   & 
   \\
A
 	\ar[ur, "\com"] 
 	\ar[rr, "\com"']  &
 \phantom{} & 
A \otimes A
 \end{tikzcd}
 \end{equation}
which satisfies the duals of the two coherence conditions in~\cite[Definition~13]{DayStreet}. (These are analogous to those for a braided monoidal category~\cite[Definition~2.1]{JoyalA:bratc}.)
\end{definition}

\begin{definition} \label{thm:braided-pseudocomonoid-morphism}
Let $A$ and $B$ be braided pseudocomonoids in $\catK$. A \myemph{braided pseudocomonoid morphism} from $A$ to $B$ is a
pseudocomoid morphism  $A \to B$ which satisfies one additional coherence condition. (This is analogous to that for a lax monoidal functor to be braided~\cite[Definition~2.3]{JoyalA:bratc}.)
\end{definition} 

We write $\BraCoMon{\catK}$ for the 2-category of braided pseudocomonoids, braided pseudocomonoid morphisms, and pseudocomonoid 2-cells  in~$\catK$.
Let us now assume that $\catK$ is sylleptic. 

\begin{definition} \label{thm:symmetric-pseudocomonoid}
A \emph{symmetric pseudocomonoid} in $\catK$ is a braided pseudocomonoid in $\catK$ whose braiding satisfies
the additional coherence condition for a symmetry~\cite[Definition~13]{DayStreet}. (This is analogous to the axiom for a braided monoidal category to be symmetric.)
\end{definition} 

We write $\SymCoMon{\catK}$ for the 2-category of symmetric pseudocomonoids, braided pseudocomonoid morphisms, and pseudocomonoid 2-cells in~$\catK$. 
The definitions of the 2-categories $\CoMon{\catK}$, $\BraCoMon{\catK}$, and $\SymCoMon{\catK}$ is summarised in \cref{tab:mon-cat}.
There are evident duals of these notions, as considered in~\cite{DayStreet} giving rise to the 2-categories $\Mon{\catK}$, $\BraMon{\catK}$, and $\SymMon{\catK}$ of 
pseudomonoids, braided pseudomonoids, and symmetric pseudomonoids, respectively, in $\catK$.

\begin{table}[htb]
\begin{tabular}{|c|c|c|c|}
\hline
 & 0-cells   &1-cells   &  2-cells   \\ \hline
  $\CoMon{\catK}$  & \makecell{Pseudocomonoids} &   \makecell{Pseudocomonoid \\ morphisms } & \makecell{Pseudocomonoid \\ 2-cell}  \\ \hline  
$\BraCoMon{\catK}$  & \makecell{Braided \\ pseudocomonoids}  & \makecell{Braided \\ pseudocomonoid morphisms}  & ''     \\ \hline
$\SymCoMon{\catK}$    &  \makecell{Symmetric \\ pseudocomonoid} & '' & ''  \\ \hline
\end{tabular}
\medskip
\caption{Overview of pseudocomonoids.}
\label{tab:mon-cat} 
\end{table}

\cref{thm:ccmon-finprod} below recalls from~\cite[Theorem~5.2]{SchappiD:indacq} the two-dimensional counterpart of the well-known one-dimensional result asserting that the category of commutative comonoids in a symmetric monoidal category has finite products, \cf \cite{Fox}. Recall from \cref{sec:prelim} that we mean finite products in a bicategorical sense.

\begin{theorem}[Sch\"appi] \label{thm:ccmon-finprod}
Let $\catK$ be a symmetric Gray monoid. The 2-category $\SymCoMon{\catK}$ of symmetric pseudocomonoids in $\catK$ has finite products. 
 \end{theorem}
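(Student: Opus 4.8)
The plan is to verify directly that $\SymCoMon{\catK}$ has a terminal object and binary products, each furnished by the corresponding structure of $\catK$, in exact analogy with Fox's classical theorem that cocommutative comonoids in a symmetric monoidal category form a cartesian category. Throughout I would work under the strict axioms for a symmetric Gray monoid available by \cref{thm:strictification-symmetric-monoidal} and transport of structure, so that only the braiding, syllepsis, symmetry and interchange data of $\catK$ together with the constraint cells of the pseudocomonoids in play enter the calculations.

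\emph{Terminal object.} The unit $\unit$ carries the symmetric pseudocomonoid structure with comultiplication and counit both equal to $\id_\unit$ (recall $\unit \otimes \unit = \unit$) and all constraint $2$-cells, including the cocommutativity cell, equal to identities; the coherence conditions hold trivially. To see it is terminal I would show $\SymCoMon{\catK}[X,\unit] \simeq \mathsf{1}$ for every symmetric pseudocomonoid $X$: the counit $e_X \co X \to \unit$ canonically underlies a pseudocomonoid morphism, its structure cells coming from the unitality constraints of $X$; for any pseudocomonoid morphism $f \co X \to \unit$ the structure cell $\tilde f$ exhibits an isomorphism $f \cong e_X$, which one checks is a pseudocomonoid $2$-cell; and the coherence conditions for pseudocomonoid $2$-cells show there is a unique, necessarily invertible, pseudocomonoid $2$-cell between any two pseudocomonoid morphisms $X \to \unit$.

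\emph{Binary products.} Given symmetric pseudocomonoids $A$ and $B$, I would equip $A \otimes B$ with counit $e_A \otimes e_B \co A \otimes B \to \unit$ and comultiplication
\[
A \otimes B \xrightarrow{\ n_A \otimes n_B\ } A \otimes A \otimes B \otimes B \xrightarrow{\ \id_A \otimes \bra_{A,B} \otimes \id_B\ } A \otimes B \otimes A \otimes B \mathrlap{,}
\]
with associativity constraint assembled from $\alpha_A$, $\alpha_B$ and the braiding constraints $\beta^1,\beta^2$, unitality constraints from $\lambda_A,\rho_A,\lambda_B,\rho_B$, and cocommutativity cell $\gamma_{A\otimes B}$ from $\gamma_A$, $\gamma_B$ and the syllepsis. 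The projections are $\pi_1 \defeq \id_A \otimes e_B \co A \otimes B \to A$ and $\pi_2 \defeq e_A \otimes \id_B \co A \otimes B \to B$, made into braided pseudocomonoid morphisms by means of the unitality constraints of $A$ and $B$ and the pseudonaturality of $\bra$. For the universal property one must show that for every symmetric pseudocomonoid $X$ the functor
\[
\SymCoMon{\catK}[X, A\otimes B] \xrightarrow{\ (\pi_1(-),\, \pi_2(-))\ } \SymCoMon{\catK}[X,A] \times \SymCoMon{\catK}[X,B]
\]
is an equivalence, which one then promotes to an adjoint equivalence. A quasi-inverse sends a pair of braided pseudocomonoid morphisms $f\co X \to A$, $g \co X \to B$ to $\langle f,g\rangle \defeq (f \otimes g)\circ n_X$; this is a braided pseudocomonoid morphism, with structure cell $\overline{\langle f,g\rangle}$ built from $\alpha_X$, $\gamma_X$ and the braiding constraints, and $\widetilde{\langle f,g\rangle}$ from $\tilde f$, $\tilde g$ and a unitality constraint of $X$. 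Invertible pseudocomonoid $2$-cells $\pi_1\langle f,g\rangle \cong f$ and $\pi_2\langle f,g\rangle \cong g$ come from the unitality constraints of $X$, and $\langle \pi_1 h, \pi_2 h\rangle \cong h$ from $\bar h$ and the unitality constraints of $A$ and $B$; pseudonaturality of these comparisons and the triangle identities then reduce to coherence.

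\emph{The main obstacle.} All the data above is dictated by the one-dimensional proof, so there is no difficulty in writing it down; the work lies in verifying coherence. The most substantial points are: that the proposed cells on $A \otimes B$ satisfy the axioms of \cref{thm:comonoid}, \cref{thm:braided-comonoid} and \cref{thm:symmetric-pseudocomonoid} — here the braiding coherence of $\catK$ is used in full, and the syllepsis and symmetry axioms are precisely what controls $\gamma_{A\otimes B}$; that $\langle f,g\rangle$ is genuinely a braided pseudocomonoid morphism, which relies essentially on the cocommutativity of $X$; and that the comparison $2$-cells assemble into a quasi-inverse adjoint equivalence. Each is a diagram chase of the kind outlined for the one-dimensional setting, made finite by the Gray-monoid strictification; none is conceptually hard, but collectively they are long. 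An essentially equivalent, more conceptual route would first establish that $\otimes$ makes $\SymCoMon{\catK}$ a symmetric monoidal bicategory in which $\unit$ is terminal, and then invoke the bicategorical form of Fox's characterisation of cartesian monoidal structures; the bookkeeping is comparable.
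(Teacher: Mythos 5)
Your proposal is correct and follows essentially the same route as the paper: the paper recalls this result from Sch\"appi's Theorem~5.2 and then outlines exactly the data you construct --- the comultiplication $(\id_A \otimes \bra_{A,B} \otimes \id_B)\circ(n_A \otimes n_B)$ and counit $e_A \otimes e_B$ on $A \otimes B$, constraint cells assembled from $\alpha$, $\lambda$, $\rho$, $\gamma$ together with the braiding constraints and syllepsis, projections $\id_A \otimes e_B$ and $e_A \otimes \id_B$, and the unit $\unit$ as terminal object with the counit as the essentially unique morphism. The only difference is that the paper defers the coherence verifications to the cited reference, whereas you sketch carrying them out directly; your identification of where the braiding coherence, syllepsis, and cocommutativity of $X$ enter matches what is needed.
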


We outline the definition of the binary products and of terminal object of $\SymCoMon{\catK}$. We begin by recalling that if $A$ and $B$ are symmetric pseudocomonoids  in $\catK$, the tensor product of their 
underlying objects $A \otimes B$ in $\catK$ admits the structure of a symmetric pseudocomonoid. Indeed, the tensor product in a symmetric Gray monoid $\catK$ is a sylleptic strong monoidal pseudofunctor~\cite[page~131]{DayStreet},
and therefore it preserves symmetric pseudocomonoids~\cite[Proposition~16]{DayStreet}. Thus, it lifts as follows:
\begin{equation*}
\begin{tikzcd}[column sep = large] 
\SymCoMon{\catK} \times \SymCoMon{\catK} \ar[r, "(-) \otimes (=)"] \ar[d, "U \times U"'] &
\SymCoMon{\catK} \ar[d, "U"] \\
\catK \times \catK \ar[r, "(-) \otimes (=)"']  &
\catK . 
\end{tikzcd}
\end{equation*}
Explicitly, the comultiplication and counit are the following composites:
\begin{gather}
\label{equ:comultiplication-for-tensor}
\begin{tikzcd}[ampersand replacement=\&] 
A \otimes B
	\ar[r, "\com \otimes \com"] \&
A \otimes A \otimes B \otimes B
	\ar[r, "\id_A \otimes \bra_{A,B} \otimes \id_A"] \&[4em] 
A \otimes B \otimes A \otimes B  , 
\end{tikzcd} \\
\label{equ:counit-for-tensor}
\begin{tikzcd}[ampersand replacement=\&] 
A \otimes B 
	\ar[r, "e \otimes e"] \&
\unit \otimes \unit 
	\ar[r, "\id"] \&
\unit . 
\end{tikzcd}
\end{gather}
The associativity constraint is 
\[
\begin{small} 
\begin{tikzcd}
A \otimes B \ar[r, "\com \otimes \com"] \ar[d, "\com \otimes \com"']  \ar[dr,phantom,"\Two \alpha \otimes \alpha"]  &[4em] 
A \otimes A \otimes B \otimes B \ar[r, "\id \otimes \bra_{A,B} \otimes \id"] \ar[d, "\com \otimes \id \otimes \com \otimes \id"'] \ar[dr,phantom,"\cong"]  &[4em]
A \otimes B \otimes A \otimes B \ar[d, "\com \otimes \com \otimes \id \otimes \id"'] \\
A \otimes A \otimes B \otimes B \ar[r, "\id \otimes \com \otimes \id  \otimes \com"']  \ar[d, "\id \otimes \bra_{A,B} \otimes \id"']  \ar[dr,phantom,"\cong"] & 
A \otimes A \otimes A \otimes B \otimes B \otimes B  \ar[r, "\id \otimes \id  \otimes \bra_{A,B \otimes B} \otimes \id"]  \ar[d, "\id \otimes \bra_{A \otimes A,B} \otimes \id \otimes \id"]  \ar[dr,phantom,"\Two \id \otimes \xi \otimes \id"']  & 
A \otimes A \otimes B \otimes B \otimes A \otimes B \ar[d, "\id \otimes \bra_{A,B} \otimes \id \otimes \id \otimes \id"'] \\
A \otimes B \otimes A \otimes B \ar[r, "\id \otimes \id  \otimes \com \otimes \com"']  & 
A \otimes B \otimes A \otimes A \otimes B \otimes B  \ar[r, "\id  \otimes \id \otimes \id \otimes \bra_{A,B} \otimes \id"']   &
A \otimes B \otimes A \otimes B \otimes A \otimes B ,
\end{tikzcd}
\end{small}
\]
where $\xi$ is an invertible 2-cell which can easily be constructed using the two braiding constraints~$\beta_1$ and~$\beta_2$ of \cref{def:symmetric-gray-monoid}.
The left unitality constraint is 
\[
\begin{tikzcd}
A \otimes B 
	\ar[r, "\com \otimes \com"]  
	\ar[dr, bend right = 20, "\id"'] &  
A \otimes A \otimes B \otimes B 
	\ar[r, "\id \otimes \bra_{A, B} \otimes \id"] 
	\ar[d, "e \otimes \id \otimes e \otimes \id"] \ar[dr,phantom,"\cong"] &[5em]  
A \otimes B \otimes A \otimes B 
	\ar[d, "e \otimes e \otimes \id \otimes \id"] \\
\phantom{} \ar[ur, phantom, description, pos=(.6), "\Two \lambda \otimes \lambda"] 
 & \unit \otimes A \otimes \unit \otimes B \ar[r, "\id \otimes \bra_{A, \unit} \otimes \id"'] & \unit \otimes \unit \otimes A \otimes B .
 \end{tikzcd}
 \]
The right unitality constraint is
\[
\begin{tikzcd}
A \otimes B \ar[r, "\com \otimes \com"] \ar[dr,  bend right = 20,  "\id"']  & A \otimes A \otimes B \otimes B \ar[r, "\id \otimes \bra_{A,B} \otimes \id"] \ar[d, "\id \otimes e \otimes \id \otimes e"] \ar[dr,phantom,"\cong"]  &[5em] A \otimes B \otimes A \otimes B \ar[d, "\id \otimes \id \otimes e \otimes e"]  \\
\phantom{} \ar[ur, phantom, description, pos=(.6), "\Two \rho \otimes \rho"] 
 &  A \otimes \unit \otimes B \otimes \unit  \ar[r, "\id \otimes \bra_{\unit, B} \otimes \id"']  & \otimes A \otimes B \otimes \unit \otimes \unit .
 \end{tikzcd}
 \]
The symmetry is 
\[
\begin{tikzcd}
A \otimes B 
	\ar[r, "\com \otimes \com"] 
	\ar[dr,  bend right = 20,  "\com \otimes \com"'] &[3em] 
A \otimes A \otimes B \otimes B 
	\ar[r, "\id_A \otimes \bra_{A,B} \otimes \id_B"] 
	\ar[d,  "\bra_{A,A} \otimes \bra_{B,B}"] \ar[dr,phantom,"\Two \sigma'"]  &[7em]
A \otimes B \otimes A \otimes B 
	\ar[d, "\bra_{A \otimes B, A\otimes B}"]  \\
 	\phantom{} \ar[ur, phantom, description, pos=(.6), "\Two \gamma \otimes \gamma"]  &
 	A \otimes A \otimes B \otimes B 
	\ar[r, "\id_A \otimes \bra_{A,B} \otimes \id_B"'] &
A \otimes B \otimes A \otimes B ,
\end{tikzcd}
\]
where $\sigma'$ is an invertible 2-cell constructed using the braiding constraints $\beta^1$, $\beta^2$ and the symmetry $\sigma$ of $\catK$, \cf \cref{def:symmetric-gray-monoid}.

It is then possible to show that $A \otimes B$ is the product of $A$ and $B$ in $\SymCoMon{\catK}$, when considered equipped with the projections $\pi_1 \co A \otimes B \to A$ and $\pi_2 \co A \otimes B \to B$  given by
the following composites:
\[
\begin{tikzcd}
A \otimes B \ar[r, "\id_A \otimes e"] &
 A \otimes \unit \ar[r, equal] &
 A , 
 \end{tikzcd} \quad
\begin{tikzcd}
A \otimes B \ar[r, "e \otimes \id_B"] &
 \unit \otimes B \ar[r, equal] &
B , 
 \end{tikzcd}
 \]
 respectively, both of which can be shown to be braided pseudocomonoid morphisms.  

The terminal object of $\SymCoMon{\catK}$ is the unit $\unit$ of $\catK$, viewed as a symmetric pseudocomonoid in the evident way. For a symmetric pseudomonoid $A$, the required essentially unique braided pseudocomonoid morphism to $\unit$ is the counit of $A$.

\begin{corollary} \label{thm:sym-comon-is-monoidal}
Let $\catK$ be a symmetric Gray monoid. The 2-category $\SymCoMon{\catK}$ of symmetric pseudocomonoids in $\catK$ admits a symmetric monoidal structure such that the forgetful 2-functor 
\[
U \co \SymCoMon{\catK} \to \catK
\] 
is  strict symmetric monoidal.
\end{corollary}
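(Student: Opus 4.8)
The plan is to derive the statement from \cref{thm:ccmon-finprod}: since a bicategory with finite products carries a canonical symmetric monoidal structure whose tensor is the binary product \cite[Theorem~2.15]{CarboniA:carbii}, and $\SymCoMon{\catK}$ has finite products by \cref{thm:ccmon-finprod}, we immediately obtain a symmetric monoidal structure on $\SymCoMon{\catK}$. The only real content is then to verify that, for the choice of products spelled out above, the forgetful $2$-functor $U$ is \emph{strict} symmetric monoidal.

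First I would fix the products in $\SymCoMon{\catK}$ to be exactly those described before the statement: the binary product of $A$ and $B$ has underlying object the Gray-monoid tensor $A\otimes B$, with comultiplication and counit~\eqref{equ:comultiplication-for-tensor} and~\eqref{equ:counit-for-tensor} and projections $\pi_1=\id_A\otimes e$, $\pi_2=e\otimes\id_B$, while the terminal object is $\unit$ with its canonical pseudocomonoid structure. With these choices $U$ sends the tensor $A\with B$ of the product-monoidal structure to $A\otimes B=U(A)\otimes U(B)$ and the monoidal unit to $\unit$ on the nose; moreover, for pseudocomonoid morphisms $f$ and $g$ one checks, using that $f\otimes g$ satisfies the defining property of the pairing $(f\pi_1,g\pi_2)$ via the structural $2$-cells $\tilde f$ and $\tilde g$, that $U(f\with g)=U(f)\otimes U(g)$. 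Thus the comparison equivalences $U(A)\otimes U(B)\to U(A\with B)$ and $\unit\to U(\term)$ can be taken to be identities.

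Next I would check that the remaining structural data of the product-monoidal structure on $\SymCoMon{\catK}$ — associator, unitors, braiding, syllepsis, and the higher modifications — are, on underlying data, those of the symmetric Gray monoid $\catK$. The unitors are literally identities, since $A\otimes\unit=A=\unit\otimes A$ already \emph{as pseudocomonoids} (here $\bra_{A,\unit}=\id$ and the counit and comultiplication of $\unit$ are identities, so formulas~\eqref{equ:comultiplication-for-tensor} and~\eqref{equ:counit-for-tensor} collapse). The associator can be taken to have underlying map $\id_{A\otimes B\otimes C}$, equipped with the evident invertible $2$-cell comparing the two induced comultiplications, built from the braiding constraints $\beta^1$ and $\beta^2$; and the braiding has underlying map $\bra_{A,B}$, promoted to a pseudocomonoid morphism by a $2$-cell built from $\beta^1$, $\beta^2$ and $\sigma$ — essentially the data already used above in the construction of the symmetry $\sigma'$ on $A\otimes B$. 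Hence $U$ strictly preserves all of this. Finally, the coherence axioms asserting that $(\SymCoMon{\catK},\otimes,\unit,\ldots)$ is a symmetric monoidal bicategory and that $U$ is a strict symmetric monoidal $2$-functor are equalities between pasting composites of $2$-cells in $\SymCoMon{\catK}$; since a pseudocomonoid $2$-cell is just an underlying $2$-cell satisfying a property, $U$ is locally faithful, so each such equality may be checked after applying $U$, where it becomes the corresponding axiom in $\catK$ — almost all of which are trivial since $\catK$ is a Gray monoid.

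I expect the main obstacle to be bookkeeping rather than anything conceptual: one must verify that the $2$-cells promoting $\id_{A\otimes B\otimes C}$ and $\bra_{A,B}$ (and their iterates) to pseudocomonoid morphisms satisfy the three coherence conditions for such morphisms, and that the resulting $1$-cells and $2$-cells do assemble into the canonical product-monoidal structure of \cite{CarboniA:carbii}; both reduce, via the universal property of products and the local faithfulness of $U$, to identities and coherence already available in $\catK$. An alternative route avoiding \cite{CarboniA:carbii} is to lift the symmetric Gray monoid structure of $\catK$ directly along $U$: the lifted tensor $\SymCoMon{\catK}\times\SymCoMon{\catK}\to\SymCoMon{\catK}$ is the one displayed just before the statement (using that the tensor of $\catK$ is a sylleptic strong monoidal pseudofunctor preserving symmetric pseudocomonoids~\cite[Proposition~16]{DayStreet}), the unit is $\unit$, all structural data are lifted with underlying maps those of $\catK$, and coherence again follows from local faithfulness of $U$.
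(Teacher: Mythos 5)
Your proposal takes essentially the same route as the paper, whose entire proof is to combine \cref{thm:ccmon-finprod} with \cite[Theorem~2.15]{CarboniA:carbii}, leaving the strictness of $U$ (via the specific choice of products with underlying object $A\otimes B$ and unit $\unit$) implicit. Your extra verification of that strictness, and the alternative of lifting the Gray-monoid structure directly along $U$, correspond to the paper's remark that a direct argument is possible but omitted for brevity.
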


\begin{proof} Combine~\cref{thm:ccmon-finprod} with \cite[Theorem~2.15]{CarboniA:carbii}.
\end{proof}

Of course, \cref{thm:sym-comon-is-monoidal} could also be established directly (without appealing to \cref{thm:ccmon-finprod}), but we prefer not to do so for brevity. One of the reasons for our interest in \cref{thm:sym-comon-is-monoidal} is that it
allows us to introduce the notion of a symmetric pseudobialgebra in a concise way, 
as done in \cref{thm:bialgebra-def} below, just as it can be done for commutative bialgebras in the one-dimensional setting~\cite{AguiarM:monfsh}. 
The definition is restated in an equivalent way and unfolded explicitly in~\cref{thm:bialgebra-equivalent}.

\begin{definition} \label{thm:bialgebra-def}
A \myemph{symmetric pseudobialgebra} in $\catK$ is a symmetric pseudomonoid in $\SymCoMon{\catK}$.
\end{definition}

\begin{remark}  \label{thm:bialgebra-equivalent}
In analogy with the one-dimensional situation, a symmetric pseudobialgebra can
be defined equivalently as a symmetric pseudocomonoid in the 2-category $\SymMon{\catK}$ of symmetric pseudomonoids, braided pseudomonoid morphisms, and pseudomonoid 2-cells. Indeed, assume that we have
\begin{itemize}
\item an object $A \in \catK$,
\item maps $\com \co A \to A \otimes A$, $e \co A \to I$ in $\catK$, and invertible 2-cells $\alpha$, $\lambda$, $\rho$ and $\gamma$ equipping $A$ with the structure of a braided pseudocomonoid in $\catK$,
as in \cref{thm:comonoid} and \cref{thm:braided-comonoid},
\item maps $m \co A \otimes A \to A$ and $u \co I \to A$ in $\catK$ and invertible 2-cells 
\[
\begin{tikzcd}[column sep = large]
A \otimes A  \otimes A \ar[r, "\id_A \otimes m "] \ar[d, " m \otimes \id_A"']  \ar[dr,phantom,"\Two \beta"] & A \otimes A \ar[d, "m  "] \\
A \otimes A \ar[r, " m"'] & A   ,
\end{tikzcd}
\]
\[
\begin{tikzcd}[column sep = {1.5cm,between origins}]
  & 
A \otimes A
	\ar[d, phantom, description, "\Two \sigma"]
	\ar[dr, "m"]   & 
   \\
A 
 	\ar[ur, "u \otimes \id_A"] 
 	\ar[rr, "\id_A"']  &
 \phantom{} & 
A  ,
 \end{tikzcd} \qquad
\begin{tikzcd}[column sep = {1.5cm,between origins}]
  & 
A \otimes A 
	\ar[d, phantom, description, "\Two \tau"]
	\ar[dr, "m"]   & 
   \\
A 
 	\ar[ur, "\id_A \otimes u"] 
 	\ar[rr, "\id_A"']  &
 \phantom{} & 
A \otimes A ,
 \end{tikzcd}
 \]
\[
\begin{tikzcd}[column sep = {1.5cm,between origins}]
  & 
A \otimes A
	\ar[d, phantom, description, "\Two \delta"]
	\ar[dr, "m"]   & 
   \\
A \otimes A
 	\ar[ur, "\bra_{A,A}"] 
 	\ar[rr, "m"']  &
 \phantom{} & 
A  ,
 \end{tikzcd}
 \]
equipping $A$ with the structure of a braided pseudomonoid in $\catK$.
\end{itemize}
Then, there is a bijection between:
\begin{enumerate}
\item the set of 4-tuples of invertible 2-cells making $m \co A \otimes A \to A$ and $u \co I \to A$ into braided pseucodomonoid morphisms such that $\beta$, $\sigma$, $\tau$ and $\delta$ are pseudocomonoid 2-cells, 
which therefore determine a symmetric pseudomonoid in $\SymCoMon{\catK}$;
\item the set of 4-tuples of invertible 2-cells making $\com \co A \to A \otimes A$ and $e \co A \to I$ into  braided pseudomonoid morphisms such that $\alpha$, $\lambda$, $\rho$ and $\gamma$ are pseudomonoid 2-cells, 
which therefore determine a symmetric pseudocomonoid in $\SymMon{\catK}$.
\end{enumerate}
In this correspondence, the coherence conditions for the braided pseudomonoid morphisms and for pseudocomonoid 2-cells in~(i) imply the coherence conditions for the pseudocomonoid 2-cells and
for the braided pseudomonoid morphisms in (ii), respectively. 

Since \cref{thm:bialgebra-def} is stated in terms of the data in~(i), let us record for reference that the 2-cells therein have the form
 \[
\begin{tikzcd}[column sep = large]
A \otimes A 
	\ar[r, "m"]
	\ar[d, "\com \otimes \com"'] 
	\ar[ddr, phantom, description, "\Two \bar{m}"] &
A \ar[dd, "\com"]  \\
A \otimes A \otimes A \otimes A 
	\ar[d, "\id_A \otimes \bra_{A,A} \otimes \id_A"'] 
	 & \\
 A \otimes A \otimes A \otimes A \ar[r, "m \otimes m"'] & 
A \otimes A ,
\end{tikzcd} \qquad
 \begin{tikzcd}
 A \otimes A 
 	\ar[r, "m"] 
	\ar[dd, "e \otimes e"'] 
	\ar[ddr, phantom, description, "\Two \tilde{m}"] & 
A  \ar[dd, "\cou"]  \\
 & \\
 \unit \otimes \unit \ar[r, "\id"'] & 
\unit ,
 \end{tikzcd} 
 \]
 and
 \[
 \qquad
\begin{tikzcd}
\unit 
	\ar[r, "u"] 
	\ar[d, "\id"'] 
	\ar[dr, phantom, description, "\Two \tilde{u}"] &[4em]
A 
	\ar[d, "\com"] \\
 \unit \otimes \unit
	 \ar[r, "u \otimes u"']  &
 A \otimes A ,
 \end{tikzcd}  \qquad
 \begin{tikzcd}
 \unit 
 	\ar[d, "\id"'] 
 	\ar[r, "u"] 
	\ar[dr, phantom, description, "\Two \bar{u}"] & 
 A \ar[d, "\cou"] \\ 
\unit 
	\ar[r, "\id"']  & 
\unit 
.
 \end{tikzcd} \qquad 
\]
\end{remark}

\subsection*{Products, coproducts and biproducts} 

We  establish some facts regarding cartesian and cocartesian monoidal structures. Recall from \cref{sec:prelim} our conventions and notation regarding products and coproducts in a 2-category.

\begin{theorem} \label{thm:prod-iff-all-ccmon}
Let $\catK$ be a symmetric Gray monoid. The following conditions are equivalent.
\begin{enumerate} 
\item The monoidal structure of $\catK$ is cartesian, \ie the unit $\unit$ is a terminal object and, for all
$A, B \in \catK$, the object $A \otimes B$ is a product of $A$ and $B$, with projections
given by the composites
  \[
  \begin{tikzcd}[column sep = large]
  A \otimes B \ar[r, "\id_A \otimes e_B"] & A \otimes \unit \ar[r, equal] & A , \quad
  \end{tikzcd}
   \begin{tikzcd}[column sep = large]
  A \otimes B \ar[r, "e_A \otimes \id_B"] & \unit \otimes B \ar[r, equal] & B ,
  \end{tikzcd}
  \]
  where, for $A \in \catK$, we write $e_A \co A \to \unit$ for the essentially unique map
  into the terminal object.  
  \item The forgetful symmetric strict monoidal 2-functor $U \co \SymCoMon{\catK} \to \catK$ has a quasi-inverse $V \co \catK \to \SymCoMon{\catK}$ that is
  a  symmetric strong monoidal section of it. In particular, $U$ is a  symmetric strong monoidal biequivalence.
\item Every $A \in \catK$ is a symmetric pseudocomonoid in $\catK$, pseudonaturally and monoidally in $A$, by which we mean the following.
\begin{itemize}
\item Every $A \in \catK$ is a symmetric pseudocomonoid in the sense of \cref{thm:comonoid}, \ie we have maps
\begin{equation*}
\com_A \co A \to  A \otimes A , \qquad
e_A \co A \to \unit ,
\end{equation*}
and invertible 2-cells 
\begin{gather*}
\begin{tikzcd}[column sep = large, ampersand replacement=\&] 
A 
	\ar[r, "\com_A"] \ar[d, "\com_A"']  
	\ar[dr,phantom,"\Two \alpha_A"] \& 
A \otimes A 
	\ar[d, "\id_A \otimes \com_A"] \\
A \otimes A 
	\ar[r, "\com_A \otimes \id_A"'] \& 
A \otimes A \otimes A  
,
\end{tikzcd}  \\
\label{equ:uniform-unitality}
\begin{tikzcd}[column sep = {1.5cm,between origins}, ampersand replacement=\&] 
  \& 
A \otimes A
	\ar[d, phantom, description, "\Two \lambda_A"]
	\ar[dr, "e_A \otimes \id_A"]   \& 
   \\
A
 	\ar[ur, "\com_A"] 
 	\ar[rr, "\id_A"']  \&
 \phantom{} \& 
\unit \otimes A ,
 \end{tikzcd} \qquad
\begin{tikzcd}[column sep = {1.5cm,between origins}, ampersand replacement=\&] 
  \& 
A \otimes A
	\ar[d, phantom, description, "\Two \rho_A"]
	\ar[dr, "\id_A \otimes e_A"]   \& 
   \\
A
 	\ar[ur, "\com_A"] 
 	\ar[rr, "\id_A"']  \&
 \phantom{} \& 
A \otimes \unit ,
 \end{tikzcd} \\
\begin{tikzcd}[column sep = {1.5cm,between origins}, ampersand replacement=\&] 
  \& 
A \otimes A
	\ar[d, phantom, description, "\Two \gamma_A"]
	\ar[dr, "\bra_{A,A}"]   \& 
   \\
A
 	\ar[ur, "\com_A"] 
 	\ar[rr, "\com_A"']  \&
 \phantom{} \& 
A \otimes A ,
 \end{tikzcd}
\end{gather*}
satisfying appropriate coherence conditions.
\item The families of maps $(\com_A)_{A \in \catK}$ and $(e_A)_{A \in \catK}$ are pseudonatural, \ie for every map $f \co A \to  B$ in $\catK$, we have invertible  2-cells
\begin{equation*}
\begin{tikzcd} 
A \ar[r, "\com_A"] \ar[d, "f"'] \ar[dr, phantom, description, "\Two n_f"]  &  A \otimes  A \ar[d, "f \otimes f"] \\
 B \ar[r, "\com_B"'] &  B \otimes  B ,
 \end{tikzcd} \qquad \begin{tikzcd} 
 A \ar[r, "e_A"] \ar[d, "f"']  \ar[dr, phantom, description, "\Two e_f"]  & \unit \ar[d, "\id_\unit"] \\
 B \ar[r, "e_B"'] & \unit ,
 \end{tikzcd} 
 \]
 satisfying pseudonaturality axioms.
\item The families of 2-cells $(\alpha_A)_{A \in \catK}$, $(\lambda_A)_{A \in \catK}$, $(\rho_A)_{A \in \catK}$ and $(\gamma_A)_{A \in \catK}$ are modifications.
\item The pseudonatural transformations $\com$ and $e$ are braided monoidal in the sense of~\cref{thm:bra-mon-transformation}, \ie 
we have an invertible modification $\com^2$ and an invertible 2-cell $\com^0$ as follows:
\begin{equation*}
\begin{tikzcd}[column sep = large]
A \otimes B \ar[r, "\com_{A \otimes B}"] \ar[d, "\id_{A \otimes B}"'] \ar[dr, phantom, description, "\Two \com^2_{A,B}"]  & A \otimes B \otimes A \otimes B \ar[d, "\id_A \otimes \bra_{B,A} \otimes \id_B"] \\
A \otimes B \ar[r, "\com_A \otimes \com_B"'] & A \otimes A \otimes B \otimes B ,
 \end{tikzcd} 
  \qquad
\begin{tikzcd} 
\unit \ar[d, "\id_\unit"'] \ar[r, "\com_\unit"] \ar[dr, phantom, description, "\Two \com^0"] & \unit  \otimes \unit  \ar[d, "\id_\unit"] \\
\unit \ar[r, "\id_\unit"'] &  \unit  ,
 \end{tikzcd} 
\end{equation*}
and an invertible modification $e^2$ and an invertible 2-cell  $e^0$  as follows:
\begin{equation*}
\begin{tikzcd}[column sep = large]
A \otimes B \ar[r, "e_A \otimes e_B"] \ar[d, "\id_{A \otimes B}"'] \ar[dr,
phantom, description, "\Two e^2_{A,B}"]  & \unit \otimes \unit \ar[d, "\id"] \\
A \otimes B \ar[r, "e_{A \otimes B}"'] & \unit ,
\end{tikzcd} \qquad
\begin{tikzcd}
\unit \ar[r, "\id_\unit"] \ar[d, "\id_\unit"'] \ar[dr, phantom, description, "\Two e^0"] & \unit \ar[d, "\id_\unit"] \\
\unit \ar[r, "e_\unit"'] & \unit ,
\end{tikzcd}
\end{equation*}
satisfying the appropriate coherence conditions.
\item The modifications $\alpha$, $\lambda, \rho$ and $\gamma$  are monoidal, in the sense of \cref{thm:monoidal-modification}.
\end{itemize}
\end{enumerate}
\end{theorem}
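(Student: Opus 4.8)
The plan is to prove the three conditions equivalent by establishing the cycle (i) $\Rightarrow$ (iii) $\Rightarrow$ (ii) $\Rightarrow$ (i). The conceptual heart is \cref{thm:sym-comon-is-monoidal}: the forgetful 2-functor $U \co \SymCoMon{\catK} \to \catK$ is strict symmetric monoidal, so a strong monoidal section $V$ of it is essentially the same data as ``coherent pseudocomonoid structure on every object''; that observation will let me translate between (ii) and (iii) almost formally. The tensor product $\otimes$ on $\SymCoMon{\catK}$ is, by the discussion following \cref{thm:ccmon-finprod}, the \emph{cartesian} product, so a strong monoidal $U$-section $V$ forces the cartesian product in $\SymCoMon{\catK}$ to be carried to $\otimes$ in $\catK$, which is exactly the content of (i) once one checks the projections match.

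\textbf{(i) $\Rightarrow$ (iii).} Assume the monoidal structure is cartesian. Then for each $A$ the diagonal $\Delta_A \co A \to A \with A = A \otimes A$ (using that $A \otimes B$ \emph{is} a chosen product) and the unique map $e_A \co A \to \unit = \top$ supply the comultiplication and counit. All the required invertible 2-cells $\alpha_A, \lambda_A, \rho_A, \gamma_A$, their modification axioms, the pseudonaturality cells $n_f, e_f$, and the braided-monoidal structure cells $\com^2, \com^0, e^2, e^0$ are then produced by the universal property of products: any two maps into $A \otimes A \otimes \cdots$ with the same composites to the factors are canonically isomorphic, and these canonical isomorphisms automatically satisfy the coherence conditions because the universal property is itself coherent (this is precisely the ``bicategory with finite products is symmetric monoidal'' phenomenon of \cite[Theorem~2.15]{CarboniA:carbii}, now applied pointwise). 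Here I would invoke \cref{thm:strictification-symmetric-monoidal} to work in a Gray monoid, keeping the bookkeeping manageable. I'd also record that the pseudocomonoid projections $\pi_1 = (\id_A \otimes e_B), \pi_2 = (e_A \otimes \id_B)$ described after \cref{thm:ccmon-finprod} coincide with the product projections, since $e_B$ is the map to the terminal object — this is the compatibility needed later.

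\textbf{(iii) $\Rightarrow$ (ii).} Given the data of (iii), define $V \co \catK \to \SymCoMon{\catK}$ on objects by equipping $A$ with $(\com_A, e_A, \alpha_A, \lambda_A, \rho_A, \gamma_A)$, on maps by sending $f$ to $f$ equipped with the 2-cells $n_f, e_f$, and on 2-cells by the identity assignment; pseudofunctoriality of $V$ is exactly the pseudonaturality axioms for $\com, e$, and $V$ is a strict section of $U$ by construction. The braided-monoidal data $\com^2, \com^0, e^2, e^0$ of (iii) provides precisely the structure cells making $V$ a \emph{symmetric strong monoidal} pseudofunctor (the components $V(A \otimes B) \to VA \otimes VB$ are identities on underlying objects, hence equivalences), and the monoidal-modification clause for $\alpha, \lambda, \rho, \gamma$ supplies the remaining coherence. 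Since $U V = \id_\catK$ strictly and $U$ is a biequivalence iff it has such a section (here I would note that $U$ is always fully faithful on 2-cells, and a strong monoidal section automatically makes it essentially surjective and a biequivalence), we get (ii).

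\textbf{(ii) $\Rightarrow$ (i).} If $V$ is a symmetric strong monoidal section of $U$, then $\catK \simeq \SymCoMon{\catK}$ as symmetric monoidal 2-categories, and under this equivalence the terminal object and binary products of $\SymCoMon{\catK}$ from \cref{thm:ccmon-finprod} transport to $\catK$; since the monoidal structure on $\SymCoMon{\catK}$ \emph{is} the cartesian one (products = tensor, by the construction recalled after \cref{thm:ccmon-finprod}) and $V$ is strong monoidal, the tensor product on $\catK$ becomes the cartesian product. Concretely, $U$ carries the product projections of $\SymCoMon{\catK}$ to the stated composites $\id_A \otimes e_B$, $e_A \otimes \id_B$, and $V$ being a strong monoidal section means these inherit the universal property; the unit $\unit = U(\unit)$ becomes terminal because $\unit$ is terminal in $\SymCoMon{\catK}$.

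\textbf{The main obstacle} is the bookkeeping in (i) $\Rightarrow$ (iii): one must exhibit genuinely all the listed 2-cells and verify that the canonical comparison isomorphisms coming from the universal property of products satisfy the (numerous) coherence axioms for a braided monoidal pseudonatural transformation and a monoidal modification. The right way to tame this — rather than a brute pasting-diagram computation — is to note that $\catK$ being cartesian means $\SymCoMon{\catK} \to \catK$ is \emph{already} a biequivalence, so (iii)'s data is just the transported canonical pseudocomonoid structure, and its coherence is inherited from the (known) coherence of \cref{thm:ccmon-finprod} together with \cite[Theorem~2.15]{CarboniA:carbii}. Thus the real work is setting up this transport cleanly; the individual diagram checks are then, as the authors put it elsewhere, ``trivial aspects'' that can be indicated in text.
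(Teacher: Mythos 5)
Your cycle (i) $\Rightarrow$ (iii) $\Rightarrow$ (ii) $\Rightarrow$ (i) founders at the step (iii) $\Rightarrow$ (ii). Constructing the section $V \co \catK \to \SymCoMon{\catK}$ from the data of (iii) is fine, but your claim that ``$U$ is a biequivalence iff it has such a section'' and that ``a strong monoidal section automatically makes it essentially surjective and a biequivalence'' is a non sequitur: a 2-functor admitting a section need not be a biequivalence. What must be shown is that $U$ is also \emph{locally} an equivalence, \ie that every map $f \co UX \to UY$ lifts, essentially uniquely, to a braided pseudocomonoid morphism, that every 2-cell between such lifts is a pseudocomonoid 2-cell, and that every symmetric pseudocomonoid structure on an object is equivalent (in $\SymCoMon{\catK}$) to the one assigned by $V$. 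This essential uniqueness is exactly what the paper identifies, in the remark following the proof, as ``the force'' of the theorem, and it is not visible in (iii): nothing in the data $(\com_A, e_A, \ldots)$ alone tells you that an arbitrary comonoid structure $(A, \com, e)$ agrees with $(\com_A, e_A)$. (Your parenthetical that $U$ is ``always fully faithful on 2-cells'' is also inaccurate: it is faithful but not full, since an arbitrary 2-cell between pseudocomonoid morphisms need not satisfy the pseudocomonoid 2-cell conditions.)

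The paper avoids this by routing through (i): it proves (iii) $\Rightarrow$ (i) directly, constructing the pairing $(f,g) \defeq (f \otimes g)\com_X$ and the unit/counit 2-cells of the equivalence $\catK[X, A\otimes B] \simeq \catK[X,A] \times \catK[X,B]$ from $\lambda$, $\rho$, $\com^2$, and only then proves (i) $\Rightarrow$ (ii), where the universal property of products and of the terminal object is available to show that any symmetric pseudocomonoid $(A,\com,e)$ is equivalent to $(A,\Delta_A,t_A)$ (via $\rho$ and $\lambda$ giving $\pi_i \com \cong \id_A$) and that any braided pseudocomonoid morphism structure on $f$ is isomorphic to the canonical one. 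Your legs (i) $\Rightarrow$ (iii) and (ii) $\Rightarrow$ (i) are consistent in spirit with the paper's arguments, so the proposal is repairable: replace your (iii) $\Rightarrow$ (ii) by (iii) $\Rightarrow$ (i) followed by the uniqueness argument under (i); as written, the hard content of the theorem has been assumed rather than proved.
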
 

\begin{proof} For $\text{(i)} \Rightarrow \text{(ii)}$, assume that the monoidal structure on $\catK$ is cartesian. We exhibit the required section
$V \co \catK \to \SymCoMon{\catK}$ as follows. On objects, we send $A \in \catK$ to the symmetric pseudocomonoid with underlying object
$A$ determined by finite products, with the diagonal $\Delta_A \co A \to A \otimes A$ as comultiplication and the essentially unique map into the terminal object~$t_A \co A \to \term$ 
as counit. The rest of the structure and the coherence conditions can be constructed and verified using the universal property of binary products and terminal objects. 
This definition extends to maps and 2-cells in an evident way. In 
particular, every map $f \co A \to B$ can be equipped with the structure of a braided pseudocomonoid morphism $(f, \bar{f}_\textup{can}, \tilde{f}_\textup{can}) \co (A, \Delta_A, t_A) \to (B, \Delta_B, t_B)$, in the sense of  \cref{thm:braided-pseudocomonoid-morphism}.

We claim that $V$ is essentially surjective. For this, let $(A, \com, e)$ be  symmetric pseudocomonoid  as in \cref{thm:symmetric-pseudocomonoid}. We claim that the identity
$\id_A \co A \to A$ is a braided pseudocomonoid morphism between $(A, n, e)$ and $(A, \Delta_A, t_A)$. We thus need invertible 2-cells~$\com \cong \Delta_A$ and~$e \cong t_A$. The first is determined, via the universal property of binary products, by the 2-cells $ \pi_1 \com \cong \id_A$ and $\pi_2 \com \cong \id_A$
in
\[
\begin{tikzcd}
A \ar[r, "\com"] \ar[dr, bend right = 30, "\id_A"']  \ar[dr, phantom, description, "\quad \Two \rho"]  & A \otimes A \ar[d, "\id_A \otimes e"] \\
 & A \otimes \unit ,
    \end{tikzcd} \qquad
    \begin{tikzcd}
 A \ar[r, "\com"] \ar[dr, bend right = 30, "\id_A"']  \ar[dr, phantom, description, "\quad \Two \lambda"]  & A \otimes A \ar[d, "\id_A \otimes e"] \\
 & A \otimes \unit .
    \end{tikzcd}
  \]
The second is determined uniquely by the universal property of terminal objects. It remains to show that, for all $A, B \in \catK$, the functor
  \[
  V_{A,B} \co \catK[A,B] \to \mathsf{SymCMon} \big[ (A, \Delta_A, t_A), (B, \Delta_B, t_B) \big]
  \]
  is an equivalence. For essential surjectivity, let  $(f, \bar{f}, \tilde{f}) \co (A, \Delta_A, t_A) \to (B, \Delta_B, t_B)$ be a general braided pseudocomonoid morphism in the sense of
 \cref{thm:braided-pseudocomonoid-morphism} (but note that the symmetric pseudocomonoid structures involved here are those determined by the finite products). One then
 shows  that the identity
 \mbox{2-cell} $1_f \co f \Rightarrow f$ is a pseudocomonoid 2-cell between $(f, \bar{f}, \tilde{f})$ and $(f, \bar{f}_\textup{can}, \tilde{f}_\textup{can})$ in the sense of \cref{thm:pseudocomonoid-2-cell}. This can be done using the universal property of binary products. Finally,  fullness and faithfulness are straightforward. The required monoidality of $V \co \catK \to \SymCoMon{\catK}$ follows from 
 long calculations, which we omit.

The proof that $\text{(ii)} \Rightarrow \text{(iii)}$ essentially consists in unravelling the relevant definitions. The section $V$ not only provides the required symmetric pseudocomonoid structure
on every object, but also makes every map in $\catK$ into a braided pseudocomonoid morphism and every 2-cell into a pseudocomonoid 2-cell (with respect to the assigned structure). The pseudonaturality of $(\com_A)_{A \in \catK}$ and $(e_A)_{A \in \catK}$ and  the fact that $(\alpha_A)_{A \in \catK}$, $(\lambda_A)_{A \in \catK}$, $(\rho_A)_{A \in \catK}$ and $(\gamma_A)_{A\in \catK}$ are modifications come
from the fact that every $f \co A \to B$ in~$\catK$ is a braided pseudocomonoid morphism. The monoidality of $(n_A)_{A\in \catK}$ and $(e_A)_{A \in \catK}$ comes from the 
monoidality of $V$. Finally, the monoidality of $(\alpha_A)_{A \in \catK}$, $(\lambda_A)_{A \in \catK}$, $(\rho_A)_{A \in \catK}$, and $(\gamma_A)_{A\in \catK}$  comes from
the fact that every 2-cell is a pseudocomonoid 2-cell.

  For $\text{(iii)} \Rightarrow \text{(i)}$, assume that $A \in \catK$ admits a symmetric pseudocomonoid structure in $\catK$, with comultiplication $n_A \co A \to A \otimes A$ and counit $e_A \co A \to \unit$, 
  pseudonaturally and monoidally in $A$.  First, one shows that $\unit$ is terminal. We thus need to show that, for $A \in \catK$, the unique functor $\catK[A, \unit] \to \mathsf{1}$  is an equivalence. A quasi-inverse to it is defined by mapping
 the unique object of $\mathsf{1}$ to~$e_A \co A \to \unit$, the counit of $A$. To prove that this
 is an  equivalence we need to show that there are invertible 2-cells $\eta_f 
 \co f \Rightarrow e_A$, for $f \co A \to \unit$, natural in $f$, and that these 2-cells are unique.
 These 2-cells are given by the pasting diagrams
 \[
 \begin{tikzcd}
 A \ar[r, "e_A"] \ar[d, "f"']  \ar[dr, phantom, description, "\Two e_f"]  & \unit \ar[d, equal] \\
 \unit \ar[r, "e_\unit"] \ar[d, equal] \ar[dr, phantom, description, "\Two e^0"]  & \unit \ar[d, equal] \\
 \unit \ar[r, "\id_\unit"'] & \unit .
 \end{tikzcd}
 \] 
 From the pseudo-naturality of the counits $e_A$ one readily deduces the naturality
of $\eta_f$ and also the fact that $\eta_{e_A}$ is the identity 2-cell which gives the
uniqueness.
  
      For binary products, we wish to show that $A \otimes B$ is the product of $A, B \in \catK$. First, we
  define projections $\pi_1 \co A \otimes B \to A$ and $\pi_2 \co A \otimes B \to B$ as the composites
  \[
  \begin{tikzcd}
  A \otimes B \ar[r, "\id_A \otimes e_B"] & A \otimes \unit \ar[r, equal] & B , \quad
  \end{tikzcd}
   \begin{tikzcd}
  A \otimes B \ar[r, "e_A \otimes \id_B"] & A \otimes \unit \ar[r, equal] & A .
  \end{tikzcd}
  \]
We wish to show that the functor
 \begin{equation}
 \label{equ:monoidal-cartesian}
\begin{tikzcd}
\catK[X, A \otimes B] \ar[r, "(  \pi_1(-) {,} \pi_2 (-))"]  &[8ex] \catK[X,A] \times \catK[A, B] 
\end{tikzcd}
\end{equation}
is an equivalence. We do this by constructing a quasi-inverse for it. For $f \co X \to A$ and $g \co X \to B$, we define $(f, g) \co X \to A \otimes B$ as the map
\[
  \begin{tikzcd}
  X  \ar[r, "\com_X"] & X \otimes X \ar[r, "f \otimes g"] & A \otimes B .
  \end{tikzcd}
\]
For 2-cells $\alpha \co f \Rightarrow f'$ and $\beta \co g \Rightarrow g'$, there is an evident
2-cell $(\alpha, \beta) \co (f, g) \Rightarrow (f', g')$ and the functoriality of this construction is easy to show.
Next, we need to exhibit a natural family of invertible 2-cells 
\[
\eta_h \co h \Rightarrow ( \pi_1 h, \pi_2 h) ,
\]
for $h \co X \to A \otimes B$. These are given by the pasting diagram:
\[
\begin{tikzcd}[column sep = huge] 
X \ar[r, "n_X"] \ar[d, "h"'] \ar[dr, phantom, description, "\cong"] & X \otimes X \ar[d, "h \otimes h"] &  \\
A \otimes B \ar[r, "n_{A \otimes B}"]  \ar[d, equal] \ar[dr, phantom, description, "\Two n^2_{A,B}"]
 & A \otimes B \otimes A \otimes B \ar[d, "\id \otimes \bra_{B,A} \otimes \id"] \ar[r, "\id \otimes e_B \otimes e_A \otimes \id"] 
 \ar[dr, phantom, description, "\cong"]
 & A \otimes \unit \otimes \unit \otimes B \ar[d, "\id \otimes \bra_{\unit, \unit} \otimes \id" ]  \\
 A \otimes B \ar[r,  "n_A \otimes n_B"] \ar[bend right = 20, dr, "\id_{A \otimes B}"']  
 \ar[dr, bend left = 5, phantom, description, "\Two  \rho_A \otimes \lambda_B"]
 & A \otimes A \otimes B \otimes B \ar[d, "\id \otimes e_A \otimes e_B \otimes \id"] \ar[r,  "\id \otimes e_A \otimes e_B \otimes \id"] & A \otimes \unit \otimes \unit \otimes B \ar[d, equal]  \\
 & A \otimes B \ar[r, equal] & A \otimes B  ,
 \end{tikzcd}
\]
using that $\bra_{\unit, \unit} = \id_\unit$. Finally, we need a natural family of invertible 2-cells 
\[
\varepsilon^1_{f,g} \co \pi_1(f, g) \Rightarrow f , \qquad
\varepsilon^2_{f,g}  \co \pi_2(f,g) \Rightarrow g ,
\]
for $f \co X \to A$ and $g \co X \to B$. These are given by the pasting diagram
\[
\begin{tikzcd}
X \ar[r, "\com_X"] \ar[dr, bend right = 20, "\id_X"'] 
 \ar[dr, bend left = 8, phantom, description, "\Two  \rho_X"]
& X \otimes  X \ar[r, "f \otimes g"]  \ar[d, "\id_X \otimes e_X"]  \ar[dr, phantom, description, "\quad \cong"] & A \otimes B \ar[d, "\id_A \otimes e_B"]  \\
 & X \ar[r, "f"']  & A .
 \end{tikzcd}
 \]
The naturality conditions and then the equivalence \eqref{equ:monoidal-cartesian} follow by routine calculations which we omit.
\end{proof}
  
 The force of \cref{thm:prod-iff-all-ccmon} is in the implication $\textup{(iii)} \Rightarrow \textup{(ii)}$, since $\textup{(ii)}$ expresses not only the existence, but also the 
 essential uniqueness of the symmetric pseudocomonoid structure on objects in $\catK$, which is not evident in $\textup{(iii)}$. We have a preliminary
 analysis of this fact in terms of a two-dimensional version of Fox's classical result~\cite{Fox}, which we hope to develop further and present elsewhere.

For the next lemma, recall what we mean by preservation of finite products from the start of \cref{sec:prelim}.

\begin{lemma} \label{thm:canonical-preserved}
Let $\catK$ and $\catL$ be 2-categories with finite products and $F \co \catK \to \catL$ be a 2-functor that preserves finite products.
Then, for every $A \in \catK$, the following
symmetric pseudocomonoids in $\catL$ are equivalent as symmetric pseudocomonoids:
\begin{enumerate}
\item the symmetric pseudomonoid on $FA$ obtained by applying $F$ to the canonical pseudocomonoid on $A$ determined by finite products in $\catK$, with comultiplication and counit 
\[
\begin{tikzcd} 
FA \ar[r, "F(\Delta_A)"] &[1em]  F(A \with A) \ar[r, "\simeq"] &  FA \with FA  ,
\end{tikzcd} \quad
\begin{tikzcd}
FA \ar[r, "F(\textup{can})"]  &[1em] F(\term) \ar[r, "\simeq"] & \term ,
\end{tikzcd}
\]
\item the symmetric pseudocomonoid on $FA$ determined by finite products in $\catL$, with comultiplication and counit 
\[
\begin{tikzcd} 
FA \ar[r, "\Delta_{FA}"] &[1em] FA \with FA  ,
\end{tikzcd} \qquad
\begin{tikzcd}
FA \ar[r, "\textup{can}"]  &[1em]  \term . 
\end{tikzcd}
\]
\end{enumerate}
\end{lemma}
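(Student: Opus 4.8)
The plan is to produce a braided pseudocomonoid morphism from structure~(i) to structure~(ii) whose underlying $1$-cell is the identity $\id_{FA}$; since $\id_{FA}$ is trivially an adjoint equivalence, any such morphism is automatically an equivalence in $\SymCoMon{\catL}$, which is what the lemma asserts. First I would fix notation: write $p_A \co F(A \with A) \to FA \with FA$ for the canonical comparison map, namely the pairing $(F\pi_1, F\pi_2)$, and $q \co F(\term) \to \term$ for the canonical comparison map into the terminal object; both are equivalences by the hypothesis that $F$ preserves finite products. Note that $\pi_i \circ p_A \cong F\pi_i$ for $i = 1, 2$ by the universal property of products in $\catL$, and that $q$, being a map into $\term$, is essentially unique. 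Unwinding the statement, structure~(i) on $FA$ has comultiplication $p_A \circ F(\Delta_A)$, counit $q \circ F(\textup{can}_A)$, and associativity, unitality and symmetry constraints obtained by conjugating, along $p_A$ and $q$, the canonical constraints of the pseudocomonoid on $A$ determined by products in $\catK$, whereas structure~(ii) is the canonical pseudocomonoid $(FA, \Delta_{FA}, \textup{can}_{FA})$ determined by products in $\catL$.

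Next I would construct the two invertible $2$-cells witnessing that $\id_{FA}$ is a pseudocomonoid morphism, directly from universal properties. For the comultiplications, for each $i = 1, 2$ the chain of invertible $2$-cells
\[
\pi_i \circ p_A \circ F(\Delta_A) \;\cong\; F\pi_i \circ F(\Delta_A) \;=\; F(\pi_i \circ \Delta_A) \;\cong\; F(\id_A) \;=\; \id_{FA} \mathrlap{,}
\]
where the first step is the defining property of $p_A$ and the third is $F$ — a $2$-functor, hence preserving invertible $2$-cells and strict on composition and identities — applied to the canonical $2$-cell $\pi_i \circ \Delta_A \cong \id_A$ from the universal property of products in $\catK$, pairs through the universal property of products in $\catL$ to an invertible $2$-cell $p_A \circ F(\Delta_A) \Rightarrow \Delta_{FA}$. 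For the counits, $q \circ F(\textup{can}_A)$ and $\textup{can}_{FA}$ are parallel maps $FA \to \term$, so the universal property of the terminal object supplies a unique invertible $2$-cell between them. These are the data of the candidate morphism.

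It then remains to verify the three coherence conditions for a pseudocomonoid morphism together with the extra one making it braided. The device that keeps this routine is that each such condition is an equality of $2$-cells whose common target is an iterated product of copies of $FA$, or the terminal object $\term$: equalities of $2$-cells into $\term$ hold automatically, and an equality of $2$-cells into a finite product may be tested componentwise, i.e.\ after post-composing with each projection. After this reduction both sides become $2$-cells between maps into $FA$ (or into $\term$) assembled, using that $F$ is strict on composition, out of the canonical $2$-cells witnessing the universal properties of products in $\catK$ and in $\catL$; hence they agree by the uniqueness clauses of those universal properties --- this is exactly the two-dimensional counterpart of checking an identity of maps into a product or terminal set componentwise. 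I expect this last bookkeeping --- in particular, tracking how the $p_A$-conjugated constraints of structure~(i) line up with the native constraints of structure~(ii) under each projection --- to be the only genuine obstacle; everything preceding it is forced. As a cross-check, and an alternative abstract route, one can observe that a finite-product-preserving $2$-functor is canonically symmetric strong monoidal for the cartesian structures and so preserves symmetric pseudocomonoids in the sense of \cite[Proposition~16]{DayStreet}, sending the canonical pseudocomonoid on $A$ to structure~(i); the lemma then reduces to the essential uniqueness of the canonical pseudocomonoid structure on $FA$, a two-dimensional form of Fox's theorem (\cf the remark following \cref{thm:prod-iff-all-ccmon}).
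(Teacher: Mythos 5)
Your proposal is correct and is essentially the paper's own (unstated) argument: the paper's proof reads only ``Direct calculation'', and your route---exhibiting the identity on $FA$ as a braided pseudocomonoid morphism via invertible 2-cells supplied by the universal properties of binary products and the terminal object in $\catL$, and checking the coherence conditions componentwise after projecting---is exactly that calculation, mirroring the strategy the authors use explicitly elsewhere (\eg in \cref{thm:id-is-comonoid-morphism}). The only step worth stating explicitly is the standard fact you invoke at the outset, namely that a braided pseudocomonoid morphism whose underlying 1-cell is an equivalence is itself an equivalence in $\SymCoMon{\catL}$, since the quasi-inverse and the unit and counit 2-cells can be promoted to pseudocomonoid data.
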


\begin{proof} Direct calculation.
\end{proof}

We conclude this section with our definition of a biproduct and a basic fact about it, which will be useful for  \cref{sec:biproducts}. In order to state it, we need some auxiliary notation.
Let $\catK$ be a 2-category with finite products and finite coproducts and assume that the evident map $f^0 \co 0 \to \term$ is an equivalence. Then, for every $A, B \in \catK$, we define 
$0_{A,B} \co A \to B$ as the composite $A \to \term \to 0 \to B$, where $\term \to 0$ is the pseudo-inverse of $f^0 \co 0 \to \term$. Then, for $A, B \in \catK$, the universal property of coproducts determines a map 
\[
f^2_{A,B} \co A + B \to A \with B
\] 
such that $f^2_{A,B} \circ \iota_1 \cong (\id_A, 0_{A,B})$ and $f^2_{A,B} \circ \iota_2 \cong (0_{B,A}, \id_B)$.

\begin{definition} \label{thm:biproducts}
Let $\catK$ be a 2-category with finite products and finite coproducts. We say that $\catK$ has \myemph{biproducts} if 
\begin{enumerate}
\item the map $f^0 \co 0 \to \term$ is an equivalence;
\item the map $f^2_{A, B} \co A + B \to A \with B$ is an equivalence, for all $A, B \in \catK$.
\end{enumerate}
\end{definition}

If a 2-category $\catK$ has biproducts, we identify product and coproducts,
call them \emph{biproducts}, and write~$A \oplus B$ for the biproduct of $A$ and $B$. We also identify the initial and terminal object, write $0$ for it, and call it the \emph{zero object} of $\catK$. The next result builds on \cref{thm:prod-iff-all-ccmon}.

\begin{proposition} \label{thm:biprod-bialgebra}
Let $\catK$ be a 2-category with biproducts. For every object $A \in \catK$, the symmetric pseudomonoid structure on $A$ determined by  products,
\[
\Delta_A \co A \to A \oplus A , \quad A \to 0  , 
\]
and the symmetric pseudocomonoid structure on $A$, determined by coproducts,
\[
\nabla_A \co A \oplus A \to A , \quad 0 \to A ,
\]
determine a symmetric pseudobialgebra structure on $A$.
\end{proposition}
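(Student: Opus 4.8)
The plan is to present the asserted bialgebra as a symmetric pseudomonoid in $\SymCoMon{\catK}$ (\cref{thm:bialgebra-def}), assembled from the two dual instances of \cref{thm:prod-iff-all-ccmon} available for the biproduct structure. First I would equip $\catK$ with the symmetric monoidal structure given by $\oplus$, with unit the zero object $0$; by \cite[Theorem~2.15]{CarboniA:carbii} this is a symmetric monoidal $2$-category, and using \cref{thm:strictification-symmetric-monoidal} together with the invariance of the notion of symmetric pseudobialgebra under symmetric monoidal biequivalence, we may assume $\catK$ is a symmetric Gray monoid. Since $\oplus$ is simultaneously the cartesian and the cocartesian structure of $\catK$, \cref{thm:prod-iff-all-ccmon} applies twice. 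From the cartesian side, condition~(ii) of \cref{thm:prod-iff-all-ccmon} yields a symmetric strong monoidal section $V \co \catK \to \SymCoMon{\catK}$ of the forgetful $2$-functor $U$, with $V(A) = (A, \Delta_A, t_A)$ — where $t_A \co A \to 0$ is the map to the terminal object — carrying every map of $\catK$ to a braided pseudocomonoid morphism and every $2$-cell to a pseudocomonoid $2$-cell. From the cocartesian side, the formal dual of \cref{thm:prod-iff-all-ccmon} equips every object $A$ with a canonical symmetric pseudomonoid structure $(A, \nabla_A, z_A, \ldots)$ in $\catK$, where $z_A \co 0 \to A$ is the map out of the initial object.

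Next I would transport this pseudomonoid along $V$. Being a symmetric strong monoidal biequivalence, $V$ preserves symmetric pseudomonoids (\cf\ \cite[Proposition~16]{DayStreet}), so $V$ applied to $(A, \nabla_A, z_A, \ldots)$ is a symmetric pseudomonoid in $\SymCoMon{\catK}$ whose underlying pseudocomonoid is $V(A) = (A, \Delta_A, t_A)$. Unwinding: the multiplication is $\nabla_A$, regarded by condition~(iii) of \cref{thm:prod-iff-all-ccmon} as a braided pseudocomonoid morphism $V(A \oplus A) \to V(A)$ and precomposed with the comparison equivalence $V(A) \otimes V(A) \simeq V(A \oplus A)$ coming from the strong monoidal structure of $V$ (explicitly, from the modification $\com^2$ of condition~(iii) of \cref{thm:prod-iff-all-ccmon}, using that the braiding squares to the identity); the unit is $z_A$, precomposed with the equivalence between the monoidal unit of $\SymCoMon{\catK}$ and $V(0)$; and the remaining structure $2$-cells of $(A, \nabla_A, z_A, \ldots)$, being $2$-cells of $\catK$, are pseudocomonoid $2$-cells and hence $2$-cells of $\SymCoMon{\catK}$. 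By \cref{thm:bialgebra-equivalent} this is precisely the data of a symmetric pseudobialgebra structure on $A$ whose underlying pseudocomonoid is the one determined by products and whose underlying pseudomonoid is the one determined by coproducts.

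For the coherence axioms of this pseudomonoid inside $\SymCoMon{\catK}$ I would use that $U \co \SymCoMon{\catK} \to \catK$ is strict symmetric monoidal (\cref{thm:sym-comon-is-monoidal}) and locally faithful, since a pseudocomonoid $2$-cell is merely a $2$-cell of $\catK$ satisfying extra conditions. Applying $U$ turns each pseudomonoid coherence diagram in $\SymCoMon{\catK}$ into the corresponding diagram for $(A, \nabla_A, z_A, \ldots)$ in $\catK$, which commutes because that is a symmetric pseudomonoid in $\catK$; local faithfulness of $U$ then transfers the equation back to $\SymCoMon{\catK}$. The same device handles the braided-pseudocomonoid-morphism and pseudocomonoid-$2$-cell coherence conditions appearing in \cref{thm:bialgebra-equivalent}, as each holds after applying $U$.

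\textbf{The main obstacle} is the bookkeeping in the transport step: identifying $V(A) \otimes V(A)$ with the product-comonoid on $A \oplus A$, and the monoidal unit of $\SymCoMon{\catK}$ with the comonoid on $0$, coherently and compatibly with all the structure $2$-cells, so that $\nabla_A$ and $z_A$ genuinely constitute a multiplication and a unit in $\SymCoMon{\catK}$ — in effect, reconciling the description of $\oplus$ as a cartesian tensor with its description as a cocartesian tensor. Everything else is formal once \cref{thm:prod-iff-all-ccmon} and its dual are in hand, since they supply exactly the pseudonaturality and monoidality needed for the transport, while coherence is handled uniformly by the local faithfulness of $U$.
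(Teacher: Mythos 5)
Your argument is correct and is essentially the paper's own proof: both hinge on applying \cref{thm:prod-iff-all-ccmon} (for the cartesian description of $\oplus$) and its dual (for the cocartesian one), the decisive point being that in the cartesian structure every map is a braided pseudocomonoid morphism and every 2-cell a pseudocomonoid 2-cell, so that $\nabla_A$, $0 \to A$ and the pseudomonoid constraints lift to $\SymCoMon{\catK}$. Your packaging of this as transport of the pseudomonoid along the section $V$, with the comparison $V(A)\otimes V(A)\simeq V(A\oplus A)$ handled explicitly, is just a more detailed rendering of the step the paper dispatches in one sentence, and your flagged "main obstacle" is exactly the bookkeeping the paper glosses over.
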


\begin{proof} The symmetric monoidal structure determined by biproducts is both cartesian and cocartesian and therefore every object $A \in \catK$ has both the structure of a  symmetric pseudocomonoid
and of a  symmetric pseudomonoid by~\cref{thm:prod-iff-all-ccmon} and its dual. It remains to check that the multiplication (\ie the codiagonal $\nabla_A \co A \oplus A \to A$) and the counit (\ie the essentially unique map $A \to 0$) are braided pseudocomonoid morphisms and that the associativity and unitality constraints are pseudocomonoid 2-cells, but this is true for any map and any 2-cell in a cocartesian monoidal structure.
\end{proof}

\section{Linear exponential pseudocomonads}
\label{sec:linear-exponential}

The goal of this section is to introduce our bicategorical counterpart of the notion of a linear exponential comonad and prove some basic facts about it. Our formulation is based on the
definition of linear exponential comonad in~\cite{HylandM:gluoml}. We compare our notion with the one in~\cite{JacqC:catcnis} in \cref{thm:compare-with-jacq} below.

\subsection*{Pseudocomonads}
As a first step, we recall the definition of a pseudocomonad, which is dual to that of a pseudomonad~\cite{LackS:cohap}. 
We consider pseudocomonads whose underlying pseudofunctor is a 2-functor, as justified by the 
strictification theorems in~\cite{CampbellA:howss,LackS:cohap}, \cf \cite[§1.3.1]{MirandaA:enrkop}.

 \begin{definition} \label{def:psd-comonad} Let $\catK$ be a 2-category.  A \myemph{pseudocomonad} on $\catK$ is a 2-functor $\bang(-) \co \catK \to \catK$ equipped with:
 \begin{itemize}
 \item a pseudonatural transformation with components on objects $\dig_A \co \bang A \to \bbang A$, for $A \in \catK$, called the \emph{comultiplication},
 \item a pseudonatural transformation with components on objects  $\der_A \co \bang A \to A$, for $A \in \catK$, called the \emph{counit},
 \item invertible modifications with components
\[
\begin{tikzcd}[column sep = large]
\bang A \ar[r, "\dig_A"] \ar[d, "\dig_A"']  \ar[dr,phantom,"\Two \pi_A"] & \bbang A \ar[d, "\bang \dig_A"] \\
\bbang A \ar[r, "\dig_{\bang A} "'] & \bbbang A  ,
\end{tikzcd} 
\]
for $A \in \catK$, called the \emph{associativity constraint}, 
\item two invertible modifcations
\[
\begin{tikzcd}[column sep = {1.5cm,between origins}]
  & 
\bbang A 
	\ar[d, phantom, description, "\Two \mu_A"]
	\ar[dr, "\der_{\bang A}"]   & 
   \\
\bang A 
 	\ar[ur, "\dig_A"] 
 	\ar[rr, "\id_{\bang A}"']  &
 \phantom{} & 
\bang A ,
 \end{tikzcd} \qquad
\begin{tikzcd}[column sep = {1.5cm,between origins}]
  & 
\bbang A 
	\ar[d, phantom, description, "\Two \nu_A"]
	\ar[dr, "\bang \der_A"]   & 
   \\
\bang A 
 	\ar[ur, "\dig_A"] 
 	\ar[rr, "\id_{\bang A}"']  &
 \phantom{} & 
\bang A ,
 \end{tikzcd}
 \end{equation*}
for $A \in \catK$, called the  (left and right) \emph{unitality constraints}, 
\end{itemize}
which satisfy two coherence conditions. (These are dual to those for a pseudomonad~\cite[Section~1]{LackS:cohap}.)
\end{definition}

The notation for a pseudocomonad adopted here\footnote{The slightly unorthodox choice of the Greek letters $\pi$, $\mu$, $\nu$ for the pseudocomonad
constraints is intended to avoid a clash with the notation for the constraints of a pseudomonoid, \cf \cref{thm:comonoid}. This is especially useful in view of the formulation of a linear exponential pseudocomonad
in \cref{thm:linear-exponential-comonad}, which will involve both notions.} is inspired by the literature on
categorical models of Linear Logic, where the underlying 2-functor of the pseudocomonad corresponds to the exponential modality, 
the comultiplication to the promotion rule, and the comultiplication to the dereliction rule~\cite{MelliesPA:catsll}.

Let us now fix a 2-category $\catK$ and a pseudocomonad $(\bang, \dig, \der)$ on it as in \cref{def:psd-comonad}. 

\begin{definition} A \emph{pseudocoalgebra} for the pseudocomonad is an object $A \in \catK$ equipped with:
\begin{itemize}
\item a map $a \co A \to \bang A$, called the \emph{structure map} of the pseudocoalgebra,
\item two invertible 2-cells
\[
\begin{tikzcd}[column sep = large]
A \ar[r, "a"] \ar[d, "a"'] \ar[dr,phantom,"\Two \bar{a}"] & \bang A \ar[d, "\dig_A"] \\
\bang A \ar[r, "\bang a"'] &  \bbang A ,
\end{tikzcd} \qquad
\begin{tikzcd}[column sep = large]
A \ar[r, "a"] \ar[dr, bend right = 20,  "\id_A"'] & \bang A \ar[d, "\der_A"] \\
\phantom{} \ar[ur, phantom, {pos=.7},  "\Two \tilde{a}"] & A ,
\end{tikzcd} 
\]
called the \emph{associativity} and \emph{unitality} constraints of the pseudocoalgebra, 
\end{itemize}
which satisfy  two coherence conditions. (These are dual to those for a pseudoalgebra~\cite[Section~4.2]{LackS:cohap}.)
\end{definition}

\begin{definition} Let $A$ and $B$ be pseudocoalgebras in $\catK$. A \myemph{pseudocoalgebra morphism} from $A$ to $B$ is 
a map $f \co A \to B$ equipped with an invertible 2-cell
\[
\begin{tikzcd}
A \ar[r, "f"] \ar[d, "a"'] \ar[dr,phantom,"\Two \bar{f}"] & B \ar[d, "b"] \\
\bang A \ar[r, "\bang f"'] & \bang B
\end{tikzcd}
\]
which satisfy  two coherence conditions. (These are dual to those for a pseudoalgebra morphism~\cite[Section~1.2]{BlackwellR:twodmt}.)
\end{definition}

\begin{definition} Let $f, g \co A \to B$ be pseudocoalgebra morphisms in $\catK$. A \myemph{coalgebra $2$-cell} from $f$ to $g$ is a 
2-cell $\phi \co f \Rightarrow g$ which satisfies one coherence condition. (This is dual to that for a pseudoalgebra 2-cell~\cite[Section~1.2]{BlackwellR:twodmt}.)
\end{definition}

We write $\Em{\catK}$ for the 2-category of  pseudocoalgebras, pseudocoalgebra
morphisms, and pseudocoalgebra 2-cells. This is connected to $\catK$ by
a biadjunction 
\begin{equation}
\label{equ:em-adjunction}
\begin{tikzcd}[column sep = large]
\catK  
	\ar[r, shift right =1, bend right = 10, "F"']  
	\ar[r, description, phantom, "\scriptstyle \bot"] &  
 \Em{\catK}   , 
	\ar[l, shift right = 1, bend right = 10, "U"'] 
\end{tikzcd}
\end{equation}
where the left biadjoint $U$ is the evident forgetful 2-functor, sending a pseudocoalgebra to its underlying object, and the right biadjoint $F$ sends
an object $B \in \catK$ to the cofree pseudocoalgebra on $B$, given by $\bang B$, viewed as a pseudocoalgebra with structure map
 $\dig_B \co \bang B \to \bbang B$. The associativity and unit constraints of the pseudocoalgebra  are obtained from those of the pseudocomonad. 
 
In view of an application in \cref{sec:products}, let us unfold more explicitly the biadjunction in~\eqref{equ:em-adjunction} in terms of universal properties.
For $B \in \catK$, the universal property of the pseudocoalgebra $\bang B$ means that for every pseudocoalgebra~$A$, with structure map $a \co A \to \bang A$, 
composition with $\der_B \co \bang B \to B$  gives an adjoint equivalence of categories
\begin{equation}
\label{equ:aux-equiv}
\begin{tikzcd}[column sep = large]
 \Em{\catK}[ A, \bang B]   \ar[r, "(-) \der_B"] & 
\catK[A, B]  .
\end{tikzcd} 
\end{equation}
In particular, for every $f \co A \to B$ in $\catK$, there is an essentially unique  pseudocoalgebra morphism $f^\sharp \co A \to \bang B$  such that $\der_B \circ f^\sharp \cong f$ in $\catK$. Here, essential
uniqueness refers to the fact that \eqref{equ:aux-equiv} is an equivalence, rather than an isomorphism.
Explicitly, $f^\sharp \co A \to \bang B$ is obtained as the composite
\begin{equation}
\label{equ:unfold-f-sharp}
\begin{tikzcd}
A \ar[r, "a"] &
\bang A \ar[r, "\bang f"] &
\bang B .
\end{tikzcd}
\end{equation}

We shall also make use of the Kleisli bicategory of the pseudocomonad, written $\Kl{\catK}$, whose objects
are the objects of~$\catK$ and whose hom-categories are defined by letting $\Kl{\catK}[A, B] \defeq \catK[\bang A, B]$,
for $A, B \in \catK$. As observed in~\cite{ChengE:psedl}, in general $\Kl{\catK}$ is a  bicategory, not a 2-category. There is a biadjunction 
\begin{equation}
\label{equ:cokleisliadjunction}
\begin{tikzcd}[column sep = large]
 \Kl{\catK} 
	\ar[r, shift left =1, bend left =10, "K"]  
	\ar[r, description, phantom, "\scriptstyle \bot"] &  
\catK .
	\ar[l, shift left = 1, bend left =10, "J"] 
\end{tikzcd}
\end{equation}
Here, $K$ sends $A \in \Kl{\catK} $ to $\bang A \in \catK$ and $f  \co \bang A \to B$ to the composite $\bang f \circ \dig_A \co \bang A \to \bang B$, 
while~$J$ is the identity on objects and sends $f \co A \to B$ to the composite $f \circ \der_A \co \bang A \to B$.

It is immediate %to check 
that if $\catK$ has finite products, then so does $\Kl \catK$. 
We record this as a proposition since we will use it in the proof of
\cref{thm:coKleisli-cartesian-closed}. 

\begin{proposition} \label{thm:coKleisli-cartesian}
Let $\catK$ be a 2-category  and $(\bang, \dig, \der)$ a pseudocomonad on it. Assume $\catK$ has finite products.
Then the  Kleisli bicategory $\Kl \catK$ has finite products.
\end{proposition}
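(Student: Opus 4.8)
The plan is to transport the finite products of $\catK$ along the right biadjoint $J \co \catK \to \Kl{\catK}$ of the coKleisli biadjunction~\eqref{equ:cokleisliadjunction}, using the fact that $J$ is the identity on objects. Concretely, for $A, B \in \catK = \Kl{\catK}$, I claim that the object $A \with B$ (the product in $\catK$) together with the Kleisli maps $J\pi_1 \co A \with B \to A$ and $J\pi_2 \co A \with B \to B$ — that is, the composites $\pi_1 \der_{A \with B} \co \bang(A \with B) \to A$ and $\pi_2 \der_{A \with B} \co \bang(A \with B) \to B$ — form a product in $\Kl{\catK}$. For the terminal object, I claim the terminal object $\term$ of $\catK$ remains terminal in $\Kl{\catK}$.

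The key computation is the universal property. Fix $X \in \Kl{\catK}$; I must show that composition with $J\pi_1$ and $J\pi_2$ induces an adjoint equivalence
\[
\begin{tikzcd}[column sep = large]
\Kl{\catK}[X, A \with B] \ar[r] &[4ex] \Kl{\catK}[X, A] \times \Kl{\catK}[X, B] \mathrlap{.}
\end{tikzcd}
\]
Unfolding the definition of the hom-categories, the left-hand side is $\catK[\bang X, A \with B]$ and the right-hand side is $\catK[\bang X, A] \times \catK[\bang X, B]$, and the functor in question is — up to the coherence isomorphisms witnessing pseudofunctoriality of $\der$ and of Kleisli composition — exactly the functor $(\pi_1(-), \pi_2(-))$ associated to the product $A \with B$ in $\catK$, applied at the object $\bang X$. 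Since $A \with B$ is a bicategorical product in $\catK$, that functor is an adjoint equivalence by definition (see the conventions at the start of \cref{sec:prelim}), and hence so is the composite after absorbing the coherence isomorphisms. The same argument with $\mathsf{1}$ in place of $\catK[\bang X, A] \times \catK[\bang X, B]$ and $\term$ in place of $A \with B$ handles the terminal object: $\Kl{\catK}[X, \term] = \catK[\bang X, \term] \simeq \mathsf{1}$.

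I expect the only real obstacle to be bookkeeping rather than anything conceptual: one must check that composition in $\Kl{\catK}$ with the \emph{Kleisli} maps $J\pi_i$ agrees, up to canonical invertible 2-cells, with ordinary composition in $\catK$ with $\pi_i$ on the $\bang X$-hom. This follows because $J$ is a (pseudo)functor that preserves all of the relevant structure — indeed, one can phrase the whole argument as: a right biadjoint that is the identity on objects reflects and creates the bicategorical limits that exist in its codomain, so $\Kl{\catK}$ inherits finite products from $\catK$. Since the verification amounts to pasting standard coherence cells for the pseudocomonad $(\bang, \dig, \der)$, I would state the correspondence and leave the routine 2-cell manipulations to the reader, in keeping with the conventions adopted in this paper.
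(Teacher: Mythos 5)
Your proposal is correct and follows essentially the same route as the paper, which simply observes that the right biadjoint $J \co \catK \to \Kl{\catK}$ preserves bicategorical limits and is the identity on objects; your explicit identification of $\Kl{\catK}[X, A \with B] = \catK[\bang X, A \with B]$ and of Kleisli composition with $J\pi_i$ as composition with $\pi_i$ (up to the pseudonaturality of $\der$ and the unit constraint of the pseudocomonad) is just an unfolding of that argument.
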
 

\begin{proof} 
Follows from the facts that the right biadjoint $J \co \catK \to \Kl{\catK}$
preserves bicategorical limits and is the identity on objects.
\end{proof}

\subsection*{Symmetric lax monoidal pseudocomonads} As a second step towards our notion of a linear exponential pseudocomonad, we give the definition of a
symmetric lax monoidal pseudocomonad, assuming that $\catK$ has the structure of a symmetric Gray monoid. The definition involves not only 
the underlying 2-functor, comultiplication, and counit interacting suitably with the symmetric monoidal structure, but also the additional modifications
for associativity and unit laws. 

\begin{definition} \label{thm:sym-lax-monoidal-pseudocomonad}
Let $\catK$ be a symmetric Gray monoid. A \myemph{symmetric lax monoidal pseudocomonad} on $\catK$ is  a pseudocomonad
$(\bang, \dig, \der)$ on $\catK$ as in \cref{def:psd-comonad} equipped with additional structure and satisfying extra properties as follows:
\begin{itemize}
\item the underlying 2-functor of the pseudocomonad is sylleptic lax monoidal
  in the sense of \cref{thm:sym-mon-functor},  \ie we have a pseudonatural transformation with object components
\begin{equation}
\label{equ:monn}
\monn_{A, B} \co \bang A \otimes \bang B \to \bang (A \otimes B) ,
\end{equation}
for $A, B \in \catK$, a map 
\begin{equation}
\label{equ:moni}
\moni \co  \unit \to \bang  \unit ,
\end{equation}
and invertible modifications with components
\begin{equation}
\label{equ:lax-monoidal-associativity}
\begin{tikzcd}[column sep = huge, ampersand replacement=\&] 
\bang A \otimes \bang B \otimes \bang C \ar[r, "\id_A \otimes \monn_{B,C}"] \ar[d, "\monn_{A,B} \otimes \id_C"'] \ar[dr,phantom,"\Two\omega_{A,B,C}"] \& 
\bang A \otimes \bang (B \otimes C) \ar[d, "\monn_{A, B \otimes C}"]  \\
\bang (A \otimes B) \otimes \bang C \ar[r, "\monn_{A \otimes B, C}"'] \& \bang (A \otimes B \otimes C) ,
\end{tikzcd} 
\end{equation}
for $A, B, C \in \catK$, 
\begin{equation} 
\label{equ:lax-monoidal-unitality}
\begin{tikzcd}[column sep = large, ampersand replacement=\&] 
  \& 
\bang  \unit \otimes   \bang A 
	\ar[d, phantom, description, "\Two \kappa_A"]
	\ar[dr, "\monn_{ \unit, A}"]   \& 
   \\
\unit \otimes \bang A 
 	\ar[rr, "\id_{\bang A}"'] 
 	\ar[ur, "\moni \otimes\id_{\bang A}"]  \&
 \phantom{} \& 
\bang A  ,
 \end{tikzcd}   \qquad
\begin{tikzcd}[column sep = large, ampersand replacement=\&] 
  \& 
\bang A \otimes \bang  \unit
	\ar[d, phantom, description, "\Two \zeta_A"]
	\ar[dr, "\monn_{A,  \unit}"]   \& 
   \\
\bang A \otimes \unit
 	\ar[rr, "\id_{\bang A}"'] 
 	\ar[ur, "\id_{\bang A} \otimes \moni"]  \&
 \phantom{} \& 
\bang A  ,
 \end{tikzcd} 
 \end{equation}
 for $A \in \catK$, and 
 \begin{equation}
 \label{equ:lax-monoidal-braiding}
 \begin{tikzcd}[ampersand replacement=\&] 
 \bang A \otimes \bang B \ar[r, "\mathsf{r}_{\bang A, \bang B}"] \ar[d, "\monn_{A,B}"'] \ar[dr,phantom,"\Two \theta_{A,B}"] \& \bang B \otimes \bang A \ar[d, "\monn_{B,A}"] \\
 \bang (A \otimes B) \ar[r, "\bang \mathsf{r}_{A,B}"'] \& \bang (B \otimes A) ,
 \end{tikzcd}
 \end{equation}
 for $A, B \in \catK$, which satisfy the coherence conditions for a sylleptic lax monoidal pseudofunctor;
\item the comultiplication $\dig$  is  a braided  monoidal pseudonatural transformation in the sense of \cref{thm:bra-mon-transformation}, \ie we have an invertible modification with components
\begin{equation}
\label{equ:comultiplication-monoidal-2}
\begin{tikzcd}[column sep = large]
\bang A \otimes \bang B \ar[rr, "\monn_{A,B}"]  \ar[d, "\dig_A \otimes \dig_B"'] 
\ar[drr,phantom,"\Two \dig^2_{A,B}"]
& & \bang (A \otimes B) \ar[d, "\dig_{A \otimes B}"] \\
\bbang A \otimes \bbang B \ar[r, "\monn_{\bang A, \bang B}"']  & \bang ( \bang A \otimes \bang B) \ar[r, "\bang \monn_{A, B}"']  & \bbang (A \otimes B)  ,
\end{tikzcd}
\end{equation}
and an invertible 2-cell
\begin{equation}
\label{equ:comultiplication-monoidal-0}
\begin{tikzcd}[column sep = large]
 \unit \ar[r, "\moni"] \ar[d, "\moni"'] \ar[dr,phantom,"\Two \dig^0"] & \bang  \unit \ar[d, "\dig_ \unit"] \\
\bang  \unit \ar[r, "\bang \moni"'] & \bbang  \unit   ,
\end{tikzcd}
\end{equation}
which satisfy the coherence conditions for a braided monoidal transformation;
\item the counit $\der$ is a braided monoidal pseudonatural transformation in the sense of \cref{thm:bra-mon-transformation}, \ie we have an invertible modification with components
\begin{equation}
\label{equ:counit-monoidal-2}
\begin{tikzcd}[column sep = large]
\bang A \otimes \bang B \ar[r, "\monn_{A,B}"]  \ar[d, "\der_A \otimes \der_B"']  \ar[dr,phantom,"\Two \der^2_{A,B}"]
& \bang (A \otimes B) \ar[d, "\der_{A \otimes B}"]  \\
 A \otimes B \ar[r, "\id_{A \otimes B}"'] & A \otimes B  ,
 \end{tikzcd} 
 \end{equation}
for $A, B \in \catK$, and an invertible 2-cell
\begin{equation}
\label{equ:counit-monoidal-0}
 \begin{tikzcd} 
 \unit \ar[r, "\moni"] \ar[d, "\id_\unit"'] \ar[dr,phantom,"\Two \der^0"] & \bang \unit \ar[d, "\der_\unit"]  \\
\unit  \ar[r, "\id_\unit"'] & \unit  ,
  \end{tikzcd}
  \end{equation} 
which satisfy the coherence conditions for a braided monoidal transformation;
 \item the associativity and unitality constraints $\pi$, $\mu$, $\nu$  of the pseudocomonad are monoidal modifications in the sense of \cref{thm:monoidal-modification}.
\end{itemize}
\end{definition}

In the one-dimensional setting, it is well-known that the category of coalgebras for a symmetric monoidal comonad admits a symmetric
monoidal structure. The 2-dimensional counterpart of this
result requires some care, in that one does not get back a structure that is as strict as the one from which one starts.  Let us recall the precise statement from~\cite[Theorem~6.3.2 and Corollary~6.3.3]{MirandaA:paper-1}.

\begin{theorem}[Miranda] \label{thm:coalg-is-monoidal} Let $\catK$ be a symmetric Gray monoid and 
let $(\bang, \dig, \der)$ be a symmetric lax monoidal pseudocomonad on $\catK$. Then 
the 2-category of pseudocoalgebras $\Em{\catK}$ admits the structure of a symmetric monoidal bicategory so that
the forgetful 2-functor $U \co \Em{\catK} \to \catK$ is a strict symmetric monoidal 2-functor. 
Furthermore, the associativity and unit constraints for this monoidal bicategory are 2-natural and they satisfy the pentagon and triangle axioms strictly.
\end{theorem}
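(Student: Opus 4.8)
The plan is to transport the symmetric Gray monoid structure of $\catK$ along the forgetful 2-functor $U \co \Em{\catK} \to \catK$. Since $U$ is required to be strict symmetric monoidal, the underlying 1-cell of every component of the structure on $\Em{\catK}$ — the tensor product on objects and 1-cells, the associator, the unitors, the braiding, and the 2-cells filling the pentagon, triangle and hexagons — is forced to be the corresponding datum of $\catK$. The only freedom lies in the coalgebra data (the invertible 2-cells witnessing compatibility with structure maps) and in the verification of the coherence axioms; in particular, no new coherence \emph{data} needs to be invented.

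First I would define the tensor product of pseudocoalgebras. Given $(A, a)$ and $(B, b)$, the underlying object is $A \otimes B$ and the structure map is the composite $A \otimes B \xto{a \otimes b} \bang A \otimes \bang B \xto{\monn_{A,B}} \bang(A \otimes B)$. Its associativity constraint is assembled from $\bar{a}$, $\bar{b}$, the associativity modification $\omega$ of the lax monoidal pseudofunctor $\bang$, the modification $\dig^2$, and the pseudonaturality 2-cells of $\monn$; its counit constraint from $\tilde{a}$, $\tilde{b}$, the unit modifications $\kappa, \zeta$, the modification $\der^2$, and again pseudonaturality of $\monn$. The two coalgebra axioms then follow from the coherence conditions collected in \cref{thm:sym-lax-monoidal-pseudocomonad}, namely the lax monoidal coherence of $\bang$ together with the monoidal-transformation axioms of $\dig$ and $\der$. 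The unit coalgebra is $\unit$ equipped with structure map $\moni$. One extends this assignment to 1-cells and 2-cells using pseudonaturality of $\monn$, obtaining a pseudofunctor $\Em{\catK} \times \Em{\catK} \to \Em{\catK}$ lying over the tensor of $\catK$; its pseudofunctoriality 2-cells are inherited from those of the Gray monoid $\catK$ together with pseudonaturality of $\monn$, and one checks they are coalgebra 2-cells. Note that this tensor is \emph{not} strictly associative: the two coalgebra structures on $A \otimes B \otimes C$ arising from the two bracketings have distinct structure maps, related by $\omega$. This is precisely why $\Em{\catK}$ is a genuine symmetric monoidal bicategory rather than a Gray monoid.

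Next I would build the remaining structure. The associator 1-cell $(A \otimes B) \otimes C \to A \otimes (B \otimes C)$ has underlying 1-cell the identity and is promoted to a coalgebra morphism by the canonical comparison 2-cell built from $\omega$; checking it is a coalgebra morphism amounts to the first coherence axiom of $\bang$ as a lax monoidal pseudofunctor together with the compatibility of $\omega$ with $\dig^2$ and $\der^2$. Its 2-naturality follows from the strict 2-naturality of the associator of $\catK$ and the fact that $\omega$ is a modification, and the pentagon equation in $\Em{\catK}$ unwinds exactly to the associativity coherence condition of $\monn$, so it holds on the nose; similarly the unitors are built from $\kappa$ and $\zeta$, are 2-natural because these are modifications, and the triangle equation reduces to the second lax monoidal coherence axiom. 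The braiding 1-cell has underlying 1-cell $\bra_{A,B}$ with coalgebra 2-cell built from $\theta$; the braiding constraints are built from $\beta^1, \beta^2$ of $\catK$ and their coherence follows from the axioms of a braided lax monoidal pseudofunctor, the syllepsis is built from that of $\catK$ using the sylleptic axiom, and the symmetry condition descends from the corresponding property of $\catK$.

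The main obstacle is sheer bookkeeping: one must verify a large number of pasting-diagram identities — the coalgebra axioms for the tensor, that each constructed comparison 2-cell is a coalgebra morphism or a coalgebra 2-cell, and the full list of symmetric monoidal bicategory axioms for $\Em{\catK}$ — none conceptually deep but all lengthy, and strictification is of limited help here since $\Em{\catK}$ is not itself a Gray monoid. The mitigating fact, which makes the verification feasible, is that every such check reduces mechanically to a coherence condition already imposed on $(\bang, \dig, \der)$ in \cref{thm:sym-lax-monoidal-pseudocomonad}, so no new conditions arise. We omit these computations and refer to~\cite{MirandaA:paper-1} for the full details.
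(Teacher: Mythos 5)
Your construction matches the paper's treatment: the theorem is recalled from Miranda's work (\cite{MirandaA:paper-1}) rather than proved in the paper, and the structure the paper unfolds after the statement — tensor of pseudocoalgebras with structure map $\monn_{A,B}\circ(a\otimes b)$, associativity constraint assembled from $\bar a\otimes\bar b$ and $\dig^2$, unitality from $\tilde a\otimes\tilde b$ and $\der^2$, unit coalgebra $(\unit,\moni)$, and identity-underlying comparison morphisms between the two bracketings — is exactly what you describe. Deferring the remaining coherence bookkeeping to \cite{MirandaA:paper-1} is precisely what the paper does, so your proposal is correct and takes essentially the same route.
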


We outline some of the structure in the statement of \cref{thm:coalg-is-monoidal} for later use. Let $\catK$ be a symmetric Gray monoid and 
let $(\bang, \dig, \der)$ be a symmetric lax monoidal pseudocomonad on $\catK$, with data as in \cref{thm:sym-lax-monoidal-pseudocomonad}.
Let $A$ and $B$ be coalgebras with structure maps~$a \co A \to \bang A$
and~$b \co B \to \bang B$, respectively. Their tensor product in $\Em{\catK}$ is the pseudocoalgebra with underlying object
$A \otimes B$ and structure map the following composite:
\[
\begin{tikzcd}
A \otimes B \ar[r, "a \otimes b"] & \bang A \otimes \bang B \ar[r, "\monn_{A,B}"] & \bang (A\otimes B) . 
\end{tikzcd}
\]
The associativity constraint is the invertible 2-cell obtained by the following pasting diagram:
\[
\begin{tikzcd}[column sep = large]
A \otimes B 
	\ar[r, "a \otimes b"] 
	\ar[d, "a \otimes b"'] 
	\ar[dr,phantom,"\Two \bar{a} \otimes \bar{b}"] &[3em]
\bang A \otimes \bang B 
	\ar[r, "\monn_{A,B}"] \ar[d, "\dig_A \otimes \dig_B"]  
	\ar[ddr,phantom,"\Two \dig^2_{A,B}"]   &[3em]
\bang (A \otimes B) 
	\ar[dd, "\dig_{A \otimes B}"] \\
\bang A \otimes \bang B 
	\ar[r, "\bang a \otimes \bang b"] 
	\ar[d, "\monn_{A, B}"'] 
	\ar[dr,phantom,"\cong"] & 
\bbang A \otimes \bbang B 
	\ar[d, "\monn_{\bang A, \bang B}"] &  \\
  \bang (A \otimes  B)
  	\ar[r, "\bang (a \otimes b)"'] & 
 \bang (\bang A \otimes \bang B) 
 	\ar[r, "\bang \monn_{A, B}"'] & 
\bbang (A \otimes B) .
\end{tikzcd}
\]
The unitality constraint is the invertible 2-cell below:
\[
\begin{tikzcd}[column sep = huge]
A \otimes B \ar[r, "a \otimes b"] \ar[dr,  bend right = 20, "\id_{A \otimes B}"']  & 
\bang A \otimes \bang B \ar[d, "\der_A \otimes \der_B"]  \ar[r, "\monn_{A,B}"]  \ar[dr,phantom,"\Two \der^2_{A,B}"] &
\bang (A \otimes B) \ar[d, "\der_{A \otimes B}"] \\
 \phantom{} 
 \ar[ur, phantom, pos = (0.7), "\Two \tilde{a} \otimes \tilde{b}"]
 & A \otimes B \ar[r, "\id_{A \otimes B}"']  &  A \otimes B  .
 \end{tikzcd} 
 \]
The unit of the monoidal structure is the pseudocoalgebra with underlying object $\unit$ and coalgebra structure map $\moni \co \unit \to \bang \unit$. Its associativity
and unitality constraints are given by the invertible 2-cells $\dig^0$ and $\der^0$ in~\eqref{equ:comultiplication-monoidal-0} and~\eqref{equ:counit-monoidal-0}, respectively.

A subtle point is that, even if the tensor product of $\catK$ is strictly unital and associative, the tensor product in $\Em{\catK}$ need not be, even if the underlying objects of the
relevant pseudocoalgebras are equal. The key point here is that, for pseudocoalgebras $A, B, C$, the pseudocoalgebra structures on $(A \otimes B) \otimes C$ and $A \otimes (B \otimes C)$ are 
isomorphic in $\Em{\catK}$. Indeed, the identity map $\id_{A \otimes B \otimes C}$ is a pseudocoalgebra morphism between them.

\begin{remark} \label{equ:lift-braiding-to-em}
The content of \cref{thm:coalg-is-monoidal} goes much further than merely lifting tensor product and unit from $\catK$ to $\Em{\catK}$, in that it asserts that all the data for the symmetric Gray monoid $\catK$,
as in \cref{def:symmetric-gray-monoid}, lifts to $\Em{\catK}$. This means in particular that:
\begin{enumerate}
\item  the braiding $\bra_{A, B} \co A \otimes B \to B \otimes A$ is a pseudocoalgebra morphism for all pseudocoalgebras $A, B$;
\item the components $\beta^1_{A,B,C}$ and $\beta^2_{A,B,C}$ of the braiding constraints are pseudocoalgebra 2-cells for all pseudocoalgebras  $A, B, C$;
\item the component $\sigma_{A,B}$ of the syllepsis is a pseudocoalgebra 2-cell for all pseudocoalgebras $A$ and $B$.
\end{enumerate}
\end{remark}

\begin{corollary} Let $\catK$ be a symmetric Gray monoid and 
$(\bang, \dig, \der)$  a symmetric lax monoidal pseudocomonad on $\catK$.
The forgetful-cofree biadjunction between $\catK$ and $\Em{\catK}$ in~\eqref{equ:em-adjunction} lifts to a symmetric lax monoidal biadjunction, as in
\[
\begin{tikzcd}[column sep = large]
(\catK, \otimes, \unit) 
	\ar[r, shift right =1, bend right = 10, "F"']  
	\ar[r, description, phantom, "\scriptstyle \bot"] &  
(\Em{\catK}, \otimes, \unit)   . 
	\ar[l, shift right = 1, bend right = 10, "U"'] 
\end{tikzcd}
\]
\end{corollary}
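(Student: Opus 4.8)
The plan is to realise the biadjunction $U \dashv F$ of~\eqref{equ:em-adjunction} as a biadjunction internal to the tricategory whose objects are symmetric monoidal bicategories, whose $1$-cells are sylleptic lax monoidal pseudofunctors, whose $2$-cells are braided monoidal pseudonatural transformations, and whose $3$-cells are monoidal modifications. By \cref{thm:coalg-is-monoidal} the $2$-category $\Em{\catK}$ is already a symmetric monoidal bicategory with $U$ strict symmetric monoidal, so the task is to equip $F$ with a sylleptic lax monoidal structure and to upgrade the unit and counit of~\eqref{equ:em-adjunction} to braided monoidal pseudonatural transformations, checking the relevant coherence. This is a bicategorical version of Kelly's doctrinal adjunction: since $U$ is strict, hence strong, symmetric monoidal, its right biadjoint $F$ should carry a \emph{canonical} such structure, all of whose constraint $2$-cells arise as mates of the structure cells of $U$ and of the counit.

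Concretely, I would first use strictness of $U$, which gives $U(FA \otimes FB) = \bang A \otimes \bang B$, so that the adjoint equivalence of~\eqref{equ:aux-equiv} underlying $U \dashv F$ specialises to
\[
\Em{\catK}[FA \otimes FB,\; F(A \otimes B)] \;\simeq\; \catK[\bang A \otimes \bang B,\; A \otimes B]\mathrlap{.}
\]
I would then define $\monn^F_{A,B} \co FA \otimes FB \to F(A \otimes B)$ to be the map corresponding to $\der_A \otimes \der_B$ under this equivalence, and $\moni^F \co \unit \to F\unit$ the one corresponding to $\id_\unit$. Pseudonaturality of $\monn^F$ is inherited from that of $\der$, since mates under adjoint equivalences preserve pseudonaturality. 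The associativity and unitality constraints $\omega^F, \kappa^F, \zeta^F$ and the braiding constraint $\theta^F$ required by \cref{thm:lax-monoidal-pseudofunctor} and \cref{thm:sym-mon-functor} would be defined as the unique invertible $2$-cells determined, via the same universal property, by pseudofunctoriality of $\otimes$ and by the constraints of $\der$ as a braided monoidal pseudonatural transformation; the coherence axioms of~\cite[Definitions~2, 14, 16]{DayStreet} for $F$ then hold because the adjoint equivalences $\Em{\catK}[X, FY] \simeq \catK[UX, Y]$ are fully faithful, so each pasting-diagram identity in $\Em{\catK}$ is equivalent to its mate in $\catK$, where it follows from the corresponding axiom for the symmetric lax monoidal pseudocomonad $(\bang, \dig, \der)$ in \cref{thm:sym-lax-monoidal-pseudocomonad} together with strictness of $U$.

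To promote the biadjunction itself, I would note that its counit is $\der \co UF \Rightarrow \Id_\catK$. With the structure just defined, the universal property of the cofree pseudocoalgebra $F(A \otimes B)$ together with $\dig^2_{A,B}$ from~\eqref{equ:comultiplication-monoidal-2} --- which makes $\monn_{A,B}$ into a pseudocoalgebra morphism $FA \otimes FB \to F(A \otimes B)$ --- and $\der^2_{A,B}$ from~\eqref{equ:counit-monoidal-2} --- which identifies the image of $\monn_{A,B}$ under~\eqref{equ:aux-equiv} with $\der_A \otimes \der_B$ --- force $U(\monn^F_{A,B}) \iso \monn_{A,B}$ by uniqueness. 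Hence $UF$, as a sylleptic lax monoidal pseudofunctor, is (isomorphic to) $\bang$ with its given structure, so that $\der$ being a braided monoidal pseudonatural transformation is exactly the hypothesis in \cref{thm:sym-lax-monoidal-pseudocomonad}. The unit $\eta \co \Id_{\Em{\catK}} \Rightarrow FU$, whose component at a pseudocoalgebra $(A, a)$ is the coalgebra morphism $a \co A \to \bang A$, is made braided monoidal in the same spirit: the invertible $2$-cells $\eta^2$ and $\eta^0$ of \cref{thm:monoidal-pseudonatural-transformation} are mates of canonical $2$-cells assembled from the coalgebra structure maps and the monoidal pseudocomonad data, and their three coherence conditions --- together with the requirement that the triangulators of the biadjunction be monoidal modifications in the sense of \cref{thm:monoidal-modification} --- reduce once more, by fully faithfulness of the adjoint equivalences, to identities in $\catK$ holding by the axioms of \cref{thm:sym-lax-monoidal-pseudocomonad}.

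I expect the only real difficulty to be the bookkeeping of the bicategorical mate correspondence, namely confirming that forming mates is compatible with pasting of $2$-cells, so that the coherence diagrams for $F$, for $\eta$, and for the triangulators transport faithfully between $\Em{\catK}$ and $\catK$. No genuinely new coherence phenomena should arise, precisely because the pseudonatural transformation $\der$ that drives the construction is, by hypothesis, already braided monoidal with all of its coherence data in place; everything else is forced by the universal property of the cofree pseudocoalgebras and by strictness of $U$.
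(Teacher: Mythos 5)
Your proposal is correct and follows essentially the same route as the paper: the paper's proof simply invokes the two-dimensional version of Kelly's doctrinal adjunction (citing \cite[pages~62-63]{GarnerR:enrcfc}) together with \cite[Proposition~15]{DayStreet}, which is exactly the mate/doctrinal-adjunction argument you unfold by hand, exploiting the strict symmetric monoidality of $U$ to transfer a canonical sylleptic lax monoidal structure to the right biadjoint $F$ and to the unit and counit. Your explicit description of $\monn^F_{A,B}$ as the transpose of $\der_A \otimes \der_B$ and of $\moni^F$ as the transpose of $\id_\unit$ agrees with the structure the cited results produce, so the only difference is that you verify details the paper delegates to the literature.
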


\begin{proof} Since the left biadjoint~$U$ is a sylleptic strict monoidal pseudofunctor, the claim 
follows by a two-dimensional version of Kelly's doctrinal adjunction, \cf \cite[pages~62-63]{GarnerR:enrcfc}. In particular, the right biadjoint~$F$ becomes a sylleptic lax monoidal 
pseudofunctor by~\cite[Proposition~15]{DayStreet}. 
\end{proof}

For a 2-category $\catK$ and pseudocomonad $(\bang, \dig, \der)$ on it, it is immediate to observe that the components of the comultiplication $\dig_A \co \bang A \to \bbang A$ are
pseudocomonad morphisms, the pseudonaturality 2-cells 
\[
\begin{tikzcd}
\bang A \ar[r, "\dig_A"] \ar[d, "\bang f"'] \ar[dr,phantom,"\Two \bar{\dig}_f"] & \bbang A \ar[d, "\bbang f"] \\
\bang B \ar[r, "\dig_B"'] & \bbang B ,
\end{tikzcd}
\]
for $f \co A \to B$ in $\catK$, are pseudocoalgebra 2-cells (since $\pi$ is a modification), and the components $\pi_A$ of its associativity constraints, for $A \in \catK$,  are pseudocoalgebra 2-cells (by the
coherence axioms for a pseudocomonad). When $\catK$ is a Gray
monoid and the pseudocomonad is sylleptic lax monoidal, as in \cref{thm:sym-lax-monoidal-pseudocomonad}, other parts of the structure can also be lifted to
the 2-category of pseudocoalgebras. This is described in the next two lemmas. The second one concerns pseudocoalgebra 2-cells and therefore does not have a one-dimensional counterpart.

Let us fix a symmetric Gray monoid $\catK$ and a symmetric lax monoidal pseudocomonad $(\bang, \dig, \der)$ on it as in \cref{thm:sym-lax-monoidal-pseudocomonad}.

\begin{lemma} \label{thm:mu-are-coalgebra-morphisms} \leavevmode
\begin{enumerate}[(i)]
\item For every $A, B \in \catK$, the map $\monn_{A, B} \co \bang A \otimes \bang B \to \bang (A \otimes B)$ is a pseudocoalgebra morphism, \ie we have an invertible 2-cell
\[
\begin{tikzcd}[column sep = large]
\bang A \otimes \bang B \ar[d, "\dig_A \otimes \dig_B"'] \ar[r, "\monn_{A,B}"] 
 \ar[ddr, phantom, description, "\Two"]
&  \bang (A \otimes B) \ar[dd, "\dig_{A \otimes B}"]  \\
\bbang A \otimes \bbang B \ar[d, "\monn_{\bang A, \bang B}"'] & \\
\bang (\bang A \otimes \bang B) \ar[r, "\bang \monn_{A,B}"'] & \bbang (A \otimes B) 
\end{tikzcd}
\]
such that the appropriate coherence conditions hold.
\item The map $\moni \co \unit \to \bang \unit$ is a pseudocoalgebra morphism, \ie we have an invertible 2-cell
\[
\begin{tikzcd}[column sep = large]
\unit \ar[r, "\moni"] \ar[dd, "\moni"']  \ar[ddr, phantom, description, "\Two"] & \bang \unit \ar[dd, "\dig_\unit"] \\ 
 & \\ 
\bang \unit \ar[r, "\bang \moni"'] & \bbang \unit
\end{tikzcd}
\]
such that the appropriate coherence conditions hold.
\end{enumerate} 
\end{lemma}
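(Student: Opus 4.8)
The plan is to read off the required structure $2$-cells directly from the data witnessing that $(\bang,\dig,\der)$ is symmetric lax monoidal, and then to verify the coherence conditions by appealing to the universal property of the cofree pseudocoalgebras rather than by a direct pasting-diagram computation against the constraints of the tensor coalgebra.

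For part~(i): by the description of the monoidal structure of $\Em{\catK}$ following \cref{thm:coalg-is-monoidal}, the source of $\monn_{A,B}$ --- namely the tensor product in $\Em{\catK}$ of the cofree pseudocoalgebras on $A$ and $B$ --- has underlying object $\bang A \otimes \bang B$ and structure map $a \defeq \monn_{\bang A, \bang B} \circ (\dig_A \otimes \dig_B)$, whereas its target $\bang(A \otimes B)$ is the cofree pseudocoalgebra on $A \otimes B$, with structure map $\dig_{A \otimes B}$. With these identifications the square displayed in the statement is precisely~\eqref{equ:comultiplication-monoidal-2}, so I take the required structure $2$-cell to be the modification component $\dig^2_{A,B}$. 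To check that $(\monn_{A,B},\dig^2_{A,B})$ is a pseudocoalgebra morphism, it suffices to exhibit an invertible $2$-cell in $\catK$ identifying $\monn_{A,B}$ with a $1$-cell that already underlies a pseudocoalgebra morphism with the correct source and target, since then the pseudocoalgebra-morphism structure and all its coherence conditions transport along this isomorphism. By~\eqref{equ:aux-equiv}--\eqref{equ:unfold-f-sharp}, the canonical lift $(\der_A \otimes \der_B)^\sharp \co \bang A \otimes \bang B \to \bang(A \otimes B)$ of $\der_A \otimes \der_B$ is such a $1$-cell, computed as $\bang(\der_A \otimes \der_B) \circ \monn_{\bang A, \bang B} \circ (\dig_A \otimes \dig_B)$. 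Using pseudonaturality of $\monn$ at $(\der_A,\der_B)$ and then the right unitality constraint $\nu$ of the pseudocomonad in the form $\bang\der_X \circ \dig_X \cong \id_{\bang X}$, one gets
\[
(\der_A \otimes \der_B)^\sharp \;\cong\; \monn_{A,B} \circ (\bang\der_A \otimes \bang\der_B) \circ (\dig_A \otimes \dig_B) \;\cong\; \monn_{A,B} \circ (\id_{\bang A} \otimes \id_{\bang B}) \;=\; \monn_{A,B},
\]
the isomorphisms coming from pseudonaturality, the cubical functoriality of $\otimes$, and $\nu_A \otimes \nu_B$. This establishes~(i).

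For part~(ii) the argument is the same but shorter: the tensor unit of $\Em{\catK}$ is the pseudocoalgebra with underlying object $\unit$ and structure map $\moni$, and the target is the cofree pseudocoalgebra $\bang\unit$ with structure map $\dig_\unit$, so the square in the statement is~\eqref{equ:comultiplication-monoidal-0} and the required $2$-cell is $\dig^0$. Here~\eqref{equ:unfold-f-sharp} gives $(\id_\unit)^\sharp = \bang(\id_\unit) \circ \moni = \moni$ on the nose, since $\bang$ is a $2$-functor; hence $\moni$ is itself of the form $g^\sharp$ and therefore underlies a pseudocoalgebra morphism directly by~\eqref{equ:aux-equiv}.

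I do not expect a genuine obstacle: the only delicate point is the bookkeeping of the cubical-functoriality isomorphisms of $\otimes$ and of the two applications of $\nu$ in the displayed chain of isomorphisms for~(i). A more pedestrian alternative would be to verify the two pseudocoalgebra-morphism coherence conditions for $\dig^2_{A,B}$ and $\dig^0$ by hand against the explicit associativity and unitality constraints of the tensor coalgebra recalled after \cref{thm:coalg-is-monoidal} --- which are themselves built out of $\pi_A$, $\pi_B$ and $\dig^2$ --- and this reduces to the coherence axioms for $\dig$ being a braided monoidal pseudonatural transformation together with the fact that $\pi$, $\mu$ and $\nu$ are monoidal modifications; the route via the universal property is preferable precisely because it sidesteps this longer computation.
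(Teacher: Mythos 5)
Your choice of structure 2-cells agrees with the paper ($\dig^2_{A,B}$ for (i), and for (ii) the observation that $\moni$ is the structure map of the unit pseudocoalgebra, which is exactly the paper's proof of (ii)). The problem is the verification step in part (i). Transporting pseudocoalgebra-morphism structure along your isomorphism $(\der_A\otimes\der_B)^\sharp\cong\monn_{A,B}$ (the iso chain itself is fine) only shows that $\monn_{A,B}$ underlies \emph{some} pseudocoalgebra morphism: the 2-cell you obtain is the canonical structure on $(\der_A\otimes\der_B)^\sharp$ pasted with the pseudonaturality cell of $\monn$ at $(\der_A,\der_B)$ and with $\nu_A\otimes\nu_B$, and nothing in your argument identifies this transported cell with $\dig^2_{A,B}$. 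So the claim you actually announce, that the pair $(\monn_{A,B},\dig^2_{A,B})$ is a pseudocoalgebra morphism, is not established; "it suffices to exhibit an invertible 2-cell identifying $\monn_{A,B}$ with a 1-cell underlying a pseudocoalgebra morphism" is false for that specific pair. Closing this gap --- either by comparing the transported cell with $\dig^2_{A,B}$ or by checking the two pseudocoalgebra-morphism axioms for $\dig^2_{A,B}$ directly --- requires precisely the coherence input you hoped to sidestep; the paper does the latter in one line, citing the first coherence axiom for the monoidal modifications $\pi$ and $\nu$ (i.e.\ the compatibility of $\dig^2$ with the comonad constraints).

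The distinction is not pedantic here: later results use the specific cell $\dig^2_{A,B}$ as the coalgebra-morphism datum (it is shown to be a pseudocoalgebra 2-cell in \cref{thm:to-be-added-1} and enters the explicit pasting in \cref{thm:coh-digging-with-m}), and the comonoid-morphism structure extracted via \cref{thm:coalg-mor-comon-mor} depends on which coalgebra-morphism structure one feeds in. If you only intend to prove bare existence of an invertible 2-cell in the displayed square satisfying the coherence conditions, your universal-property route is a legitimate alternative to the paper's direct check; in that case drop the identification with $\dig^2_{A,B}$, or add the comparison, which is comparable in length to the paper's direct verification.
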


\begin{proof} For part~(i), the required invertible 2-cell is given by the 2-cell $\dig^2_{A,B}$ in~\eqref{equ:comultiplication-monoidal-2}. The two coherence conditions for a pseudoalgebra morphism follow from the first\footnote{{Cf.}~\cref{thm:coh-numb-convention}.}
coherence axiom of a monoidal modification for $\pi$ (the associativity constraint of the pseudocomonad) and for $\nu$ (the right unitality constraint of the pseudomonad), 
respectively. 

For part (ii), it suffices to recall that the map $\moni \co \unit \to \bang \unit$ is the structure map of a pseudocoalgebra.
\end{proof} 

Next, we show that the 2-cells that are part of the structure of a symmetric lax monoidal pseudomonad can also be lifted. For this statement to make sense, recall that the components of the 
comultiplication $\dig_A \co \bang A \to \bbang A$, for $A \in \catK$ are pseudoalgebra morphisms, the statement of \cref{thm:mu-are-coalgebra-morphisms}, and that
the braiding $\bra_{A, B} \co A \otimes B \to B \otimes A$ is a pseudoalgebra morphism for all pseudoalgebras $A, B$ by \cref{equ:lift-braiding-to-em}.

\begin{lemma} \label{thm:to-be-added-1} \leavevmode
\begin{enumerate}
\item The 2-cell $\omega_{A, B, C}$   in \eqref{equ:lax-monoidal-associativity} is a pseudocoalgebra 2-cell for all  pseudocoalgebras~$A, B, C$.
\item The 2-cells $\kappa_A$ and $\zeta_A$ in \eqref{equ:lax-monoidal-unitality} are  pseudocoalgebra 2-cells for every  pseudocoalgebra~$A$.
\item The 2-cell $\theta_{A,B}$ in \eqref{equ:lax-monoidal-braiding} is a pseudocoalgebra 2-cell for all pseudoalgebras $A, B$.
\item The 2-cell $\dig^2_{A,B}$ in \eqref{equ:comultiplication-monoidal-2}  is a pseudocoalgebra 2-cell   for all pseudoalgebras $A, B$.
\item The 2-cell $\dig^0$ in \eqref{equ:comultiplication-monoidal-0} is a  pseudocoalgebra 2-cell.
\end{enumerate}
\end{lemma}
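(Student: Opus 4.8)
The plan is to handle all five items uniformly. In each case the 2-cell in question is a 2-cell between two 1-cells that are both pseudocoalgebra morphisms, so the entire content of the statement is the single coherence condition making it a pseudocoalgebra 2-cell; I would isolate the easy part (source and target are coalgebra morphisms) from the computational part (checking that one condition).

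First I would record that pseudocoalgebra morphisms are closed under composition (since $\Em{\catK}$ is a 2-category), under the tensor product (since $\otimes \co \Em{\catK} \times \Em{\catK} \to \Em{\catK}$ is a cubical pseudofunctor by \cref{thm:coalg-is-monoidal}), and under applying the cofree-coalgebra 2-functor to a map of $\catK$. Combining this with the facts recalled just before the lemma --- that each $\monn_{A,B}$ and $\moni$ is a coalgebra morphism (\cref{thm:mu-are-coalgebra-morphisms}), that each $\dig_X \co \bang X \to \bbang X$ is a coalgebra morphism, and that each braiding $\bra_{A,B}$ is a coalgebra morphism (\cref{equ:lift-braiding-to-em}) --- shows at once that the source and target 1-cells of $\omega_{A,B,C}$, $\kappa_A$, $\zeta_A$, $\theta_{A,B}$, $\dig^2_{A,B}$ and $\dig^0$ are obtained by composing, tensoring, and applying $\bang$ to such morphisms, hence are pseudocoalgebra morphisms. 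Here I would keep in mind that the structure 2-cell exhibiting $\monn_{A,B}$ as a coalgebra morphism is $\dig^2_{A,B}$ (as in the proof of \cref{thm:mu-are-coalgebra-morphisms}(i)) and that the one exhibiting $\dig_X$ as a coalgebra morphism is the pseudocomonad associativity constraint $\pi_X$.

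For items (1)--(3) I would then unwind the coherence condition. The condition for $\omega_{A,B,C}$ to be a pseudocoalgebra 2-cell is an equation in $\catK$ between two pastings involving $\omega_{A,B,C}$, $\bang\omega_{A,B,C}$, and the structure 2-cells of the source and target coalgebra morphisms; since those structure 2-cells are assembled from copies of $\dig^2$ according to how the source and target are built from $\monn$'s and identities, the equation reduces to exactly the associativity axiom of the monoidal pseudonatural transformation $\dig$ (\cf \cref{thm:coh-numb-convention} and the relevant clause of \cref{thm:sym-lax-monoidal-pseudocomonad}). In the same way, the conditions for $\kappa_A$ and $\zeta_A$ reduce to the two unit axioms of the monoidal transformation $\dig$, and the condition for $\theta_{A,B}$ to the braided monoidal transformation axiom for $\dig$; all of these are part of the hypothesis that the pseudocomonad is symmetric lax monoidal.

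For items (4)--(5), the source of $\dig^2_{A,B}$ is $\dig_{A\otimes B}\circ\monn_{A,B}$ and its target is $\bang\monn_{A,B}\circ\monn_{\bang A,\bang B}\circ(\dig_A\otimes\dig_B)$; the structure 2-cells of these two composites are assembled from copies of $\dig^2$, the pseudonaturality 2-cells of $\dig$, and the constraint $\pi$, and unwinding them turns the coherence condition for $\dig^2_{A,B}$ into an equation that I expect to follow from the pseudocomonad coherence axioms together with the fact that $\pi$ --- and, for the auxiliary identities needed, $\mu$ and $\nu$ --- is a monoidal modification, all being part of \cref{thm:sym-lax-monoidal-pseudocomonad}; item (5) should be the analogous but simpler computation with $\moni$ and $\dig^0$ in place of $\monn$ and $\dig^2$. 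The hard part will be purely organizational: in each of the six cases one must identify precisely which previously assumed coherence axiom the unwound condition coincides with, and track how the composite and tensor structure 2-cells are assembled --- there is no conceptual obstacle, and I would indicate the pasting diagrams in text, in the style of the rest of the paper.
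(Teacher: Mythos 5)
Your proposal is correct and follows essentially the same route as the paper: items (1)--(3) are reduced to the three monoidal-transformation axioms for $\dig$ and the braidedness axiom, and (4)--(5) to the monoidality of $\pi$. The only difference is that where you hedge on (4)--(5), the paper pins these down precisely as the first and second coherence conditions for the modification $\pi$ to be monoidal, with no need to invoke $\mu$ or $\nu$.
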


\begin{proof} For parts (i) and (ii), use the first, second and third coherence conditions for the comultiplication~$\dig$ to be a monoidal transformation to prove the claim for~$\omega_{A, B, C}$, $\kappa_A$ and~$\zeta_A$,
respectively. For part~(iii), use the condition expressing that the monoidal transformation~$\dig$ is braided. For parts~(iv) and (v), use the first and second coherence conditions for the modification $\pi$ (the associativity constraint of the pseudocomonad) to be monoidal, respectively.
\end{proof}

\subsection*{Linear exponential pseudocomonads} 
We are now ready to introduce our bicategorical counterpart of  linear exponential comonads, which is based on the one-dimensional axiomatisation of~\cite{HylandM:gluoml}.
Note that, since the 2-category of pseudocoalgebras $\Em \catK$  for a symmetric monoidal pseudocomonad $(\bang, \dig, \der)$ is a symmetric monoidal bicategory by~\cref{thm:coalg-is-monoidal}, 
we can consider  symmetric pseudocomonoids therein, in the sense of \cref{thm:comonoid} and \cref{thm:braided-comonoid}.

\begin{definition} \label{thm:linear-exponential-comonad}
Let $\catK$ be a symmetric Gray monoid. A \myemph{linear exponential pseudocomonad} on $\catK$ is a symmetric lax monoidal pseudocomonad $(\bang, \dig, \der)$ on $\catK$
as in \cref{thm:sym-lax-monoidal-pseudocomonad}, 
such that every pseudocoalgebra $A \in \Em \catK$ is a symmetric pseudocomonoid
in $\Em \catK$, pseudonaturally and monoidally in $A$. 
\end{definition}

\begin{remark} \label{thm:unfold-linear-exponential} For later use, 
we unfold  what it means that every pseudocoalgebra $A \in \Em \catK$ is a symmetric pseudocomonoid in $\Em \catK$, pseudonaturally and monoidally in $A$.
While this is the counterpart in $\Em \catK$ of part~(ii) of \cref{thm:prod-iff-all-ccmon}, it is convenient 
to record explicitly that all the maps and 2-cells under consideration are in~$\Em{\catK}$.

\begin{itemize}
\item Every pseudocoalgebra~$A$ is a symmetric pseudocomonoid, in the sense of of \cref{thm:symmetric-pseudocomonoid}, in $\Em{\catK}$, \ie 
we have pseudocoalgebra morphisms 
\begin{equation}
\label{equ:lin-exp-com-and-cou}
\com_A \co A \to  A \otimes A , \qquad
e_A \co A \to \unit ,
\end{equation}
which come equipped with invertible 2-cells 
 \[
\begin{tikzcd}
 A \ar[r, "\com_A"]  \ar[dd, "a"'] \ar[ddr, phantom, description, "\Two \bar{n}_A"] &  A \otimes  A \ar[d, "a \otimes a"]  \\
 &  \bang A \otimes \bang A \ar[d, "\monn_{A, A}"]  \\
\bang A \ar[r, "\bang \com_A"'] &  \bang (A \otimes A) ,
\end{tikzcd} \qquad
\begin{tikzcd}
A \ar[r, "e_A"] \ar[d, "a"'] \ar[dr, phantom, description, "\Two \bar{e}_A"] & \unit \ar[d, "\moni"] \\
\bang A \ar[r, "\bang e_\unit"']   & \bang \unit 
\end{tikzcd}
\]
of $\catK$, and invertible pseudocoalgebra 2-cells 
\begin{gather}
\label{equ:lin-exp-associativity}
\begin{tikzcd}[column sep = large, ampersand replacement=\&] 
A 
	\ar[r, "\com_A"] \ar[d, "\com_A"']  
	\ar[dr,phantom,"\Two \alpha_A"] \& 
A \otimes A 
	\ar[d, "\id_A \otimes \com_A"] \\
A \otimes A 
	\ar[r, "\com_A \otimes \id_A"'] \& 
A \otimes A \otimes A  
,
\end{tikzcd}  \\
\label{equ:lin-exp-unitality}
\begin{tikzcd}[column sep = {1.5cm,between origins}, ampersand replacement=\&] 
  \& 
A \otimes A
	\ar[d, phantom, description, "\Two \lambda_A"]
	\ar[dr, "e_A \otimes \id_A"]   \& 
   \\
A
 	\ar[ur, "\com_A"] 
 	\ar[rr, "\id_A"']  \&
 \phantom{} \& 
\unit \otimes A ,
 \end{tikzcd} \qquad
\begin{tikzcd}[column sep = {1.5cm,between origins}, ampersand replacement=\&] 
  \& 
A \otimes A
	\ar[d, phantom, description, "\Two \rho_A"]
	\ar[dr, "\id_A \otimes e_A"]   \& 
   \\
A
 	\ar[ur, "\com_A"] 
 	\ar[rr, "\id_A"']  \&
 \phantom{} \& 
A \otimes \unit ,
 \end{tikzcd} \\
\label{equ:lin-exp-braiding} 
\begin{tikzcd}[column sep = {1.5cm,between origins}, ampersand replacement=\&] 
  \& 
A \otimes A
	\ar[d, phantom, description, "\Two \gamma_A"]
	\ar[dr, "\bra_{A,A}"]   \& 
   \\
A
 	\ar[ur, "\com_A"] 
 	\ar[rr, "\com_A"']  \&
 \phantom{} \& 
A \otimes A ,
 \end{tikzcd}
\end{gather}
satisfying the appropriate coherence axioms.
\item The families $(\com_A)_{A \in \catK}$ and $(e_A)_{A \in \catK}$ 
 are pseudonatural transformations with respect to pseudocoalgebra morphisms, \ie for every pseudocoalgebra morphism $f \co A \to  B$, we have 
  invertible pseudocoalgebra   2-cells
\begin{equation}
\label{equ:d-psdnat}
\begin{tikzcd} 
A \ar[r, "\com_A"] \ar[d, "f"'] \ar[dr, phantom, description, "\Two n_f"]  &  A \otimes  A \ar[d, "f \otimes f"] \\
 B \ar[r, "\com_B"'] &  B \otimes  B ,
\end{tikzcd} 
\end{equation}
and
\begin{equation}
\label{equ:e-psdnat}
\begin{tikzcd} 
 A \ar[r, "e_A"] \ar[d, "f"']  \ar[dr, phantom, description, "\Two e_f"]  & \unit \ar[d, "\id_\unit"] \\
 B \ar[r, "e_B"'] & \unit ,
 \end{tikzcd} 
\end{equation}
satisfying the pseudonaturality axioms. 
\item The families of 2-cells $(\alpha_A)_{A \in \catK}$, $(\lambda_A)_{A \in \catK}$, $(\rho_A)_{A \in \catK}$ and $(\gamma_A)_{A \in \catK}$ are modifications.
\item The pseudonatural transformations $\com$ and $e$ are  braided monoidal in the sense of \cref{thm:bra-mon-transformation}, 
\ie we have a modification $\com^2$, with components invertible pseudocoalgebra 2-cells, 
and an invertible pseudocoalgebra 2-cell $\com^0$, as in 
\begin{equation}
\label{equ:d-monoidal}
\begin{tikzcd}[column sep = large]
A \otimes B \ar[r, "\com_{A \otimes B}"] \ar[d, "\id_{A \otimes B}"'] \ar[dr, phantom, description, "\Two \com^2_{A,B}"]  & A \otimes B \otimes A \otimes B \ar[d, "\id_A \otimes \bra_{B,A} \otimes \id_B"] \\
A \otimes B \ar[r, "\com_A \otimes \com_B"'] & A \otimes A \otimes B \otimes B ,
 \end{tikzcd} 
  \qquad
\begin{tikzcd} 
\unit \ar[d, "\id_\unit"'] \ar[r, "\com_\unit"] \ar[dr, phantom, description, "\Two \com^0"] & \unit  \otimes \unit  \ar[d, "\id_\unit"] \\
\unit \ar[r, "\id_\unit"'] &  \unit ,
 \end{tikzcd} 
\end{equation}
 an invertible modification $e^2$, with components invertible pseudocoalgebra 2-cells, and an invertible pseudocoalgebra 2-cell $e^0$, as in 
\begin{equation}
\label{equ:e-monoidal}
\begin{tikzcd}[column sep = large]
A \otimes B \ar[r, "e_A \otimes e_B"] \ar[d, "\id_{A \otimes B}"'] \ar[dr,
phantom, description, "\Two e^2_{A,B}"]  & \unit \otimes \unit \ar[d, "\id_{\unit}"] \\
A \otimes B \ar[r, "e_{A \otimes B}"'] & \unit ,
\end{tikzcd} \qquad
\begin{tikzcd}
\unit \ar[r, "\id_\unit"] \ar[d, "\id_\unit"'] \ar[dr, phantom, description, "\Two e^0"] & \unit \ar[d, "\id_\unit"] \\
\unit \ar[r, "e_\unit"'] & \unit ,
\end{tikzcd}
\end{equation}
satisfying the coherence axioms of \cref{thm:monoidal-pseudonatural-transformation}.
\item The modifications $\alpha$, $\lambda$, $\rho$, $\gamma$ are monoidal, in the sense of \cref{thm:monoidal-modification}.
\end{itemize}
\end{remark}

\begin{remark} \label{thm:retract-cofree}
 In the one-dimensional setting, a possible axiomatisation of the notion of a linear
exponential comonad requires  the existence of a commutative comonoid structure only
on cofree coalgebras. From this one can derive the presence of 
a commutative comonoid structure on every coalgebra, using the fact that every
coalgebra lies in an equaliser diagram of
free ones which splits at the level of underlying objects. We expect that an analogous argument applies in the two-dimensional
setting, \cf also \cref{thm:compare-with-jacq} below, but do not pursue the details here.
\end{remark} 

Let us now fix a symmetric Gray monoid $\catK$ and a linear exponential pseudocomonad~$(\bang, \dig, \der)$ on it, with data as in \cref{def:psd-comonad,thm:sym-lax-monoidal-pseudocomonad,thm:linear-exponential-comonad} and \cref{thm:unfold-linear-exponential}.

\begin{proposition} \label{thm:coalgebras-is-cartesian} Let $\catK$ be a symmetric Gray monoid and $(\bang, \dig, \der)$ be a linear exponential pseudocomonad.
The  symmetric monoidal structure on the 2-category of pseudocoalgebras $\Em{\catK}$ of \cref{thm:coalg-is-monoidal} is a cartesian monoidal structure.
In particular, $\Em{\catK}$ has finite products.
\end{proposition}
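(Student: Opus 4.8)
The plan is to reduce the statement to the implication $\textup{(iii)} \Rightarrow \textup{(i)}$ of Theorem~\ref{thm:prod-iff-all-ccmon}. By Theorem~\ref{thm:coalg-is-monoidal}, $\Em{\catK}$ is a symmetric monoidal bicategory, so by the strictification theorem~\ref{thm:strictification-symmetric-monoidal} there is a symmetric monoidal biequivalence $E \co \Em{\catK} \to \catL$ with $\catL$ a symmetric Gray monoid. Being a cartesian monoidal structure --- that is, the unit being a terminal object and each tensor $A \otimes B$ being a product with projections induced by the counits --- is a property invariant under symmetric monoidal biequivalence, so it suffices to prove that the monoidal structure of $\catL$ is cartesian.

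The key point is that Definition~\ref{thm:linear-exponential-comonad}, unfolded in Remark~\ref{thm:unfold-linear-exponential}, asserts exactly that $\Em{\catK}$ satisfies condition~$\textup{(iii)}$ of Theorem~\ref{thm:prod-iff-all-ccmon}: every object carries a symmetric pseudocomonoid structure, pseudonaturally and monoidally in the object. I would then transport this package of data along the biequivalence $E$ --- using that symmetric monoidal pseudofunctors preserve symmetric pseudocomonoids (\cf \cite[Proposition~16]{DayStreet}) and that $E$ is essentially surjective and a local equivalence --- so that every object of $\catL$ acquires a symmetric pseudocomonoid structure, pseudonatural and monoidal in the object. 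Thus $\catL$ satisfies condition~$\textup{(iii)}$ of Theorem~\ref{thm:prod-iff-all-ccmon}, whence by that theorem its monoidal structure is cartesian; transporting back along $E$, the symmetric monoidal structure of $\Em{\catK}$ is cartesian and, in particular, $\Em{\catK}$ has finite products.

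The main obstacle is the transport-of-structure step: condition~$\textup{(iii)}$ is a sizeable bundle of coherent data --- a pseudocomonoid structure on each object, pseudonaturality constraints for every morphism, and the requirement that the resulting families form braided monoidal pseudonatural transformations and monoidal modifications --- and one must verify that a symmetric monoidal biequivalence carries all of it while preserving the relevant coherence conditions. This is an instance of the transport-of-structure principle flagged at the end of Section~\ref{sec:prelim}, and the biequivalences produced by strictification are well-behaved enough for it to go through, but it is the part that requires genuine care rather than being purely formal.
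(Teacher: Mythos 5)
Your proof is correct and follows essentially the same route as the paper: the paper's own proof is the one-line observation that \cref{thm:prod-iff-all-ccmon} implies the claim, since the axioms of a linear exponential pseudocomonad (as unfolded in \cref{thm:unfold-linear-exponential}) are precisely condition (iii) of that theorem for $\Em{\catK}$. The strictification/transport step you spell out is exactly what the paper leaves implicit via its stated convention after \cref{thm:strictification-symmetric-monoidal}, so your write-up is just a more explicit version of the same argument.
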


\begin{proof}   \cref{thm:prod-iff-all-ccmon} implies the claim. 
\end{proof} 

\begin{remark} \label{thm:canonical-is-linear-exponential}
In the setting of~\cref{thm:coalgebras-is-cartesian}, the canonical symmetric pseudocomonoid structure on a pseudoalgebra determined by products  in $\Em{\catK}$ is the one given by the linear exponential pseudocomonad. Also note that, by \cref{thm:prod-iff-all-ccmon}, a linear exponential pseudocomonad could be 
defined equivalently as a symmetric lax monoidal pseudocomonad such that the monoidal
structure on its 2-category of pseudocoalgebras is cartesian.
\end{remark}

We derive some more consequences of the axioms for a linear exponential pseudocomonad. We give explicit proofs even if 
some of the statements follow from~\cref{thm:prod-iff-all-ccmon} (\cf \cref{thm:canonical-is-linear-exponential}).

For the next lemma, recall that if $A$ and $B$ are pseudocoalgebras, then $A \otimes B$ admits the structure of a pseudocoalgebra by \cref{thm:coalg-is-monoidal} and thus acquires the structure of a symmetric pseudocomonoid by the axioms for a linear exponential pseudocomonad.
Recall also that the 2-category $\SymCoMon{\catK}$ has a symmetric monoidal structure by \cref{thm:sym-comon-is-monoidal}.

\begin{lemma} 
\label{thm:id-is-comonoid-morphism}
\leavevmode
\begin{enumerate}[(i)] 
\item Let $A$ and $B$ be pseudocoalgebras. Then the symmetric pseudocomonoid  $(A \otimes B, \com_{A \otimes B}, e_{A \otimes B})$, determined by the linear 
exponential pseudocomonad, is equivalent to the tensor product of the symmetric pseudocomonoids
$(A, \com_A, e_A)$ and $(B, \com_B, e_B)$ in $\SymCoMon{\catK}$.
\item The symmetric pseudocomonoid $(\unit, \com_\unit, e_\unit)$ determined by the linear exponential pseudocomonad 
is equivalent to the unit of $\SymCoMon \catK$. 
\end{enumerate}
\end{lemma}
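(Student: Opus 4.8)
The plan is to prove the two equivalences one dimension up, inside the $2$-category $\SymCoMon{\Em{\catK}}$ of symmetric pseudocomonoids in the symmetric monoidal bicategory $\Em{\catK}$, and then transport them down to $\SymCoMon{\catK}$. The forgetful $2$-functor $U \co \Em{\catK} \to \catK$ is strict symmetric monoidal by \cref{thm:coalg-is-monoidal}, so it induces a $2$-functor $\SymCoMon{U} \co \SymCoMon{\Em{\catK}} \to \SymCoMon{\catK}$. The first point is that $\SymCoMon{U}$ strictly preserves the tensor product of symmetric pseudocomonoids of \cref{thm:sym-comon-is-monoidal} and its unit: indeed, as recalled after \cref{thm:ccmon-finprod}, that tensor product is assembled entirely from the ambient tensor, braiding, braiding constraints $\beta^1,\beta^2$ and syllepsis $\sigma$ together with the structure maps and constraint $2$-cells of the two factors, and a \emph{strict} symmetric monoidal $2$-functor carries all of these on the nose. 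Under this, the symmetric pseudocomonoid $(A \otimes B, \com_{A\otimes B}, e_{A\otimes B})$ of the statement is precisely the image under $\SymCoMon{U}$ of the symmetric pseudocomonoid on the pseudocoalgebra $A \otimes B$ in $\Em{\catK}$ supplied by the linear exponential structure, and similarly for $(A, \com_A, e_A)$ and $(B, \com_B, e_B)$. Since $2$-functors preserve equivalences, it suffices to exhibit, in $\SymCoMon{\Em{\catK}}$, an equivalence between $(A \otimes B, \com_{A\otimes B}, e_{A\otimes B})$ and the tensor product $(A, \com_A, e_A) \otimes (B, \com_B, e_B)$ (writing these also for the pseudocomonoids in $\Em{\catK}$).

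For the equivalence in $\SymCoMon{\Em{\catK}}$, I would use that, by \cref{thm:coalgebras-is-cartesian}, the symmetric monoidal structure on $\Em{\catK}$ is cartesian. Invoking \cref{thm:strictification-symmetric-monoidal} and transport of structure to replace $\Em{\catK}$ by a biequivalent symmetric Gray monoid, \cref{thm:prod-iff-all-ccmon} applies to $\Em{\catK}$: the forgetful $2$-functor $U' \co \SymCoMon{\Em{\catK}} \to \Em{\catK}$ is a symmetric strong monoidal biequivalence, with symmetric strong monoidal quasi-inverse $V' \co \Em{\catK} \to \SymCoMon{\Em{\catK}}$ sending an object to its canonical pseudocomonoid structure determined by the products of $\Em{\catK}$. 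By \cref{thm:canonical-is-linear-exponential} that canonical structure is equivalent to the one coming from the linear exponential pseudocomonad, so $V'(A) \simeq (A, \com_A, e_A)$, $V'(B) \simeq (B, \com_B, e_B)$ and $V'(A \otimes B) \simeq (A \otimes B, \com_{A\otimes B}, e_{A\otimes B})$ in $\SymCoMon{\Em{\catK}}$. Since the cartesian tensor $A \otimes B$ is the product of $A$ and $B$ in $\Em{\catK}$ and $V'$ is (strong) monoidal for the product monoidal structures (the tensor in $\SymCoMon{\Em{\catK}}$ being the product, by \cref{thm:ccmon-finprod}), this gives
\[
(A, \com_A, e_A) \otimes (B, \com_B, e_B) \;\simeq\; V'(A) \otimes V'(B) \;\simeq\; V'(A \otimes B) \;\simeq\; (A \otimes B, \com_{A\otimes B}, e_{A\otimes B})
\]
in $\SymCoMon{\Em{\catK}}$, and applying $\SymCoMon{U}$ yields part~(i). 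Part~(ii) is the analogous statement for the unit: the unit of $\SymCoMon{\Em{\catK}}$ is its terminal object, which is $V'$ of the terminal object $\unit$ of $\Em{\catK}$, hence equivalent to $(\unit, \com_\unit, e_\unit)$ by \cref{thm:canonical-is-linear-exponential}; applying $\SymCoMon{U}$, which preserves the unit, delivers the claim in $\SymCoMon{\catK}$.

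The main obstacle I anticipate is the bookkeeping in the first paragraph: checking that $U$, and hence $\SymCoMon{U}$, genuinely preserves the tensor product of symmetric pseudocomonoids and its unit \emph{strictly}, rather than merely up to a coherent equivalence. This rests on the explicit formulas for that tensor product and on the strictness of the symmetric monoidal $2$-functor $U$ asserted in \cref{thm:coalg-is-monoidal}, which in particular forces $U$ to carry the braiding, the constraints $\beta^1,\beta^2$ and the syllepsis $\sigma$ of $\Em{\catK}$ to those of $\catK$ identically. A secondary, routine point is the appeal to \cref{thm:strictification-symmetric-monoidal} needed to apply \cref{thm:prod-iff-all-ccmon} (stated for symmetric Gray monoids) to the symmetric monoidal bicategory $\Em{\catK}$; since being cartesian and the property of $\SymCoMon{(-)}$ having finite products are invariant under symmetric monoidal biequivalence, this causes no difficulty.
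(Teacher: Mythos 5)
Your proposal is correct, but it takes a genuinely different route from the paper. The paper proves the lemma by direct verification inside $\SymCoMon{\catK}$: it exhibits the equivalence as the identity 1-cell $\id_{A\otimes B}$ (resp.\ $\id_\unit$) made into a braided pseudocomonoid morphism by the monoidality data $\com^2_{A,B}$, $e^2_{A,B}$ (resp.\ $\com^0$, $e^0$) of the pseudonatural transformations $\com$ and $e$, and it checks the braided-pseudocomonoid-morphism axioms using precisely the first (resp.\ second) coherence conditions for the modifications $\alpha$, $\lambda$, $\rho$, $\gamma$ to be monoidal. You instead work one level up in $\SymCoMon{\Em{\catK}}$, combining the cartesianness of $\Em{\catK}$ (\cref{thm:coalgebras-is-cartesian}), the implication $\textup{(i)}\Rightarrow\textup{(ii)}$ of \cref{thm:prod-iff-all-ccmon} (after strictifying $\Em{\catK}$), the identification of the canonical and linear-exponential pseudocomonoid structures (\cref{thm:canonical-is-linear-exponential}), and strict monoidality of $U\co\Em{\catK}\to\catK$ to push the equivalence down. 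This is essentially the abstract alternative the paper itself alludes to when it says, just before this series of results, that it gives explicit proofs ``even if some of these follow from \cref{thm:prod-iff-all-ccmon}''; your citations introduce no circularity, and the transport-of-structure and strict-preservation bookkeeping you flag is of the kind the paper routinely elides (and in any case preservation up to equivalence would suffice). What your route buys is economy and conceptual clarity; what the paper's explicit route buys is a sharper conclusion — the equivalence is the identity 1-cell with specified witnessing 2-cells, and the proof pinpoints exactly which monoidality coherence axioms do the work, which fits the paper's general programme of tracking coherence explicitly.
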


\begin{proof} For~(i), recall the definition of the tensor product $(A, \com_A, e_B) \otimes (B, \com_B, e_B)$ from the comments below~\cref{thm:coalg-is-monoidal}. In particular, 
its underlying object is $A \otimes B$, with comultiplication as in~\eqref{equ:comultiplication-for-tensor} and counit as in~ \eqref{equ:counit-for-tensor}.  

We claim that the identity map $\id_{A \otimes B} \co A \otimes B \to A \otimes B$ is a braided pseudocomonoid morphism
between $(A \otimes B, \com_{A \otimes B}, e_{A \otimes B})$ and $(A, \com_A, e_B) \otimes (B, \com_B, e_B)$. For this, we need 2-cells fitting in
the diagrams
\[
\begin{tikzcd}[column sep = huge]
A \otimes B \ar[r, "\id_{A \otimes B}"] \ar[dd, "\com_{A \otimes B}"'] 
 \ar[ddr, phantom, description, "\Two"]
& A \otimes B  \ar[d, "\com_A \otimes \com_B"] \\
 & A \otimes A \otimes B \otimes B \ar[d, "\id_A \otimes \mathsf{r}_{A,B} \otimes \id_B"] \\
 A \otimes B \otimes A \otimes B \ar[r, "\id_{A \otimes B \otimes A \otimes B}"'] & A \otimes B \otimes A \otimes B ,
 \end{tikzcd}
 \qquad 
 \begin{tikzcd}
 A \otimes B \ar[r, "\id"] \ar[dd, "e_{A \otimes B}"'] 
 \ar[ddr, phantom, description, "\Two"]
 & A \otimes B \ar[d, "e_A \otimes e_B"] \\
 & \unit \otimes \unit \ar[d, "\id"] \\
 \unit \ar[r, "\id"'] & \unit .
 \end{tikzcd}
 \]
 These are given by the 2-cells $\com^2_{A,B}$ and $e^2_{A,B}$ in~\eqref{equ:d-monoidal} and~\eqref{equ:e-monoidal}, which are part of the data making~$\com$ and~$e$ into braided monoidal
 pseudonatural transformations, respectively. We need to check the four coherence conditions for a braided pseudocomonoid morphism. The four proofs use 
 the first\footnote{{Cf.} \cref{thm:coh-numb-convention}.} coherence condition for the four modifications $\alpha$, $\lambda, \rho$ and $\gamma$ in \eqref{equ:lin-exp-associativity}, \eqref{equ:lin-exp-unitality}, and \eqref{equ:lin-exp-braiding} to be monoidal, respectively. 
  
 For part~(ii), recall that the unit $\unit$ is a braided pseudocomonoid with identity maps for both comultiplication and unit, and identity 2-cells for all the constraints. We claim that the identity 
 map $\id_\unit \co \unit \to \unit$ is a braided pseudocomonoid morphism from $(\unit, \com_\unit, e_\unit)$ to~$(\unit, \id_\unit, \id_\unit)$. Thus, we need invertible 2-cells:
 \[
 \begin{tikzcd}[column sep = large]
 \unit \ar[r, "\id_\unit"] \ar[d, "\com_\unit"'] \ar[dr,phantom,"\Two"] & \unit \ar[d, "\id_\unit"] \\
 \unit \otimes \unit \ar[r, "\id_\unit \otimes \id_\unit"'] & \unit \otimes \unit ,
 \end{tikzcd} \qquad
  \begin{tikzcd}[column sep = large]
 \unit \ar[r, "\id_\unit"] \ar[d, "e_\unit"'] \ar[dr,phantom,"\Two"] & \unit \ar[d, "\id_\unit"] \\
 \unit \ar[r, "\id_\unit"'] & \unit .
 \end{tikzcd} 
 \]
 These are given by the 2-cells $\com^0$ and $e^0$ in ~\eqref{equ:d-monoidal} and~\eqref{equ:e-monoidal}, respectively. To verify the four coherence conditions for a braided pseudocomonoid morphism, use
 the second coherence condition for the modifications $\alpha$, $\lambda, \rho$, and $\gamma$ to be monoidal, respectively.
\end{proof}

\begin{corollary} Let $A$ and $B$ be pseudocoalgebras. The symmetric pseudocomonoid  $(A \otimes B, \com_{A \otimes B}, e_{A \otimes B})$ is the product of $A$ and $B$ in $\SymCoMon{\catK}$. 
\end{corollary}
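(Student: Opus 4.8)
The plan is to deduce the statement directly from \cref{thm:id-is-comonoid-morphism}(i) together with the explicit description of binary products in $\SymCoMon{\catK}$ recalled after \cref{thm:ccmon-finprod}. By that description and \cref{thm:sym-comon-is-monoidal}, the product of the symmetric pseudocomonoids $(A, \com_A, e_A)$ and $(B, \com_B, e_B)$ in $\SymCoMon{\catK}$ is precisely their tensor product $(A, \com_A, e_A) \otimes (B, \com_B, e_B)$ in the symmetric monoidal $2$-category $\SymCoMon{\catK}$: its underlying object is $A \otimes B$, with comultiplication and counit as in~\eqref{equ:comultiplication-for-tensor} and~\eqref{equ:counit-for-tensor}, and its projections are the braided pseudocomonoid morphisms underlain by the composites $A \otimes B \xrightarrow{\id_A \otimes e_B} A \otimes \unit = A$ and $A \otimes B \xrightarrow{e_A \otimes \id_B} \unit \otimes B = B$.

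Now \cref{thm:id-is-comonoid-morphism}(i) provides an equivalence in $\SymCoMon{\catK}$, with underlying map $\id_{A \otimes B}$, between $(A \otimes B, \com_{A \otimes B}, e_{A \otimes B})$ and $(A, \com_A, e_A) \otimes (B, \com_B, e_B)$. I would then use that bicategorical products are invariant under equivalence: if $P$ is a product of $X$ and $Y$ with projections $p_1, p_2$, and $h \co P' \to P$ is an equivalence, then post-composition with $h$ makes $P'$ a product of $X$ and $Y$ with projections $p_1 h$ and $p_2 h$, since the composite $\SymCoMon{\catK}[Z, P'] \to \SymCoMon{\catK}[Z, P] \to \SymCoMon{\catK}[Z,A] \times \SymCoMon{\catK}[Z,B]$ is an equivalence for every $Z$. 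Applying this with $P = (A, \com_A, e_A) \otimes (B, \com_B, e_B)$, $P' = (A \otimes B, \com_{A \otimes B}, e_{A \otimes B})$, and $h$ the equivalence just mentioned, we conclude that $(A \otimes B, \com_{A \otimes B}, e_{A \otimes B})$ is the product of $(A, \com_A, e_A)$ and $(B, \com_B, e_B)$ in $\SymCoMon{\catK}$. As $h$ has underlying map $\id_{A \otimes B}$, the resulting projections have the expected underlying composites $\id_A \otimes e_B$ and $e_A \otimes \id_B$, now carrying the braided pseudocomonoid morphism structure obtained by pasting those of \cref{thm:ccmon-finprod} with the $2$-cells $\com^2_{A,B}$ and $e^2_{A,B}$ appearing in \cref{thm:id-is-comonoid-morphism}(i).

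There is essentially no obstacle here: the substantive work — verifying the four coherence conditions for the braided pseudocomonoid morphism structure on $\id_{A \otimes B}$ — was already carried out in \cref{thm:id-is-comonoid-morphism}. The only points requiring a moment's care are the invariance of bicategorical products under equivalence and the observation that a braided pseudocomonoid morphism whose underlying $1$-cell is an equivalence in $\catK$ is itself an equivalence in $\SymCoMon{\catK}$ (transport of structure along the equivalence, trivial here since the underlying map is an identity); both are routine.
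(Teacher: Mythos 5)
Your proposal is correct and follows essentially the same route as the paper, which simply cites \cref{thm:id-is-comonoid-morphism} together with \cref{thm:ccmon-finprod} (and the description of products in $\SymCoMon{\catK}$ as tensor products); you merely spell out the routine invariance of bicategorical products under equivalence that the paper leaves implicit. No gaps.
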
 

\begin{proof} Immediate from~\cref{thm:id-is-comonoid-morphism} and \cref{thm:ccmon-finprod}.
\end{proof}

We now define explicitly two pseudonatural transformations, called \emph{contraction} and \emph{weakening} following the terminology used in Linear Logic,  which will be important
for our development.
For $A \in \catK$, the cofree pseudocoalgebra $\bang A$ admits the structure of a symmetric pseudocomonoid 
by the axioms for a linear exponential comonad, like any pseudocoalgebra. 
We define the map~$\con_A$, called \emph{contraction}, to be the comultiplication of the symmetric pseudocomonoid $\bang A$, \ie we let
\begin{equation}
\label{equ:con}
\begin{tikzcd}
 \bang A  \ar[r, "\con_A"] &  \bang A \otimes \bang A
 \end{tikzcd}  \quad \defeq \quad
 \begin{tikzcd}
 \bang A  \ar[r, "\com_{\bang A}"] &  \bang A \otimes \bang A .
 \end{tikzcd}
 \end{equation} 
 Similarly, we define the map $\weak_A$, called \emph{weakening}, to be the counit of the symmetric pseudocomonoid $\bang A$, \ie we let
 \begin{equation}
 \label{equ:weak}
\begin{tikzcd} 
\bang A \ar[r, "\weak_A"] &  \unit
 \end{tikzcd} 
\quad \defeq \quad
 \begin{tikzcd} 
\bang A \ar[r, "e_{\bang A}"] &  \unit .
 \end{tikzcd} 
 \end{equation}  
By definition, $\con_A$ and $\weak_A$ are the components on objects of pseudonatural transformations.

\begin{samepage}
\begin{lemma} \label{thm:con-and-weak-coalgebra-maps}
\leavevmode
\begin{enumerate}
\item For every $A \in \catK$,
the map $\con_A \co \bang A \to \bang A \otimes \bang A$ is a pseudocoalgebra morphism, \ie we have an invertible 2-cell
\[
\begin{tikzcd}[column sep = large] 
 \bang A \ar[r, "\con_A"]  \ar[dd, "\dig_A"'] 
 \ar[ddr, phantom, description, "\Two \bar{\con}_A"]
 &  \bang A \otimes  \bang A \ar[d, "\dig_A \otimes \dig_A"]  \\
 &  \bbang A \otimes \bbang A \ar[d, "\monn_{\bang A, \bang A}"]  \\
\bbang A \ar[r, "\bang \con_A"']  &  \bang (\bang A \otimes \bang A) ,
\end{tikzcd}
\]
such that  the appropriate coherence conditions hold.
\item For every $A \in \catK$,
the  map $\weak_A \co \bang A \to \unit$ is a pseudocoalgebra morphism, \ie we have an invertible 2-cell
\begin{equation}
 \label{thm:dig-and-weak}
\begin{tikzcd}[column sep = large]
\bang A \ar[r, "\weak_A"] \ar[d, "\dig_A"'] \ar[dr, phantom, description, "\Two \bar{\weak}_A"] & \unit \ar[d, "\moni"]  \\
\bbang A \ar[r, "\bang{\weak}_A"']  & \bang \unit ,
\end{tikzcd}
\end{equation}
such that  the appropriate coherence conditions hold.
\end{enumerate}
\end{lemma}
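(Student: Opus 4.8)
The plan is to observe that, once the relevant structures are named correctly, this lemma is a matter of specialising definitions rather than of computation. The starting point is that $\bang A$ is the cofree pseudocoalgebra on $A$, with structure map $\dig_A \co \bang A \to \bbang A$, and that, being a pseudocoalgebra, it acquires by \cref{thm:linear-exponential-comonad} the structure of a symmetric pseudocomonoid in $\Em{\catK}$; in particular (see \cref{thm:unfold-linear-exponential}) its comultiplication $\com_{\bang A} \co \bang A \to \bang A \otimes \bang A$ and counit $e_{\bang A} \co \bang A \to \unit$ are \emph{maps in} $\Em{\catK}$, hence pseudocoalgebra morphisms, equipped with structure 2-cells of the shape $\bar{n}_{\bang A}$ and $\bar{e}_{\bang A}$ respectively. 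Since we defined $\con_A \defeq \com_{\bang A}$ and $\weak_A \defeq e_{\bang A}$ in \eqref{equ:con} and \eqref{equ:weak}, the maps $\con_A$ and $\weak_A$ are already pseudocoalgebra morphisms, and all that remains is to check that the squares appearing in the statement are precisely the structure squares of these morphisms.

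To carry this out I would unfold, using the explicit description of the symmetric monoidal structure on $\Em{\catK}$ recorded after \cref{thm:coalg-is-monoidal}, the pseudocoalgebra structure maps of the objects occurring in those squares. For part~(i), the tensor product $\bang A \otimes \bang A$ of the cofree pseudocoalgebra $\bang A$ with itself has structure map $\monn_{\bang A, \bang A} \circ (\dig_A \otimes \dig_A)$, so specialising the structure 2-cell $\bar{n}$ of \cref{thm:unfold-linear-exponential} to the cofree pseudocoalgebra $\bang A$ (whose structure map is $\dig_A$) yields exactly the invertible 2-cell $\bar{\con}_A$ displayed in the statement. For part~(ii), the unit of $\Em{\catK}$ is $\unit$ with structure map $\moni$, so specialising the structure 2-cell $\bar{e}$ of \cref{thm:unfold-linear-exponential} to $\bang A$ yields $\bar{\weak}_A$. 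The coherence conditions that $\con_A$ and $\weak_A$ must satisfy as pseudocoalgebra morphisms then coincide verbatim with those already satisfied by $\com_{\bang A}$ and $e_{\bang A}$, so there is nothing further to verify.

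I do not expect any real obstacle here; the only point needing a little care is to confirm that the chosen presentation of the tensor-product and unit pseudocoalgebra structures on $\bang A \otimes \bang A$ and $\unit$ matches the diagrams as drawn, including the orientation of each structure 2-cell. An alternative, which I would mention but not pursue, is to read the statement off from \cref{thm:prod-iff-all-ccmon} applied in $\Em{\catK}$ (\cf \cref{thm:canonical-is-linear-exponential}), but the direct identification above is shorter and more transparent.
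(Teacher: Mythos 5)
Your proposal is correct and follows essentially the same route as the paper: since $\con_A$ and $\weak_A$ are by definition $\com_{\bang A}$ and $e_{\bang A}$, the claims are immediate instances of the axiom (spelled out in \cref{thm:unfold-linear-exponential}) that the components of $\com$ and $e$ are pseudocoalgebra morphisms, with the displayed squares being exactly the structure 2-cells $\bar{n}_{\bang A}$ and $\bar{e}_{\bang A}$ specialised to the cofree pseudocoalgebra $(\bang A, \dig_A)$. Your extra step of unfolding the tensor-product and unit pseudocoalgebra structures to match the diagrams is a harmless elaboration of what the paper leaves implicit.
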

\end{samepage}

\begin{proof} By the definitions of  $\con_A$ and $\weak_A$ in \eqref{equ:con} and \eqref{equ:weak}, the claims are instances of the assertions that the components of the pseudonatural transformations~$\com$ and $e$ 
in~\eqref{equ:lin-exp-com-and-cou} are pseudocoalgebra morphisms, which are part of the axioms for a linear exponential pseudocomonad, \cf \cref{thm:unfold-linear-exponential}.
\end{proof}

The next proposition is a useful result since it allows us to leverage our earlier results on sylleptic lax monoidal pseudomonads to
exhibit a number of braided pseudocomonoid morphism and pseudocomonoid 2-cells.

\begin{proposition} \leavevmode
\label{thm:coalg-mor-comon-mor}
\begin{enumerate}
\item  Let $(A,a), (B, b)$ be pseudocoalgebras. Every pseudocoalgebra morphism $f \co (A,a) \to (B, b)$ is a braided pseudocomonoid morphism $f \co (A, \com_A, e_A) \to (B, \com_B, e_B)$.
\item Let $f \co A \to B$, $g \co A \to B$ be pseudocoalgebra morphisms. Every pseudocoalgebra 2-cell $f \Rightarrow g$ is a pseudocomonoid 2-cell.
\end{enumerate}
\end{proposition}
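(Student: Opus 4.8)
The plan is to derive both parts from the cartesianness of $\Em{\catK}$ and then transport the resulting statements along the forgetful 2-functor $U \co \Em{\catK} \to \catK$.

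By \cref{thm:coalgebras-is-cartesian} the symmetric monoidal structure on $\Em{\catK}$ is cartesian, and by \cref{thm:canonical-is-linear-exponential} the symmetric pseudocomonoid $(A, \com_A, e_A)$ attached by the linear exponential pseudocomonad to a pseudocoalgebra $A$ is precisely the canonical symmetric pseudocomonoid determined by finite products in $\Em{\catK}$ (diagonal as comultiplication, the map to the terminal object as counit). Hence \cref{thm:prod-iff-all-ccmon}, applied to the cartesian symmetric Gray monoid $\Em{\catK}$ — this is part of the content of the implication $\textup{(i)} \Rightarrow \textup{(ii)}$ and of its unfolding in $\textup{(iii)}$ — shows that every map $f \co (A,a) \to (B,b)$ of $\Em{\catK}$ carries a canonical structure of braided pseudocomonoid morphism $(A, \com_A, e_A) \to (B, \com_B, e_B)$, and that every 2-cell of $\Em{\catK}$ is a pseudocomonoid 2-cell between the resulting braided pseudocomonoid morphisms. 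Concretely, the structure 2-cells of $f$ are the pseudonaturality constraints $n_f$ and $e_f$ of the transformations $\com$ and $e$ recorded in \cref{thm:unfold-linear-exponential}, while the two coherence conditions for a pseudocomonoid 2-cell hold by the universal property of binary products in $\Em{\catK}$.

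It then remains to transfer these conclusions from $\Em{\catK}$ to $\catK$. By \cref{thm:coalg-is-monoidal} the forgetful 2-functor $U \co \Em{\catK} \to \catK$ is strict symmetric monoidal, and it sends the pseudocomonoid structure $(A, \com_A, e_A)$ in $\Em{\catK}$ to the pseudocomonoid structure of the same name in $\catK$ (this is exactly what it means to regard $\com_A$, $e_A$, $\alpha_A$, $\lambda_A$, $\rho_A$, $\gamma_A$ as maps and 2-cells of $\catK$). A strict symmetric monoidal 2-functor preserves braided pseudocomonoids, braided pseudocomonoid morphisms and pseudocomonoid 2-cells, since all the relevant data and axioms are pasting composites built from the tensor product, the braiding, and the comonoid constraints, all of which $U$ preserves on the nose. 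Applying $U$ to the two conclusions of the previous paragraph yields exactly (i) and (ii).

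I do not anticipate a genuine obstacle; the only point requiring attention is the identification, supplied by \cref{thm:canonical-is-linear-exponential}, of the pseudocomonoid structures appearing in the statement with the ones determined by products in $\Em{\catK}$, without which \cref{thm:prod-iff-all-ccmon} could not be invoked. Should one prefer an argument not routed through \cref{thm:prod-iff-all-ccmon}, one can instead take $\bar{f} \defeq n_f^{-1}$ and $\tilde{f} \defeq e_f^{-1}$ (the inverses of the pseudonaturality constraints of $\com$ and $e$ from \cref{thm:unfold-linear-exponential}) and verify the coherence conditions of \cref{thm:braided-pseudocomonoid-morphism} using the pseudonaturality axioms for $\com$ and $e$ together with the modification axioms for $\alpha$, $\lambda$, $\rho$, $\gamma$; but this merely re-proves the relevant special case of \cref{thm:prod-iff-all-ccmon}.
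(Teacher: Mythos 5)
Your argument is correct, but it takes a genuinely different and heavier route than the paper. The paper's proof is direct and essentially two lines: the candidate structure 2-cells are exactly the pseudonaturality constraints $n_f$ and $e_f$ of \eqref{equ:d-psdnat}--\eqref{equ:e-psdnat}, which are already part of the data packaged in \cref{thm:unfold-linear-exponential}; the four coherence conditions for a braided pseudocomonoid morphism then follow from the axioms stating that $\alpha$, $\lambda$, $\rho$, $\gamma$ are modifications, and for part (ii) the two pseudocomonoid 2-cell conditions follow from the respect-for-2-cells axiom of the pseudonaturality of $\com$ and $e$ — which is where the hypothesis that the 2-cell is a pseudocoalgebra 2-cell is used, since those transformations live over $\Em{\catK}$. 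This is precisely the "fallback" you sketch in your final sentence. Your main route instead makes the round trip through \cref{thm:prod-iff-all-ccmon}: from the linear-exponential axioms (condition (iii) for $\Em{\catK}$) to cartesianness of $\Em{\catK}$ (\cref{thm:coalgebras-is-cartesian}), back through (i)$\Rightarrow$(ii)$\Rightarrow$(iii) to get comonoid-morphism and comonoid-2-cell structures on all maps and 2-cells of $\Em{\catK}$, then the identification of \cref{thm:canonical-is-linear-exponential} and transport along the strict monoidal $U$. This is logically sound and not circular (the proposition is not used upstream), but it buys the conclusion at the cost of the full strength of \cref{thm:prod-iff-all-ccmon}, including the omitted long calculations for the monoidality of $V$ that are not needed here, and it requires applying that theorem to $\Em{\catK}$, which by \cref{thm:coalg-is-monoidal} is a symmetric monoidal bicategory but not literally a Gray monoid (the paper silently makes the same move in \cref{thm:coalgebras-is-cartesian}, so this is acceptable by its standards, but it deserves a word). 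Two further small points: the structure 2-cells produced by your main route are the product-induced ones $\bar f_{\textup{can}}, \tilde f_{\textup{can}}$, which are only isomorphic to $n_f^{-1}, e_f^{-1}$ via the universal property, so your "concretely" claim really belongs to the direct argument rather than to the \cref{thm:prod-iff-all-ccmon} route; and for part (ii) one must check that this identification is compatible with the 2-cell, a routine but unstated step. The direct proof avoids both issues and makes visible exactly which axioms of \cref{thm:linear-exponential-comonad} do the work, which is why the paper prefers it.
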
 

\begin{proof} For (i), let $f \co (A,a) \to (B, b)$ be a pseudocoalgebra morphism. The 2-cells making $f$ into a braided pseudocomonoid morphism
are exactly the 2-cells in~\eqref{equ:d-psdnat} and \eqref{equ:e-psdnat} above. The four coherence conditions for a braided pseudocomonoid morphism
follow from the assumption that $\alpha$, $\lambda$, $\rho$, and $\gamma$ in~\eqref{equ:lin-exp-associativity}, \eqref{equ:lin-exp-unitality}, and \eqref{equ:lin-exp-braiding} are modifications, respectively.

For part~(ii), the two coherence conditions follow from the axiom describing the interaction between pseudonatural transformations and 2-cells for $\com$ and $e$, respectively.
Note that one has to use that the 2-cell under consideration is a pseudocoalgebra 2-cell to apply this axiom.
\end{proof}

\begin{corollary}  \label{thm:dig-comonoid-morphism}
\leavevmode
 \begin{enumerate}
 \item For every $A \in \catK$, the  map $\dig_A \co \bang A \to \bbang A$ is 
a braided pseudocomonoid morphism, \ie we have invertible 2-cells
\[
\begin{tikzcd}[column sep = large]
\bang A 
	\ar[d, "\con_A"'] 
	\ar[r, "\dig_A"]  
	\ar[dr, phantom, description, "\Two \bar{\dig}_A"] & 
\bbang A 
	\ar[d, "\con_{\bang A}"]  \\
\bang A \otimes \bang A 
	\ar[r, "\dig_A \otimes \dig_A"'] & 
\bbang A \otimes \bbang A ,
\end{tikzcd} \qquad 
\begin{tikzcd}
\bang A 
	\ar[d, "\weak_A"'] 
	\ar[r, "\dig_A"] 
	\ar[dr, phantom, description, "\Two \tilde{\dig}_A"]  & 
\bbang A  
	\ar[d, "\weak_{\bang A}"] \\
\unit 
	\ar[r, "\id_\unit"'] 	& 
\unit ,
\end{tikzcd}
\]
such that the appropriate coherence conditions hold.
\item For every $A \in \catK$, the associativity constraint 2-cell $\pi_A$ of the pseudocomonad is a pseudocomonoid 2-cell.
\end{enumerate}
\end{corollary}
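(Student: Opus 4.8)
The plan is to obtain both parts as direct corollaries of Proposition~\ref{thm:coalg-mor-comon-mor}, once we recall two facts recorded in the text just before \cref{thm:mu-are-coalgebra-morphisms}: first, each component $\dig_A \co \bang A \to \bbang A$ is a pseudocoalgebra morphism from the cofree pseudocoalgebra on $A$ (with structure map $\dig_A$) to the cofree pseudocoalgebra on $\bang A$ (with structure map $\dig_{\bang A}$), the filling invertible 2-cell being the associativity constraint $\pi_A$ of the pseudocomonad; second, each $\pi_A$ is a pseudocoalgebra 2-cell (by the coherence axioms for a pseudocomonad).

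For part~(i), I would apply \cref{thm:coalg-mor-comon-mor}(i) to the pseudocoalgebra morphism $\dig_A$: it follows at once that $\dig_A$ underlies a braided pseudocomonoid morphism from $(\bang A, \con_A, \weak_A)$ to $(\bbang A, \con_{\bang A}, \weak_{\bang A})$, these being symmetric pseudocomonoids in $\catK$ since the forgetful 2-functor $\Em{\catK} \to \catK$ is strict symmetric monoidal. Unwinding the proof of that proposition, the two invertible 2-cells exhibiting this structure are the pseudonaturality components $n_{\dig_A}$ and $e_{\dig_A}$ of $\com$ and $e$ at $\dig_A$, as in~\eqref{equ:d-psdnat} and~\eqref{equ:e-psdnat}; since $\con_A = \com_{\bang A}$ and $\weak_A = e_{\bang A}$ by~\eqref{equ:con} and~\eqref{equ:weak}, these are precisely the 2-cells $\bar{\dig}_A$ and $\tilde{\dig}_A$ displayed in the statement, and the four coherence conditions for a braided pseudocomonoid morphism are exactly those supplied by \cref{thm:coalg-mor-comon-mor}(i) (which in turn rest on the modification axioms for $\alpha$, $\lambda$, $\rho$, $\gamma$).

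For part~(ii), I would observe that the source $\bang \dig_A \circ \dig_A$ and target $\dig_{\bang A} \circ \dig_A$ of $\pi_A$ are composites of pseudocoalgebra morphisms, hence pseudocoalgebra morphisms, hence braided pseudocomonoid morphisms by part~(i), \ie by \cref{thm:coalg-mor-comon-mor}(i). Since $\pi_A$ is a pseudocoalgebra 2-cell, \cref{thm:coalg-mor-comon-mor}(ii) then gives immediately that it is a pseudocomonoid 2-cell between them.

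I do not anticipate any genuine obstacle here: the argument is entirely a matter of invoking \cref{thm:coalg-mor-comon-mor}. The only care needed is bookkeeping---matching the abstract 2-cells produced by that proposition with the concrete squares $\bar{\dig}_A$ and $\tilde{\dig}_A$ in the statement---which amounts to substituting $f = \dig_A$ and unwinding the definitions~\eqref{equ:con} and~\eqref{equ:weak} of contraction and weakening as the comultiplication and counit on cofree pseudocoalgebras.
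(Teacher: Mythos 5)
Your proposal is correct and follows essentially the same route as the paper: part (i) is obtained by applying \cref{thm:coalg-mor-comon-mor}(i) to $\dig_A$ viewed as a pseudocoalgebra morphism (being the structure map of the cofree pseudocoalgebra), and part (ii) by applying \cref{thm:coalg-mor-comon-mor}(ii) to the pseudocoalgebra 2-cell $\pi_A$. The extra bookkeeping you supply (identifying $\bar{\dig}_A$, $\tilde{\dig}_A$ with the pseudonaturality components of $\com$ and $e$ at $\dig_A$) is consistent with, and slightly more explicit than, the paper's argument.
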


\begin{proof} For part~(i), $\dig_A \co \bang A \to \bbang A $ is the structure map of a pseudocoalgebra and so it is a pseudocoalgebra morphism.
The claim then follows by part~(i) of \cref{thm:coalg-mor-comon-mor}. 
For part~(ii), $\pi_A$ is a pseudocoalgebra 2-cell and so the claim follows by part~(ii) of  \cref{thm:coalg-mor-comon-mor}.
\end{proof} 

\begin{corollary} \label{thm:mu-comonoid-morphism} \leavevmode
\begin{enumerate}[(i)] 
\item For every $A, B \in \catK$, the map $\monn_{A, B} \co \bang A \otimes \bang B \to \bang (A \otimes B)$ is a braided pseudocomonoid morphism, \ie we have invertible 2-cells
\[
\begin{gathered} 
\begin{tikzcd}[column sep =huge]
\bang A \otimes \bang B 
	\ar[r, "\monn_{A,B}"] 
	\ar[d, "\con_A \otimes \con_B"']
	\ar[ddr, phantom, description, "\Two \bar{\mon}_{A,B}"] & 
\bang (A \otimes B) 
	\ar[dd, "\con_{A \otimes B}"] \\
\bang A \otimes \bang A \otimes \bang B \otimes \bang B
	\ar[d, "\id_{\bang A} \otimes \bra_{\bang A,\bang B} \otimes \id_{\bang B}"'] &  \\
\bang A \otimes \bang B \otimes \bang A \otimes \bang B
	\ar[r, "\monn_{A,B} \otimes \monn_{A, B}"'] &
\bang (A \otimes B) \otimes \bang ( A \otimes B)  ,
\end{tikzcd} \\[2ex]
\begin{tikzcd}[column sep = huge]
\bang A \otimes \bang B 
	\ar[r, "\monn_{A,B}"] 
	\ar[d, "\weak_A \otimes \weak_B"'] 
	\ar[dr, phantom, description, "\Two \tilde{\mon}_{A,B}"] & 
\bang (A \otimes B) 
	\ar[d, "\weak_{A \otimes B}"] \\
\unit \otimes \unit \ar[r, "\id"'] & \unit  ,
\end{tikzcd}
\end{gathered}
\]
such that the appropriate coherence conditions hold.
\item The map $\moni \co \unit \to \bang \unit$ is a braided pseudocomonoid morphism, \ie we have invertible 2-cells
\[
\begin{tikzcd}[column sep = large]
\unit 
	\ar[r, "\moni"] 
	\ar[d, "\id"'] 
	\ar[dr, phantom, description, "\Two \bar{\mon}_\unit"] & 
\bang \unit 
	\ar[d, "\con_\unit"]  \\
\unit \otimes \unit 
	\ar[r, "\moni \otimes \moni"'] & 
	\bang \unit \otimes \bang \unit  ,
\end{tikzcd}
\qquad
\begin{tikzcd} 
\unit 
	\ar[r, "\moni"]  
	\ar[d, "\id"'] 
	\ar[dr, phantom, description, "\Two \tilde{\mon}_\unit"] & 
\bang \unit 
	\ar[d, "\weak_\unit"] \\
\unit 
	\ar[r, "\id"'] &
\unit ,
\end{tikzcd}
\]
such that the appropriate coherence conditions hold.
\end{enumerate}
\end{corollary}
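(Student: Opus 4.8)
The plan is to obtain this exactly as \cref{thm:dig-comonoid-morphism} was obtained, namely as a direct consequence of the fact, recorded in \cref{thm:mu-are-coalgebra-morphisms}, that $\monn_{A,B}$ and $\moni$ are pseudocoalgebra morphisms, together with \cref{thm:coalg-mor-comon-mor}. The only real work is bookkeeping: matching the pseudocomonoid structures appearing in the statement with those that come out of the general machinery.

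For part~(i): by \cref{thm:mu-are-coalgebra-morphisms}(i), the map $\monn_{A,B} \co \bang A \otimes \bang B \to \bang(A \otimes B)$ is a pseudocoalgebra morphism, where $\bang A \otimes \bang B$ carries the tensor-product pseudocoalgebra structure of \cref{thm:coalg-is-monoidal}. Hence, by \cref{thm:coalg-mor-comon-mor}(i), it is a braided pseudocomonoid morphism from $(\bang A \otimes \bang B, \com_{\bang A \otimes \bang B}, e_{\bang A \otimes \bang B})$ to $(\bang(A \otimes B), \com_{\bang(A\otimes B)}, e_{\bang(A \otimes B)})$; by the definitions \eqref{equ:con} and \eqref{equ:weak}, the target here is precisely $(\bang(A \otimes B), \con_{A \otimes B}, \weak_{A \otimes B})$, as in the statement. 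To identify the source, I would apply \cref{thm:id-is-comonoid-morphism}(i) to the pseudocoalgebras $\bang A$ and $\bang B$: it exhibits $\id_{\bang A \otimes \bang B}$ as a braided pseudocomonoid morphism from $(\bang A \otimes \bang B, \com_{\bang A \otimes \bang B}, e_{\bang A \otimes \bang B})$ to the tensor product $(\bang A, \con_A, \weak_A) \otimes (\bang B, \con_B, \weak_B)$ in $\SymCoMon \catK$, whose comultiplication and counit are the composites \eqref{equ:comultiplication-for-tensor} and \eqref{equ:counit-for-tensor}, \ie $\con_A \otimes \con_B$ followed by $\id \otimes \bra_{\bang A, \bang B} \otimes \id$, and $\weak_A \otimes \weak_B$. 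Since this braided pseudocomonoid morphism has an identity as its underlying map, its inverse is again a braided pseudocomonoid morphism, and precomposing $\monn_{A,B}$ with it yields exactly the braided pseudocomonoid morphism from the tensor-product comonoid to $(\bang(A \otimes B), \con_{A \otimes B}, \weak_{A \otimes B})$ that part~(i) asserts. Concretely, $\bar{\mon}_{A,B}$ and $\tilde{\mon}_{A,B}$ arise by pasting the pseudonaturality $2$-cells $n_{\monn_{A,B}}$ and $e_{\monn_{A,B}}$ of \eqref{equ:d-psdnat} and \eqref{equ:e-psdnat} with the $2$-cells $\com^2$, $e^2$ produced in the proof of \cref{thm:id-is-comonoid-morphism}(i), and the four coherence conditions for a braided pseudocomonoid morphism follow by combining those verified in \cref{thm:coalg-mor-comon-mor}(i) with those in \cref{thm:id-is-comonoid-morphism}(i).

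Part~(ii) goes the same way and is even easier: $\moni \co \unit \to \bang \unit$ is a pseudocoalgebra morphism from the unit pseudocoalgebra to $\bang\unit$ by \cref{thm:mu-are-coalgebra-morphisms}(ii), hence a braided pseudocomonoid morphism $(\unit, \com_\unit, e_\unit) \to (\bang\unit, \con_\unit, \weak_\unit)$ by \cref{thm:coalg-mor-comon-mor}(i); and \cref{thm:id-is-comonoid-morphism}(ii) identifies $(\unit, \com_\unit, e_\unit)$ with the unit $(\unit, \id_\unit, \id_\unit)$ of $\SymCoMon\catK$ via the identity braided pseudocomonoid morphism, which matches the diagrams in the statement, with $\bar{\mon}_\unit$ and $\tilde{\mon}_\unit$ built from $\com^0$ and $e^0$. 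I do not expect a genuine obstacle in any of this; the only point that needs care is distinguishing, in each diagram, the pseudocomonoid structure carried by $\bang A \otimes \bang B$ qua pseudocoalgebra from the tensor-product structure in $\SymCoMon\catK$, and \cref{thm:id-is-comonoid-morphism} is precisely the bridge between the two.
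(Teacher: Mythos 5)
Your proposal is correct and follows essentially the same route as the paper, which proves the corollary precisely by combining \cref{thm:mu-are-coalgebra-morphisms} with \cref{thm:coalg-mor-comon-mor}; your additional use of \cref{thm:id-is-comonoid-morphism} to match the comonoid structure on $\bang A \otimes \bang B$ qua pseudocoalgebra with the tensor-product structure appearing in the statement is exactly the bookkeeping the paper leaves implicit.
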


\begin{proof} By \cref{thm:mu-are-coalgebra-morphisms} and \cref{thm:coalg-mor-comon-mor}.
\end{proof}

\begin{corollary} \label{thm:to-be-added-2} \leavevmode
\begin{enumerate}
\item The 2-cell $\omega_{A, B, C}$  in \eqref{equ:lax-monoidal-associativity} is a pseudocomonoid 2-cell for all  pseudocoalgebras $A, B, C$.
\item The 2-cells $\kappa_A$ and $\zeta_A$ in \eqref{equ:lax-monoidal-unitality} are  pseudocomonoid 2-cells for every  pseudocoalgebra $A$.
\item The 2-cell $\theta_{A,B}$ in \eqref{equ:lax-monoidal-braiding} is a pseudocomonoid 2-cell for all pseudoalgebras $A, B$.
\item The 2-cell $\dig^2_{A,B}$ in \eqref{equ:comultiplication-monoidal-2} is a pseudocomonoid 2-cell for all pseudoalgebras $A, B$.
\item The 2-cell $\dig^0$ in \eqref{equ:comultiplication-monoidal-0}  is a  pseudocomonoid 2-cell.
\end{enumerate}
\end{corollary}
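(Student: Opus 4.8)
The plan is to deduce all five assertions at once from \cref{thm:to-be-added-1} together with part~(ii) of \cref{thm:coalg-mor-comon-mor}. \cref{thm:to-be-added-1} already tells us that each of $\omega_{A,B,C}$, $\kappa_A$, $\zeta_A$, $\theta_{A,B}$, $\dig^2_{A,B}$ and $\dig^0$ is a pseudocoalgebra 2-cell, and \cref{thm:coalg-mor-comon-mor}(ii) says that a pseudocoalgebra 2-cell between pseudocoalgebra morphisms is automatically a pseudocomonoid 2-cell for the symmetric pseudocomonoid structures that those pseudocoalgebras carry by the axioms for a linear exponential pseudocomonad. So, exactly as in the proof of \cref{thm:mu-comonoid-morphism}, the proof reduces to the observation that the source and target 1-cells bounding each of these 2-cells are pseudocoalgebra morphisms.

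That observation I would carry out case by case, but it is routine bookkeeping: in every instance the bounding 1-cells are built by composition and by $\otimes$ out of maps already known to be pseudocoalgebra morphisms --- the laxity components $\monn_{A,B}$ and the unit map $\moni$ (\cref{thm:mu-are-coalgebra-morphisms}), the comultiplication components $\dig_A$ (structure maps of cofree pseudocoalgebras), the braiding components $\bra_{A,B}$ (\cref{equ:lift-braiding-to-em}), and maps of the form $\bang f$ with $f$ a pseudocoalgebra morphism, which are pseudocoalgebra morphisms by pseudonaturality of $\dig$. Since the composite and the tensor of pseudocoalgebra morphisms are again pseudocoalgebra morphisms --- the tensor on $\Em{\catK}$ being 2-functorial by \cref{thm:coalg-is-monoidal} --- each bounding 1-cell is a pseudocoalgebra morphism. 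For instance, the target of $\dig^2_{A,B}$ is the composite $\bang\monn_{A,B} \circ \monn_{\bang A,\bang B} \circ (\dig_A \otimes \dig_B)$, all three factors of which are pseudocoalgebra morphisms.

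I expect no real obstacle here: once \cref{thm:to-be-added-1} is available the corollary is essentially immediate, and the only care needed is to keep track of which pseudocomonoid structures the resulting pseudocomonoid 2-cells are taken with respect to. By \cref{thm:coalg-mor-comon-mor}(i) these are precisely the symmetric pseudocomonoid structures determined by the linear exponential pseudocomonad on the relevant cofree and tensor pseudocoalgebras, consistently with \cref{thm:mu-comonoid-morphism} and \cref{thm:dig-comonoid-morphism}.
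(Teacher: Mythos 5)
Your proposal is correct and follows exactly the paper's own route: the paper proves this corollary by citing \cref{thm:to-be-added-1} together with part~(ii) of \cref{thm:coalg-mor-comon-mor}. Your extra bookkeeping that the bounding 1-cells are pseudocoalgebra morphisms (via \cref{thm:mu-are-coalgebra-morphisms}, the lifted braiding, and closure of pseudocoalgebra morphisms under composition and tensor) is the implicit content the paper leaves unstated, and it is accurate.
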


\begin{proof} By \cref{thm:to-be-added-1} and part~(ii) of \cref{thm:coalg-mor-comon-mor}.
\end{proof}

Next, we establish that contraction and weakening are braided monoidal pseudonatural transformations.

  \begin{proposition}
  \label{thm:contraction-is-monoidal} 
  The contraction pseudonatural transformation, with components on objects $\con_A \co \bang A \to \bang A \otimes \bang A$, is  braided monoidal, \ie we have invertible modifications with components
 \[
 \begin{gathered}
\begin{tikzcd}[column sep = 1.7cm]
 \bang A \otimes \bang B 
 	\ar[rr, "\monn_{A, B}"] 
	 \ar[d, "\con_A \otimes \con_B"'] 
	 \ar[drr, phantom, description, "\Two \con^2_{A,B}"]  & 
	 	& 
\bang (A \otimes B) 
	\ar[d, "\con_{A \otimes B}"] \\
 \bang A \otimes \bang A \otimes \bang B \otimes \bang B 
 	\ar[r, "\id_{\bang A} \otimes \bra_{\bang A, \bang B} \otimes \id_{\bang B}"']  & 
 \bang A \otimes \bang B \otimes \bang A \otimes \bang B 
 	\ar[r, "\monn_{A,B} \otimes \monn_{A,B}"'] & 
 \bang (A \otimes B) \otimes \bang (A \otimes B) ,
 \end{tikzcd} \\[1ex]
 \begin{tikzcd}[column sep = large]
 \unit 
 	\ar[r, "\moni"]  
 	\ar[d, "\id"] 
	\ar[dr, phantom, description, "\Two \con^0"] & 
\bang \unit\ar[d, "\con_{\unit}"]  \\
 \unit \otimes \unit  
 	\ar[r, "\moni \otimes \moni"'] & 
\bang \unit \otimes \bang \unit ,
\end{tikzcd}
\end{gathered}
 \] 
such that the appropriate coherence conditions hold.
\end{proposition}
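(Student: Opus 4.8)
The plan is to construct the two modifications out of 2-cells that are already in hand. By~\eqref{equ:con} we have $\con_A = \com_{\bang A}$, so $\con$ is the pseudonatural transformation $\com$ of \cref{thm:unfold-linear-exponential} whiskered on the source side by the cofree pseudofunctor $F \co \catK \to \Em{\catK}$, $A \mapsto \bang A$, and on the target side by the forgetful $U \co \Em{\catK} \to \catK$. Now $U$ is strict symmetric monoidal, and $F$ is sylleptic lax monoidal (as the cofree pseudofunctor of a symmetric lax monoidal pseudocomonad, \cf \cref{thm:coalg-is-monoidal} and the biadjunction lifting that follows it), with lax constraints that push down along $U$ to the maps $\monn$ and $\moni$ --- these being pseudocoalgebra morphisms by \cref{thm:mu-are-coalgebra-morphisms}. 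Consequently the target pseudofunctor $\bang(-) \otimes \bang(-)$ of $\con$ inherits a sylleptic lax monoidal structure whose binary constraint at $A, B$ is $(\monn_{A,B} \otimes \monn_{A,B}) \circ (\id_{\bang A} \otimes \bra_{\bang A, \bang B} \otimes \id_{\bang B})$ and whose nullary constraint is $\moni \otimes \moni$ --- exactly the domains appearing in the statement.

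With this set-up, I would take $\con^2_{A,B} \defeq \bar{\mon}_{A,B}$ and $\con^0 \defeq \bar{\mon}_\unit$, the invertible 2-cells exhibited in \cref{thm:mu-comonoid-morphism} witnessing $\monn_{A,B}$ and $\moni$ as braided pseudocomonoid morphisms out of the tensor-product pseudocomonoid $\bang A \otimes \bang B$, respectively the unit pseudocomonoid $\unit$ (using \cref{thm:id-is-comonoid-morphism} to identify the relevant pseudocomonoid structures). By inspection of \cref{thm:mu-comonoid-morphism} these have precisely the boundaries required of $\con^2_{A,B}$ and $\con^0$; conceptually they are the data $(U \ast \com \ast F)^2$ and $(U \ast \com \ast F)^0$ of the whiskered transformation, obtained by pasting the components of $\com^2$ with the pseudonaturality 2-cells of $\com$ at the pseudocoalgebra morphisms $\monn_{A,B}$ and $\moni$ --- compare the proof of \cref{thm:coalg-mor-comon-mor}. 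That $\con^2$ is an invertible modification in $A$ and $B$ then follows from the pseudonaturality of $\monn$ and of $\com$ together with the compatibility of the pseudonaturality data of $\com$ with composition; from the whiskering description it is automatic.

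It remains to check the three coherence conditions for a monoidal pseudonatural transformation (\cref{thm:monoidal-pseudonatural-transformation}) and the extra condition making it braided (\cref{thm:bra-mon-transformation}). In each case I expect the verification to reduce to combining the corresponding coherence condition for $\com$ as a braided monoidal pseudonatural transformation (listed in \cref{thm:unfold-linear-exponential}) with a coherence condition for $\monn$ as a sylleptic lax monoidal pseudofunctor (\cref{thm:sym-lax-monoidal-pseudocomonad}): the associativity axiom uses the first coherence condition for $\com$ together with the modification $\omega$ of~\eqref{equ:lax-monoidal-associativity}, the two unit axioms use the remaining coherence conditions for $\com$ together with the modifications $\kappa$, $\zeta$ of~\eqref{equ:lax-monoidal-unitality}, and the braided axiom uses the braided coherence condition on $\com$ together with the modification $\theta$ of~\eqref{equ:lax-monoidal-braiding} --- all modulo the canonical comparisons of \cref{thm:id-is-comonoid-morphism}. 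The main obstacle is exactly this bookkeeping: pairing up the Day--Street axioms on the two sides so that each required pasting identity falls out, while keeping track of the comparison 2-cells between $\com_{\bang A \otimes \bang B}$ and the tensor-product pseudocomonoid structure on $\bang A \otimes \bang B$. The whiskering picture serves as a useful guide here, since it guarantees in advance that no genuinely new coherence can be needed.
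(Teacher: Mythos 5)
Your proposal matches the paper's own proof: the paper likewise observes that $\con$ is the whiskering of the braided monoidal transformation $\com$ with the sylleptic lax monoidal pseudofunctor $\bang$ (equivalently, with $F$ and $U$), hence braided monoidal, and identifies $\con^2_{A,B}$ and $\con^0$ with the 2-cells $\bar{\mon}_{A,B}$ and $\bar{\mon}_\unit$ of \cref{thm:mu-comonoid-morphism}. Your extra remarks on how the coherence axioms would be verified only elaborate what the whiskering argument already guarantees.
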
 

\begin{proof} By definition, $\con$ is obtained by whiskering the sylleptic lax monoidal pseudofunctor $\bang$ and the braided monoidal natural transformation $\com$ and therefore it is a braided monoidal natural transformation.
Explicitly, the 2-cells $\con^2_{A,B}$ are part of the data making $\monn_{A,B}$ into a comonoid morphism in part~(i) of \cref{thm:mu-comonoid-morphism} and the 2-cell $\con^0$ is part
of the data making $\moni$ into a comonoid morphism in part~(ii) of \cref{thm:mu-comonoid-morphism}.
\end{proof}

 \begin{proposition}
 \label{thm:weakening-is-monoidal}
 The weakening pseudonatural transformation, with components on objects $\weak_A \co \bang A \to \unit$ is 
braided monoidal, \ie we have invertible modifications with components
 \[
\begin{tikzcd}
 \bang A \otimes \bang B 
 	\ar[r, "\monn_{A,B}"] 
 	\ar[d, "\weak_A \otimes \weak_B"'] 
	\ar[dr, phantom, description, "\Two \weak^2_{A,B}"] & 
\bang (A \otimes B) 
	\ar[d, "\weak_{A \otimes B}"] \\
 \unit \otimes \unit 
 	\ar[r, "\id_\unit"'] &
\unit  ,
\end{tikzcd} \qquad
 \begin{tikzcd} 
 \unit 
 	\ar[d, "\id_\unit"']
	\ar[r, "\moni"] 
	\ar[dr, phantom, description, "\Two \weak^0"]& 
 \bang \unit 
 	\ar[d, "\weak_I"] \\
\unit 
	\ar[r, "\id_\unit"']  &  
\unit  ,
\end{tikzcd} 
 \]
 such that the appropriate coherence conditions hold.
 \end{proposition}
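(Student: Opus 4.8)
The plan is to proceed exactly as in the proof of~\cref{thm:contraction-is-monoidal}. By the definition of weakening in~\eqref{equ:weak} we have $\weak_A = e_{\bang A}$, so the pseudonatural transformation $\weak$ is obtained by whiskering the sylleptic lax monoidal pseudofunctor $\bang$ with the counit $e$ of the symmetric pseudocomonoid structure on pseudocoalgebras, which is a braided monoidal pseudonatural transformation in the sense of~\cref{thm:bra-mon-transformation} (this being part of the data unfolded in~\cref{thm:unfold-linear-exponential}); equivalently, $\weak = U \cdot e \cdot F$ is the whiskering of $e$ by the cofree pseudofunctor $F \co \catK \to \Em{\catK}$ and the forgetful 2-functor $U$, which is strict symmetric monoidal by~\cref{thm:coalg-is-monoidal}. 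Either way, $\weak$ inherits the structure of a braided monoidal pseudonatural transformation.

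First I would identify the two required invertible modifications explicitly: the components $\weak^2_{A,B}$ and $\weak^0$ are precisely the 2-cells $\tilde{\mon}_{A,B}$ and $\tilde{\mon}_\unit$ of~\cref{thm:mu-comonoid-morphism}, \ie the weakening-compatibility (counit-preservation) parts of the data making $\monn_{A,B} \co \bang A \otimes \bang B \to \bang(A \otimes B)$ and $\moni \co \unit \to \bang \unit$ into braided pseudocomonoid morphisms. This is the exact analogue of how $\con^2$ and $\con^0$ were read off from the contraction-compatibility 2-cells $\bar{\mon}_{A,B}$ and $\bar{\mon}_\unit$ in the proof of~\cref{thm:contraction-is-monoidal}. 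The three coherence conditions of~\cref{thm:monoidal-pseudonatural-transformation}, together with the braiding condition of~\cref{thm:bra-mon-transformation}, then reduce to the corresponding conditions for $e$ in $\Em{\catK}$ whiskered by $F$ and $U$; since $U$ is strict monoidal, the unit and associativity constraints on the forgetful side are identities, so nothing new needs to be computed beyond what is already packaged in~\cref{thm:mu-comonoid-morphism} and~\cref{thm:unfold-linear-exponential}.

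The main obstacle is a mild one: bookkeeping of the canonical invertible 2-cells that are suppressed throughout (\cf the conventions following~\cref{def:gray-monoid} and~\cref{thm:coh-numb-convention}), and checking that the pasting diagrams displayed in the statement agree, up to these essentially unique isomorphisms, with the ones produced by the whiskering construction --- in particular tracking the implicit braiding $\bra_{\bang A, \bang B}$ inside $\monn$ consistently on both sides. As for~$\con$, no genuinely new calculation is required.
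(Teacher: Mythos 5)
Your proof is correct and follows essentially the same route as the paper: the paper likewise observes that $\weak$ is the whiskering of the sylleptic lax monoidal pseudofunctor $\bang$ with the braided monoidal pseudonatural transformation $e$, hence braided monoidal, and notes that the 2-cells can be given explicitly via \cref{thm:mu-comonoid-morphism}, exactly as you identify $\weak^2_{A,B}$ and $\weak^0$ with $\tilde{\mon}_{A,B}$ and $\tilde{\mon}_\unit$. Your extra remarks on the bookkeeping of suppressed canonical isomorphisms are harmless elaborations of the same argument.
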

 
 \begin{proof} By definition, the pseudonatural transformation $\weak$ is obtained by whiskering the sylleptic lax monoidal pseudofunctor $\bang$ and the braided monoidal pseudonatural transformation $e$. Therefore, it is 
 braided monoidal. The 2-cells can also be given explicitly using \cref{thm:mu-comonoid-morphism}.
 \end{proof}
 
 \begin{remark} \label{thm:compare-with-jacq} Our axioms for a linear exponential pseudocomonad imply those in \cite[Definition~90]{JacqC:catcnis}.
Indeed, for each object $A \in \catK$, the object $\bang A$ is a symmetric pseudocomonoid, the maps 
$\con_A \co \bang A \to \bang A \otimes \bang A$ and $\weak_A \co \bang A \to \unit$ are pseudocoalgebra morphisms by \cref{thm:con-and-weak-coalgebra-maps}, 
and $\dig_A \co \bang A \to \bbang A$ is a morphism of braided pseudocomonoids by \cref{thm:dig-comonoid-morphism}. We have not checked yet whether the
converse implication holds.
\end{remark}

\subsection*{Cofree symmetric pseudocomonoids and linear exponential pseudocomonads} We now provide a method to construct linear exponential pseudocomonads
in the sense of \cref{thm:linear-exponential-comonad}, which we will use in our application in \cref{sec:prof}. We begin by considering the setting of a symmetric Gray monoid~$\catK$ and assume that it 
admits the construction of cofree symmetric pseudocomonoids, in the sense that the forgetful 2-functor $U \co \SymCoMon{\catK} \to \catK$ has a right biadjoint~$F$, mapping
an object~$A \in \catK$ to the cofree symmetric pseudocomonoid on it, as in
\begin{equation}
\label{equ:cofree-ccmon}
\begin{tikzcd}[column sep = large]
\catK
 \ar[r, shift left =2, "F"] 
  \ar[r, description, phantom, "\scriptstyle \top"] 
	&  
\SymCoMon{\catK}   .
	\ar[l, shift left = 2, "U"] 
\end{tikzcd}
\end{equation}
Assuming further that the biadjunction in \eqref{equ:cofree-ccmon} is comonadic, in the sense that the canonical comparison 2-functor $K$ in 
\[
\begin{tikzcd}[column sep = {2cm,between origins}]
\SymCoMon{\catK} \ar[rr, pos = (.4), "K"] \ar[dr, "U"'] & & \Em{\catK} \ar[dl, "U"] \\
 & \catK & 
 \end{tikzcd}
 \]
 is a biequivalence, we obtain a linear exponential pseudocomonad, as stated in the next result.

\begin{theorem} 
\label{thm:case-1}
Let $\catK$ be a symmetric Gray monoid. Assume that~$\catK$ admits the construction of cofree symmetric pseudocomonoids and that $\SymCoMon{\catK}$
is comonadic over $\catK$. Then the pseudocomonad on~$\catK$ determined by the forgetful-cofree biadjunction  in \eqref{equ:cofree-ccmon} is a linear exponential pseudocomonad.
\end{theorem}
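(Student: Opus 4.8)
The plan is to reduce the statement to the characterisation of linear exponential pseudocomonads recorded in \cref{thm:canonical-is-linear-exponential}: a symmetric lax monoidal pseudocomonad is a linear exponential pseudocomonad exactly when the symmetric monoidal structure on its 2-category of pseudocoalgebras from \cref{thm:coalg-is-monoidal} is cartesian. Thus there are two things to show: first, that the pseudocomonad $\bang = UF$ induced by the biadjunction~\eqref{equ:cofree-ccmon} is symmetric lax monoidal in the sense of \cref{thm:sym-lax-monoidal-pseudocomonad}; and then, that $\Em{\catK}$ with the resulting monoidal structure is cartesian.

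For the first point, recall from \cref{thm:sym-comon-is-monoidal} that $\SymCoMon{\catK}$ carries a symmetric monoidal structure for which the forgetful 2-functor $U \co \SymCoMon{\catK} \to \catK$ is strict symmetric monoidal. Since $U$ is moreover a left biadjoint, the two-dimensional doctrinal adjunction argument already used for the lift of the forgetful--cofree biadjunction in the corollary following \cref{thm:coalg-is-monoidal} (\cf~\cite[pages~62--63]{GarnerR:enrcfc} and~\cite[Proposition~15]{DayStreet}) makes the cofree pseudofunctor $F$ sylleptic lax monoidal and the biadjunction $U \dashv F$ symmetric monoidal, so that, in particular, the unit $\eta \co \Id \Rightarrow FU$ and the counit $\der \co UF \Rightarrow \Id$ of this adjunction are monoidal pseudonatural transformations. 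It follows that the induced pseudocomonad $\bang = UF$, with comultiplication $\dig = U\eta F$ and counit $\der$, is a symmetric lax monoidal pseudocomonad.

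For the second point, I would analyse the comparison 2-functor $K \co \SymCoMon{\catK} \to \Em{\catK}$, which carries a symmetric pseudocomonoid $(A, \com, e)$ to the pseudocoalgebra $(A, U\eta_A)$ and commutes with the forgetful 2-functors to $\catK$. Because $\eta$ and $\der$ are monoidal and $U$ is strict monoidal, $K$ can be promoted to a symmetric strong monoidal 2-functor over $\catK$: its binary structure cell compares $U\eta_{A \otimes B}$ with $\monn_{A,B} \circ (U\eta_A \otimes U\eta_B)$ --- the comparison being supplied by the monoidal structure of $\eta$, using that the tensor product in $\SymCoMon{\catK}$ has underlying object $A \otimes B$ and that the lax monoidal structure of $\bang = UF$ is obtained by applying $U$ to that of $F$ --- its unit cell compares $U\eta_\unit$ with $\moni$, and both cells are pseudocoalgebra equivalences, their underlying maps being identities. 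By hypothesis $K$ is a biequivalence, and so it is a symmetric monoidal biequivalence. Now the symmetric monoidal structure on $\SymCoMon{\catK}$ from \cref{thm:sym-comon-is-monoidal} is, by construction via~\cite[Theorem~2.15]{CarboniA:carbii}, the cartesian one associated with the finite products of \cref{thm:ccmon-finprod}; and being cartesian --- the unit terminal and the tensor a binary product, with the projections then forced to be the canonical ones --- is invariant under symmetric monoidal biequivalence, so the monoidal structure on $\Em{\catK}$ is cartesian as well. Invoking \cref{thm:canonical-is-linear-exponential} --- equivalently, \cref{thm:prod-iff-all-ccmon} applied to $\Em{\catK}$ after strictifying it to a symmetric Gray monoid via \cref{thm:strictification-symmetric-monoidal} --- then shows that $\bang$ is a linear exponential pseudocomonad.

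The step I expect to be the main obstacle is the verification that $K$ genuinely carries a symmetric strong monoidal structure that is simultaneously compatible with the monoidal structure on $\SymCoMon{\catK}$ built from finite products in \cref{thm:sym-comon-is-monoidal} and with the one on $\Em{\catK}$ built by lifting along the pseudocomonad in \cref{thm:coalg-is-monoidal}; although the required 2-cells are forced by the monoidal structure of $\eta$ and the strictness of $U$, checking all the coherence axioms is a lengthy computation. If one prefers to bypass it, an alternative is to transport the data of \cref{thm:prod-iff-all-ccmon}(ii) directly along a pseudo-inverse $V \co \Em{\catK} \to \SymCoMon{\catK}$ of $K$, equipping the underlying object of each pseudocoalgebra with the symmetric pseudocomonoid structure it acquires via $V$, and checking, pseudonaturally and monoidally in the pseudocoalgebra, that all the resulting structure maps and constraint 2-cells lie in $\Em{\catK}$.
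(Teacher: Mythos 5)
Your proposal is correct and follows essentially the same route as the paper: doctrinal adjunction along the strict symmetric monoidal forgetful 2-functor of \cref{thm:sym-comon-is-monoidal} to obtain the symmetric lax monoidal pseudocomonad, and then the comonadicity hypothesis to transfer the cartesianness of the monoidal structure from $\SymCoMon{\catK}$ to $\Em{\catK}$, concluding via \cref{thm:prod-iff-all-ccmon} (equivalently \cref{thm:canonical-is-linear-exponential}). Your explicit promotion of the comparison 2-functor to a symmetric strong monoidal biequivalence is just a more detailed rendering of the paper's terse statement that, by comonadicity, $\Em{\catK}$ inherits finite products from $\SymCoMon{\catK}$.
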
 

\begin{proof} By \cref{thm:ccmon-finprod}, the symmetric
monoidal structure of $\catK$ lifts to $\SymCoMon{\catK}$ in such a way that the forgetful 2-functor $U \co \SymCoMon{\catK} \to \catK$ is  a strict sylleptic monoidal 2-functor.
Therefore its right biadjoint is automatically sylleptic lax monoidal by~\cite[Proposition~15]{DayStreet} and we obtain a symmetic lax monoidal pseudocomonad $\bang (-) \co \catK \to \catK$ by 
a 2-categorical version of doctrinal adjunction~\cite[pages~62-63]{GarnerR:enrcfc}.
By comonadicity, $\Em{\catK}$ inherits finite products from $\SymCoMon{\catK}$. Thus, every pseudocoalgebra admits a canonical symmetric pseudocomonoid structure and the required 
pseudonaturality axioms hold by \cref{thm:prod-iff-all-ccmon}.
\end{proof}

Our applications in \cref{sec:prof} involve a special case of \cref{thm:case-1} which we isolate for later reference. Let us now consider a symmetric Gray monoid $\catK$ that is compact closed, in the sense of~\cite{StayM:comcb}, and assume that it admits the construction of free symmetric pseudomonoids, in the sense that the forgetful 2-functor $U \co \SymCoMon{\catK} \to \catK$ has a left biadjoint $F$, as in
\begin{equation}
\label{equ:free-mon}
\begin{tikzcd}[column sep = large]
 \catK 
 	\ar[r, shift left =2,  "F"] 
 	\ar[r, description, phantom, "\scriptstyle \bot"] 
	&  
\SymCoMon{\catK} . 
	\ar[l,shift left = 2,  "U"] 
\end{tikzcd}
\end{equation}
Assuming further that the biadjunction in~\eqref{equ:free-mon} is monadic, we obtain again a linear exponential pseudocomonad, as stated in the final result of this section.

\begin{theorem} 
\label{thm:case-2}
Let $\catK$ be a compact closed symmetric Gray monoid. Assume that $\catK$ admits the construction of free symmetric pseudomonoids and that $\SymCoMon{\catK}$
is monadic over $\catK$, and let $\wn(-) \co \catK~\to~\catK$ be the pseudomonad associated to the forgetful-free biadjunction in~\eqref{equ:free-mon}.
Then the pseudocomonad $\bang(-) \co \catK \to \catK$ determined by duality from $\wn(-) \co \catK \to \catK$ is a linear exponential pseudocomonad.
\end{theorem}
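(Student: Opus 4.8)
The plan is to reduce the statement to \cref{thm:case-1} by using the compact closed duality of $\catK$ to transport the forgetful-free biadjunction for symmetric pseudomonoids into a forgetful-cofree biadjunction for symmetric pseudocomonoids. Since $\catK$ is compact closed in the sense of~\cite{StayM:comcb}, the dualization pseudofunctor $(\arghole)^\ast$ is a symmetric monoidal biequivalence between $\catK$ and $\catK^\op$, where $\catK^\op$ reverses the direction of $1$-cells. First I would check that dualizing the relevant algebraic structure levelwise induces a biequivalence between $\SymMon{\catK}$ and $\SymCoMon{\catK}$, contravariant on morphisms: a symmetric pseudomonoid $(A, m, u)$ is sent to the symmetric pseudocomonoid $(A^\ast, m^\ast, u^\ast)$, a braided pseudomonoid morphism to a braided pseudocomonoid morphism in the opposite direction, and a pseudomonoid $2$-cell to a pseudocomonoid $2$-cell. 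Because $(\arghole)^\ast$ is symmetric monoidal it preserves the data and coherence conditions involved, so this biequivalence is well defined and is compatible, up to equivalence, with the two forgetful $2$-functors and with $(\arghole)^\ast \co \catK^\op \to \catK$.

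Next I would use that biadjunctions, (co)monadicity, and the pseudomonad or pseudocomonad generated by a biadjunction all transport across biequivalences. Transporting the forgetful-free biadjunction $\catK \leftrightarrows \SymMon{\catK}$ of~\eqref{equ:free-mon} along the biequivalence above turns it into a forgetful-cofree biadjunction $\catK \leftrightarrows \SymCoMon{\catK}$: the left biadjoint $F$ becomes a right biadjoint, precisely because $1$-cells are reversed, so that $\catK$ admits the construction of cofree symmetric pseudocomonoids; and the monadicity of $U \co \SymMon{\catK} \to \catK$ becomes the comonadicity of $U \co \SymCoMon{\catK} \to \catK$, since the comparison $2$-functor of the dual biadjunction is the dual of the original comparison, hence again a biequivalence. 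Thus both hypotheses of \cref{thm:case-1} hold for $\catK$.

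It then remains to identify the pseudocomonads. By construction, the pseudocomonad obtained from $\wn(\arghole)$ by duality satisfies $\bang A \simeq \bigl(\wn(A^\ast)\bigr)^\ast$, with comultiplication and counit the duals of the multiplication and unit of $\wn(\arghole)$. On the other hand, the underlying object of the cofree symmetric pseudocomonoid on $A$ is exactly $\bigl(\wn(A^\ast)\bigr)^\ast$, and chasing through the duality shows that the pseudocomonad determined by the forgetful-cofree biadjunction of~\eqref{equ:cofree-ccmon} agrees, up to equivalence, with $\bang(\arghole)$. Applying \cref{thm:case-1} then yields that $\bang(\arghole)$ is a linear exponential pseudocomonad, as required.

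The step I expect to be the main obstacle is the bookkeeping underlying the first two paragraphs: verifying that the dualization pseudofunctor genuinely induces the stated biequivalence $\SymMon{\catK} \simeq \SymCoMon{\catK}$ compatibly with the forgetful $2$-functors, and that monadicity --- a property stated in terms of a comparison $2$-functor being a biequivalence --- really does transport to comonadicity rather than to some weaker variant. These are instances of transport of structure along symmetric monoidal biequivalences, but spelling out the $2$-categorical coherence, in particular the interaction of the duality $2$-cells with the (co)monoid coherence data and with the comparison $2$-functors, is where the effort lies.
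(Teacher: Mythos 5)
Your proposal is correct and is in essence the paper's own argument: both reduce the statement to \cref{thm:case-1} by duality. The only difference is organizational --- the paper first applies the (formal, 1-cell) dual of \cref{thm:case-1} to conclude that $\wn(-)$ is a linear exponential pseudo\emph{monad} and then uses the compact closed duality, via $\bang A \defeq \big(\wn(A^\perp)\big)^\perp$, to transport that conclusion to a linear exponential pseudocomonad, whereas you transport the \emph{hypotheses} (existence of cofree symmetric pseudocomonoids and comonadicity) along the dualization biequivalence and then invoke \cref{thm:case-1} verbatim; the transport-of-structure bookkeeping you identify as the main effort is exactly what the paper also leaves implicit in the phrase ``exploit the duality available in compact closed bicategories.''
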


\begin{proof} By the dual of \cref{thm:case-1}, the pseudomonad $\wn(-) \co \catK \to \catK$ is what we may call a linear exponential pseudomonad, \ie the dual notion of that of
a linear exponential pseudocomonad.

We can then exploit the duality that is available in compact closed bicategories to turn this linear exponential pseudomonad $\wn(-) \co \catK \to \catK$ into a linear exponential 
pseudocomonad $\oc (-) \co \catK \to \catK$, as desired. On objects, for example, we define 
\begin{equation}
\label{equ:wn-to-bang}
\bang A \defeq \big( \wn (A^\bot) \big)^\bot ,
\end{equation}
for $A \in \catK$.
\end{proof}

The linear exponential comonad constructed in \cref{thm:case-2} can be seen as
the counterpart of the notion of a bi-exponential in \cite[Section~2.2]{HylandM:gluoml}.
Note that the application of \cref{thm:case-1} and \cref{thm:case-2} requires to check the appropriate comonadicity or monadicity assumptions. For this, one can appeal 
to the existing  monadicity results for pseudomonads~\cite{CreurerI:bectpm,HermidaC:desfsr} or, as we shall do in \cref{sec:prof}, proceed by direct inspection.

\section{Products and the Seely equivalences} 
\label{sec:products}

We continue to work with our fixed symmetric Gray monoid $\catK$, but now we assume also that $\catK$ has finite products (in a bicategorical sense, following the convention set in \cref{sec:prelim}).
Recall that we write $A \with B$ for the binary product of $A, B \in \catK$, with projections $\pi_1 \co A \with B \to A$ and $\pi_2 \co A \with B \to B$,
and $\term$ for the terminal object.  Let us also fix a linear exponential pseudocomonad $(\bang, \dig, \der)$ on $\catK$ as in~\cref{thm:linear-exponential-comonad}.
The next theorem relates the cartesian and monoidal structures on~$\catK$. Its proof was suggested by the observation in~\cite{GarnerR:hyplem}
that  the Seely equivalences arise in a canonical way.

\begin{theorem}  \label{thm:seely-equivalences-monoidal}
The 2-functor $\bang(\arghole) \co \catK \to \catK$ admits the structure of a sylleptic strong monoidal 2-functor 
\[
(\bang, \seell, \seeli)  \co (\catK, \with, \term)   \to  (\catK, \otimes, \unit) . 
\]
\end{theorem}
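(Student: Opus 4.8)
The plan is to obtain the Seely equivalences by transporting, along the forgetful-cofree biadjunction between $\catK$ and $\Em{\catK}$, the \emph{cartesian} monoidal structure of $\Em{\catK}$ established in \cref{thm:coalgebras-is-cartesian}. The key observation is that $\bang(\arghole) \co \catK \to \catK$ factors as $U F$, where $F \co \catK \to \Em{\catK}$ is the right biadjoint (cofree pseudocoalgebra) and $U \co \Em{\catK} \to \catK$ is the forgetful 2-functor, which is strict symmetric monoidal by \cref{thm:coalg-is-monoidal}. Since $\catK$ has finite products, so does $\Em{\catK}$ by \cref{thm:coalgebras-is-cartesian}; by the bicategorical version of \cite[Theorem~2.15]{CarboniA:carbii}, the cartesian structure $(\with, \term)$ on $\Em{\catK}$ is symmetric monoidal, and moreover $F \co \catK \to \Em{\catK}$, being a \emph{right} biadjoint, preserves finite products, hence is a strong symmetric (a fortiori sylleptic) monoidal 2-functor $(\catK, \with, \term) \to (\Em{\catK}, \with, \term)$: the comparison maps $F(A \with B) \to FA \with FB$ and $F(\term) \to \term$ are equivalences. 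Composing with the strict symmetric monoidal $U$, and using that $U$ sends the product $FA \with FB$ in $\Em{\catK}$ to $\bang A \otimes \bang B$ in $\catK$ (this is exactly the content of the description of binary products in $\SymCoMon{\catK}$ recalled after \cref{thm:ccmon-finprod}, transported to $\Em{\catK}$ via \cref{thm:id-is-comonoid-morphism}), one obtains a sylleptic strong monoidal structure on $U F = \bang(\arghole)$ of the asserted type.

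Concretely, I would first spell out the two comparison maps. The Seely map $\seell_{A,B} \co \bang(A \with B) \to \bang A \otimes \bang B$ is defined as
\[
\begin{tikzcd}[column sep = large]
\bang(A \with B) \ar[r, "(\bang \pi_1 {,} \bang \pi_2)"] & \bang A \otimes \bang B \mathrlap{,}
\end{tikzcd}
\]
where the pairing is taken with respect to the product structure on $\Em{\catK}$, i.e.\ using the comultiplication of $\bang(A \with B)$ as a symmetric pseudocomonoid to form $\con_{A \with B}$ and then applying $\bang \pi_1 \otimes \bang \pi_2$; equivalently it is $U$ applied to the canonical equivalence $F(A \with B) \xrightarrow{\ \simeq\ } FA \with FB$ in $\Em{\catK}$. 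The map $\seeli \co \unit \to \bang \term$ is $U$ applied to the equivalence $\term_{\Em{\catK}} \xrightarrow{\ \simeq\ } F(\term)$, which on underlying objects is $\moni \co \unit \to \bang \term$ precomposed with the structure identifications. That $\seell_{A,B}$ and $\seeli$ are equivalences in $\catK$ follows since they are images under $U$ of equivalences in $\Em{\catK}$ and $U$ preserves equivalences. The associativity constraint $\omega$, the unitality constraints $\kappa, \zeta$, and the braiding constraint $\theta$ for $(\bang, \seell, \seeli)$ are obtained by applying $U$ to the corresponding constraints of the strong monoidal 2-functor $F$; since $U$ is strict symmetric monoidal, no further coherence data is introduced, and the coherence axioms for a sylleptic lax monoidal pseudofunctor (\cref{thm:sym-mon-functor}) for $(\bang, \seell, \seeli)$ reduce to those for $F$, which hold because $F$ is a product-preserving 2-functor between 2-categories with finite products — the relevant coherence there is exactly the (bicategorical) coherence of the cartesian structure, \cite[Theorem~2.15]{CarboniA:carbii}.

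I expect the main obstacle to be \emph{bookkeeping the strictness mismatch}: $\catK$ and $\Em{\catK}$ both carry a symmetric Gray monoid $(\otimes, \unit)$, but the cartesian structure $(\with, \term)$ that we are transporting is only a symmetric monoidal bicategory, not a Gray monoid, so one must be careful that "sylleptic strong monoidal 2-functor $(\catK, \with, \term) \to (\catK, \otimes, \unit)$" is interpreted in the appropriate sense (a morphism from a symmetric monoidal bicategory to a symmetric Gray monoid), and that the passage through $\Em{\catK}$ respects this. A secondary, purely computational, point is verifying that $U$ of the product $FA \with FB$ is genuinely $\bang A \otimes \bang B$ with $\seell$ the stated pairing — this is where one invokes the explicit description of products of symmetric pseudocomonoids recalled after \cref{thm:ccmon-finprod} together with \cref{thm:id-is-comonoid-morphism}, and the remark of \cite{GarnerR:hyplem} that the Seely equivalences arise canonically is precisely the statement that this identification is the natural one. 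None of these steps requires genuinely new coherence input beyond what \cref{thm:coalg-is-monoidal}, \cref{thm:coalgebras-is-cartesian}, and \cite[Theorem~2.15]{CarboniA:carbii} already provide; the work is in assembling them.
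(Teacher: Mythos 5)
Your proposal is correct and follows essentially the same route as the paper: factor $\bang(\arghole) = U F$ through $\Em{\catK}$, use that the monoidal structure on $\Em{\catK}$ is cartesian (\cref{thm:coalgebras-is-cartesian}) so that the product-preserving right biadjoint $F$ is sylleptic strong monoidal, and compose with the strict symmetric monoidal $U$ from \cref{thm:coalg-is-monoidal}. The only discrepancy is a harmless convention: your $\seell_{A,B} \co \bang(A \with B) \to \bang A \otimes \bang B$ is the adjoint quasi-inverse of the paper's Seely map $\seell_{A,B} \co \bang A \otimes \bang B \to \bang(A \with B)$, which is the direction required by the definition of a (strong) monoidal constraint.
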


\begin{proof} Since the pseudocomonad is lax monoidal, the 2-category of pseudocoalgebras $\Em{\catK}$ admits a symmetric monoidal structure by~\cref{thm:coalg-is-monoidal}. Since the pseudocomonad is a linear exponential pseudocomonad, the monoidal structure on $\Em{\catK}$ is cartesian by~\cref{thm:coalgebras-is-cartesian}. Therefore the product  of two cofree pseudocoalgebras is their tensor 
product, with its evident induced pseudocoalgebra structure.

Let us now consider the biadjunction~in~\eqref{equ:em-adjunction} between $\catK$ and $\Em{\catK}$, where the left biadjoint is the forgetful 2-functor and the right biadjoint $F$ sends an object of $\catK$ to the cofree pseudocoalgebra on it. Since right biadjoints preserve products, the evident maps
\begin{equation}
\label{equ:seely-key}
f_{A,B} \co F(A \with B) \to FA \otimes FB , \qquad f \co F(\term) \to \unit ,
\end{equation} 
are equivalences. Since $F \co (\catK, \with, \term) \to (\Em{\catK}, \otimes, \unit)$ preserves finite products, it is a sylleptic strong monoidal pseudofunctor
between the cartesian monoidal structures, \cf \cite{CarboniA:carbii}. The forgetful pseudofunctor $U$ is a sylleptic strict monoidal pseudofunctor $U \co (\Em{\catK}, \otimes, \unit) \to (\catK, \otimes, \unit)$ 
by~\cref{thm:coalg-is-monoidal} and therefore the composite, 
\[
\begin{tikzcd}
(\catK, \with, \top) \ar[r, "F"] &
(\Em{\catK}, \otimes, \unit) \ar[r, "U"] &
(\catK, \otimes, \unit) ,
\end{tikzcd}
\]
 is a sylleptic strong monoidal pseudofunctor. But this is exactly the underlying pseudofunctor of the pseudocomonad, as desired.
\end{proof}

We can unfold explicitly the definition of the constraints of the sylleptic strong monoidal 2-functor of \cref{thm:seely-equivalences-monoidal}, which we call 
\emph{Seely equivalences}, since they are the 2-categorical counterparts of the Seely 
isomorphisms~\cite{Seely}. 
These have the form
\begin{equation}
\label{equ:seely-maps}
\seell_{A, B} \co \bang A \otimes \bang B \to \bang (A \with B) , \qquad \seeli \co I \to \bang \top .
\end{equation}
The map $\seell_{A,B}$ is the transpose of 
\[
\begin{tikzcd}
\bang A \otimes \bang B \ar[r, "(\der_A \otimes \weak_B {,} \weak_A \otimes \der_B)"]  &[5em] 
(A \otimes \unit) \with (\unit \otimes B) \ar[r, "\id"] &
A \with B 
\end{tikzcd}
\]
across the biadjunction in~\eqref{equ:em-adjunction}, \ie the image of this map under the adjoint equivalence 
\[
\catK[\bang A \otimes \bang B, A \with B] \simeq \Em{\catK}[ \bang A \otimes \bang B,  \bang (A \with B)] .
\]
Thus, it is the essentially unique\footnote{{Cf.}~comments below~\eqref{equ:aux-equiv}.}~pseudocoalgebra morphism from $\bang A \otimes \bang B$ to $\bang (A \with B)$ with an invertible 2-cell
\[
\begin{tikzcd}[column sep = large]
\bang A \otimes \bang B \ar[r, "\seell_{A,B}"] \ar[dr, bend right = 20,  {pos=.4}, "(\der_A \otimes \weak_B {,} \weak_A \otimes \der_B)"'] & \bang (A \with B) 
\ar[d, "\der_{A \with B}"] \\
\phantom{} \ar[ur, phantom, {pos=.7},  "\Two"] 	& A \with B .
\end{tikzcd}
\]
Using the definition in~\eqref{equ:unfold-f-sharp}, we obtain that $\seell_{A,B}$ is the composite
 \[
 \begin{tikzcd}[scale=0.3]
 \bang A \otimes \bang B
 	\ar[r, "\dig_A \otimes \dig_B"] &[2em]
\bbang A \otimes \bbang B
	\ar[r, "\monn_{\bang A, \bang B}"] &
\bang (\bang A \otimes \bang B)
	\ar[d, "\bang (\der_A \otimes \weak_B {,} \weak_A \otimes \der_B)"] \\ 
	& &  \bang ( (A \otimes \unit) \with (\unit \otimes B) ) \ar[d, "\id"]  \\
	& &  \bang (A \with B) .
\end{tikzcd}
\]
Furthermore, the adjoint quasi-inverse  ${\seell}^\bullet_{A,B}$ of $\seell_{A,B}$ (\cf the notation introduced at the start of \cref{sec:prelim}) is the map $f_{A,B}$ in~\eqref{equ:seely-key}, \ie 
\[
\begin{tikzcd}[column sep = large] 
\bang (A \with B) \ar[r, "\con_{A \with B}"] &
\bang (A \with B) \otimes \bang (A \with B) \ar[r, "\bang \pi_1 \otimes \bang \pi_2"] & 
\bang A \otimes \bang B .
\end{tikzcd} 
\]
Similarly, $\seeli \co \unit \to \bang \term$ is the essentially unique pseudocoalgebra morphism from $I$ to $\bang \term$ with an invertible 2-cell
\[
\begin{tikzcd}[column sep = large] 
\unit \ar[r, "\seeli"] \ar[dr, bend right = 20] & \bang \top
\ar[d, "\der_\top "] \\
\phantom{} \ar[ur, phantom, {pos=.7},  "\Two"] 	& \top .
\end{tikzcd}
\]
Thus, it is the composite
\[
\begin{tikzcd}
\unit 
	\ar[r, "\moni"] &
\bang \unit 
	\ar[r] &
\bang \term .
\end{tikzcd}
\]
and its adjoint quasi-inverse ${\seel}^\bullet$ is  $\weak_\term \co \bang \term \to \unit$.

\begin{corollary}  \label{thm:lift-bang-to-comonoids}  \leavevmode
The 2-functor $\bang(-) \co \catK \to \catK$ lifts to 2-categories of symmetric pseudocomonoids, as in 
\[
\begin{tikzcd} 
\SymCoMon{\catK, \with, \term} \ar[r, "\bang"] \ar[d] & \SymCoMon{\catK, \otimes, \unit} \ar[d] \\
\catK \ar[r, "\bang"'] & 
\catK .
\end{tikzcd}
\]
\end{corollary}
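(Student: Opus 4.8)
The plan is to deduce this at once from \cref{thm:seely-equivalences-monoidal} together with the bicategorical version of the classical fact that strong monoidal functors transport comonoids.

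First, I would record that, by \cref{thm:seely-equivalences-monoidal}, the 2-functor $\bang(-)$ underlies a \emph{sylleptic strong monoidal} 2-functor $(\bang, \seell, \seeli) \co (\catK, \with, \term) \to (\catK, \otimes, \unit)$, whose monoidal constraints are the Seely equivalences $\seell_{A,B} \co \bang A \otimes \bang B \to \bang(A \with B)$ and $\seeli \co \unit \to \bang \term$. I would then invoke \cite[Proposition~16]{DayStreet}---the same result invoked earlier in the paper to show that the tensor product of a symmetric Gray monoid preserves symmetric pseudocomonoids---according to which a sylleptic strong monoidal pseudofunctor $F \co \catC \to \catD$ sends a symmetric pseudocomonoid $(A,n,e)$ in $\catC$ to one with underlying object $FA$ in $\catD$, taking as comultiplication $F(n)$ postcomposed with an adjoint quasi-inverse of the constraint $F(A \otimes A) \to FA \otimes FA$, and as counit $F(e)$ postcomposed with $F(\unit) \to \unit$, and moreover does so $2$-functorially: braided pseudocomonoid morphisms are sent to braided pseudocomonoid morphisms, and pseudocomonoid $2$-cells to pseudocomonoid $2$-cells. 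Instantiating $F = (\bang, \seell, \seeli)$ gives the horizontal 2-functor of the statement; concretely, a symmetric pseudocomonoid $(A, n, e)$ for $\with$ is carried to $\bang A$ equipped with the comultiplication $\bang A \xto{\bang n} \bang(A \with A) \xto{{\seell}^\bullet_{A,A}} \bang A \otimes \bang A$ and the counit $\bang A \xto{\bang e} \bang \term \xto{{\seel}^\bullet} \unit$, the quasi-inverses being the ones displayed explicitly after \cref{thm:seely-equivalences-monoidal}.

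Commutativity of the square is then immediate, since the two vertical forgetful 2-functors return underlying objects, $1$-cells and $2$-cells, and the lifted $\bang$ acts on this underlying data exactly as the given 2-functor $\bang$. The only point I would spend a sentence on is checking that \cite[Proposition~16]{DayStreet} indeed packages the construction as a genuine $2$-functor and that ``sylleptic'' is precisely the hypothesis ensuring that the symmetry $2$-cell $\gamma$ of a symmetric pseudocomonoid (and not merely its underlying pseudocomonoid structure) is preserved; but this is exactly what is used elsewhere in the paper, so it requires no new work. Alternatively, a self-contained argument would verify the pseudocomonoid coherence axioms for $\bang A$ directly from the explicit formulas for $\seell$, $\seeli$ and their quasi-inverses, at the cost of repeating computations already carried out in \cite{DayStreet}.
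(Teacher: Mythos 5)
Your proposal is correct and coincides with the paper's own argument: both deduce the lift by combining \cref{thm:seely-equivalences-monoidal} with \cite[Proposition~16]{DayStreet}, which transports symmetric pseudocomonoids along sylleptic strong monoidal pseudofunctors. The explicit description of the transported comultiplication and counit via the adjoint quasi-inverses of the Seely equivalences is a helpful (and accurate) unpacking, but no new ingredient beyond the paper's proof.
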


\begin{proof} The 2-functor is sylleptic strong monoidal by \cref{thm:seely-equivalences-monoidal}
and therefore it lifts to 2-categories of symmetric pseudocomonoids by \cite[Proposition~16]{DayStreet}.
\end{proof}

We turn to study the lifted 2-functor of \cref{thm:lift-bang-to-comonoids}. Our goal is to show  that it preserves finite products, which will be achieved in \cref{thm:lift-preserves-products}.
Let us begin with a couple of lemmas.

\begin{lemma} \leavevmode \label{thm:coh-digging-with-m}
\begin{enumerate}[(i)] 
\item For every $A, B \in \catK$, the map $\seell_{A,B} \co \bang A \otimes \bang B \to \bang(A \with B)$ is a pseudocoalgebra morphism, \ie we have an invertible 2-cell
\[
\begin{tikzcd}[column sep = large]
\bang A \otimes \bang B 
	\ar[r, "\seell_{A,B}"]
	\ar[d, "\dig_A \otimes \dig_B"'] 
	\ar[ddr, phantom, description, "\Two \bar{\seel}^2_{A,B}"] &
\bang (A \with B) 
	\ar[dd, "\dig_{A \with B}"] \\
\bbang A \otimes \bbang B 
	\ar[d, "\monn_{\bang A, \bang B}"'] &
	\\
\bang (\bang A \otimes \bang B)
	\ar[r, "\bang \seell_{A,B}"'] &
\bbang (A \with B) 
\end{tikzcd}
\]
which satisfies the appropriate coherence conditions.
\item The map $\seeli \co \unit \to \bang \term$ is a pseudocoalgebra morphism,  \ie we have an invertible 2-cell
\[
\begin{tikzcd} 
\unit 
	\ar[r, "\seeli"] 
	\ar[d, "\moni"'] 
	\ar[dr, phantom, description, "\Two \bar{\seel}^0"] & 
\bang \term 
	\ar[d, "\dig_{\term}"] \\
\bang \unit 
	\ar[r, "\bang \seeli"'] & 
\bbang \term 
\end{tikzcd} 
\]
which satisfies the appropriate coherence conditions.
\end{enumerate}
\end{lemma}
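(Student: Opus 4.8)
The plan is to observe that the lemma is, in effect, a restatement of how $\seell_{A,B}$ and $\seeli$ were constructed in the discussion following \cref{thm:seely-equivalences-monoidal}: there they were defined as transposes across the biadjunction \eqref{equ:em-adjunction} between $\catK$ and $\Em{\catK}$, and, as recalled after \eqref{equ:aux-equiv}, every such transpose $f^\sharp = \bang f \circ a$ of a map $f \co A \to B$ out of the underlying object of a pseudocoalgebra $(A,a)$ is canonically a pseudocoalgebra morphism $A \to \bang B$ into the cofree pseudocoalgebra $\bang B$. So the proof amounts to unwinding this canonical pseudocoalgebra-morphism structure and recognising it as the 2-cell drawn in the statement. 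Concretely, the structure 2-cell of $f^\sharp$ is the pasting of the pseudonaturality square of $\dig$ at $f$ with the associativity constraint $\bar a$ of $(A,a)$, realising $\dig_B \circ \bang f \circ a \cong \bbang f \circ \dig_A \circ a \cong \bbang f \circ \bang a \circ a = \bang f^\sharp \circ a$, and its two coherence conditions (those for a pseudocoalgebra morphism) are forced by the coherence axioms of the pseudocomonad.

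For part (i), I would instantiate this with $(A,a)$ the tensor-product pseudocoalgebra of the cofree coalgebras $\bang A$ and $\bang B$ in $\Em{\catK}$, whose structure map is $\monn_{\bang A, \bang B} \circ (\dig_A \otimes \dig_B)$ --- exactly the left-hand column of the square in the statement --- and whose associativity constraint, as described after \cref{thm:coalg-is-monoidal}, is built from $\pi_A$, $\pi_B$ and $\dig^2_{\bang A, \bang B}$; and with $f$ the map $\bang A \otimes \bang B \to A \with B$ displayed in the Seely discussion. Formula \eqref{equ:unfold-f-sharp} then gives $\seell_{A,B} = \bang f \circ \monn_{\bang A, \bang B} \circ (\dig_A \otimes \dig_B)$, and the canonical transpose 2-cell is precisely $\bar{\seel}^2_{A,B}$; its two coherence conditions follow from the pseudocomonad coherence axioms together with the modification axiom for $\dig^2$ and the coherence of the pseudonaturality of $\dig$. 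For part (ii), I would run the same recipe with $(A,a)$ the monoidal unit $\unit$ of $\Em{\catK}$, with structure map $\moni$ and associativity constraint $\dig^0$, with $B = \term$ (so $\bang\term$ cofree), and with $f = \canonical_\term \co \unit \to \term$ the essentially unique map to the terminal object, so that $\seeli = \bang(\canonical_\term) \circ \moni$; the canonical transpose 2-cell is then $\bar{\seel}^0$, its coherence conditions following from the monoidal-transformation coherences for $\dig^0$ and from the pseudonaturality coherence of $\dig$.

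The conceptual content is minimal, since everything is forced by the universal property of the cofree pseudocoalgebras $\bang(A \with B)$ and $\bang\term$. Accordingly, I expect the only real obstacle to be bookkeeping: one must pin down the pseudocoalgebra structures on $\bang A \otimes \bang B$ and on $\unit$ exactly as they come out of \cref{thm:coalg-is-monoidal}, and then carefully check that the canonical transpose 2-cell coincides with the displayed $\bar{\seel}^2_{A,B}$, respectively $\bar{\seel}^0$ --- a routine but somewhat lengthy diagram chase of the kind that recurs throughout the paper.
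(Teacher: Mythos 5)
Your proposal is correct and follows essentially the same route as the paper: the paper's proof likewise observes that the Seely maps are pseudocoalgebra morphisms by construction (being transposes into the cofree pseudocoalgebras $\bang(A\with B)$ and $\bang\term$) and then writes out the canonical structure 2-cells explicitly, for (i) as the pasting of $\pi_A\otimes\pi_B$, the pseudonaturality of $\monn$, $\dig^2_{\bang A,\bang B}$ and the pseudonaturality of $\dig$ at $(\der_A\otimes\weak_B,\weak_A\otimes\der_B)$, and for (ii) as the pasting of $\dig^0$ with a pseudonaturality square of $\dig$ — exactly the unwinding you describe.
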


\begin{proof}  The Seely equivalences are, by construction, pseudocoalgebra morphisms. However, we can also provide an explicit definition of the required invertible 2-cells. 
For part~(i), this is constructed as follows:
\[
\begin{tikzcd}[column sep = large]
% Row 1
\bang A \otimes \bang B
	\ar[r, "\dig_A \otimes \dig_B"]
	\ar[d, "\dig_A \otimes \dig_B"'] 
	\ar[dr, phantom, description, "\Two \pi_A \otimes \pi_B"] &
\bbang A \otimes \bbang B
	\ar[r, "\monn_{\bang A, \bang B}"] 
	\ar[d, "\dig_{\bang A} \otimes \dig_{\bang B}"] 
	\ar[ddr, phantom, description, "\Two \dig^2_{\bang A, \bang B}"] &
\bang  ( \bang A \otimes \bang B)
	\ar[rr, " \bang (\der_A \otimes \weak_B {,} \weak_A \otimes \der_B) "]
	\ar[dd, "\dig_{\bang A \otimes \bang B}"] 
	\ar[ddrr, phantom, description, "\cong"]&
 	&
\bang (A \with B)
	\ar[dd, "\dig_{A \with B}"] \\
% Row 2
\bbang A \otimes \bbang B
	\ar[r, "\bang \dig_A \otimes \bang \dig_B"]
	\ar[d,  "\monn_{\bang A, \bang B}"'] 
	\ar[dr, phantom, description, "\cong"] &
\bbbang A \otimes \bbbang B		
	\ar[d, "\monn_{\bbang A, \bbang B}"] &
		& 
		\\
% Row 3
\bang (\bang A \otimes \bang B)
	\ar[r, "\bang (\bang \dig_A \otimes \bang \dig_B)"'] &
\bang (\bbang A \otimes \bbang B)
	\ar[r, "\bang \monn_{\bang A, \bang B}"'] &
\bbang (\bang A \otimes \bang B)
	\ar[rr, "\bbang (\der_A \otimes \weak_B {,} \weak_A \otimes \der_B)"'] &
	&
\bbang (A \with B) .
\end{tikzcd}
\]
For part~(ii), the required invertible 2-cells is given by
\[
\begin{tikzcd}[column sep = large]
\unit \ar[r, "\moni"] \ar[d, "\moni"'] 
\ar[dr, phantom, description, "\Two \dig^0"]
& \bang \unit \ar[d, "\dig_\unit"] \ar[r] \ar[dr, phantom, description, "\cong"]& \bang \term \ar[d, "\dig_\term"] \\
\bang \unit \ar[r, "\bang \moni"'] & \bbang \unit \ar[r] & \bbang \term .
\end{tikzcd}
\]
Here, $\dig^0$ is the 2-cell that is part of the data making $\mathsf{p}$ into a symmetric monoidal pseudocomonad, as
in part~(ii) of \cref{thm:sym-lax-monoidal-pseudocomonad}, and the other 2-cell is part of the pseudonaturality of $\dig$.
\end{proof}

\begin{lemma} \label{thm:seely-equiv-comonoid} \leavevmode
\begin{enumerate}
\item For every $A, B \in \catK$, the Seely equivalence $\seell_{A,B}  \co \bang A \otimes \bang B \to \bang(A \with B)  $ is a braided pseudocomonoid morphism.
\item The Seely equivalence $\seeli \co \unit \to \bang \term$ is a braided pseudocomonoid morphism.
\end{enumerate}
\end{lemma}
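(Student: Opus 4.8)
The plan is to obtain both statements directly from two facts already in hand: first, that the Seely equivalences are pseudocoalgebra morphisms, as recorded in \cref{thm:coh-digging-with-m}; and second, the general principle of part~(i) of \cref{thm:coalg-mor-comon-mor}, according to which every pseudocoalgebra morphism is automatically a braided pseudocomonoid morphism between its source and target equipped with the symmetric pseudocomonoid structures supplied by the linear exponential pseudocomonad, the required structure 2-cells being nothing but the pseudonaturality cells of $\com$ and $e$ along that morphism.

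For part~(i), I would argue as follows. The object $\bang A \otimes \bang B$ is a pseudocoalgebra with structure map $\monn_{A,B} \circ (\dig_A \otimes \dig_B)$ and $\bang(A \with B)$ is the cofree pseudocoalgebra on $A \with B$; by part~(i) of \cref{thm:coh-digging-with-m} the map $\seell_{A,B} \co \bang A \otimes \bang B \to \bang(A \with B)$ is a pseudocoalgebra morphism between them. Applying part~(i) of \cref{thm:coalg-mor-comon-mor} then exhibits $\seell_{A,B}$ as a braided pseudocomonoid morphism from $(\bang A \otimes \bang B, \com_{\bang A \otimes \bang B}, e_{\bang A \otimes \bang B})$ to $(\bang(A \with B), \con_{A \with B}, \weak_{A \with B})$. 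It remains only to match this with the comonoid structure intended in the statement: by part~(i) of \cref{thm:id-is-comonoid-morphism} the identity map is a braided pseudocomonoid morphism between $(\bang A \otimes \bang B, \com_{\bang A \otimes \bang B}, e_{\bang A \otimes \bang B})$ and the tensor product of $(\bang A, \con_A, \weak_A)$ and $(\bang B, \con_B, \weak_B)$ in $\SymCoMon{\catK}$, so precomposing with it presents $\seell_{A,B}$ as a braided pseudocomonoid morphism out of the latter.

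For part~(ii) the argument has the same shape: $\seeli \co \unit \to \bang\term$ is a pseudocoalgebra morphism from the unit pseudocoalgebra $(\unit, \moni)$ to the cofree pseudocoalgebra $\bang\term$ by part~(ii) of \cref{thm:coh-digging-with-m}, hence a braided pseudocomonoid morphism $(\unit, \com_\unit, e_\unit) \to (\bang\term, \con_\term, \weak_\term)$ by part~(i) of \cref{thm:coalg-mor-comon-mor}; and $(\unit, \com_\unit, e_\unit)$ is identified with the unit of $\SymCoMon{\catK}$ via part~(ii) of \cref{thm:id-is-comonoid-morphism}.

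Since the substantive work is carried out in \cref{thm:coh-digging-with-m} and \cref{thm:coalg-mor-comon-mor}, I do not anticipate a genuine obstacle. The only point requiring care — and the reason the two preceding lemmas are invoked rather than a bare appeal to the Seely maps being pseudocoalgebra morphisms — is the bookkeeping over which symmetric pseudocomonoid structure on $\bang A \otimes \bang B$ (respectively on $\unit$) is meant, and this is precisely what \cref{thm:id-is-comonoid-morphism} reconciles.
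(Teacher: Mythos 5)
Your proposal is correct and follows exactly the paper's route: the paper's proof is precisely the combination of \cref{thm:id-is-comonoid-morphism}, \cref{thm:coalg-mor-comon-mor}, and \cref{thm:coh-digging-with-m}, which you have spelled out in the natural order (Seely maps are pseudocoalgebra morphisms, hence braided pseudocomonoid morphisms for the comonad-supplied structures, then reconciled with the tensor-product and unit structures in $\SymCoMon{\catK}$). Nothing is missing; your account is just a more explicit version of the paper's one-line argument.
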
 

\begin{proof} Both claims follow by combining \cref{thm:id-is-comonoid-morphism}, \cref{thm:coalg-mor-comon-mor}, and \cref{thm:coh-digging-with-m}.
\end{proof}

\begin{proposition} \label{thm:lift-preserves-products}
The lifted 2-functor $\bang (-) \co \SymCoMon{\catK, \with, \term} \to   \SymCoMon{\catK, \otimes, \unit}$ preserves
finite products.
\end{proposition}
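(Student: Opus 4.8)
The plan is to reduce the claim to the known fact that the Seely equivalences are equivalences in $\catK$, together with the fact (established in the preceding lemmas) that these equivalences are braided pseudocomonoid morphisms. Recall that a 2-functor between 2-categories with finite products preserves finite products precisely when the canonical comparison maps $\bang(A \with B) \to \bang A \otimes \bang B$ (in $\SymCoMon{\catK,\otimes,\unit}$) and $\bang(\term) \to \unit$ (the terminal object of $\SymCoMon{\catK,\otimes,\unit}$, by the discussion following \cref{thm:ccmon-finprod}) are equivalences, where now all objects and maps live in the 2-category of symmetric pseudocomonoids. First I would identify these canonical comparison maps. Since the product in $\SymCoMon{\catK,\otimes,\unit}$ is computed by the tensor product $\otimes$ (as recalled after \cref{thm:ccmon-finprod}), and the product in $\SymCoMon{\catK,\with,\term}$ is computed by $\with$ (by the same theorem applied to the cartesian symmetric Gray monoid $(\catK,\with,\term)$), the comparison map for the lifted 2-functor is, at the level of underlying objects in $\catK$, exactly the Seely equivalence $\seell_{A,B} \co \bang A \otimes \bang B \to \bang(A \with B)$ read backwards, i.e.\ its adjoint quasi-inverse $\seell^\bullet_{A,B} = f_{A,B}$; similarly the comparison for the terminal object is $\seeli^\bullet = \weak_\term$. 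The key point to check is that this identification is correct: that the canonical comparison map produced by the universal property of products in $\SymCoMon{\catK,\otimes,\unit}$ agrees (up to equivalence) with $f_{A,B}$, equipped with its braided pseudocomonoid morphism structure.

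Concretely, I would argue as follows. By \cref{thm:seely-equiv-comonoid}, both $\seell_{A,B}$ and $\seeli$ are braided pseudocomonoid morphisms; since they are also equivalences in $\catK$, and since the forgetful 2-functor $U \co \SymCoMon{\catK,\otimes,\unit} \to \catK$ creates equivalences (a map of pseudocomonoids whose underlying map is an equivalence is an equivalence in $\SymCoMon{\catK,\otimes,\unit}$, with inverse pseudocomonoid structure transported along the adjoint equivalence data), it follows that $\seell_{A,B}$ and $\seeli$ are equivalences in $\SymCoMon{\catK,\otimes,\unit}$; hence so are their adjoint quasi-inverses. It then remains to verify that $f_{A,B} = \seell^\bullet_{A,B}$, as a morphism of symmetric pseudocomonoids, is the canonical comparison map $\bang(A \with B) \to \bang A \otimes \bang B$ in $\SymCoMon{\catK,\otimes,\unit}$. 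This is a matter of checking that $f_{A,B}$ composed with the two projections $\bang A \otimes \bang B \to \bang A$ and $\bang A \otimes \bang B \to \bang B$ (which, in $\SymCoMon{\catK,\otimes,\unit}$, are $\id \otimes e$ and $e \otimes \id$, i.e.\ $\id_{\bang A} \otimes \weak_B$ and $\weak_A \otimes \id_{\bang B}$ followed by the unit isomorphisms) recovers $\bang\pi_1$ and $\bang\pi_2$ up to coherent invertible $2$-cells. Using the explicit description $f_{A,B} = (\bang\pi_1 \otimes \bang\pi_2)\circ\con_{A\with B}$ from the text, this reduces to the counit laws $\rho$ and $\lambda$ for the symmetric pseudocomonoid $\bang(A\with B)$, i.e.\ $(\id \otimes \weak_{A\with B})\circ\con_{A\with B} \cong \id$ and symmetrically, together with pseudonaturality of $\weak$ and $\der$; all of these are available from \cref{thm:unfold-linear-exponential}. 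The analogous check for $\seeli^\bullet = \weak_\term$ against the canonical map $\bang\term \to \unit$ is immediate, since the unique pseudocomonoid morphism into the unit $\unit$ of $\SymCoMon{\catK,\otimes,\unit}$ is the counit of the source pseudocomonoid, which for $\bang\term$ is precisely $\weak_\term$.

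The main obstacle I expect is bookkeeping rather than conceptual: one must be careful that the symmetric pseudocomonoid structure on $\bang A \otimes \bang B$ used to form the product in $\SymCoMon{\catK,\otimes,\unit}$ is the one of \cref{thm:ccmon-finprod} (tensor of pseudocomonoids), whereas the linear exponential pseudocomonad a priori equips $\bang(A\with B)$ and also, via \cref{thm:id-is-comonoid-morphism}, $\bang A \otimes \bang B$ with structure coming from the coalgebra/comonoid axioms—and \cref{thm:id-is-comonoid-morphism}(i) is exactly what guarantees these agree up to equivalence. Thus the cleanest route is: invoke \cref{thm:seely-equiv-comonoid} to get that $\seell_{A,B}$ is an equivalence in $\SymCoMon{\catK,\otimes,\unit}$; invoke \cref{thm:id-is-comonoid-morphism} to identify the product of the pseudocomonoids $\bang A$ and $\bang B$ with $(\bang A \otimes \bang B, \com_{\bang A \otimes \bang B}, e_{\bang A \otimes \bang B})$; and conclude that the comparison map $\bang(A\with B) \to \bang A \otimes \bang B$, being (equivalent to) $\seell^\bullet_{A,B}$, is an equivalence, and likewise for the terminal object via $\weak_\term$. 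I would present the argument in this order, leaving the verification of the two projection-compatibility $2$-cells as a routine diagram chase using the counit and pseudonaturality axioms of \cref{thm:unfold-linear-exponential}.
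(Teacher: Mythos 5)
Your proposal is correct and follows essentially the same route as the paper: both reduce preservation of products to the fact that products in $\SymCoMon{\catK,\otimes,\unit}$ are computed by the tensor product (\cref{thm:ccmon-finprod}) and then invoke \cref{thm:seely-equiv-comonoid} to see that the Seely equivalences $\seell_{A,B}$ and $\seeli$ are braided pseudocomonoid morphisms, hence equivalences of symmetric pseudocomonoids. Your additional identification of the canonical comparison map with $\seell^{\bullet}_{A,B}$ and the projection checks are just a more explicit spelling-out of what the paper leaves implicit.
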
 

\begin{proof}
Recall that finite products in 2-categories of  symmetric pseudocomonoids are given by the tensor product of their underlying objects. Therefore, we need to show that if~$A$ and~$B$
are symmetric pseudocomonoids with respect to $\with$, then the symmetric pseudocomonoid~$\bang (A \with B)$ is equivalent to the symmetric pseudocomonoid $\bang A \otimes \bang B$
in~$\SymCoMon{\catK}$ and that the symmetric pseudocomonoid $\bang \term$ is equivalent to the symmetric pseudocomonoid $\unit$.
 These claims follow from  \cref{thm:seely-equiv-comonoid}.
\end{proof}

The next proposition, which will be needed for \cref{thm:bang-A-bialgebra}, describes the action of the lifted pseudofunctor of \cref{thm:lift-bang-to-comonoids} in terms of the linear exponential pseudocomonad. 
For this, recall from \cref{thm:prod-iff-all-ccmon} that  the finite products of $\catK$ determine a symmetric pseudocomonoid  structure on every object of $\catK$.

\begin{proposition} \label{thm:comm-comonoids-coincide}
Let $A \in \catK$. The following are equivalent as  symmetric pseudocomonoids in $(\catK, \otimes, \unit)$:
\begin{enumerate}
\item the symmetric pseudocomonoid  on $\bang A$ obtained by 
applying the sylleptic strong monoidal 2-functor of \cref{thm:seely-equivalences-monoidal} to the symmetric pseudocomonoid on~$A$ determined by the finite products in~$\catK$,
\item  the symmetric pseudocomonoid on $\bang A$ determined by the linear exponential pseudocomonad. 
\end{enumerate}
Specifically, 
the identity is a braided pseudocomonoid morphism between them, \ie we have invertible 2-cells
\[
\begin{tikzcd}
\bang A
	\ar[r, "\id"]
	\ar[d, "\bang \Delta_A"'] 
	\ar[ddr, phantom, description, "\Two"]
	 &
\bang A
	\ar[dd, "\con_A"]
	 \\
\bang (A \with A)
	\ar[d, "(\seell_{A,A})^{\bullet}"'] 
	\\
\bang A \otimes \bang A
	\ar[r, "\id"'] &
\bang A \otimes \bang A 
\end{tikzcd} \qquad
\begin{tikzcd}
\bang A
	\ar[r, "\id"]
	\ar[d] 
	\ar[ddr, phantom, description, "\Two"] &
\bang A 
	\ar[dd, "\weak_A"] \\
\bang \term
	\ar[d, "(\seeli)^\bullet"'] & 
	\\
\unit
	\ar[r, "\id"'] &
\unit  ,
\end{tikzcd}
\]
satisfying the appropriate coherence conditions.
\end{proposition}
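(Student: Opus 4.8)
The plan is to reduce the statement to the universal property of the cofree pseudocoalgebra, rather than compute either symmetric pseudocomonoid structure explicitly. Recall from \cref{thm:comm-comonoids-coincide}'s setting that we have two candidate comultiplications on $\bang A$: on the one hand $\con_A \co \bang A \to \bang A \otimes \bang A$, the comultiplication of $\bang A$ qua pseudocoalgebra via the linear exponential pseudocomonad (\cf \eqref{equ:con}); on the other hand, the composite $(\seell_{A,A})^\bullet \circ \bang \Delta_A$, obtained by transporting the diagonal $\Delta_A \co A \to A \with A$ through the sylleptic strong monoidal 2-functor of \cref{thm:seely-equivalences-monoidal} and the Seely equivalence. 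The key observation is that both maps are pseudocoalgebra morphisms into the \emph{product} $\bang A \otimes \bang B$ in $\Em{\catK}$, which by \cref{thm:coalgebras-is-cartesian} is genuinely a bicategorical product; and, having \cref{thm:canonical-is-linear-exponential} in hand, $\con_A$ is precisely the diagonal $\Delta_{\bang A}$ of this product. So it suffices to show that $(\seell_{A,A})^\bullet \circ \bang \Delta_A$ also acts as such a diagonal, which amounts to exhibiting invertible pseudocoalgebra 2-cells $\pi_i \circ (\seell_{A,A})^\bullet \circ \bang \Delta_A \cong \id_{\bang A}$ for $i = 1, 2$, where $\pi_1, \pi_2 \co \bang A \otimes \bang A \to \bang A$ are the projections of the product in $\Em{\catK}$.

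The steps I would carry out are as follows. First, unfold the projections: from the description of $\seell$ just given, $(\seell_{A,A})^\bullet$ is the composite $(\bang \pi_1 \otimes \bang \pi_2) \circ \con_{A \with A}$, so that $\pi_i \circ (\seell_{A,A})^\bullet$ computes out, using that $\con_{A\with A} = \com_{\bang(A\with B)}$ satisfies the unitality constraints $\lambda, \rho$ of \eqref{equ:lin-exp-unitality}, to (up to canonical invertible 2-cells) $\bang \pi_i \co \bang(A \with A) \to \bang A$. Second, precompose with $\bang \Delta_A$: by pseudofunctoriality of $\bang(-)$, $\bang \pi_i \circ \bang \Delta_A \cong \bang(\pi_i \circ \Delta_A) \cong \bang(\id_A) \cong \id_{\bang A}$, using the universal property of the product $A \with A$ in $\catK$. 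Third, check that the resulting invertible 2-cells $\pi_i \circ (\seell_{A,A})^\bullet \circ \bang \Delta_A \cong \id_{\bang A}$ are pseudocoalgebra 2-cells; this is where \cref{thm:coalg-mor-comon-mor}(ii) and the fact that all the maps involved ($\bang \pi_i$, $\bang \Delta_A$, $\con$, the weakenings) are pseudocoalgebra morphisms do the work, together with the coherence data of \cref{thm:unfold-linear-exponential}. Having the 2-cells in $\Em{\catK}$, the universal property of the product $\bang A \otimes \bang A$ in $\Em{\catK}$ then yields the desired invertible pseudocoalgebra 2-cell $\con_A \cong (\seell_{A,A})^\bullet \circ \bang \Delta_A$, which in particular is a pseudocomonoid 2-cell by \cref{thm:coalg-mor-comon-mor}(ii). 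The counit case is strictly easier: both $\weak_A$ and $(\seeli)^\bullet \circ \bang(t_A)$ (where $t_A \co A \to \term$) are pseudocoalgebra morphisms into the terminal object $\unit$ of $\Em{\catK}$, which is terminal by \cref{thm:coalgebras-is-cartesian}, so they are canonically equivalent and the comparison 2-cell is essentially unique; concretely $(\seeli)^\bullet = \weak_\term$ and $\weak_\term \circ \bang(t_A) \cong \weak_A$ by pseudonaturality of weakening.

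Finally, one must verify that the pair of invertible 2-cells so constructed satisfies the coherence conditions for a braided pseudocomonoid morphism, as required for the identity map $\id_{\bang A}$ to witness the claimed equivalence in $\SymCoMon{\catK, \otimes, \unit}$. Here I would appeal again to \cref{thm:coalg-mor-comon-mor}(i): the identity $\id_{\bang A}$ is trivially a pseudocoalgebra morphism between the two pseudocoalgebra structures on $\bang A$ (they are literally the same structure, since both symmetric pseudocomonoid structures live on the same pseudocoalgebra $\bang A$), and hence automatically a braided pseudocomonoid morphism for the induced symmetric pseudocomonoid structures; but the two symmetric pseudocomonoid structures we are comparing are exactly the canonical one from products in $\Em{\catK}$ (\cref{thm:canonical-is-linear-exponential}) transported two different ways — via $\con_A, \weak_A$ directly, and via $\seell$ and $\bang \Delta_A$ — so \cref{thm:id-is-comonoid-morphism} and \cref{thm:seely-equiv-comonoid} cover exactly the coherence bookkeeping. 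The main obstacle I anticipate is purely organisational: keeping track of which of the several ``morally identical'' symmetric pseudocomonoid structures on $\bang A$ one is working with at each moment, and ensuring the constructed 2-cells are honestly pseudocoalgebra 2-cells before invoking the universal properties; once that discipline is maintained, each individual verification is a short diagram chase of the kind already exemplified in \cref{thm:coh-digging-with-m} and \cref{thm:seely-equiv-comonoid}.
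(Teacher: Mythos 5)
Your proposal is correct in substance and close in spirit to the paper's argument, but the mechanics differ. The paper disposes of the statement in one line at the abstract level — the cofree 2-functor $F \co \catK \to \Em{\catK}$ is a right biadjoint, hence preserves finite products, so the claim is an instance of \cref{thm:canonical-preserved} read through \cref{thm:canonical-is-linear-exponential} — and then writes the comparison 2-cell for the comultiplication \emph{directly} as a single pasting: the pseudonaturality square of $\con$ at $\Delta_A$ (giving $\con_{A \with A} \circ \bang \Delta_A \cong (\bang \Delta_A \otimes \bang \Delta_A) \circ \con_A$) followed by the cartesian triangle $(\bang \pi_1 \otimes \bang \pi_2) \circ (\bang \Delta_A \otimes \bang \Delta_A) \cong \id$; the counit case is just pseudonaturality of $\weak$. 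You instead invoke the biuniversal property of the product $\bang A \otimes \bang A$ in $\Em{\catK}$ (via \cref{thm:coalgebras-is-cartesian}), showing both candidate comultiplications become equivalent to the identity after composing with the projections. That is a legitimate route — it is essentially what sits inside the proof of \cref{thm:canonical-preserved}, which the paper leaves as a direct calculation — and it has the advantage that the coherence bookkeeping is absorbed into the essential uniqueness of diagonals, at the price of having to verify that the intermediate 2-cells are pseudocoalgebra 2-cells before the universal property may be applied. The paper's pasting is shorter and does not need the universal property at all for the construction of the 2-cell itself.

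Two citations in your write-up are misdirected, though neither is fatal. First, \cref{thm:coalg-mor-comon-mor}(ii) goes the wrong way for your purposes: it turns pseudocoalgebra 2-cells into pseudocomonoid 2-cells, whereas you need to know that your comparison 2-cells \emph{are} pseudocoalgebra 2-cells; the correct sources are \cref{thm:unfold-linear-exponential} (the unitality constraints and the pseudonaturality 2-cells of $e$ are pseudocoalgebra 2-cells) together with the fact that 2-cells of the form $\bang \alpha$ between cofree morphisms are pseudocoalgebra 2-cells. Second, your final coherence paragraph is circular as stated: \cref{thm:coalg-mor-comon-mor}(i) makes $\id_{\bang A}$ a braided pseudocomonoid morphism only with respect to the linear-exponential structures on source and target, not from the Seely-transported structure to the linear-exponential one — which is exactly what the proposition asserts. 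The honest way to get the coherences is the one you already set up: the product structure in $\Em{\catK}$ determines the symmetric pseudocomonoid structure essentially uniquely (\cref{thm:prod-iff-all-ccmon}, \cref{thm:canonical-preserved}), so once both structures are identified as the canonical cartesian one on $F A$, the braided pseudocomonoid morphism coherences follow.
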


\begin{proof} Since the cofree pseudocoalgebra 2-functor $F \co \catK \to \Em{\catK}$ is a right biadjoint, it preserves finite products. Thus, the claim follows from~\cref{thm:canonical-preserved} 
via \cref{thm:canonical-is-linear-exponential} and the definition of $\con_A$ and $\weak_A$. Explicitly, the required 2-cell in the  left-hand diagram can be obtained as the pasting diagram
\[
\begin{tikzcd}[column sep = huge]
\bang A 
	\ar[r, "\bang \Delta_A"]
	\ar[d, "\con_A"'] 
	\ar[dr, phantom, description, "\Two \bar{\con}_{\Delta_A}"] &[3em]
\bang (A \with A)
	\ar[d, "\con_{A \with A}"] \\
\bang A \otimes \bang A
	\ar[r, "\bang \Delta_A \otimes \bang \Delta_A"] 
	\ar[dr, bend right = 10, "\id_{\bang A \otimes \bang A}"']&
\bang (A \with A) \otimes \bang (A \with A) 
	\ar[d, "\bang \pi_1 \otimes \bang \pi_2"] \\ 
\phantom{} \ar[ur, phantom, {pos=.7},  "\Two"] 	& 
\bang A \otimes \bang A  , 
\end{tikzcd}
\]
where the invertible 2-cell in the rectangle is given by pseudonaturality of $\con$ and the 
invertible 2-cell
in the triangle is obtained by the cartesian structure. For the right-hand diagram, 
unfolding the definitions, we need an invertible 2-cell
\[
\begin{tikzcd}
\bang A
	\ar[r] 
	\ar[d, "\weak_A"'] 
	\ar[dr, phantom, description, "\Two"]&
\bang \term
	\ar[d, "\weak_\term"] \\
\unit 
	\ar[r, "\id_\unit"'] &
\unit ,
\end{tikzcd} 
\]
which  is simply given by the pseudonaturality of $\weak$. 
\end{proof}

The final result of this section, \cref{thm:coKleisli-cartesian-closed} below,  is the two-dimensional counterpart of a fundamental
theorem on linear exponential comonads on symmetric monoidal categories with finite 
products~\cite{MelliesPA:catsll}. In its statement, by a closed symmetric Gray monoid $\catK$, we mean
that, for every $A \in \catK$, the 2-functor $(-) \otimes A \co \catK \to \catK$ has a right
biadjoint. The action of such a right biadjoint on $B \in \catK$ will be written $A \linhom B$,
using again notation inspired by Linear Logic.

\begin{theorem} \label{thm:coKleisli-cartesian-closed} Assume that $\catK$ is closed and has finite products.
Then the Kleisli bicategory~$\Kl{\catK}$ is a cartesian closed bicategory.
\end{theorem}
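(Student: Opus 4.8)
The plan is to combine the classical Seely-style argument with the structure already developed in the excerpt. First I would record that, by \cref{thm:coKleisli-cartesian}, the Kleisli bicategory $\Kl{\catK}$ has finite products, so it only remains to exhibit, for each $A \in \Kl{\catK}$, a right biadjoint to the 2-functor $(-) \with A \co \Kl{\catK} \to \Kl{\catK}$. The candidate exponential is the object $\bang A \linhom B$, where $A \linhom B$ denotes the internal hom of the closed symmetric Gray monoid $\catK$ and $\bang A$ is obtained from the linear exponential pseudocomonad. Concretely I would show that there are equivalences of hom-categories, pseudonatural in $X$, $B$,
\[
\Kl{\catK}[X \with A, B] = \catK[\bang(X \with A), B] \simeq \catK[\bang X \otimes \bang A, B] \simeq \catK[\bang X, \bang A \linhom B] = \Kl{\catK}[X, \bang A \linhom B] \mathrlap{,}
\]
where the first equivalence is the Seely equivalence $\seell_{X,A} \co \bang X \otimes \bang A \to \bang(X \with A)$ of \cref{thm:seely-equivalences-monoidal} (an equivalence, with adjoint quasi-inverse unfolded in the text after that theorem), and the second is the closed structure biadjunction of $\catK$. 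Composing these equivalences and checking that the composite is pseudonatural in $X$ (and in $B$) yields the required biadjunction $(-) \with A \dashv (\bang A \linhom -)$ on $\Kl{\catK}$.

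The key steps, in order, are: (i) invoke \cref{thm:coKleisli-cartesian} for finite products in $\Kl{\catK}$; (ii) fix $A$ and define the exponential $B^A \defeq \bang A \linhom B$; (iii) assemble the chain of equivalences above, using \cref{thm:seely-equivalences-monoidal} to know $\seell_{X,A}$ is an equivalence and using the closedness hypothesis for the adjunction $\catK[\bang X \otimes \bang A, B] \simeq \catK[\bang X, \bang A \linhom B]$; (iv) verify pseudonaturality of the composite equivalence in $X$, which amounts to checking that the Seely equivalences are pseudonatural in their first argument — this follows since $\seell$ is a constraint of a monoidal pseudofunctor, hence a pseudonatural transformation by \cref{thm:seely-equivalences-monoidal} — and combining this with the (pseudo)naturality of the internal-hom adjunction; (v) conclude that $(-) \with A$ has a right biadjoint for every $A$, which is precisely cartesian closedness of $\Kl{\catK}$.

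The main obstacle I expect is step (iv): keeping track of the coherence 2-cells witnessing pseudonaturality as one composes the two biadjunctions. In the one-dimensional case this is a routine string of natural isomorphisms, but here each equivalence carries invertible 2-cells (unit and counit of an adjoint equivalence, pseudonaturality squares) and one must paste them together and check they satisfy the axioms required of a biadjunction of bicategories. Rather than grind through the coherence diagrams explicitly, I would argue structurally: the composite of an adjoint equivalence of hom-categories with a biadjunction hom-equivalence is again of the required form, and pseudonaturality is preserved under such composition because $\seell$ is a genuine pseudonatural transformation and the internal-hom structure of a closed Gray monoid is itself pseudonatural; one then appeals to the standard recognition principle for biadjunctions via pseudonatural equivalences of hom-categories (as in \cite{StreetR:fibb}). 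The only genuinely new input beyond the one-dimensional argument is that all the relevant isomorphisms there become equivalences and all equalities become invertible 2-cells, which is exactly what the machinery of \cref{sec:linear-exponential} and \cref{sec:products} has been set up to handle.
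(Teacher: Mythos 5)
Your chain of equivalences is exactly the one the paper writes down: $\Kl{\catK}[X \with A, B] = \catK[\bang(X \with A), B] \simeq \catK[\bang X \otimes \bang A, B] \simeq \catK[\bang X, \bang A \linhom B] = \Kl{\catK}[X, A \Rightarrow B]$, with the exponential defined as $A \Rightarrow B \defeq \bang A \linhom B$, and finite products coming from \cref{thm:coKleisli-cartesian}; so in outline you follow the same route. The difference lies in what carries the burden of proof: the paper does not verify the pseudonaturality by hand, but deduces the closed structure from \cite[Theorem~14]{JacqC:catcnis}, having checked in \cref{thm:compare-with-jacq} that its axioms for a linear exponential pseudocomonad imply those of \cite[Definition~90]{JacqC:catcnis}; the displayed chain is offered only as an explicit description, with pseudonaturality asserted rather than proved.

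Your step (iv) is where the genuine work lies, and your justification there is too quick. Pseudonaturality in $X$ means pseudonaturality with respect to morphisms of $\Kl{\catK}$, i.e.\ Kleisli 1-cells $f \co \bang X' \to X$, not merely with respect to maps of $\catK$. Precomposition with $f \with \id_A$ in $\Kl{\catK}$ goes through the comultiplication $\dig$, so the relevant naturality squares require comparing $\seell_{X,A}$ with $\dig$ --- 2-cells of the shape constructed in \cref{thm:coh-digging-with-m} and discussed in \cref{thm:mellies-2-cell}, where the authors explicitly note that they have not fully verified the resulting pseudonaturality conditions. Hence the fact that $\seell$ is a constraint of the sylleptic strong monoidal 2-functor of \cref{thm:seely-equivalences-monoidal}, and so pseudonatural for maps of $\catK$, does not by itself give what you need: either you verify these Kleisli-level coherence conditions directly, or you do as the paper does and invoke the established result of Jacq and Curien.
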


\begin{proof} The existence of finite products in $\Kl{\catK}$ was recalled in \cref{thm:coKleisli-cartesian}. For the closed structure, 
the claim follows  by~\cite[Theorem~14]{JacqC:catcnis} since the axioms for a linear exponential pseudocomonad in \cite[Definition~90]{JacqC:catcnis}  are consequences
of the axioms for a linear exponential pseudocomonad used here, as noted in \cref{thm:compare-with-jacq}. Explicitly, 
the exponential of $A, B$ in $\Kl \catK$ is defined by $A \Rightarrow B = \bang A \multimap B$. Indeed, we have the following chain of equivalences:
\begin{align*}
\Kl \catK[ X \with A, B] & = \catK[ \bang ( X \with A), B ] \\
 & \simeq \catK[ \bang X \otimes \bang A, B] \\
 & \simeq \catK[ \bang X,  \bang A \multimap B ] \\
 & = \Kl \catK[ X, A \Rightarrow B] ,
 \end{align*}
 which can be proved to be pseudonatural.
\end{proof}

\begin{remark} \label{thm:compact-closed-hom}
 In preparation for our applications in \cref{sec:prof}, we consider the special case where $\catK$ is compact closed, in the sense of~\cite{StayM:comcb}. We write  $A^\perp$ for the dual of an object $A \in \catK$. The internal hom of the closed structure is defined by letting $A \multimap B \defeq A^\perp \otimes B$. Therefore, the exponential objects in $\Kl \catK$ are given
by $A \Rightarrow B \defeq (\bang A)^\perp \otimes B$. 
\end{remark}

\begin{remark}  \label{thm:mellies-2-cell} We conjecture that the Kleisli biadjunction in~\eqref{equ:cokleisliadjunction} becomes a symmetric lax monoidal biadjunction
when $\catK$ is considered as a symmetric Gray monoid with respect to $\otimes$, and $\Kl{\catK}$ is considered as a symmetric monoidal bicategory with respect
to $\with$, as in 
\begin{equation*}
\begin{tikzcd}[column sep = large]
 (\Kl{\catK}, \with, \term) 
	\ar[r, shift left =1, bend left =10, "K"]  
	\ar[r, description, phantom, "\scriptstyle \bot"] &  
(\catK, \otimes, \unit) .
	\ar[l, shift left = 1, bend left =10, "J"] 
\end{tikzcd}
\end{equation*}
By the 2-categorical counterpart of Kelly's doctrinal adjunction theorem (see ~\cite[pages~62-63]{GarnerR:enrcfc} and~\cite[Propositions~2, 12 and~15]{DayStreet}), it is sufficient to show that 
the left biadjoint $K$ is symmetric strong monoidal. This involves showing that the Seely equivalences $\seell$ are pseudonatural with respect to Kleisli maps. The candidate pseudonaturality 2-cells can be constructed as in \cite[Section~7.3.0.6]{MelliesPA:catsll}, making
essential use of invertible 2-cells of the form
 \begin{equation}
 \label{equ:mellies-2-cell}
\begin{tikzcd}[column sep = large]
\bang A \otimes \bang B 
	\ar[r, "\seell_{A,B}"]  
	\ar[dd, "\dig_A \otimes \dig_B"'] 
	\ar[ddr, phantom, description, "\Two \sigma_{A, B}"] 
	& 
\bang (A \with B) 
	\ar[d, "\dig_{A \with B}"] 
	\\ 
 	& 
\bbang (A \with B) 
	\ar[d, "\bang {(\bang \pi_1, \bang \pi_2)}"]
	\\
 \bbang A \otimes \bbang B 
 	\ar[r, "\seell_{\bang A, \bang B}"'] 
  & 
\bang (\bang A \with \bang B) .
\end{tikzcd}
\end{equation}
In turn, these can be constructed using part~(i) of \cref{thm:coh-digging-with-m}.
As yet we have not fully verified the pseudonaturality conditions.
\end{remark} 

% Section 6

\section{Biproducts, cocontraction, and coweakening}
\label{sec:biproducts}

Let $\catK$ be a symmetric Gray monoid. We now assume that $\catK$ has biproducts, in the sense of \cref{thm:biproducts}. Recall that we write $A \oplus B$ for the 
biproduct of $A, B \in \catK$ and $0$ for the zero object.  As a consequence of having biproducts, $\catK$  admits a form of enrichment 
over the category of symmetric monoidal categories. In particular, for every $A, B \in \catK$, 
the hom-category $\catK[A, B]$ admits a symmetric monoidal structure, which we call
\emph{convolution},  in analogy with the one-dimensional case. 
For $f \co A \to B$ and $g \co A \to B$, we define their convolution $f + g \co A \to B$ as the composite
\begin{equation}
\label{equ:convolution}
\begin{tikzcd}
A 
	\ar[r, "\Delta_A"] & 
A \oplus A 
	\ar[r, "f \oplus g"] &
B \oplus B 
	\ar[r, "\nabla_B"] &
B .
\end{tikzcd}
\end{equation}
The unit of the symmetric monoidal structure is the \emph{zero morphism} $0_{A,B} \co A \to B$
defined as the composite 
\begin{equation}
\label{equ:zero-map}
\begin{tikzcd}
A 
	\ar[r] & 
0
 	\ar[r] &
B .
\end{tikzcd} 
\end{equation}

Let us now assume again that we have a linear exponential pseudocomonad $(\bang, \dig, \der)$ on $\catK$. As an instance of \cref{thm:seely-equivalences-monoidal}, the underlying pseudofunctor of the linear exponential pseudocomonad
acquires the structure of a sylleptic strong monoidal functor $\bang (-) \co (\catK, \oplus, 0) \to (\catK, \otimes, \unit)$ and the Seely equivalences have the form
\begin{equation}
\label{equ:seely-with-biproducts}
\seel^{2}_{A,B} \co \bang A \otimes \bang B \to \bang (A \oplus B)  , \qquad
\seeli \co \unit \to \bang 0 .
\end{equation}

In categorical models of linear logic where the product is a coproduct, the Seely isomorphisms
can be used to define cocontraction and coweakening maps. Here we do the same using the
Seely equivalences. For $A \in \catK$, we define the cocontraction map $\cocon_A$ by letting
\begin{equation}
\label{equ:def-cocon}
\begin{tikzcd}[ampersand replacement=\&] 
\bang A \otimes \bang A 
	\ar[r, "\cocon_A"] \& 
\bang A 
\end{tikzcd}
\quad \defeq \quad 
 \begin{tikzcd}[ampersand replacement=\&] 
 \bang A \otimes \bang A 
 	\ar[r, "\seel_{A,A}"] \& 
	\bang (A \oplus A) \ar[r, "\bang \nabla_A"] \& 
	\bang A ,
 \end{tikzcd}
 \end{equation}
 and the coweakening map $\coweak_A$ by letting
 \begin{equation}
 \label{equ:def-coweak}
 \begin{tikzcd}[ampersand replacement=\&] 
\unit \ar[r, "\coweak_A"] \& \bang A 
\end{tikzcd}
\quad \defeq \quad
 \begin{tikzcd}[ampersand replacement=\&]
 \unit \ar[r, "\seeli"] \&
 \bang 0 \ar[r] \&
 \bang A .
 \end{tikzcd}
\end{equation}

For the next proposition, recall the definition of a symmetric pseudobialgebra from \cref{thm:bialgebra-def} and its equivalent rephrasing in \cref{thm:bialgebra-equivalent}.

\begin{proposition} \label{thm:bang-A-bialgebra} Let $A \in \catK$. The object $\bang A$ is not only a symmetric pseudocomonoid, with comultiplication
and counit given by contraction and weakening, 
\[
\con_A \co \bang A \to \bang A \otimes \bang A , \qquad
\weak_A \co \bang A \to \unit , 
\]
as implied by the axioms for a linear exponential pseudocomonad, 
but also a symmetric pseudomonoid, with multiplication and unit given by cocontraction and coweakening, 
\[
\cocon_A \co  \bang A \otimes \bang A \to \bang A ,  \qquad
\coweak_A \co \unit \to \bang A . 
\] 
Furthermore, the symmetric pseudocomonoid and the symmetric pseudomonoid of $\bang A$ are part of a symmetric pseudobialgebra structure, \ie we have additional invertible 2-cells
\[
\begin{tikzcd}[column sep = huge]
 \bang A \otimes \bang A 
 	\ar[r, "\con_A \otimes \con_A"] 
	\ar[d, "\cocon_A"'] 
	\ar[drr, phantom, description, "\Two"] &
 \bang A \otimes \bang A \otimes \bang A \otimes \bang A 
 	\ar[r, "\id \otimes \bra_{\bang A, \bang A} \otimes \id"] &
 \bang A \otimes \bang A \otimes \bang A \otimes \bang A 
 	\ar[d, "\cocon_A \otimes \cocon_A"] \\
 \bang A 
 	\ar[rr, "\con_A"']  & &
 \bang A \otimes \bang A ,
\end{tikzcd}
\]
 
\[
\begin{gathered}
\begin{tikzcd}
\bang A \otimes \bang A 
	\ar[d, "\weak_A  \otimes \weak_A"'] 
	\ar[r, "\cocon_A"] 
	\ar[dr, phantom, description, "\Two"] & 
\bang A 
	\ar[d, "\weak_A"] \\
 \unit \otimes \unit \ar[r, "\id"'] 
 	& 
\unit ,
\end{tikzcd}
 \qquad
\begin{tikzcd}[column sep = large] 
\unit 
	\ar[r, "\coweak_A"]
	\ar[d, "\id"'] 
	\ar[dr, phantom, description, "\Two"] & 
\bang A 
	\ar[d, "\con_A"] \\
\unit \otimes \unit
	\ar[r, "\coweak_A \otimes \coweak_A"']  & 
\bang A \otimes \bang A ,
\end{tikzcd}
 \qquad
\begin{tikzcd}[column sep = large]
\unit
	\ar[r, "\coweak_A"]
	\ar[dr, bend right = 20, "\id_\unit"'] & 
\bang A 
	\ar[d, "\weak_A"] \\
\phantom{} \ar[ur, phantom, {pos=.7},  "\Two"]   & \unit ,
\end{tikzcd}  
 \end{gathered}
\]
satisfying the appropriate coherence conditions.
\end{proposition}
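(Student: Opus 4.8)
The plan is to obtain the claimed structure on $\bang A$ by transporting along the Seely equivalences the symmetric pseudobialgebra structure that every object of $\catK$ carries with respect to the biproduct monoidal structure. First, since $\catK$ has biproducts, \cref{thm:biprod-bialgebra} tells us that $A$ carries a symmetric pseudobialgebra structure in $(\catK, \oplus, 0)$: its symmetric pseudocomonoid part is $(\Delta_A \co A \to A \oplus A$, $A \to 0)$, determined by products, and its symmetric pseudomonoid part is $(\nabla_A \co A \oplus A \to A$, $0 \to A)$, determined by coproducts. Equivalently, by \cref{thm:bialgebra-def}, $A$ is a symmetric pseudomonoid in $\SymCoMon{\catK, \oplus, 0}$.

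Second, I would invoke the monoidal properties of the exponential. By \cref{thm:seely-equivalences-monoidal} the underlying 2-functor of the linear exponential pseudocomonad is sylleptic strong monoidal, $(\bang, \seell, \seeli) \co (\catK, \oplus, 0) \to (\catK, \otimes, \unit)$. Hence by \cref{thm:lift-bang-to-comonoids} it lifts to a 2-functor $\bang \co \SymCoMon{\catK, \oplus, 0} \to \SymCoMon{\catK, \otimes, \unit}$ which, by \cref{thm:lift-preserves-products}, preserves finite products. A finite-product-preserving 2-functor between 2-categories with finite products is symmetric strong monoidal for the cartesian monoidal structures, \cf~\cite{CarboniA:carbii}, and in particular preserves symmetric pseudomonoids. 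Since the symmetric monoidal structure on $\SymCoMon{\catK, \otimes, \unit}$ of \cref{thm:sym-comon-is-monoidal} is exactly the one used in \cref{thm:bialgebra-def}, applying this lifted 2-functor to $A$ produces a symmetric pseudobialgebra structure on $\bang A$ in $(\catK, \otimes, \unit)$, together with the four invertible structure 2-cells $\bar m, \tilde m, \tilde u, \bar u$ of \cref{thm:bialgebra-equivalent} and all their coherence conditions, obtained by transport from those of $A$ in $\SymCoMon{\catK, \oplus, 0}$.

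Third, it remains to identify this transported structure with the one in the statement. The underlying symmetric pseudocomonoid of the bialgebra is obtained by applying the sylleptic strong monoidal 2-functor of \cref{thm:seely-equivalences-monoidal} to the product-comonoid on $A$; by \cref{thm:comm-comonoids-coincide} this coincides, with the identity as a braided pseudocomonoid morphism, with the symmetric pseudocomonoid determined by the linear exponential pseudocomonad, i.e.\ with $(\con_A, \weak_A)$ by~\eqref{equ:con} and~\eqref{equ:weak}. For the monoid part, I would recall that a strong monoidal 2-functor $(F, \monn, \moni)$ sends a pseudomonoid $(M, m, u)$ on the source to the pseudomonoid on $FM$ with multiplication $F(m) \circ \monn_{M, M}$ and unit $F(u) \circ \moni$; taking $F = \bang$, $\monn = \seell$, $\moni = \seeli$, $M = A$, $m = \nabla_A$ and $u \co 0 \to A$ the essentially unique map, this gives multiplication $\bang \nabla_A \circ \seell_{A, A} = \cocon_A$ and unit $\bang(u) \circ \seeli = \coweak_A$, precisely matching~\eqref{equ:def-cocon} and~\eqref{equ:def-coweak}.

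The main obstacle is the bookkeeping in this last step: one must make precise that the abstract structure produced by transport agrees, up to the canonical invertible 2-cells, with the explicitly defined contraction, weakening, cocontraction and coweakening, which requires unwinding how a strong monoidal 2-functor acts on a pseudomonoid and a pseudocomonoid and then appealing to \cref{thm:comm-comonoids-coincide}. Once this identification is in place, the required bialgebra 2-cells and their coherence conditions are automatic, being the transport of the corresponding data for $A$ in the cocartesian structure $(\catK, \oplus, 0)$, where every map is canonically a comonoid morphism and every 2-cell a comonoid 2-cell.
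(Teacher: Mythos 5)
Your proposal is correct and follows essentially the same route as the paper: transport the symmetric pseudobialgebra structure on $A$ given by \cref{thm:biprod-bialgebra} along the sylleptic strong monoidal 2-functor $\bang(-) \co (\catK, \oplus, 0) \to (\catK, \otimes, \unit)$ of \cref{thm:seely-equivalences-monoidal}, and identify the comonoid half with $(\con_A, \weak_A)$ via \cref{thm:comm-comonoids-coincide} and the monoid half with $(\cocon_A, \coweak_A)$ by unwinding the definitions \eqref{equ:def-cocon} and \eqref{equ:def-coweak}. The only difference is that you justify the preservation of pseudobialgebras by routing through the lifted 2-functor of \cref{thm:lift-bang-to-comonoids} and its product preservation (\cref{thm:lift-preserves-products}), whereas the paper simply asserts that a sylleptic strong monoidal 2-functor sends symmetric pseudobialgebras to symmetric pseudobialgebras; this is a harmless, slightly more explicit justification of the same step.
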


\begin{proof} Let $A \in \catK$. Since $\catK$ has biproducts, $A$ admits a canonical structure of 
symmetric pseudobialgebra by \cref{thm:biprod-bialgebra}. As before, we write 
$\nabla_A \co A \oplus A \to A$ and $0 \to A$ for the symmetric pseudomonoid structure, $\Delta_A \co A \to A \oplus A$ and $A \to 0$ for the symmetric pseudocomonoid
structure. Since~$\bang(-) \co (\catK, \oplus, 0) \to (\catK, \otimes, \unit)$ is a sylleptic 
strong monoidal 2-functor by \cref{thm:seely-equivalences-monoidal}, it sends symmetric pseudobialgebras 
in~$(\catK, \oplus, 0)$ to symmetric pseudobialgebras in~$(\catK, \otimes, \unit)$. Explicitly, it sends $A$, with the symmetric pseudobialgebra structure just described, to $\bang A$,
with the symmetric pseudobialgebra structure defined as follows. The symmetric pseudomonoid structure on $\bang A$ is given exactly by the cocontraction and coweakening maps in the claim,
while the symmetric pseudocomonoid structure on $\bang A$ is given by 
\[
\begin{tikzcd}
\bang A 
	\ar[r, "\bang \Delta_A"] &
\bang (A \oplus A)
	\ar[r, "\seel^\bullet_{A,A}"] &
\bang A \otimes \bang A ,
\end{tikzcd} \qquad
\begin{tikzcd}
\bang A \ar[r, "\bang u"] &
\bang 0 \ar[r, "(\seeli)^\bullet"] &
\unit .
\end{tikzcd}
\]
This is equivalent to the symmetric pseudocomonoid structure given by contraction and weakening by \cref{thm:comm-comonoids-coincide}.
Therefore, we have the symmetric pseudobialgebra structure in the claim.
\end{proof}

\begin{proposition} \label{thm:cocontraction-coweakeneing-with-digging}
\leavevmode
\begin{enumerate}[(i)]
\item For every $A \in \catK$, the cocontraction map $\cocon_A \co \bang A \otimes \bang A \to \bang A$ is a pseudocoalgebra morphism,  \ie we have an invertible 2-cell
\[
\begin{tikzcd} 
\bang A \otimes \bang A 
	 \ar[r, "\cocon_A"] 
	 \ar[d, "\dig_A \otimes \dig_A"'] 
	 \ar[ddr, phantom, "\Two"] &
\bang A 
	\ar[dd, "\dig_A"] \\
\bbang A \otimes \bbang A 
	\ar[d, "\monn_{\bang A, \bang A}"'] &
	\\
  \bang (\bang A \otimes \bang A)
	\ar[r, "\bang \cocon_A"'] &
\bbang A
\end{tikzcd}
\]
satisfying  the appropriate  coherence conditions.
\item For every $A \in \catK$, the coweakening map $\coweak_A \co \unit \to \bang A$ is a pseudocoalgebra morphism, \ie we have an invertible 2-cell
\[
\begin{tikzcd} 
\unit
	\ar[r, "\coweak_A"] 
	\ar[d, "\moni"'] 
	 \ar[dr, phantom, "\Two"] & 
\bang A 
	\ar[d, "\dig_A"] \\
\bang \unit 
	\ar[r, "\bang \coweak_A"'] & 
\bbang A 
\end{tikzcd}
\]
satisfying  the appropriate coherence conditions.
\end{enumerate}
\end{proposition}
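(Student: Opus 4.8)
The plan is to recognise $\cocon_A$ and $\coweak_A$ as composites of pseudocoalgebra morphisms and then to read off the required invertible 2-cells as the evident pastings, using that pseudocoalgebra morphisms compose (since $\Em{\catK}$ is a 2-category).

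Two ingredients are already in place. First, by part~(i) of \cref{thm:coh-digging-with-m} the Seely equivalence $\seell_{A,A} \co \bang A \otimes \bang A \to \bang(A \oplus A)$ is a pseudocoalgebra morphism, with structure 2-cell $\bar{\seel}^2_{A,A}$; here $\bang A \otimes \bang A$ carries its tensor-product pseudocoalgebra structure, whose structure map $\monn_{\bang A, \bang A} \circ (\dig_A \otimes \dig_A)$ is exactly the one occurring in the present statement (\cf the description following \cref{thm:coalg-is-monoidal}). Likewise $\seeli \co \unit \to \bang 0$ is a pseudocoalgebra morphism by part~(ii) of \cref{thm:coh-digging-with-m}, with structure 2-cell $\bar{\seel}^0$. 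Second, for any map $g \co B \to C$ in $\catK$ the map $\bang g \co \bang B \to \bang C$ is a pseudocoalgebra morphism between the corresponding cofree pseudocoalgebras, its structure 2-cell being the pseudonaturality 2-cell $\bar{\dig}_g$ of $\dig$ at $g$, with the coherence conditions holding by the pseudocomonad axioms; equivalently, this is the statement that the cofree 2-functor $F \co \catK \to \Em{\catK}$ of~\eqref{equ:em-adjunction} is a 2-functor. In particular $\bang \nabla_A \co \bang(A \oplus A) \to \bang A$ and $\bang j \co \bang 0 \to \bang A$, where $j \co 0 \to A$ denotes the essentially unique map, are pseudocoalgebra morphisms.

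For part~(i), the definition~\eqref{equ:def-cocon} exhibits $\cocon_A$ as the composite $\bang \nabla_A \circ \seell_{A,A}$ of pseudocoalgebra morphisms, hence a pseudocoalgebra morphism itself; the required invertible 2-cell is obtained by pasting $\bar{\seel}^2_{A,A}$ (whiskered by $\bang \nabla_A$) with $\bar{\dig}_{\nabla_A}$ (whiskered by the structure map of $\bang A \otimes \bang A$), using that $\bbang \nabla_A \circ \bang \seell_{A,A} = \bang \cocon_A$ by 2-functoriality of $\bang$. For part~(ii), the definition~\eqref{equ:def-coweak} exhibits $\coweak_A$ as the composite $\bang j \circ \seeli$, and the required invertible 2-cell is the analogous pasting of $\bar{\seel}^0$ with $\bar{\dig}_j$. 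In both cases the two coherence conditions for a pseudocoalgebra morphism hold because they are preserved under composition of pseudocoalgebra morphisms.

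There is no substantial mathematical difficulty here: the real content was discharged in \cref{thm:coh-digging-with-m}, and what remains is bookkeeping — checking that the pseudocoalgebra structure map on $\bang A \otimes \bang A$ used in the statement is the one induced from $\Em{\catK}$, and that the pasted 2-cell matches the displayed square. If a self-contained argument is preferred, one can instead write out the pasting diagrams explicitly, following the same pattern as the proof of \cref{thm:coh-digging-with-m}.
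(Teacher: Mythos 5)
Your proposal is correct and takes essentially the same route as the paper: the paper's proof likewise observes that \eqref{equ:def-cocon} and \eqref{equ:def-coweak} exhibit $\cocon_A$ and $\coweak_A$ as composites of pseudocoalgebra morphisms, invoking parts (i) and (ii) of \cref{thm:coh-digging-with-m} for the Seely maps (with $\bang \nabla_A$ and $\bang j$ being pseudocoalgebra morphisms as images under the cofree 2-functor). Your extra remarks identifying the structure 2-cell of $\bang g$ with the pseudonaturality 2-cell $\bar{\dig}_g$ and checking the coalgebra structure on $\bang A \otimes \bang A$ are accurate bookkeeping that the paper leaves implicit.
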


\begin{proof} For part~(i), the definition of $\cocon_A$ in \eqref{equ:def-cocon} exhibits $\cocon_A$ as the composite of two pseudocoalgebra morphisms, as $\seell_{A,A}$
is so by part~(i) of \cref{thm:coh-digging-with-m}. 
Similarly, part~(ii) follows by unfolding the definition of $\coweak_A$ in \eqref{equ:def-coweak} and then using part~(ii) of \cref{thm:coh-digging-with-m}.
\end{proof} 

The combination of \cref{thm:cocontraction-coweakeneing-with-digging} and \cref{thm:coalg-mor-comon-mor} gives another proof that $\cocon_A$ 
and~$\coweak_A$ are braided pseudocomonoid morphisms, as required
for a symmetric pseudobialgebra structure.

\cref{thm:cocontraction-coweakeneing-with-digging} above is the last statement in this section the proof of which involves the verification of coherence conditions. The statements after it will involve the existence of invertible 2-cells, but
we have not investigated what coherence conditions these satisfy. Of course, the construction of these 2-cells in the proofs provides some guidance as to what these may be.  
The first proposition in this series, \cref{thm:first-without-coh}, describes the interaction of the maps $\monn_{A, B} \co \bang A \otimes \bang B \to \bang (A \otimes B)$, for $A, B \in \catK$, 
that are part of the lax monoidal structure of the 2-functor $\bang (-) \co \catK  \to \catK$, with cocontraction  and coweakening.

\begin{proposition}  \label{thm:first-without-coh}
\leavevmode
\begin{enumerate}[(i)]
\item For every $A, B \in \catK$, we have an invertible 2-cell
\[
\begin{tikzcd}[column sep = huge]
\bang A \otimes \bang A \otimes \bang B 
	\ar[r, "\cocon_A \otimes \id"]
	\ar[d, "\id_{\bang A} \otimes \id_{\bang A} \otimes \con_B"'] 
	 \ar[dddr, phantom, description, "\Two"] & 
\bang A \otimes \bang B 
	\ar[ddd, "\monn_{A,B}"] \\
\bang A \otimes \bang A \otimes \bang B \otimes \bang B 
	\ar[d, "\id \otimes \bra_{\bang A, \bang B} \otimes \id"'] & \\
\bang A \otimes \bang B \otimes \bang A \otimes \bang B 
	\ar[d, "\monn_{A,B} \otimes \monn_{A, B}"'] & \\
\bang (A \otimes B) \otimes \bang (A \otimes B)
	 \ar[r, "\cocon_{A\otimes B}"'] & 
\bang (A \otimes B) .
\end{tikzcd}
\]
\item For every $A, B \in \catK$, we have an invertible 2-cell
\[
\begin{tikzcd}[column sep = large]
\unit \otimes \bang B 
	\ar[r, "\coweak_A \otimes \id"] 
	\ar[d, "\id \otimes \weak_B"']
	\ar[ddr, phantom, description, "\Two"] & 
\bang A \otimes \bang B 
	\ar[dd, "\monn_{A, B}"] \\
\unit \otimes \unit 
	\ar[d, "\id"] & \\
\unit 
	\ar[r, "\coweak_{A \otimes B}"'] & 
\bang ( A \otimes B)  .
\end{tikzcd}
\]
\end{enumerate}
\end{proposition}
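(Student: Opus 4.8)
The plan is to reduce both identities to computations in $\catK$ by invoking the universal property of cofree pseudocoalgebras recorded in~\eqref{equ:aux-equiv}. For part~(i), I would first observe that both legs of the diagram underlie pseudocoalgebra morphisms $\bang A \otimes \bang A \otimes \bang B \to \bang(A \otimes B)$, the source being the tensor product of cofree pseudocoalgebras. The outer leg does because $\cocon_A$ is a pseudocoalgebra morphism by part~(i) of~\cref{thm:cocontraction-coweakeneing-with-digging}, $\id_{\bang B}$ trivially is, and $\monn_{A,B}$ is by part~(i) of~\cref{thm:mu-are-coalgebra-morphisms}; the inner leg does because $\con_B$ is a pseudocoalgebra morphism by~\cref{thm:con-and-weak-coalgebra-maps}, the braiding $\bra_{\bang A,\bang B}$ lifts to $\Em{\catK}$ by~\cref{equ:lift-braiding-to-em}, $\monn_{A,B}$ is a pseudocoalgebra morphism, and $\cocon_{A\otimes B}$ is one. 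Since $\bang(A\otimes B)$ is the cofree pseudocoalgebra on $A\otimes B$, composition with $\der_{A\otimes B}$ is an adjoint equivalence $\Em{\catK}[\bang A\otimes\bang A\otimes\bang B,\bang(A\otimes B)]\simeq\catK[\bang A\otimes\bang A\otimes\bang B, A\otimes B]$, so it suffices to produce an invertible 2-cell in $\catK$ between the composites of the two legs with $\der_{A\otimes B}$.

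I would then compute both composites from a small repertoire of moves. For the outer leg, the 2-cell $\der^2_{A,B}$ gives $\der_{A\otimes B}\circ\monn_{A,B}\cong\der_A\otimes\der_B$, and unfolding $\cocon_A=\bang\nabla_A\circ\seell_{A,A}$, using pseudonaturality of $\der$ together with the characterising 2-cell of the Seely equivalence (so that $\der_{A\oplus A}\circ\seell_{A,A}\cong(\der_A\otimes\weak_A,\weak_A\otimes\der_A)$), identifies $\der_A\circ\cocon_A$ with $\nabla_A\circ(\der_A\otimes\weak_A,\weak_A\otimes\der_A)$; hence the outer composite becomes $\bigl(\nabla_A\circ(\der_A\otimes\weak_A,\weak_A\otimes\der_A)\bigr)\otimes\der_B$. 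For the inner leg I would additionally use that $\weak$ is braided monoidal, so $\weak_{A\otimes B}\circ\monn_{A,B}\cong\weak_A\otimes\weak_B$ by~\cref{thm:weakening-is-monoidal}, the analogous description of $\der_{A\otimes B}\circ\cocon_{A\otimes B}$ coming from $\seell_{A\otimes B,A\otimes B}$, and the unit constraints of the pseudocomonoid $\bang B$ to absorb $\con_B$ (via $(\der_B\otimes\weak_B)\circ\con_B\cong\der_B\cong(\weak_B\otimes\der_B)\circ\con_B$); the inner composite then becomes $\nabla_{A\otimes B}\circ(\der_A\otimes\weak_A\otimes\der_B,\;\weak_A\otimes\der_A\otimes\der_B)$. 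Finally the two are matched using naturality of $\otimes$ and the behaviour of the codiagonal $\nabla$ under $(-)\otimes\id_B$.

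Part~(ii) follows the same template but is shorter. The two legs $\unit\otimes\bang B\to\bang(A\otimes B)$ are pseudocoalgebra morphisms ($\coweak_A$ and $\coweak_{A\otimes B}$ by part~(ii) of~\cref{thm:cocontraction-coweakeneing-with-digging}, and $\weak_B$ by~\cref{thm:con-and-weak-coalgebra-maps}), so again it suffices to compare their composites with $\der_{A\otimes B}$; unfolding $\coweak_A=\bang(0_{\unit,A})\circ\moni$ and using pseudonaturality of $\der$ together with the characterising 2-cell of $\seeli$ gives $\der_A\circ\coweak_A\cong 0_{\unit,A}$, whence both composites with $\der_{A\otimes B}$ reduce to zero maps $\unit\otimes\bang B\to A\otimes B$ and therefore agree. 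The conceptual reduction via cofreeness is routine; the main obstacle is the $\catK$-level bookkeeping, in particular the identification of $\bigl(\nabla_A\circ(-,-)\bigr)\otimes\der_B$ with $\nabla_{A\otimes B}\circ\bigl((-)\otimes\der_B,(-)\otimes\der_B\bigr)$, which expresses the compatibility of the tensor product with the convolution structure on hom-categories and is immediate in the closed setting of our applications.
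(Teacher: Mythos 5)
Your proposal takes a genuinely different route from the paper. The paper precomposes both legs with the equivalence ${\seell}^{\bullet}_{A,A}\otimes\id_{\bang B}$ and then exhibits explicit isomorphisms from each leg to one and the same composite through $\bang\bigl(((A\oplus A)\otimes B)\oplus((A\oplus A)\otimes B)\bigr)$, built from $\monn_{A\oplus A,B}$, $\bang\Delta$, $\bang(\pi_i\otimes\id_B)$ and $\bang\nabla_{A\otimes B}$; for part~(ii) it shows both legs are isomorphic to $\bang 0$ precomposed with the unit isomorphism. You instead observe that both legs underlie pseudocoalgebra morphisms into the cofree pseudocoalgebra $\bang(A\otimes B)$ and invoke the equivalence \eqref{equ:aux-equiv} to reduce the comparison to the composites with $\der_{A\otimes B}$ in $\catK$. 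That reduction is sound: your citations of \cref{thm:cocontraction-coweakeneing-with-digging}, \cref{thm:mu-are-coalgebra-morphisms}, \cref{thm:con-and-weak-coalgebra-maps} and \cref{equ:lift-braiding-to-em} are the right ones, tensors and composites of pseudocoalgebra morphisms are again such by \cref{thm:coalg-is-monoidal}, equivalences reflect isomorphisms, and a pseudocoalgebra 2-cell has an underlying invertible 2-cell in $\catK$. The $\der$-level computations you sketch (via $\der^2_{A,B}$, \cref{thm:weakening-is-monoidal}, the unit constraints absorbing $\con_B$, and $\der_A\cocon_A\cong\der_A\otimes\weak_A+\weak_A\otimes\der_A$) agree with the lemma the paper proves immediately after this proposition, so that part is fine and arguably cleaner than the paper's bare outline.

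The step you defer, however, is not bookkeeping: the identification of $\bigl(\nabla_A\circ(\der_A\otimes\weak_A,\weak_A\otimes\der_A)\bigr)\otimes\der_B$ with $\nabla_{A\otimes B}\circ(\der_A\otimes\weak_A\otimes\der_B,\weak_A\otimes\der_A\otimes\der_B)$ is exactly the distributivity $(f+g)\otimes h\cong f\otimes h+g\otimes h$ of the tensor over the convolution, and this does \emph{not} follow from the hypotheses in force in this section (symmetric Gray monoid with biproducts plus a linear exponential pseudocomonad). For a degenerate instance, take a cartesian symmetric monoidal category with biproducts, e.g.\ abelian groups with $\otimes=\times$ and the identity comonad as linear exponential pseudocomonad: there $(f+g)\otimes h$ and $f\otimes h+g\otimes h$ differ, and likewise $0_{\unit,A}\otimes g$ need not be a zero map, which is the analogous fact your part~(ii) argument silently uses when it declares both $\der$-composites to be ``zero maps'' (the top leg reduces to $0_{\unit,A}\otimes\der_B$, which factors through $0\otimes\bang B$, not through $0$). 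So appealing to ``the closed setting of our applications'' imports an assumption (that $-\otimes X$ preserves finite biproducts, as in the compact closed case of $\Prof$) that is not part of the stated hypotheses; a complete proof must either make it explicit or construct the required 2-cell from the given structure, which the cartesian example shows is impossible in general. In fairness, the paper's own outline needs the very same ingredient — its identification of the top leg with the intermediate composite amounts to $\nabla_A\otimes\id_B\cong\pi_1\otimes\id_B+\pi_2\otimes\id_B$ — so your argument is on a par with the printed one; but the deferred compatibility is where the real content of the proposition lies, and it should be stated and justified rather than dismissed.
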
 

\begin{proof} For part~(i), we limit ourselves to outlining a strategy to construct the required 
invertible 2-cell. First, one precomposes both composite maps with the adjoint equivalence
\[
\begin{tikzcd}[column sep = huge]
\bang (A \oplus A) \otimes \bang B 
	\ar[r, "\seel^\bullet_{A,A} \otimes \id_{\bang B}"] &
\bang A \otimes \bang A \otimes \bang B .
\end{tikzcd}
\]
Secondly, one shows that both of the resulting maps are isomorphic to the composite
\[
\begin{tikzcd}
 \bang (A \oplus A) \otimes \bang B
 	\ar[r, "u"] & 
\bang \big( ( ( A \oplus A) \otimes B) \oplus ((A \oplus A) \otimes B) \big)
	\ar[r, "v"] & 
\bang (A \otimes B) ,
\end{tikzcd}
\]
where the maps $u$ and $v$ are 
\[
\begin{tikzcd}[column sep=large]
 \bang (A \oplus A) \otimes \bang B
 	\ar[r, "\monn_{A \oplus A, B}"] &
\bang \big( (A \oplus A) \otimes B \big)
	\ar[r, "\bang \Delta_{(A \oplus A) \otimes B}"] &[0.2cm]
\bang \Big( \big( ( A \oplus A) \otimes B \big) \oplus \big( (A \oplus A) \otimes B \big) \Big) ,
\end{tikzcd} 
\]
and
\[
\begin{tikzcd}
\bang \big( \big( ( A \oplus A) \otimes B \big) \oplus \big( (A \oplus A) \otimes B \big) \big)
	\ar[r, "\bang ( (\pi_1 \otimes \id) \oplus (\pi_2 \otimes \id) )"] &[4em]
\bang ( (A \otimes B) \oplus (A \otimes B) )
	\ar[r, "\bang \nabla_{A \otimes B}"] & 
\bang (A \otimes B) ,
\end{tikzcd}
\]
respectively. For part~(ii), it is not difficult to show that both composites are isomorphic to 
\[
\begin{tikzcd}
\unit \otimes \bang B 
	\ar[r, "\id"] &
\bang B
	\ar[r, "\bang 0"] &
\bang (A \otimes B) ,
\end{tikzcd}
\]
where $0$ is the zero map defined in~\cref{equ:zero-map}, by unfolding the definition of $\coweak_A$ and $\coweak_{A \otimes B}$.
\end{proof}

The next proposition describes the interaction of coweakening and cocontraction with the convolution structure defined
in \cref{equ:convolution}.

 \begin{lemma} For every $A \in \catK$, there are invertible 2-cells 
\[
\begin{tikzcd}
  \bang A \otimes \bang A  
  	\ar[r, "\cocon_A"]
	\ar[dr, bend right = 20, "\der_A \otimes \weak_A   + \weak_A \otimes \der_A"'] &   
  \bang A 
  	\ar[d, "\der_A"]  \\
\phantom{} \ar[ur, phantom, {pos=.7},  "\Two"]  & A  
 \end{tikzcd}  \qquad
 \begin{tikzcd}
 \unit 
 	\ar[r, "\coweak_A"] 
 	\ar[dr, bend right = 20, "0"'] &  
 \bang A 
 	\ar[d, "\der_A"]  \\
\phantom{} \ar[ur, phantom, {pos=.7},  "\Two"]   & A 
\end{tikzcd}
\]
\end{lemma}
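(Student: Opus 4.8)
The plan is to derive both invertible $2$-cells from the defining universal properties of the Seely equivalences $\seell_{A,A}$ and $\seeli$, the pseudonaturality of the counit $\der$, and two elementary identities in the biproduct structure. No coherence conditions are asserted in the statement, so the task is only to exhibit the relevant pastings.

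For the first $2$-cell, I would unfold $\cocon_A = \bang \nabla_A \circ \seell_{A,A}$ from~\eqref{equ:def-cocon}. Pseudonaturality of $\der$ for the codiagonal $\nabla_A \co A \oplus A \to A$ supplies an invertible $2$-cell $\der_A \circ \bang \nabla_A \cong \nabla_A \circ \der_{A \oplus A}$, so that
\[
\der_A \circ \cocon_A \;\cong\; \nabla_A \circ \der_{A \oplus A} \circ \seell_{A,A}.
\]
Next I would apply the defining invertible $2$-cell of $\seell_{A,A}$ (that $\der_{A \with B} \circ \seell_{A,B} \cong (\der_A \otimes \weak_B, \weak_A \otimes \der_B)$, recorded just after~\eqref{equ:seely-maps}, here with $B = A$ and $\with = \oplus$) to identify $\der_{A \oplus A} \circ \seell_{A,A}$ with the pairing $(\der_A \otimes \weak_A, \weak_A \otimes \der_A) \co \bang A \otimes \bang A \to A \oplus A$. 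Finally I would invoke the standard fact that in a $2$-category with biproducts $\nabla_A \circ (f,g) \cong f + g$ for the convolution of~\eqref{equ:convolution}: indeed $(f,g) \cong (f \oplus g) \circ \Delta_{\bang A \otimes \bang A}$ by the universal property of the product, and postcomposing with $\nabla_A$ yields exactly the composite defining $f + g$. Taking $f = \der_A \otimes \weak_A$ and $g = \weak_A \otimes \der_A$ then produces the asserted $2$-cell.

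For the second $2$-cell, I would unfold $\coweak_A = \bang u \circ \seeli$ from~\eqref{equ:def-coweak}, where $u \co 0 \to A$ is the essentially unique map out of the zero object and $\seeli \co \unit \to \bang 0$ (using $\term = 0$). Pseudonaturality of $\der$ for $u$ gives $\der_A \circ \bang u \cong u \circ \der_0$, whence
\[
\der_A \circ \coweak_A \;\cong\; u \circ \der_0 \circ \seeli.
\]
The defining $2$-cell of $\seeli$ exhibits $\der_0 \circ \seeli$ as equivalent to the essentially unique map $\unit \to 0$, so $u \circ \der_0 \circ \seeli$ is equivalent to the composite $\unit \to 0 \to A$, which is precisely the zero morphism $0 = 0_{\unit, A}$ of~\eqref{equ:zero-map}.

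I expect the only mild obstacle to be the routine bookkeeping: composing the pseudonaturality, Seely, and universal-property $2$-cells into a single coherent pasting, and checking that the identification $\nabla_A \circ (f,g) \cong f + g$ used above is the one induced by the biproduct structure rather than some incidental invertible $2$-cell. As noted in the surrounding discussion, since the statement asserts only existence of these invertible $2$-cells, it is not necessary to pin down the coherence conditions they satisfy; the explicit pastings above would, however, be the natural starting point if one wished to do so.
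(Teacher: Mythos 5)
Your proposal is correct and matches the paper's proof: both unfold $\cocon_A$ and $\coweak_A$ via the Seely equivalences, use pseudonaturality of $\der$ along $\nabla_A$ (resp.\ $0 \to A$), identify $\der_{A\oplus A}\seell_{A,A}$ with the pairing $(\der_A\otimes\weak_A,\weak_A\otimes\der_A)$, and finish with the definition of convolution (resp.\ the zero map). The only cosmetic difference is that where you quote the defining $2$-cell of $\seell_{A,A}$ directly, the paper re-derives it from the unfolded formula for $\seell$ using the unit constraint of the pseudocomonad and the monoidality of $\der$.
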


\begin{proof} For the 2-cell on the left-hand side, we exhibit invertible 2-cells between both morphisms in the boundary of the diagram
and the composite morphism
\[
\begin{tikzcd}
\bang A \otimes \bang A \ar[r, "\seell_{A,A}"] &
\bang (A \oplus A) \ar[r, "\bang \nabla_A"] &
\bang A \ar[r, "\der_A"] &
A .
\end{tikzcd}
\]
For  $\der_A  \cocon_A$, unfolding the definition of $\cocon_A$, the required invertible 2-cell is obtained
by whiskering $\seell_{A,A}$ with the pseudonaturality 2-cell
\[
\begin{tikzcd}
\bang (A \oplus A) 
	\ar[r, "\bang \nabla_A "] 
	\ar[d, "\der_{A \oplus A}"'] 
	\ar[dr, phantom, description, "\Two"] &
\bang A 
	\ar[d, "\der_A"] \\
A \oplus A
	\ar[r, "\nabla_A"'] &
A .
\end{tikzcd}
\]
For $\der_A \otimes \weak_A   + \weak_A \otimes \der_A$, we have
\[
\begin{small}
\begin{tikzcd}
\bang A \otimes \bang A 
	\ar[r, "\dig_A \otimes \dig_A"]
	\ar[dr, bend right = 20, "\id_{\bang A} \otimes \id_{\bang A}"']   &[1.2cm]
\bbang A \otimes \bbang A 
	\ar[r,  "\monn_{\bang A, \bang A}"]
	\ar[d,  "\der_{\bang A} \otimes \der_{\bang A}"'] 
	\ar[dr, phantom, description, "\Two \der^2_{\bang A, \bang A}"] &[0.6cm]
\bang (\bang A \otimes \bang A)
	\ar[r, "\bang ( \der_A \otimes \weak_A {,} \weak_A \otimes \der_A)"]
	\ar[d, "\der_{\bang A \otimes \bang A}"] 
	\ar[dr, phantom, description, "\cong"] &[2cm] 
\bang (A \oplus A)
	\ar[d, "\der_{A \oplus A}"] \\
 \phantom{} \ar[ur, phantom, {pos=.6},  "\Two  \mu_A \otimes \mu_A"]  &
 \bang A \otimes \bang A  \ar[r, "\id"'] & 
 \bang A \otimes \bang A 
 	\ar[r, "( \der_A \otimes \weak_A {,} \weak_A \otimes \der_A)"] 
	\ar[dr, bend right = 20, "\der_A \otimes \weak_A + \weak_A \otimes \der_A"'] &
A \oplus A
	\ar[d, "\nabla_A"] \\
& &  \phantom{} \ar[ur, phantom, {pos=.6},  "\cong"] & A ,
\end{tikzcd}
\end{small}
\]
where we are using the left unitality constraints of the pseudomonad, the monoidality of $\der$ as in~\eqref{equ:counit-monoidal-2},
pseudonaturality of $\der$, and the definition of the convolution.
	
For the 2-cell on the right-hand side, the required 2-cell is given by
\[
\begin{tikzcd}
\unit 
	\ar[r, "\seeli"] 
	\ar[dr, bend right = 20]  &
\bang 0
	\ar[r]  
	\ar[d, "\der_0"] 
	\ar[dr, phantom, description, "\cong "] &
\bang A 
	\ar[d, "\der_A"] \\
\phantom{} \ar[ur, phantom, {pos=.7},  "\cong"] 	&
0
	\ar[r] &
A  ,
\end{tikzcd}
\]
where the 2-cell in the triangle is given by the universal property of the terminal object
and the 2-cell in the square is given by the pseudonaturality of $\der$.
\end{proof} 

\pagebreak

\begin{proposition} \leavevmode \nopagebreak
\begin{enumerate}[(i)]
\item  For every
$f \co A \to B$ and $g \co A \to B$, we have an invertible 2-cell
\[
\begin{tikzcd}
\bang A 
	\ar[r, "\bang (f + g)"]  
	\ar[d, "\con_A"'] 
	\ar[dr, phantom, description, "\Two"] & 
\bang B  \\
\bang A \otimes \bang A 
	\ar[r, "\bang f \otimes \bang g"'] & 
\bang B \otimes \bang B  .
	\ar[u, "\cocon_B"']
\end{tikzcd}
\]
\item  For every $A \in \catK$, we have an invertible 2-cell
\[
\begin{tikzcd}
\bang A 
	\ar[r, "\weak_A"]
	\ar[dr, bend right = 20, "\bang 0_{A,A}"'] & 
\unit 
	\ar[d, "\coweak_A"] \\
 \phantom{} \ar[ur, phantom, description, pos=(0.7), "\Two"] & \bang A .
 \end{tikzcd}
 \]
\end{enumerate}
\end{proposition}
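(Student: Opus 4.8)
The plan is to deduce both parts from the single fact, recorded in~\cref{thm:seely-equivalences-monoidal}, that $\bang(-)$ is a sylleptic strong monoidal $2$-functor $(\catK, \oplus, 0) \to (\catK, \otimes, \unit)$, combined with the description of contraction and weakening in terms of the Seely equivalences furnished by~\cref{thm:comm-comonoids-coincide}. Throughout I would use the identification $\term = 0$ coming from the biproduct structure, and I would aim only at producing the invertible $2$-cells in the statement, without investigating their coherence conditions (as flagged in the remark preceding the statement).

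For part~(i), I would start from the definition of convolution in~\cref{equ:convolution}, which exhibits $f + g$ as the composite $\nabla_B \circ (f \oplus g) \circ \Delta_A$, so that pseudofunctoriality of $\bang$ gives an invertible $2$-cell $\bang(f+g) \cong \bang(\nabla_B) \circ \bang(f \oplus g) \circ \bang(\Delta_A)$. Next I would invoke pseudonaturality of $\seell$ --- which holds since $\seell$ is part of the strong monoidal structure on $\bang$, \cf \cref{thm:lax-monoidal-pseudofunctor} --- applied to the pair $f, g \co A \to B$, to obtain an invertible $2$-cell $\bang(f \oplus g) \circ \seell_{A,A} \cong \seell_{B,B} \circ (\bang f \otimes \bang g)$; whiskering this $2$-cell on the right with the adjoint quasi-inverse ${\seell}^\bullet_{A,A}$ and cancelling via the counit of the adjoint equivalence $\seell_{A,A}$ then yields $\bang(f \oplus g) \cong \seell_{B,B} \circ (\bang f \otimes \bang g) \circ {\seell}^\bullet_{A,A}$. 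Finally I would substitute the definition $\cocon_B = \bang(\nabla_B) \circ \seell_{B,B}$ from~\cref{equ:def-cocon} and the invertible $2$-cell $\con_A \cong {\seell}^\bullet_{A,A} \circ \bang(\Delta_A)$ from~\cref{thm:comm-comonoids-coincide}, and paste everything together to get
\[
\bang(f+g) \;\cong\; \bang(\nabla_B) \circ \seell_{B,B} \circ (\bang f \otimes \bang g) \circ {\seell}^\bullet_{A,A} \circ \bang(\Delta_A) \;\cong\; \cocon_B \circ (\bang f \otimes \bang g) \circ \con_A \mathrlap{.}
\]

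Part~(ii) would be carried out as the nullary analogue of the same argument: unfolding~\cref{equ:zero-map}, the morphism $0_{A,A}$ is the composite $A \to 0 \to A$ of the essentially unique maps $t \co A \to 0$ and $v \co 0 \to A$ through the zero object, so pseudofunctoriality of $\bang$ gives $\bang(0_{A,A}) \cong \bang(v) \circ \bang(t)$. I would then use the definition $\coweak_A = \bang(v) \circ \seeli$ from~\cref{equ:def-coweak} and the invertible $2$-cell $\weak_A \cong (\seeli)^\bullet \circ \bang(t)$ from~\cref{thm:comm-comonoids-coincide}, and cancel the middle factor using the counit of the adjoint equivalence $\seeli$, arriving at $\coweak_A \circ \weak_A \cong \bang(v) \circ \seeli \circ (\seeli)^\bullet \circ \bang(t) \cong \bang(v) \circ \bang(t) \cong \bang(0_{A,A})$, as required.

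I do not expect a genuine obstacle here; the only delicate points are keeping track of the pseudonaturality $2$-cells of $\seell$ and of the triangle identities for the adjoint equivalences $\seell_{A,A}$ and $\seeli$ when pasting, which is exactly the sort of bookkeeping that the explicit formulas collected in~\cref{sec:products} are designed to make manageable. The genuinely harder --- and here deliberately unaddressed --- question would be to pin down the coherence conditions satisfied by the $2$-cells so constructed.
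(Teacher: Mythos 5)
Your proposal is correct and follows essentially the same route as the paper: both proofs unfold the definitions of convolution (resp.\ the zero map) and of $\cocon_B$ and $\coweak_A$, use the pseudonaturality of the Seely equivalences, and invoke \cref{thm:comm-comonoids-coincide} to rewrite $\con_A$ and $\weak_A$ in terms of $\seell^\bullet_{A,A}$ and $(\seeli)^\bullet$. The only cosmetic difference is that you insert the adjoint quasi-inverses and cancel them via the counits of the adjoint equivalences, whereas the paper pastes the corresponding triangles directly.
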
 

\begin{proof} For part~(i), unfolding the definition of $f+g \co A \to B$ in \eqref{equ:convolution}, the required 2-cell is the pasting
\[
\begin{tikzcd}[column sep = large]
\bang A 
	\ar[r, "\bang \Delta_A"] 
	\ar[dr, bend right = 20, "\con_A"'] & 
\bang (A \oplus A) 
	\ar[r, "\bang (f \oplus g)"] 
	\ar[dr, phantom, description, "\Two"] & 
\bang (B \oplus B) 
	\ar[r, "\bang \nabla_B"] & 
\bang B , \\
\phantom{} \ar[ur, phantom, pos= 0.7, "\Two"] & 
 \bang A \otimes \bang A
  \ar[r, "\bang f \otimes \bang g"'] 
  \ar[u, "\seell_{A,A}"'] & 
  \bang B \otimes \bang B  
  	\ar[ur, bend right = 20, "\cocon_B"'] 
	\ar[u, "\seell_{B,B}"] & 
 \end{tikzcd}
 \]
 where the 2-cell on the left-hand triangle is given by \cref{thm:comm-comonoids-coincide}, the one in the central square is pseudonaturality  of $\seel$, and the right-hand triangle commutes strictly by the definition of $\cocon$ in \eqref{equ:def-cocon}. For part~(ii), unfolding the definition of $0_{A,A}$ from \eqref{equ:zero-map} 
 and $\coweak$ from \eqref{equ:def-coweak}, we have
 \[
 \begin{tikzcd}
 \bang A 
 	\ar[r, "\weak_A"] 
	\ar[dr, bend right = 20]  &
\unit
 	\ar[d, "\seeli"] \\
\ar[ur, phantom, description, pos = (0.7), "\Two"]  	& 
 \bang 0
 	\ar[r] & 
\bang A ,
\end{tikzcd}
 \]
 where the 2-cell is given by \cref{thm:comm-comonoids-coincide}.
\end{proof}

\section{Codereliction}
\label{sec:codereliction}

The aim of this section is to introduce the notion of a codereliction on $\catK$. We do so in \cref{thm:codereliction}, inspired by  
the definition of the notion of a creation map in~\cite[Definition~4.3]{FioreM:difsmi}. For this, we work with a symmetric Gray monoid $\catK$ with biproducts and a linear
exponential pseudocomonad $(\bang, \dig, \der)$ satisfying the additional assumptions in \cref{thm:hypothesis} below. The reason for making these
assumptions is that, in order to state \cref{thm:codereliction},  we make use of some 2-cells which do not seem to necessarily exist in the setting considered so far.

\begin{hypothesis} \label{thm:hypothesis}
 \leavevmode
\begin{enumerate}
\item For each $A, B \in \catK$, the zero map $0_{A,B} \co A \to B$, as defined in \eqref{equ:zero-map}, is an initial object of $\catK[A,B]$ and the convolution product $f + g \co A \to B$, as defined in \eqref{equ:convolution}, is a coproduct, with coprojections $\iota_1 \co f \to f + g$ and $\iota_2 \co g \to f + g$ given by the 2-cells
\[
\begin{tikzcd}
f \ar[r, "\cong"] & f + 0 \ar[r] & f + g \, , 
\end{tikzcd} \qquad
\begin{tikzcd}
g \ar[r, "\cong"] &  0 + g  \ar[r] & f + g \, . 
\end{tikzcd} 
\]
\item For every $A \in \catK$, the component of the counit $\der_A \co \bang A \to A$ of the pseudocomonad has a left adjoint  $\coder_A \co A \to \bang A$ in $\catK$,
so that we have a unit $\eta_A$ and a counit $\varepsilon_A$, as in
\begin{equation}
\label{equ:first-pseudocomonad-constraint}
\begin{tikzcd}
A 
	\ar[r, "\coder_A"] 
	\ar[dr,  bend right = 30, "\id_A"'] & 
\bang A 
	\ar[d, "\der_A"] \\
\phantom{} \ar[ur, phantom, {pos=.7},  "\overset{\eta_A}{\TwoHor}"]   & A ,
 \end{tikzcd}  \qquad
 \begin{tikzcd}
\bang A 
	\ar[r, "\der_A"] 
	\ar[dr, bend right =30, "\id_{\bang A}"'] & 
A 
	\ar[d, "\coder_A"] \\
\phantom{} \ar[ur, phantom, {pos=.7},  "\Two \varepsilon_A"]   & \bang A  .
 \end{tikzcd}
 \end{equation}
\end{enumerate}
\end{hypothesis}

From now until the end of the section we work with a fixed symmetric Gray monoid $\catK$ with biproducts and a linear
exponential pseudocomonad $(\bang, \dig, \der)$ under the assumptions of \cref{thm:hypothesis}.

We refer to the 2-cells $\eta_A \co \id_A \Rightarrow \der_A \coder_A$
in~\eqref{equ:first-pseudocomonad-constraint}, for $A \in \catK$, as the \myemph{first pseudocomonad constraint} for~$\coder$. 
We introduce a second pseudocomonad constraint in~\cref{thm:second-pseudocomonad-constraint} and a strength constraint in~\cref{thm:strength-constraint}. To do so, we construct some auxiliary 2-cells in~\cref{thm:auxiliary-2-cells} below.

\begin{lemma} 
\label{thm:auxiliary-2-cells}
 For every $A \in \catK$, there are 2-cells as follows: 
\begin{enumerate}
\item 
\[
\begin{tikzcd}[ampersand replacement=\&]
\bang A \otimes \unit 
	\ar[r, "\id"]
	\ar[dr, bend right = 20, "\id_{\bang A} \otimes \coweak_A"']  \&
\bang A \ar[d, "\con_A"] \\
 \phantom{} \ar[ur, phantom, pos = 0.7, "\TwoHor"]	\&
\bang A \otimes \bang A  ,
\end{tikzcd}
\]
\item
\[
\begin{tikzcd}[ampersand replacement=\&]
\bang A
	\ar[r, "\con_A"]
	\ar[dr, bend right = 20, "\id_{\bang A}"'] \&
\bang A \otimes \bang A  \ar[d, "\cocon_A"] \\
  \phantom{} \ar[ur, phantom, pos = 0.7, "\Two"]	\&
\bang A .
\end{tikzcd}
\]
\end{enumerate} 
\end{lemma}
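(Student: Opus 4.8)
The statement to prove is \cref{thm:auxiliary-2-cells}, which asserts the existence of two 2-cells involving contraction, cocontraction, coweakening, and identities on $\bang A$.

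The plan is to exhibit each 2-cell as a pasting diagram built from structure already established, using the symmetric pseudobialgebra structure on $\bang A$ from \cref{thm:bang-A-bialgebra}. For part~(i), the 2-cell $\id_{\bang A} \Rightarrow \con_A \cc (\id_{\bang A} \otimes \coweak_A)$ is precisely the unitality constraint for the comultiplication $\con_A$ against the coweakening map $\coweak_A$: recall that one of the invertible 2-cells packaged in the symmetric pseudobialgebra structure of \cref{thm:bang-A-bialgebra} is of the form $\coweak_A \cc \weak_A \Rightarrow \id$-type data, but the one we want here is the unit law saying that $\coweak_A$ is a unit for the comonoid comultiplication in the sense that $\con_A$ post-composed with $\id \otimes \coweak_A$ (or rather, using the right unitality constraint $\rho_{\bang A}$ of the symmetric pseudocomonoid $\bang A$ together with the fact that $\weak_A \cc \coweak_A \cong \id_\unit$) is isomorphic to $\id_{\bang A}$. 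Concretely, I would paste the right unitality constraint $\rho_{\bang A} \co \con_A \cc (\id_{\bang A} \otimes \weak_A^{-1})$... more carefully: start from $\id_{\bang A}$, apply $\rho_{\bang A}^{-1}$ (the right unitality constraint of the symmetric pseudocomonoid $\bang A$, which gives $\id_{\bang A} \cong (\id_{\bang A} \otimes \weak_A) \cc \con_A$ after identifying $\bang A \otimes \unit = \bang A$), then whisker with the invertible 2-cell $\coweak_A \cc \weak_A \cong \id_\unit$ coming from the bialgebra axioms, tensored with $\bang A$. The direction of the 2-cell (from $\id$ to $\con_A \cc (\id \otimes \coweak_A)$, as drawn) is then obtained; note it need not be invertible, which is consistent with the statement not claiming invertibility.

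For part~(ii), the 2-cell $\cocon_A \cc \con_A \Rightarrow \id_{\bang A}$ is one of the Hopf-algebra-type compatibilities. Here the natural source is the interaction of the bialgebra structure maps: I would build it from the counit law $\weak_A \cc \cocon_A \cong \weak_A \otimes \weak_A$ and the unit law $\con_A \cc \coweak_A \cong \coweak_A \otimes \coweak_A$ in \cref{thm:bang-A-bialgebra}, but more directly from the observation that $\cocon_A \cc \con_A$ is, up to the pasting already written down in the proof of \cref{thm:bang-A-bialgebra}, comparable to $\id_{\bang A}$ via the left/right unitality constraints $\lambda_{\bang A}, \rho_{\bang A}$ of the comonoid together with $\sigma, \tau$-type unitality constraints of the monoid, plus the 2-cell $\bar{\con}$, $\bar{\weak}$ data. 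Since the statement only asks for the \emph{existence} of such a 2-cell and does not pin down coherence conditions — consistent with the remark preceding \cref{thm:cocontraction-coweakeneing-with-digging} that from here on we no longer verify coherence — it suffices to produce one canonical composite. I would use that $\con_A$ followed by $\coweak_A$ on one tensor factor recovers (up to iso) $\id_{\bang A}$, and dually that $\weak_A$ followed by $\coweak_A$ is a zero, assembling a pasting through $\bang A \otimes \unit$ and $\unit \otimes \bang A$ using the unitality 2-cells of the monoid structure $(\cocon_A, \coweak_A)$.

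The main obstacle is bookkeeping rather than conceptual: there are several candidate pasting diagrams, and one must choose the one that is genuinely canonical (so that later constructions — the second pseudocomonad constraint and the strength constraint — compose well with it). I would therefore take care to define each 2-cell as the image of an identity 2-cell under the relevant adjoint equivalences and unitality isomorphisms, rather than an ad hoc composite, so that the definition is forced. The actual verification that these pastings typecheck (domains and codomains match after using the strict unit law $\bang A \otimes \unit = \bang A = \unit \otimes \bang A$ of the Gray monoid and $\bra_{\unit,\bang A} = \id$) is routine and I would leave it to the reader, in keeping with the paper's stated conventions.
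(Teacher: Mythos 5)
Your proposal has a genuine gap: both 2-cells in this lemma are irreducibly \emph{non-invertible} comparison cells that cannot be extracted from the symmetric pseudobialgebra structure of \cref{thm:bang-A-bialgebra}, and your argument never invokes the one hypothesis that actually produces them, namely \cref{thm:hypothesis}(i) (zero maps are initial and convolution is a coproduct in each hom-category). Concretely, for part~(i) you propose to combine the right unitality constraint $\rho_{\bang A}$ of the pseudocomonoid with ``the invertible 2-cell $\coweak_A \circ \weak_A \cong \id_\unit$ coming from the bialgebra axioms''. No such cell exists: the bialgebra datum is $\weak_A \circ \coweak_A \cong \id_\unit$, whereas $\coweak_A \circ \weak_A$ is an endomap of $\bang A$ and is \emph{not} isomorphic to $\id_{\bang A}$ (already in $\Prof$ or $\Rel$ it is a proper ``subrelation'' of the identity). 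Your route for (i) would require exactly such a comparison $\coweak_A\weak_A \Rightarrow \id_{\bang A}$, so it does not go through. Similarly, for part~(ii) the comonoid/monoid unitality constraints only give $(\id\otimes\weak_A)\con_A \cong \id$ and $\cocon_A(\id\otimes\coweak_A) \cong \id$; pasting these can never yield a cell $\id_{\bang A} \Rightarrow \cocon_A\con_A$ without an extra non-invertible ingredient (absent an antipode, a bialgebra has no canonical comparison between $m\circ n$ and the identity), and your appeal to ``$\weak_A$ followed by $\coweak_A$ is a zero'' is again a cell that the axioms do not provide.

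The paper's proof works one level down, in $\catK$ with its biproducts, and then transports along $\bang(-)$ and the Seely equivalences \eqref{equ:seely-with-biproducts} (using that $\con_A$, $\cocon_A$, $\coweak_A$ correspond to $\bang\Delta_A$, $\bang\nabla_A$, $\bang u$ by \cref{thm:comm-comonoids-coincide} and \eqref{equ:def-cocon}--\eqref{equ:def-coweak}). For (i) one builds a 2-cell $\id_A \oplus u \Rightarrow \Delta_A$ over $A\oplus 0 \to A\oplus A$ via the universal property of the product $A\oplus A$, where the $\pi_2$-component is the unique cell out of the zero map $0_{A,A}$, i.e.\ initiality from \cref{thm:hypothesis}(i). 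For (ii) one uses $\nabla_A\Delta_A = \id_A + \id_A$ and takes the coprojection $\id_A \Rightarrow \id_A + \id_A$, which exists precisely because convolution is assumed to be a coproduct. If you want to repair your argument, this is the missing idea to import; the bookkeeping you defer (strict units, $\bra_{\unit,\unit}=\id$) is indeed routine, but the existence of the cells themselves is where the hypothesis does real work.
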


\begin{proof} For part~(i), we begin by constructing a 2-cell
\begin{equation}
\label{equ:claim-5}
\begin{tikzcd}[ampersand replacement=\&]
A \oplus 0
	\ar[r, "\simeq"] 
	\ar[dr, bend right =20, "\id_A \oplus u"'] \&
A 
	\ar[d, "\Delta_A"] \\
\phantom{} \ar[ur, phantom, description, pos= (0.6), "\TwoHor"]  \&
A \oplus A .
\end{tikzcd}
\end{equation}
This is induced by the following evident 2-cells:
\[
\begin{tikzcd}
A \oplus 0 
	\ar[r, "\pi_1"] 
	\ar[dr, phantom, description, "\cong"] 
	\ar[d, "\id_A \oplus u"'] &
A 
	\ar[r, "\Delta_A"] 
	\ar[d, "\id_A"] 
	\ar[dr, phantom, description, "\cong"]  &
A \oplus A 
	\ar[d, "\pi_1"] \\
   A \oplus A 
  	\ar[r, "\pi_1"'] & 
 A \ar[r, "\id_A"'] &
A , 
\end{tikzcd} \qquad 
\begin{tikzcd}
A \oplus 0 \ar[r, "\pi_1"] \ar[d, "\id_A \oplus u"']  
\ar[dr, phantom, description, "\cong"] &
A \ar[r, "\id_A"] \ar[d, "0_A"]  
\ar[dr, phantom, description, "\TwoHor"] &
A \ar[r, "\Delta_A"] \ar[d, "\id_A"] 
\ar[dr, phantom, description, "\cong"] &
A \oplus A \ar[d, "\pi_2"] \\
A \oplus A \ar[r, "\pi_2"'] & 
A \ar[r, "\id_A"'] & 
A \ar[r, "\id_A"'] & 
A  .
\end{tikzcd}
\]
Given \eqref{equ:claim-5}, the desired 2-cell follows immediately via the Seely equivalences in~\eqref{equ:seely-with-biproducts}.

For part~(ii), observe that we have a 2-cell
\[
\begin{tikzcd}[ampersand replacement=\&]
A
	\ar[r, "\Delta_A"]
	\ar[dr, bend right = 20, "\id_A"'] \&
A \oplus A  \ar[d, "\nabla_A"] \\
 \phantom{} \ar[ur, phantom, description, pos= (0.7), "\Two"]	\&
A ,
\end{tikzcd}
\]
since $\nabla_A \Delta_A = \id_A + \id_A$ and $ \id_A + \id_A$ is a coproduct by \cref{thm:hypothesis}. As before, we obtain the required 2-cell via the Seely equivalences.
\end{proof}

\begin{proposition}
\label{thm:second-pseudocomonad-constraint} 
For every $A \in \catK$, there is a $2$-cell $\mu_{A,B}$, which we call the \myemph{second pseudocomonad constraint} for $\coder$, fitting in the diagram
\begin{equation}
\label{equ:definition-of-mu}
\begin{tikzcd}[column sep = large]
A \otimes I 
	\ar[r, "\id"] 
	\ar[d, "\coder_A \otimes \coweak_A"']  
	\ar[drr, phantom, description, "\overset{\mu_{A,B}}{\TwoHor}"] & 
A 
	\ar[r, "\coder_A"] & 
\bang A 
	\ar[d, "\dig_A"] \\
\bang A \otimes \bang A 
	\ar[r, "\coder_{\bang A}  \otimes \dig_A"']  & 	
\bbang A \otimes \bbang A 
	\ar[r,  "\cocon_{\bang A}"'] & 
\bbang A .
\end{tikzcd}
\end{equation}
 \end{proposition}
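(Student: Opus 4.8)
The plan is to normalise both legs of~\eqref{equ:definition-of-mu} and then produce $\mu_A$ as a whiskered instance of a canonical comparison $2$-cell between $\coder_{\bang A}$ and $\dig_A$. Since the unit of the Gray monoid is strict, the upper leg is $\dig_A\coder_A \co A \to \bbang A$, and, using the interchange isomorphisms of the cubical pseudofunctor $\otimes$, the lower leg is isomorphic to $\cocon_{\bang A}\bigl((\coder_{\bang A}\coder_A)\otimes(\dig_A\coweak_A)\bigr)$.

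First I would show that digging commutes with coweakening, \ie that there is an invertible $2$-cell $\dig_A\coweak_A \cong \coweak_{\bang A}$. This follows by unfolding the definition~\eqref{equ:def-coweak} of coweakening through the Seely equivalence and pasting the invertible $2$-cell of~\cref{thm:coh-digging-with-m}(ii) (expressing that $\seeli$ is a pseudocoalgebra morphism) with the pseudonaturality $2$-cell of $\dig$ along the essentially unique map $0 \to A$ and the canonical isomorphisms coming from initiality of the zero object; equivalently, one may start from the $2$-cell making $\coweak_A$ a pseudocoalgebra morphism in~\cref{thm:cocontraction-coweakeneing-with-digging}(ii). With this identification in hand, recall that by~\cref{thm:bang-A-bialgebra} the object $\bbang A = \bang(\bang A)$ carries a symmetric pseudomonoid structure with multiplication $\cocon_{\bang A}$ and unit $\coweak_{\bang A}$; its right unitality constraint gives an invertible $2$-cell $\cocon_{\bang A}(\id_{\bbang A}\otimes\coweak_{\bang A}) \cong \id_{\bbang A}$, and whiskering it on the right by $\coder_{\bang A}\coder_A$ (again using interchange) collapses the lower leg to $\coder_{\bang A}\coder_A \co A \to \bbang A$.

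It then remains to compare $\coder_{\bang A}\coder_A$ with $\dig_A\coder_A$. For this I would transpose the invertible pseudocomonad unitality constraint $\der_{\bang A}\dig_A \cong \id_{\bang A}$ from~\cref{def:psd-comonad} across the adjunction $\coder_{\bang A} \dashv \der_{\bang A}$ of~\cref{thm:hypothesis}(ii), obtaining a canonical $2$-cell $\coder_{\bang A} \Rightarrow \dig_A$; whiskering this on the right by $\coder_A$ and pasting it with the isomorphisms produced above yields a $2$-cell with exactly the boundary displayed in~\eqref{equ:definition-of-mu}, which we take as $\mu_A$.

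The step I expect to be the main obstacle is the first one. Coweakening, cocontraction and the Seely maps are all defined only indirectly --- through the Seely equivalences and the zero object of $\catK$ --- so establishing $\dig_A\coweak_A \cong \coweak_{\bang A}$, and more generally keeping careful track of the several coherence isomorphisms relating $\bang$ applied to biproduct data with $\cocon$, $\coweak$, $\seell$ and $\seeli$, is where the bookkeeping is delicate. Once this identification and the pseudomonoid unitality of $\bbang A$ are in place, the remaining manipulations are routine (if lengthy) pasting computations, and the coherence conditions alluded to in the statement can be read off from the construction.
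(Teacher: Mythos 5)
There is a genuine gap, and it occurs exactly at the step you flag as the main obstacle: the claimed isomorphism $\dig_A\coweak_A \cong \coweak_{\bang A}$ is false in general. What the structure actually gives you (part~(ii) of \cref{thm:cocontraction-coweakeneing-with-digging}, or equivalently \cref{thm:coh-digging-with-m}(ii) after unfolding \eqref{equ:def-coweak}) is $\dig_A\coweak_A \cong \bang\coweak_A \circ \moni$, and this is not isomorphic to $\coweak_{\bang A}$: the appeal to initiality of the zero object breaks down because after applying $\bang$ the object $\bang 0 \simeq \unit$ is no longer initial. Concretely, in the intended model $\Prof$ one computes $\dig_A\coweak_A(\varphi,\ast) \cong \bang A[\sqcup\varphi,\varepsilon]$, which is inhabited whenever $\varphi$ is any sequence of empty sequences, whereas $\coweak_{\bang A}(\varphi,\ast) \cong \bbang A[\varphi,\varepsilon]$ is inhabited only when $\varphi$ is the empty sequence of sequences (the same discrepancy is visible in the relational model). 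Consequently your reduction of the lower leg to $\coder_{\bang A}\coder_A$ via the unit law of the pseudomonoid $(\bbang A,\cocon_{\bang A},\coweak_{\bang A})$ of \cref{thm:bang-A-bialgebra} cannot work; indeed it must fail, since the lower leg is (in $\Prof$, and by design of the codereliction axiom) isomorphic to $\dig_A\coder_A$, which is not isomorphic to $\coder_{\bang A}\coder_A$ --- compare the source and target of $\mu'_A$ in \cref{thm:lemma2}.

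Your final ingredients are on the right track: the mate of the pseudocomonad unitality constraint across $\coder_{\bang A}\dashv\der_{\bang A}$ is essentially how the paper compares $\coder_{\bang A}$ with $\dig_A$ (the 2-cells $\mu_3$ and $\mu_4$ in the paper's pasting). But the paper does not collapse the $\coweak$-factor by a unit law; instead it uses the genuinely non-invertible canonical 2-cells of \cref{thm:auxiliary-2-cells}, which come from \cref{thm:hypothesis}(i) (the zero 2-cell being initial and convolution being a coproduct in each hom-category): one comparing $\id_{\bang A}\otimes\coweak_A$ with $\con_A$, and one comparing $\id_{\bang A}$ with $\cocon_A\con_A$, transported through the Seely equivalences. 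These, together with pseudonaturality of $\con$ with respect to $\dig_A$, are what make the boundary of \eqref{equ:definition-of-mu} match up without ever invoking the false identification $\dig_A\coweak_A\cong\coweak_{\bang A}$. To repair your argument you would need to replace the first step by these lax comparison 2-cells (or find another way to relate $\bang\coweak_A\circ\moni$ to the rest of the diagram), rather than a unitality isomorphism.
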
 
 
 \begin{proof} The required 2-cell is given by the pasting diagram
 \begin{equation}
 \begin{gathered} 
 \label{equ:mu-diagram}
 \begin{tikzcd}[row sep = 1.5cm, scale = 0.7] 
 % First row
 A \otimes \unit \ar[r, "\id"] 
 \ar[dr, "\coder_A \otimes \id"']  
 \ar[ddr, "\coder_A \otimes \coweak_A"', bend right =30]  &
 A \ar[dr, "\coder_A"] &
  &
  &
  &
  & 
  \\
  % Second row
  &
 \bang A \otimes \unit \ar[r, "\id"] \ar[d, "\id \otimes \coweak_A"'] \ar[dr, phantom, description, "\overset{\mu_1}{\TwoHor}"] &[-1em]
 \bang A \ar[r, "\dig_A"] 
 \ar[d, "\con_A"]  \ar[dr, phantom, description, "\overset{\mu_2}{\cong}"] &
 \bbang A \ar[d, "\con_{\bang A}"] \ar[ddrr, "\id", bend left = 20]
 &  \phantom{} &  \phantom{}
 \\
 % Third row
  \phantom{} \ar[uur, phantom, "\cong"] &
  \bang A \otimes \bang A \ar[r, "\id"] &
  \bang A \otimes \bang A \ar[r, "\dig_A \otimes \dig_A"] \ar[d, "\id \otimes \dig_A"'] 
  \ar[dr, phantom, description, pos= (0.7), "\overset{\mu_3 \otimes \id}{\cong}"] &
  \bbang A \otimes \bbang A \ar[d, "\der_{\bang A} \otimes \id"] \ar[dr, "\id", bend left = 20] & \phantom{} &  \phantom{}
 \\
 % Fourth row
 & & \bang A \otimes \bbang A \ar[r, "\id"'] \ar[ur, pos =(0.3), "\dig_A \otimes 1"] & 
 \bang A \otimes \bbang A \ar[r, "\coder_{\bang A} \otimes \id"'] \ar[ur, phantom, description, pos= (0.4), "\overset{\mu_4 \otimes \id}{\TwoHor}"] 
 \ar[uurr, phantom, description, pos= (0.5), "\overset{\mu_5}{\TwoHor}"] 
 &
 \bbang A \otimes \bbang A \ar[r, "\cocon_{\bang A}"'] &
 \bbang A  .
 \end{tikzcd}
 \end{gathered}
 \end{equation}
Here 
$\mu_1$ is the 2-cell in part~(i) of \cref{thm:auxiliary-2-cells}, 
$\mu_2$ is a pseudonaturality 2-cell for $\con$, 
$\mu_3$ is the left unitality constraint of the pseudocomonad,  
$\mu_4$ is the counit 
of the adjunction $\coder_{\bang A} \dashv \der_{\bang A}$ as
in~\eqref{equ:first-pseudocomonad-constraint}, 
and 
$\mu_5$ is the 2-cell in part~(ii) of \cref{thm:auxiliary-2-cells}.
\end{proof}

\begin{proposition} 
\label{thm:strength-constraint}  
For every $A, B \in \catK$, there is a 2-cell $\sigma_{A,B}$, which we call the \myemph{strength constraint} for $\coder$, fitting in the diagram 
\begin{equation*}
\begin{tikzcd}[column sep = large] 
A \otimes \bang B 
	\ar[r, "\coder_A \otimes \id_{\bang B}"] 
	\ar[d, "\id_A \otimes \der_B"'] 
	\ar[dr, phantom, description, "\overset{\sigma_{A,B}}{\TwoHor}"] & 
\bang A \otimes \bang B 
	\ar[d,  "\monn_{A,B}"] \\
A \otimes B 
	\ar[r, "\coder_{A \otimes B}"'] & 
\bang (A \otimes B) .
\end{tikzcd}
\end{equation*}
\end{proposition}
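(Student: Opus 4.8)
The plan is to obtain $\sigma_{A,B}$ as a \emph{mate} (conjugate) of the monoidality constraint $\der^2_{A,B}$ of the counit $\der$, recorded in~\eqref{equ:counit-monoidal-2}, transposed along two adjunctions. The crucial observation is that both horizontal edges of the square in the statement are left adjoints. Indeed, $\coder_{A\otimes B}\dashv\der_{A\otimes B}$ by part~(ii) of \cref{thm:hypothesis} applied at $A\otimes B$, with unit $\eta_{A\otimes B}$ and counit $\varepsilon_{A\otimes B}$ as in~\eqref{equ:first-pseudocomonad-constraint}; and, since $(-)\otimes\bang B\co\catK\to\catK$ is a 2-functor (part of the structure of a Gray monoid, \cref{def:gray-monoid}), applying it to the adjunction $\coder_A\dashv\der_A$ of part~(ii) of \cref{thm:hypothesis} yields $\coder_A\otimes\id_{\bang B}\dashv\der_A\otimes\id_{\bang B}$, with unit $\eta_A\otimes\id_{\bang B}$ and counit $\varepsilon_A\otimes\id_{\bang B}$. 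Transposing the square along these two adjunctions, a 2-cell filling it corresponds to a 2-cell $(\id_A\otimes\der_B)(\der_A\otimes\id_{\bang B})\Rightarrow\der_{A\otimes B}\,\monn_{A,B}$; but $(\id_A\otimes\der_B)(\der_A\otimes\id_{\bang B})$ is, up to the suppressed interchange isomorphism $\phi_{\der_A,\der_B}$ of \cref{def:gray-monoid}, exactly $\der_A\otimes\der_B$, so $\der^2_{A,B}$ (or its inverse) is precisely such a 2-cell. I would therefore \emph{define} $\sigma_{A,B}$ to be the corresponding mate.

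Unwinding the mate calculus, $\sigma_{A,B}$ is the pasting
\begin{align*}
\coder_{A\otimes B}(\id_A\otimes\der_B)
&\;\Rightarrow\;
\coder_{A\otimes B}(\id_A\otimes\der_B)(\der_A\otimes\id_{\bang B})(\coder_A\otimes\id_{\bang B}) \\
&\;\Rightarrow\;
\coder_{A\otimes B}\,\der_{A\otimes B}\,\monn_{A,B}(\coder_A\otimes\id_{\bang B}) \\
&\;\Rightarrow\;
\monn_{A,B}(\coder_A\otimes\id_{\bang B}) \mathrlap{,}
\end{align*}
where the first step whiskers in the unit $\eta_A\otimes\id_{\bang B}$, the second applies $\der^2_{A,B}$ together with $\phi_{\der_A,\der_B}$, and the third whiskers in the counit $\varepsilon_{A\otimes B}$. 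This is directly parallel to the construction of the second pseudocomonad constraint $\mu_{A,B}$ in~\eqref{equ:mu-diagram}, and I would present it as a comparable explicit pasting diagram; if the drawn orientation requires it, one reads the mate in the opposite direction, equivalently replacing $\der^2_{A,B}$ by $(\der^2_{A,B})^{-1}$, which is harmless since $\der^2$ is invertible.

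I do not expect a real obstacle: the proposition asserts only the existence of the 2-cell $\sigma_{A,B}$ needed to state \cref{thm:codereliction} --- no invertibility and no coherence conditions are claimed --- so once the pasting above is written there is nothing further to check. The only care needed is the routine bookkeeping intrinsic to the mate calculus in a Gray monoid: tracking the interchange isomorphisms $\phi$ that the paper's conventions suppress, using the 2-functoriality of $(-)\otimes\bang B$ to transport $\coder_A\dashv\der_A$, and confirming that the resulting 2-cell is oriented as in the statement. Note that only part~(ii) of \cref{thm:hypothesis} and the symmetric lax monoidal pseudocomonad structure of \cref{thm:sym-lax-monoidal-pseudocomonad} are used, not the hypotheses on biproducts and convolution; the question of which coherence conditions $\sigma_{A,B}$, $\mu_{A,B}$ and the first pseudocomonad constraint $\eta$ jointly satisfy is, as the paper indicates in analogous situations, left open and not needed here.
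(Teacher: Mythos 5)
Your proposal is correct and is essentially the paper's own argument: the mate of $\der^2_{A,B}$ along $\coder_A\otimes\id_{\bang B}\dashv\der_A\otimes\id_{\bang B}$ and $\coder_{A\otimes B}\dashv\der_{A\otimes B}$ unwinds to exactly the pasting the paper uses, namely whiskering $\eta_A$, applying the monoidality constraint $\der^2_{A,B}$ of the counit (up to interchange), and whiskering $\varepsilon_{A\otimes B}$. The mate-calculus framing is just a conceptual repackaging of that explicit pasting, and your observations about orientation, the suppressed interchange isomorphisms, and the fact that only existence (not invertibility or coherence) is claimed all match the paper.
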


\begin{proof} The 2-cell $\sigma_{A,B}$ is constructed as the following pasting:
\[
\begin{tikzcd}[column sep = 1.5cm] 
A \otimes \bang B 
	\ar[r, "\coder_A \otimes \id_{\bang B}"] 
	\ar[dr, bend right= 20, "\id_A \otimes \der_B"']  & 
\bang A \otimes \bang B 
	\ar[r, "\monn_{A,B}"]
	\ar[d, "\der_A \otimes \der_B"] 
	\ar[dr, phantom, description, "\TwoHor"] &
\bang (A \otimes B)
	\ar[d, "\der_{A \otimes B}"] 
	\ar[dr, bend left = 20, "\id_{\bang(A \otimes B)}"]  & 
	\phantom{}
	\\
\phantom{} \ar[ur, phantom, pos=(0.7), description, "\overset{\eta_A \otimes \der_B}{\TwoHor}"]  & 
A \otimes B
	\ar[r, "\id_{A \otimes B}"'] &
A \otimes B 
	\ar[r, "\coder_{A \otimes B}"'] 
	\ar[ur, phantom, description, pos=(0.3), "\overset{\varepsilon_{A\otimes B}}{\TwoHor}"]  &
\bang ( A \otimes B)  ,
\end{tikzcd}
\]
where the 2-cell in the square is given by the structure of monoidal pseudonatural transformation of $\der$, as in \cref{thm:sym-lax-monoidal-pseudocomonad}.
\end{proof}

\cref{thm:codereliction} below introduces our 2-categorical counterpart of the notion of a codereliction, in the special setting being considered here.  This uses the 2-cells $\eta_A$
of~\eqref{equ:first-pseudocomonad-constraint}, $\mu_{A,B}$ of  \cref{thm:second-pseudocomonad-constraint}, and $\sigma_{A,B}$ of \cref{thm:strength-constraint}.  
 Its part~(i) is the counterpart of the first comonad axiom in~\cite[Definition~4.3]{FioreM:difsmi} and is  also in \cite[{p.} 1026]{EhrhardT:intdll} and~\cite[{p.} 186, dC.3]{BluteR:difcr}, where it is called the Linear Rule. Part~(ii) is the counterpart of the second comomad axiom in~\cite[Definition~4.3]{FioreM:difsmi}.
which is also in~\cite[{p.} 1021]{EhrhardT:intdll} and~\cite[{p.} 187, dC.4']{BluteR:difcr}, where it is called the Alternative Chain Rule. 
Part (iii) is the counterpart of the Strength Axiom in~\cite[Definition~4.3]{FioreM:difsmi} and is also in \cite[{p.}~1026]{EhrhardT:intdll} and~\cite[{p.}~190, dC.m]{BluteR:difcr}, where it is called
the Monoidal Rule.

\begin{definition} \label{thm:codereliction}  A \myemph{codereliction} on $\catK$ is a pseudonatural transformation with components on objects $\coder_A \co A \to \bang A$, for $A \in \catK$, 
which is adjoint to $\der_A \co \bang A \to A$ and satisfies the following axioms.
\begin{enumerate}
\item \emph{First pseudocomonad axiom.} The first pseudocomonad constraint 
\[
\begin{tikzcd}
A 
	\ar[r, "\coder_A"] 
	\ar[dr,  bend right = 30, "\id_A"'] & 
\bang A 
	\ar[d, "\der_A"] \\
\phantom{} \ar[ur, phantom, {pos=.7},  "\overset{\eta_A}{\TwoHor}"]   & A 
 \end{tikzcd} 
 \] 
 is invertible for every $A \in \catK$.
\item \emph{Second pseudocomonad axiom.} The second pseudocomonad constraint 
\[
\begin{tikzcd}[column sep = large]
A \otimes I 
	\ar[r, "\id"] 
	\ar[d, "\coder_A \otimes \coweak_A"']  
	\ar[drr, phantom, description, "\overset{\mu_{A,B}}{\TwoHor}"]  & 
A 
	\ar[r, "\coder_A"] & 
\bang A 
	\ar[d, "\dig_A"] \\
\bang A \otimes \bang A 
	\ar[r, "\coder_{\bang A}  \otimes \dig_A"']  & 	
\bbang A \otimes \bbang A 
	\ar[r,  "\cocon_{\bang A}"'] & 
\bbang A 
\end{tikzcd}
\]
is invertible for every $A, B \in \catK$.
\item \emph{Strength axiom.} The strength constraint
\begin{equation*}
\begin{tikzcd}[column sep = large] 
A \otimes \bang B 
	\ar[r, "\coder_A \otimes \id_{\bang B}"] 
	\ar[d, "\id_A \otimes \der_B"'] 
	\ar[dr, phantom, description, "\overset{\sigma_{A,B}}{\TwoHor}"] & 
\bang A \otimes \bang B 
	\ar[d,  "\monn_{A,B}"] \\
A \otimes B 
	\ar[r, "\coder_{A \otimes B}"'] & 
\bang (A \otimes B) 
\end{tikzcd}
\end{equation*}
 is invertible for every $A, B \in \catK$.
\end{enumerate}
 \end{definition}

For a codereliction as in~\cref{thm:codereliction}, the pseudomonad and strength constraint will satisfy coherence conditions imposed by the universal properties of the 2-cells involved in
their definitions. Such conditions, which we do not pursue spelling out here, can be used to guide the formulation of a codereliction in a more general setting than the one considered in this
section (\ie avoiding \cref{thm:hypothesis}).
 
 In \cref{thm:consequences-of-dereliction} below, we have versions of some equations for differential categories:
 part~(i) is a counterpart of a diagram in~\cite[Page~1021]{EhrhardT:intdll}
 and of~\cite[Definition~9, dC.1]{BluteR:difcr}; 
 part~(ii)  of a diagram in~\cite[Page~1021]{EhrhardT:intdll}
 and of~\cite[Definition~9, dC.2]{BluteR:difcr}; 
 part~(iii) of~\cite[Definition~9, dC.4]{BluteR:difcr}; 
 part~(iv) of~\cite[Proposition~4.3]{FioreM:difsmi}. 
 The proof is analogous to that in the one-dimensional setting~\cite{BluteR:difcr} and hence omitted.
 
 \begin{proposition} \label{thm:consequences-of-dereliction} 
Assume that $\coder$ is a codereliction pseudonatural transformation in $\catK$.
 Then we have invertible 2-cells as follows.
\begin{enumerate}
% Part (i) 
\item 
\label{item:constant-rule}
Constant rule:
\[
\begin{tikzcd}
 A 
 	\ar[r, "\coder_A"] 
	\ar[dr, bend right = 20, "0"'] &   
\bang A 
	\ar[d, "\weak_A"] \\
\phantom{} \ar[ur, phantom,  pos=(0.7), "\Two"]   & \unit .
\end{tikzcd} 
\]
\item Product rule:
\label{item:product-rule}
\[
 \begin{tikzcd}
 A 
 	\ar[r, "\coder_A"] 
 	\ar[dr, bend right = 20, "\coder_A \otimes \coweak_A   + \coweak_A \otimes \coder_A"']
  & \bang A  
  	\ar[d, "\con_A"]  \\
	\phantom{} \ar[ur, phantom,  pos=(0.7), "\Two"] 
 & \bang A \otimes \bang A   .
 \end{tikzcd} 
 \]
 % Part (iii)
 \item Chain rule:
  \label{item:chain-rule}
 \[
\begin{tikzcd}[column sep = large]
A \otimes \bang A 
	\ar[r, "\coder_A \otimes \id_{\bang A}"]
	\ar[d, "\coder_A \otimes  \con_A"'] 
	\ar[drr, phantom, description, "\Two"]& 
\bang A \otimes \bang A 
	\ar[r, "\cocon_A"] & 
\bang A 
	\ar[d, "\dig_A"] \\
\bang A \otimes \bang A \otimes \bang A 
	\ar[r, "(\coder_{!\!A}\cocon_A) \otimes \dig_A"'] & 
\bbang A \otimes \bbang A 
	\ar[r, "\cocon_{\bang A}"'] & \bbang A  .
\end{tikzcd}
\]
% Part (ii)
 \item Monoidal rule:
 \label{item:additional-from-fiore}
\[
\begin{tikzcd}[column sep = large]
 A \otimes B 
 	\ar[dr, bend right = 20, "\coder_{A \otimes B}"'] 
	\ar[r, "\coder_A \otimes \coder_B"] & 
\bang A \otimes \bang B  
	\ar[d, "\monn_{A,B}"] \\
\phantom{} \ar[ur, phantom,  pos=(0.7), "\Two"] 
 &  \bang (A \otimes B) .
 \end{tikzcd}  
\] \qed
\end{enumerate}
\end{proposition}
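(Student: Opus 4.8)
The plan is to transpose the one-dimensional derivations of the differential-category identities from the creation-map axioms carried out in~\cite{FioreM:difsmi} (see also~\cite{BluteR:difcr,EhrhardT:intdll}), realising each equation between maps as an explicitly constructed invertible 2-cell built by pasting. The ingredients available are: the invertibility of the first pseudocomonad constraint $\eta$ of~\eqref{equ:first-pseudocomonad-constraint}, of the second pseudocomonad constraint $\mu$ of~\cref{thm:second-pseudocomonad-constraint}, and of the strength constraint $\sigma$ of~\cref{thm:strength-constraint}; the symmetric pseudobialgebra structure on $\bang A$ from~\cref{thm:bang-A-bialgebra}, in particular the invertible 2-cells relating $\con_A$, $\weak_A$, $\cocon_A$, $\coweak_A$ (notably $\weak_A \coweak_A \cong \id_\unit$ and $\coweak_A \weak_A \cong \bang 0_{A,A}$), together with the identities $\der_A \coweak_A \cong 0$ and $\der_A \cocon_A \cong \der_A \otimes \weak_A + \weak_A \otimes \der_A$ recorded earlier, the compatibilities of $\der$, $\dig$, $\weak$ with the monoidal structure from~\cref{thm:dig-comonoid-morphism}, \cref{thm:mu-comonoid-morphism} and~\cref{thm:cocontraction-coweakeneing-with-digging}, and the normalisations of~\cref{thm:comm-comonoids-coincide} and~\cref{thm:auxiliary-2-cells}; the pseudonaturality 2-cells of $\coder$, $\con$, $\weak$, $\cocon$, $\coweak$ and of the Seely equivalences; and~\cref{thm:hypothesis}, by which every hom-category $\catK[A,B]$ has $0$ as initial object and the convolution $+$ as binary coproduct, with pre- and post-composition preserving both, so that a 2-cell into a convolution $f + g$ is determined by its components along $\iota_1,\iota_2$ and, conversely, invertibility of a coprojection forces the complementary summand to be initial.

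I would establish the parts in the order (i), (iv), (ii), (iii). For part~(i), the constant rule, I would argue directly: by pseudonaturality of $\coder$ applied to $0_{A,A}\co A \to A$ we have $\coder_A \circ 0_{A,A} \cong \bang 0_{A,A} \circ \coder_A \cong \coweak_A \circ \weak_A \circ \coder_A$, using $\coweak_A\weak_A\cong\bang 0_{A,A}$; postcomposing with $\weak_A$ and using $\weak_A\coweak_A\cong\id_\unit$ gives $\weak_A \coder_A \cong (\weak_A\coder_A)\circ 0_{A,A}$, and since $0_{A,A}$ factors through the zero object this right-hand side is $0_{A,\unit}$, whence $\weak_A\coder_A \cong 0$. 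For part~(iv), the monoidal rule, precompose the strength constraint $\sigma_{A,B}$ with $\id_A \otimes \coder_B$: the top composite becomes $\monn_{A,B}(\coder_A \otimes \coder_B)$, the left composite becomes $\id_A \otimes (\der_B \coder_B)$ which is isomorphic to $\id_{A\otimes B}$ via the invertible constraint $\eta_B$, and the resulting pasting is the required 2-cell. For parts~(ii) and~(iii), the product and chain rules, I would follow the template of~\cite{FioreM:difsmi,BluteR:difcr}: part~(ii) is obtained by combining the second pseudocomonad constraint $\mu_{A,B}$ with the bialgebra 2-cells of~\cref{thm:bang-A-bialgebra} and the identity $\der_A\cocon_A \cong \der_A\otimes\weak_A + \weak_A\otimes\der_A$, simplifying with $\der_{\bang A}\dig_A\cong\id$, $\der_{\bang A}\coder_{\bang A}\cong\id$, $\weak_{\bang A}\dig_A\cong\weak_A$ and part~(i); and part~(iii) is then obtained from~$\mu$ (its special case with second slot $\coweak_A$) by inserting the bialgebra 2-cells relating $\con$ to $\cocon$ together with those of~\cref{thm:dig-comonoid-morphism} and~\cref{thm:cocontraction-coweakeneing-with-digging} expressing that $\dig$ carries $\con,\cocon$ to $\con_{\bang A},\cocon_{\bang A}$, mirroring the passage from the alternative chain rule to the chain rule in~\cite{BluteR:difcr}.

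The bulk of the work, and the only genuinely two-dimensional difficulty, is bookkeeping: every equality in the one-dimensional arguments must be replaced by a specific invertible 2-cell, and several steps use the ``additive cancellation'' principle ($f \cong g + f$ forces $g \cong 0$), which is legitimate here only because~\cref{thm:hypothesis} supplies the coproduct structure on hom-categories --- so one must check that the composite 2-cells produced actually agree with the relevant coprojections rather than with mere abstract isomorphisms, and that composition distributes over convolution coherently enough for the distributivity steps to make sense. I expect this identification of canonical 2-cells with coprojections to be the main obstacle. Note that the coherence conditions alluded to after~\cref{thm:codereliction} are not needed for the present statement, since only the existence of the 2-cells is asserted; their construction in the proofs above is, however, what would guide the formulation of those conditions.
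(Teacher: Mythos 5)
Your proposal is correct and takes essentially the same route as the paper, which in fact omits the proof entirely, saying only that it is analogous to the one-dimensional derivations in \cite{BluteR:difcr} (\cf also \cite{FioreM:difsmi}): that transposition --- with the explicit pastings you give for the constant rule (pseudonaturality of $\coder$ at $0_{A,A}$ together with $\coweak_A\weak_A\cong\bang 0_{A,A}$ and $\weak_A\coweak_A\cong\id_\unit$) and the monoidal rule ($\sigma_{A,B}$ precomposed with $\id_A\otimes\coder_B$, simplified by $\eta_B$), and the standard reductions of the product and chain rules from the second pseudocomonad constraint via the pseudobialgebra laws and the compatibilities of $\dig$, $\der$, $\weak$ with $\con$ and $\cocon$ --- is exactly what is intended. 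The only caveat is your appeal to ``additive cancellation'' ($f\cong g+f$ forcing $g\cong 0$), which the coproduct structure of \cref{thm:hypothesis} would not justify and which is in any case unnecessary: the one-dimensional computations being transposed only require discarding zero summands and distributing composition over convolution, both of which follow from the biproducts and \cref{thm:hypothesis}.
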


This concludes the development of the general theory and we now turn to our application to profunctors and analytic functors.

 \section{Profunctors, categorical symmetric sequences, and analytic functors}
 \label{sec:prof}

The aim of this section is to show that the bicategory of profunctors $\Prof$, with its symmetric monoidal
structure given by products in $\Cat$ (recalled below), can be equipped with a linear 
exponential pseudocomonad in the sense of our \cref{thm:linear-exponential-comonad} and a codereliction in the sense of \cref{thm:codereliction}. We then use this to extend the differentiation operation for analytic endofunctors on sets defined by Joyal in~\cite{JoyalA:fonaes} to the analytic functors between presheaf categories 
introduced in~\cite{FioreM:carcbg}, showing that it satisfies counterparts of all the rules of many-variable differential calculus.

Before proceeding with our development, let us explain how the developments in \cref{sec:prelim,sec:mon-comon-bialg,sec:linear-exponential,sec:products,sec:biproducts,sec:codereliction}, which is generally concerned with symmetric Gray monoids, can be applied to the symmetric monoidal bicategory~$\Prof$. The key to do so is \cref{thm:strictification-symmetric-monoidal}, according to which $\Prof$ is biequivalent, as a symmetric monoidal bicategory, to a Gray monoid, say $\catK$. Thus, any structure or property of $\Prof$ that is
invariant under biequivalence (such as having biproducts) or symmetric monoidal biequivalence (such as being equipped with a linear exponential pseudocomonad), will be inherited by $\catK$. We will therefore
be able to apply our development to obtain results on $\catK$ (such as the presence of the pseudocomonad
constraints and the strength constraint of \cref{sec:codereliction}). These can then be transferred back to $\Prof$, exploiting again the symmetric monoidal biequivalence with $\catK$. For brevity, we will leave these `transfer of structure' arguments implicit, trusting that readers will be able to fill in the details.

 \subsection*{Profunctors}  We briefly recall the definition and key properties of the bicategory
 of profunctors~\cite[Chapter~7]{BorceuxF:hanca} to set notation. We write $\Set$ for the category of sets and functions.
For small categories $A$ and $B$, a \myemph{profunctor} $F \co A \to B$ is
a functor $F \co B^\op \times A  \to \Set$. Small categories, profunctors, and natural transformations between them form a bicategory, 
written $\Prof$. Explicitly, the objects of $\Prof$ are small categories. For small categories
$A$ and $B$, the hom-category of morphisms from $A$ to $B$ is defined by letting:
\[
\Prof[A,B] \defeq \CAT[B^\op \times A, \Set] ,
\]
where $\CAT$ denotes the 2-category of locally small categories, functors and natural transformations.
The composition of profunctors $M \co A \to B$ and $N \co B \to C$ is
the profunctor $N \circ M \co A \to C$ defined by the coend formula
\[
(N \circ M)(c,a) = \coend^{b \in B} N(c, b) \times M(b,a)  .
\]
For a small category $A$, the identity profunctor $\id_A \co A \to A$ is defined by letting
\[
\id_A(a',a) = A[a',a] .
\]
The coherence conditions for a bicategory can be proved by identifying $\Prof$ as the Kleisli bicategory for the presheaf relative pseudomonad~\cite{FioreM:klebrp}. 

We now recall some of the  structure of $\Prof$.
First of all, $\Prof$ is  a symmetric monoidal bicategory
(see~\cite[Corollary~6.6]{WesterHansenL:consmb}). For small categories $A$ and $B$, their tensor product in $\Prof$ is defined by
letting
\[
A \otimes B \defeq A \times B ,
\]
where $A \times B$ denotes the product of $A$ and $B$ in $\Cat$.
The unit of the tensor product is the category $\mathsf{1}$ with a single object (written $\ast$ below) and no non-identity maps. Furthermore, $\Prof$ is compact closed (see~\cite[page~194]{KellyGM:cohccc}
and~\cite[pages~757-758]{StayM:comcb}). For a small category $A$, the dual $A^\perp$
 is defined by 
\begin{equation}
\label{equ:dual-in-prof}
A^\perp \defeq A^\op .
\end{equation}
Indeed, there is an adjunction $A^\bot \dashv A$ in  $\Prof$, whose unit and counit are 
the profunctors $u_A \co 1 \to A \otimes A^\perp$ and $v_A \co A^\perp \otimes A \to 1$,
defined by~$u_A(  (a',a), \ast ) \defeq A[a, a']$ and~$v_A( \ast , (a', a) ) \defeq A[a',a]$, respectively.  
The internal  hom in $\Prof$ of two small categories $A$ and $B$ is therefore given 
by
\begin{equation}
\label{equ:lin-hom-in-prof}
A \linhom B \defeq A^\op \times B .. 
\end{equation} 
Secondly, the bicategory $\Prof$ also has finite biproducts in the sense of \cref{thm:biproducts}. Given small categories $A$ and $B$, their biproduct in~$\Prof$ is  defined by letting
\[
A \oplus B \defeq A + B ,
\]
where $A + B$ denotes the coproduct of $A$ and $B$ in $\Cat$. The zero object of $\Prof$ is the empty category $\mathsf{0}$.

\subsection*{The linear exponential pseudocomonad} 
Let us recall the definition of the 2-monad for symmetric strict monoidal categories. For $n \in \mathbb{N}$, we write $\mathfrak{S}_n$ for the $n$-th symmetric group. 
For a  small category $A$ and $n \in \mathbb{N}$, we define the category~$\mathfrak{S}_n \wr A $ as follows. Its objects are $n$-tuples $\alpha = \langle a_1, \ldots, a_n\rangle$
of objects of $A$. Given two objects $\alpha = \langle a_1, \ldots, a_n \rangle $ and $\alpha' = \langle a'_1, \ldots, a'_n \rangle $, a map  $(\sigma, \vec{f}) \co \alpha \to \alpha'$ between them
is a pair consisting of a permutation  $\sigma \in \mathfrak{S}_n$ and an $n$-tuple $\vec{f} = (f_1, \ldots,
f_n)$ of maps $f_i \co a_i \to a'_{\sigma(i)}$ in $A$. We then let $\wn A$ be the following coproduct in 
$\Cat$:
\begin{equation}
\label{equ:wn-in-prof}
\wn A \defeq \bigsqcup_{n \in \mathbb{N}} \big ( \mathfrak{S}_n \wr A \big) .
\end{equation}
The category $\wn A$ admits the structure of a symmetric strict monoidal category. The tensor product of objects is given by concatenation of sequences,
which we write $\alpha \sqcup \alpha'$, and its unit is the empty sequence~$\varepsilon$. The symmetry of the tensor
product is evident. 
This operation extends to a 2-functor $\wn(-) \co \Cat \to \Cat$ which is part of the 2-monad whose strict algebras are symmetric strict monoidal categories~\cite{BlackwellR:twodmt},
so that $\wn A$ is the free symmetric strict monoidal category on $A$.  The multiplication of the 2-monad sends a sequence of sequences~$\varphi = \langle \alpha_1, \ldots, \alpha_n \rangle \in \wn \wn A$ to 
the sequence $\sqcup \varphi \in \wn A$, which is defined as the concatenation
\[
\sqcup \varphi  = \alpha_1 \sqcup \ldots \sqcup \alpha_n . 
\]
The unit takes $a \in A$ to the singleton sequence $\langle a \rangle \in \wn A$. We let  $\PP \defeq \wn \mathsf{1}$ and identify it with the category of natural numbers and 
permutations, \cf \cite{KellyG:opemay}.

The 2-monad on $\Cat$ for symmetric strict monoidal categories defined above can be extended to a pseudomonad on $\Prof$ 
via a form of pseudodistributivity~\cite{FioreM:klebrp}. The duality 
that is available on $\Prof$ then allows us to turn the resulting pseudomonad into a pseudocomonad $\oc(-) \co \Prof \to \Prof$ as in \eqref{equ:wn-to-bang}. On objects, we have 
\[
\oc A \defeq \wn A , 
\]
where $\wn A$ is defined in~\eqref{equ:wn-in-prof}. The comultiplication $\dig_A \co \bang A \to \bbang A$ and the counit $\der_A \co \bang A \to A$ are defined by
\begin{equation}
\label{equ:dig-and-der-in-prof}
\begin{aligned}
\dig_A( \varphi, \alpha)  & \defeq \oc A [ \sqcup \varphi , \alpha  ] , \\
\der_A (\alpha, a)  & \defeq \oc A [ \alpha, \langle a \rangle ] .
\end{aligned}
\end{equation}

\begin{theorem} \label{thm:prof-degenerate-model} 
The pseudocomonad $\oc(-) \co \Prof \to \Prof$ admits the structure of a linear exponential pseudocomonad.
\end{theorem}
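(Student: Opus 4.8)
The plan is to apply \cref{thm:case-2} to the compact closed symmetric Gray monoid that is symmetric monoidally biequivalent to $\Prof$ (via \cref{thm:strictification-symmetric-monoidal}), so that it suffices to verify that the free symmetric strict monoidal category construction, extended to $\Prof$, exhibits $\SymCoMon{\Prof}$ as \emph{monadic} over $\Prof$ once we pass across the duality $A \mapsto A^\perp$ of \eqref{equ:dual-in-prof}. Concretely: by \eqref{equ:wn-to-bang} and \eqref{equ:wn-in-prof} we have $\oc A = \wn(A^\perp)^\perp = \wn(A^\op)^\op$, so it is enough to show that the pseudomonad $\wn(-) \co \Prof \to \Prof$ obtained from the 2-monad on $\Cat$ by pseudodistributivity~\cite{FioreM:klebrp} is a \emph{linear exponential pseudomonad}, i.e.\ that $\SymMon{\Prof}$ (symmetric pseudomonoids, braided pseudomonoid morphisms, pseudomonoid 2-cells) is monadic over $\Prof$ with the free algebra on $A$ being $\wn A$ equipped with its concatenation tensor. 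Then \cref{thm:case-2} delivers the linear exponential pseudocomonad structure on $\oc(-)$, including all the Seely-type data, automatically.

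The main work is therefore the monadicity statement, which I would prove by direct inspection rather than by invoking an abstract monadicity theorem. The first step is to identify the Eilenberg--Moore bicategory $\Em{\Prof}_{\wn}$ of pseudoalgebras for $\wn(-) \co \Prof \to \Prof$ with $\SymMon{\Prof}$: a pseudoalgebra structure $a \co \wn A \to A$ in $\Prof$ together with its coherence 2-cells is precisely the data of a symmetric pseudomonoid structure on $A$ in $(\Prof, \otimes, \mathsf{1})$, and conversely; this is the profunctorial analogue of the classical fact that $\wn$-algebras in $\Cat$ are symmetric strict monoidal categories, now relaxed to pseudo level. One must check that pseudoalgebra morphisms correspond to braided (equivalently, by symmetry, symmetric) pseudomonoid morphisms and pseudoalgebra 2-cells to pseudomonoid 2-cells. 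The second step is to produce the free functor $F \co \Prof \to \SymMon{\Prof}$ sending $A$ to $\wn A$ with the concatenation monoidal structure of \eqref{equ:wn-in-prof}, and to verify the universal property: for a symmetric pseudomonoid $B$, precomposition with the unit $\langle - \rangle \co A \to \wn A$ induces an adjoint equivalence $\SymMon{\Prof}[\wn A, B] \simeq \Prof[A, B]$. This can be read off from the pseudodistributive law, since the lifting of $\wn$ to $\Prof$ is exactly what makes $\wn A$ the \emph{bi}free symmetric pseudomonoid; alternatively one checks it hom-wise using the coend formula for composition and the description of $\wn A$ as $\bigsqcup_n \mathfrak{S}_n \wr A$.

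The step I expect to be the main obstacle is verifying that the comparison pseudofunctor $K \co \SymMon{\Prof} \to \Em{\Prof}_\wn$ is a biequivalence, i.e.\ that \emph{every} pseudoalgebra is (equivalent to) a bifree one built from the data above --- in other words, essential surjectivity of $K$, which is the genuine content of monadicity. Here I would use the standard strategy (also alluded to in \cref{thm:retract-cofree} in its dual form): every pseudoalgebra $(A, a)$ sits in a coequaliser-type diagram of bifree algebras that splits at the level of underlying objects in $\Prof$, because the structure map $a \co \wn A \to A$ admits the unit $\langle - \rangle \co A \to \wn A$ as a section up to the unitality constraint, and $\Prof$ has the relevant (bicategorical, pseudo) colimits --- indeed $\Prof$ is cocomplete as it is the Kleisli bicategory of the presheaf relative pseudomonad. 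One then checks that $\wn(-)$ preserves these split colimits, which follows from $\wn$ being a coproduct of "wreath" constructions $\mathfrak{S}_n \wr (-)$ each of which is built from finite products, and products in $\Cat$ distribute over the colimits in question when computed in $\Prof$. Assembling these pieces gives monadicity; \cref{thm:case-2} then yields the desired linear exponential pseudocomonad $\oc(-) \co \Prof \to \Prof$, with comultiplication and counit as in \eqref{equ:dig-and-der-in-prof}. I would remark that the compact closedness hypothesis of \cref{thm:case-2} is exactly \eqref{equ:dual-in-prof}, and is what allows the passage between the "free pseudomonoid" and "cofree pseudocomonoid" pictures without any further bookkeeping.
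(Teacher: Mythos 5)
Your overall skeleton coincides with the paper's: both reduce the theorem to \cref{thm:case-2} by exhibiting $\wn A$ as the free symmetric pseudomonoid on $A$ in $\Prof$, proving that $\SymMon{\Prof}$ is monadic over $\Prof$, and recovering $\oc$ through the compact-closed duality \eqref{equ:wn-to-bang}. The divergence, and the gaps, lie in how you discharge the two key verifications. First, you assert that a pseudoalgebra structure $a \co \wn A \to A$ ``is precisely'' a symmetric pseudomonoid structure on $A$. It is not: a pseudoalgebra for $\wn(-)$ is an unbiased structure (operations of all arities, i.e.\ an unbiased symmetric promonoidal category), whereas a symmetric pseudomonoid is the biased notion; the comparison between the two is exactly what the paper identifies as the substance of monadicity. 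Your Beck-style split-codescent argument for essential surjectivity of the comparison is therefore both heavier than necessary and not actually set up: in the bicategorical setting it requires codescent objects and a creation/preservation statement for the forgetful $U \co \SymMon{\Prof} \to \Prof$, not merely colimits in $\Prof$ and preservation by $\wn$ (which for split, hence absolute, colimits is automatic in any case), and none of that machinery is developed here.

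Second, and more seriously, the freeness of $\wn A$ --- the pseudonatural equivalence $\SymMon{\Prof}[\wn A, B] \simeq \Prof[A,B]$ --- cannot simply be ``read off from the pseudodistributive law''. The pseudodistributivity of \cite{FioreM:klebrp} is what extends the 2-monad $\wn(-)$ from $\Cat$ to a pseudomonad on $\Prof$; it does not by itself produce a left biadjoint to the forgetful 2-functor $\SymMon{\Prof} \to \Prof$, and this is precisely where the paper does real work: it identifies symmetric pseudomonoids in $\Prof$ with symmetric promonoidal categories, passes to symmetric monoidally cocomplete categories, and invokes the universal property of Day convolution. Your fallback (``check it hom-wise using the coend formula'') is not a proof as it stands: one must analyse what a braided pseudomonoid morphism out of $\wn A$ in $\Prof$ is and show that restriction along the unit is an equivalence of hom-categories, which amounts to redoing the Day-convolution argument rather than a routine computation. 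So the strategy is the right one, but both pillars --- the biadjunction and the monadicity --- are left resting on assertions that still need the promonoidal/Day-convolution analysis (or a comparable argument) to be supplied.
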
 

\begin{proof} We apply \cref{thm:case-2} to construct a linear exponential pseudocomonad on $\Prof$ and then show that its underlying
pseudocomonad is exactly the pseudocomonad $\oc$ defined above. To begin, let us show that $\Prof$ admits the construction of free symmetric pseudomonoids, \ie that we have 
a biadjunction of the form
\begin{equation}
\label{equ:cmon-in-prof}
\begin{tikzcd}[column sep = large]
 \SymMon{\Prof}  \ar[r, shift right =2, "U"'] 
  \ar[r, description, phantom, "\scriptstyle \bot"] 
	&  \Prof ,
 \ar[l, shift right = 2, "F"'] 
\end{tikzcd}
\end{equation}
where $U$ is the evident forgetful pseudofunctor. In order to construct the required left biadjoint, recall that a symmetric pseudomonoid in $\Prof$ is a 
symmetric promonoidal category~\cite{DayB:clocf,DayB:embtcc}. For a small category~$A$, we then define~$FA$ to be the symmetric promonoidal category associated to the symmetric
monoidal category~$\wn A$ defined in~\eqref{equ:wn-in-prof}. The required adjointness follows once we show that we have a pseudonatural family of equivalences 
\[
\SymMon{\Prof}[ FA, B] \simeq \Prof[ A, B] ,
\]
for  a small category $A$ and  a small promonoidal category $B$. Applying the
correspondence between symmetric promonoidal categories and symmetric monoidally cocomplete categories~\cite{DayB:clocf,DayB:embtcc,DayB:kanepf} on the left-hand side, 
and the definition of $\Prof$ on the right hand side, it suffices to show that we have a pseudonatural family of equivalences 
\[
\SMonCoc[ \Psh(FA), \Psh(B)]  \simeq \CAT[A, \Psh(B)] \, . 
\]
where $\SMonCoc$ denotes the 2-category of symmetric monoidally cocomplete categories, symmetric strong monoidal cocontinuous functors, and monoidal
transformations. But these equivalences now follow immediately by the universal property of the Day convolution~\cite{DayB:clocf,ImG:unipcm}.  

Furthermore, the biadjunction in~\eqref{equ:cmon-in-prof} is monadic. Indeed, pseudoalgebras for the pseudomonad~$\wn(-)$ on $\Prof$ are exactly symmetric unbiased promonoidal categories, in the sense of~\cite{LeinsterT:higohc}, which are equivalent to (biased) symmetric promomonoidal categories via a routine
calculation which we leave to the reader.

To complete the proof, it is sufficient to observe that the pseudomonad associated to the biadjunction in~\eqref{equ:cmon-in-prof} is the pseudomonad $\wn(-) \co \Prof \to \Prof$ 
defined above.
 \end{proof}
 
 Just as we defined explicitly the comultiplication and counit in~\eqref{equ:dig-and-der-in-prof}, it is possible to give explicit definitions for all the relevant parts of the linear exponential
 pseudocomonad of \cref{thm:prof-degenerate-model}, very much in analogy with what happens in the relational model of Linear Logic. While we do not unfold them explicitly here for brevity, these
 will be used in the proofs of \cref{thm:lemma1,thm:lemma2,thm:lemma3} below.

 As a final step before turning to the codereliction, we check that the additional assumptions of \cref{thm:hypothesis} hold in $\Prof$. 

\begin{proposition}  \label{thm:prof-satisfies-hyp} \leavevmode
\begin{enumerate}
\item In $\Prof$, the convolution structure induced by the biproducts is cocartesian.
\item In $\Prof$, the component  of the counit of the linear exponential pseudocomonad $\der_A \co \bang A \to A$  has a left adjoint for every $A  \in \Prof$.
\end{enumerate}
\end{proposition}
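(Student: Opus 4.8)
The plan is to verify the two items of \cref{thm:hypothesis} for $\Prof$ by direct inspection, using the explicit description of biproducts, duality, and the linear exponential pseudocomonad given above.

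For part~(i), I would argue as follows. The convolution structure on the hom-category $\Prof[A,B]$ was defined in \eqref{equ:convolution} using the biproducts of $\Prof$, and $\Prof[A,B] = \CAT[B^\op \times A, \Set]$ is a functor category into $\Set$, hence has all small (co)limits computed pointwise. The key observation is that biproducts in $\Prof$ are given by coproducts of categories, so that for $f, g \co A \to B$ the convolution $f + g$ is the composite $A \xrightarrow{\Delta_A} A + A \xrightarrow{f + g} B + B \xrightarrow{\nabla_A} B$, and unwinding the coend formulas for composition of profunctors shows that $(f+g)(b, a) \cong f(b,a) \sqcup g(b,a)$, the pointwise coproduct in $\Set$; similarly $0_{A,B}(b,a) \cong \varnothing$. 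So convolution is precisely the pointwise coproduct of presheaves, which is the genuine categorical coproduct in $\Prof[A,B]$, and $0_{A,B}$ is the initial object. It then remains to check that the coprojections are the canonical ones described in \cref{thm:hypothesis}, which follows from tracing the isomorphisms $f \cong f + 0$ and the evident inclusion; this is a routine unfolding of the definitions of $\Delta$, $\nabla$, and the biproduct structure, which I would carry out but not belabour.

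For part~(ii), I need a left adjoint $\coder_A \co A \to \bang A$ to $\der_A \co \bang A \to A$ in $\Prof$. Recall from \eqref{equ:dig-and-der-in-prof} that $\der_A(\alpha, a) = \oc A[\alpha, \langle a\rangle]$. Since $\Prof$ is compact closed with $A^\perp = A^\op$, adjunctions in $\Prof$ correspond to certain dualities, but more concretely a left adjoint to a profunctor $\der_A \co \bang A \to A$ is given by the "transpose" profunctor, namely $\coder_A \co A \to \bang A$ with $\coder_A(\alpha, a) \defeq \der_A(\alpha, a) = \oc A[\alpha, \langle a \rangle]$, viewed now as a functor $(\oc A)^\op \times A \to \Set$. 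The unit $\eta_A \co \id_A \Rightarrow \der_A \circ \coder_A$ and counit $\varepsilon_A \co \coder_A \circ \der_A \Rightarrow \id_{\bang A}$ are natural transformations between the relevant coends; the unit picks out, for each $a \in A$, the identity-wrapped singleton, exploiting that $\langle a \rangle$ is a "linear" element of $\oc A$ (one whose only decompositions are trivial), while the counit is assembled from the monoidal structure of $\oc A$. The triangle identities then reduce to the fact that a sequence $\alpha = \langle a_1, \ldots, a_n\rangle$ maps to a singleton $\langle a \rangle$ only when $n = 1$, so the relevant coends collapse.

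The main obstacle, as I see it, is part~(ii): one must be careful about \emph{which} profunctor serves as the left adjoint and verify the triangle identities at the level of coends, rather than appealing to a slick abstract argument. The temptation is to say "$\der_A$ is a profunctor, so it has an adjoint" — but not every profunctor is part of an adjunction, and the adjoint here exists precisely because $\der_A$ is (up to the compact-closed duality) representable in the appropriate variable. I would therefore present the explicit formula for $\coder_A$, exhibit $\eta_A$ and $\varepsilon_A$ as concrete natural transformations, and check the two triangle identities by the coend calculation sketched above; part~(i) is then a comparatively short unfolding.
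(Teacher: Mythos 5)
Your part~(i) is exactly the paper's argument: unfold the coend formulas for $\nabla_B \circ (F \oplus G) \circ \Delta_A$ and for the composite through $0$ to see that convolution is the pointwise coproduct of presheaves and $0_{A,B}$ is pointwise empty, hence the structure is cocartesian.

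For part~(ii) you take a genuinely different, more explicit route. The paper disposes of it in one line via compact-closed duality: the unit $A^\perp \to \wn(A^\perp)$ of the pseudomonad is induced by a functor (namely $a \mapsto \langle a\rangle$), such functor-induced profunctors are well known to have right adjoints, and dualising turns that right adjoint into a left adjoint of $\der_A$, since $\der_A$ is by construction the dual of that unit. You instead exhibit the left adjoint directly as $\coder_A(\alpha,a) = \oc A[\alpha, \langle a\rangle]$ and propose to check the triangle identities by coend calculation; this is correct, and your formula is precisely the one the paper later adopts as the definition of the codereliction in~\eqref{equ:codereliction-def}, so your approach has the advantage of producing the adjoint that is actually used afterwards, at the cost of redoing by hand the standard adjunction $\iota_* \dashv \iota^*$ for the functor $\iota \co A \to \oc A$, $a \mapsto \langle a\rangle$. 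Two small imprecisions in your sketch: the counit $\coder_A \circ \der_A \Rightarrow \id_{\bang A}$ is given (after a co-Yoneda reduction) by composition in $\oc A$, not by its monoidal structure; and the triangle identities do not need the observation that only length-one sequences map to a singleton --- that fact is what later makes the unit $\eta_A$ \emph{invertible} (the first pseudocomonad axiom, \cref{thm:lemma1}), whereas the adjunction itself follows from co-Yoneda and functoriality alone. Your closing caution that not every profunctor has an adjoint, and that the adjoint here exists because $\der_A$ is functor-induced up to duality, is exactly the point the paper's shorter argument rests on.
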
 

\begin{proof} For part~(i), let $A \in \Prof$. The diagonal $\Delta_A \co A \to A \oplus A$ is the profunctor defined by letting, for $(x,a) \in (A \oplus A)^\op \times A$,
\[
\Delta_A(x, a) = (A \oplus A)[ x, \iota_1(a)] + (A \oplus A)[x, \iota_2(a)] .
\]
For $B \in \Prof$, the codiagonal $\nabla_B \co B \oplus B \to B$ is defined by letting, for $(b,y) \in  B^\op \times (B \oplus B)$,
\[
\nabla_B(b,y) = (B \oplus B)[ \iota_1(b), y] + (B\oplus B)[\iota_2(b), y] .
\]
Also, for $F, G \co A \to B$ in $\Prof$, $F \oplus G \co A \oplus A \to B \oplus B$ is given by letting
\begin{multline*} 
(F \oplus G)(y,x) = \coend^{b, a} (B \oplus B)[y, \iota_1(b)]  \times F(b,a) \times (A\oplus A)[\iota_1(a), x] \ +  \\
  \coend^{b, a} (B \oplus B)[y, \iota_2(b)]  \times G(b,a) \times (A\oplus A)[\iota_2(a), x] .
\end{multline*}
A direct calculation then shows that, for $F, G \co A \to B$ in $\Prof$, $\nabla_B \circ (F \oplus G) \circ \Delta_A \cong F + G$, where $F + G \co A \to B$ is
defined by 
\[
(F +  G)(b,a) \defeq F(b,a) + G(b,a) . 
\]
 Similarly, the composite of $A \to 0$ with $0 \to B$ is the initial object of $\Prof[A,B]$, \ie the profunctor $0_{A,B}(b,a) = \varnothing$.
 
Part~(ii) is evident, since it is well-known that every map in $\Prof$, in
particular the unit $A^\perp\to \wn(A^\perp)$, has a right adjoint and hence
its dual has a left adjoint.
\end{proof}

Let us also point out that the composition functors of $\Prof$ are `bilinear' and strict, in the sense that we have natural isomorphisms
\begin{gather*}
N \circ (M + M') \cong (N \circ M) + (N \circ M') , \quad
(N + N') \circ M \cong (N \circ M) + (N' \circ M) , \\ 
0
\circ M \cong 0 
, \quad
N \circ 0 
\cong 0 .
\end{gather*}

\subsection*{Codereliction for categorical symmetric sequences}  \cref{thm:prof-degenerate-model} and \cref{thm:prof-satisfies-hyp} allow us to apply the definitions and results  in~\cref{sec:products,sec:biproducts} to $\Prof$. We now verify that $\Prof$ admits a codereliction operation in the sense of in \cref{thm:codereliction}. 
For $A \in \Prof$, we define $\coder_A \co A \to \bang A$ to be the left adjoint of~$\der_A \co \oc A \to A$ in $\Prof$, namely
\begin{equation}
\label{equ:codereliction-def}
\coder_A(\alpha, a) \defeq \oc A [ \alpha,  \langle a \rangle]  . 
\end{equation}
For convenience of the readers, we organise the proof of the invertibility of the three 2-cells $\eta_A$, $\mu_A$, and~$\sigma_{A,B}$ in three lemmas, each dealing with one 2-cell. In these lemmas, we construct natural isomorphisms $\eta'_A$,~$\mu'_A$, and~$\sigma'_{A,B}$, define explicitly the natural transformations $\eta_A$, $\mu_A$, and $\sigma_{A,B}$, and conclude by observing that~$\eta_A = \eta'_A$, $\mu_A = \mu'_A$, and $\sigma_{A,B} = \sigma'_{A,B}$.

\begin{lemma} 
\label{thm:lemma1}
For every $A \in \Prof$, the first pseudocomonad constraint, in
\[
\begin{tikzcd}
A 
	\ar[r, "\coder_A"] 
	\ar[dr,  bend right = 30, "\id_A"'] & 
\bang A 
	\ar[d, "\der_A"] \\
\phantom{} \ar[ur, phantom, {pos=.7},  "\overset{\eta_A}{\TwoHor}"]   & A ,
 \end{tikzcd} 
 \] 
is invertible.
\end{lemma}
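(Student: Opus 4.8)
The plan is to compute both composites around the triangle explicitly using the coend formula for composition in $\Prof$ together with the definitions of $\coder_A$ and $\der_A$ in~\eqref{equ:codereliction-def} and~\eqref{equ:dig-and-der-in-prof}, exhibit a natural isomorphism $\eta'_A \co \id_A \Rightarrow \der_A \circ \coder_A$, and then identify $\eta'_A$ with the canonical 2-cell $\eta_A$ produced by the adjunction $\coder_A \dashv \der_A$ of \cref{thm:hypothesis}(ii). First I would unfold the right-hand side: for $a, a' \in A$ we have
\[
(\der_A \circ \coder_A)(a', a) = \coend^{\alpha \in \oc A} \der_A(\alpha, a') \times \coder_A(\alpha, a) = \coend^{\alpha \in \oc A} \oc A[\alpha, \langle a' \rangle] \times \oc A[\alpha, \langle a \rangle] \mathrlap{.}
\]
By the co-Yoneda lemma (density), the coend $\coend^{\alpha} \oc A[\alpha, \langle a' \rangle] \times \oc A[\alpha, \langle a \rangle]$ is isomorphic to $\oc A[\langle a \rangle, \langle a' \rangle]$, naturally in $a$ and $a'$. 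The key point is then to compute $\oc A[\langle a \rangle, \langle a' \rangle]$ from the definition of $\oc A = \wn A$ in~\eqref{equ:wn-in-prof}: a map between the singleton sequences $\langle a \rangle$ and $\langle a' \rangle$ consists of a permutation in $\mathfrak{S}_1$, which is trivial, together with a single map $a \to a'$ in $A$, so $\oc A[\langle a \rangle, \langle a' \rangle] \cong A[a, a'] = \id_A(a', a)$. Composing these isomorphisms gives the desired $\eta'_A \co \id_A \cong \der_A \circ \coder_A$.

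Next I would write down the 2-cell $\eta_A$ concretely. It is the unit of the adjunction $\coder_A \dashv \der_A$ in $\Prof$, and since $\coder_A$ was defined precisely as a left adjoint of $\der_A$, one checks that the unit is exactly the mate of the identity under the adjunction $\Prof[A, A] \simeq \Prof[A, \oc A]$ determined by postcomposition with $\der_A$; unwinding this mate using the coend formulas shows it agrees component-wise with $\eta'_A$ above. Concretely, both natural transformations are given on an element of $\id_A(a', a) = A[a', a]$ (or its image under the co-Yoneda iso) by the same formula built from whiskering identity maps of $\oc A$, so they coincide. Since $\eta'_A$ is invertible, so is $\eta_A$, which is the statement of the lemma.

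The only mild obstacle is bookkeeping: one has to make sure the co-Yoneda reduction of the coend is carried out with the correct variance (the functor $\oc A[\alpha, \langle a \rangle]$ is contravariant in $\alpha$, the functor $\oc A[\alpha, \langle a' \rangle]$ is covariant in $a'$ and contravariant in $\alpha$), and that the resulting isomorphism is natural in both $a$ and $a'$; but this is entirely routine given the explicit description of $\oc A$. The genuinely substantive input — that the hom-category $\oc A[\langle a \rangle, \langle a' \rangle]$ collapses to $A[a, a']$ because $\mathfrak{S}_1$ is trivial — is immediate from~\eqref{equ:wn-in-prof}. I would therefore present the computation of the two composites, invoke the co-Yoneda lemma, remark that the adjunction unit agrees with the exhibited isomorphism by construction of $\coder_A$, and conclude.
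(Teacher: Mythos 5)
Your argument is essentially the paper's own proof: both compute the composite as a coend, collapse it by co-Yoneda together with the triviality of $\mathfrak{S}_1$ to the isomorphism $A[a',a] \cong \bang A[\langle a'\rangle,\langle a\rangle] \cong \int^{\alpha\in\bang A}\bang A[\langle a'\rangle,\alpha]\times \bang A[\alpha,\langle a\rangle]$, and identify the resulting invertible transformation with the adjunction unit $\eta_A$. The only blemish is a direction slip at the end — tracking the variance as in the paper's proof, the coend collapses to $\bang A[\langle a'\rangle,\langle a\rangle]\cong A[a',a]=\id_A(a',a)$ rather than to $A[a,a']$ — a harmless piece of bookkeeping you already flag, and one partly induced by the order of arguments in \eqref{equ:dig-and-der-in-prof}.
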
 

\begin{proof} Let us define a natural transformation $\eta'_A$, parallel to $\eta_A$, whose component for $a' \in A$ and $a \in A$ is defined as the evident composite
\[
A[a', a] \Rightarrow \bang A[ \langle a' \rangle, \langle a \rangle] \cong \int^{\alpha \in \bang A} \bang A[ \langle a' \rangle, \alpha] \times \bang A[ \alpha, \langle a \rangle]  . 
\]
This is clearly invertible and  $\eta_A = \eta'_A$, so that $\eta_A$ is also invertible.
\end{proof}

\begin{lemma} 
\label{thm:lemma2}
For every $A \in \Prof$, the second pseudocomonad constraint $\mu_A$, in
\[
\begin{tikzcd}[column sep = large]
A \otimes I 
	\ar[r, "\id"] 
	\ar[d, "\coder_A \otimes \coweak_A"']  
	\ar[drr, phantom, description, "\overset{\mu_{A,B}}{\TwoHor}"]  & 
A 
	\ar[r, "\coder_A"] & 
\bang A 
	\ar[d, "\dig_A"] \\
\bang A \otimes \bang A 
	\ar[r, "\coder_{\bang A}  \otimes \dig_A"']  & 	
\bbang A \otimes \bbang A 
	\ar[r,  "\cocon_{\bang A}"'] & 
\bbang A ,
\end{tikzcd}
\]
 is invertible.
\end{lemma}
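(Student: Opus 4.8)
The plan is to follow exactly the template used for \cref{thm:lemma1}: to produce an explicit natural isomorphism $\mu'_A$ parallel to the natural transformation $\mu_A$ of \eqref{equ:definition-of-mu}, and then to verify that $\mu_A = \mu'_A$ by computing the pasting \eqref{equ:mu-diagram} componentwise in $\Prof$. First I would record the explicit descriptions in $\Prof$ of the profunctors occurring in \eqref{equ:mu-diagram}: by \eqref{equ:dig-and-der-in-prof} and \eqref{equ:codereliction-def}, $\coder_A(\alpha,a) = \bang A[\alpha,\langle a\rangle]$ and $\dig_A(\varphi,\alpha) = \bang A[\sqcup\varphi,\alpha]$; the coweakening $\coweak_A \co I \to \bang A$ is the profunctor with $\coweak_A(\beta,\ast) = \bang A[\beta,\varepsilon]$; and, unwinding \eqref{equ:def-cocon} together with the explicit Seely equivalences from \cref{sec:products}, the cocontraction satisfies $\cocon_A(\gamma;(\alpha,\beta)) \cong \bang A[\gamma,\alpha\sqcup\beta]$ up to canonical isomorphism (the coproduct over interleavings of $\alpha$ and $\beta$, quotiented by the symmetric group action). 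Using these formulas and the co-Yoneda lemma, the target $\dig_A\circ\coder_A \co A \to \bbang A$ has $(\varphi,a)$-component $\bang A[\sqcup\varphi,\langle a\rangle]$, while the source $\cocon_{\bang A}\circ(\coder_{\bang A}\otimes\dig_A)\circ(\coder_A\otimes\coweak_A)$ can be reduced by iterated coend calculus — collapsing successively the variables attached to $\coder_{\bang A}$, to $\coweak_A$, and to $\coder_A$ — to a coend over sequences of empty sequences which, after taking the relevant coinvariants, is again canonically isomorphic to $\bang A[\sqcup\varphi,\langle a\rangle]$. The composite of these canonical isomorphisms is $\mu'_A$, which is manifestly invertible.

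Next I would identify $\mu_A$ with $\mu'_A$. Each of the cells $\mu_1,\dots,\mu_5$ making up \eqref{equ:mu-diagram} has an explicit description in $\Prof$: $\mu_1$, coming from part~(i) of \cref{thm:auxiliary-2-cells}, is the insertion of the empty sequence into one factor of $\con_A$; $\mu_2$ is a pseudonaturality cell of $\con$, that is, a coherence isomorphism of concatenation; $\mu_3$ is a component of the left unitality constraint of the pseudocomonad, which in $\Prof$ witnesses that prepending an empty sequence does not change a flattening; $\mu_4$ is the counit of $\coder_{\bang A}\dashv\der_{\bang A}$, realised by the co-Yoneda collapse; and $\mu_5$, coming from part~(ii) of \cref{thm:auxiliary-2-cells}, records that $\nabla_A\circ\Delta_A = \id_A + \id_A$ is a coproduct. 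Tracing these through the coend computation of the previous paragraph, each pasting step becomes precisely one of the canonical reductions used to build $\mu'_A$, so that the whole diagram collapses to $\mu'_A$; hence $\mu_A = \mu'_A$ is invertible.

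The step I expect to be the main obstacle is the combinatorial bookkeeping in these last two reductions: one has to track the wreath-product structure of $\bang A = \bigsqcup_n \mathfrak{S}_n \wr A$ and of $\bbang A$ through all the coends, and check that the permutations introduced by the symmetry hidden inside $\cocon$ and by the coherence cells $\mu_1,\dots,\mu_5$ cancel exactly against those produced by the co-Yoneda collapses and by the coinvariant quotients over sequences of empty sequences. This is the categorified analogue of the strict identity witnessing the second codereliction axiom in the relational model; here the corresponding equation holds only up to the coherent isomorphism $\mu'_A$, and essentially all the work lies in matching that isomorphism with the pasting $\mu_A$.
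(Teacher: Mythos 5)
Your proposal follows essentially the same route as the paper's proof: construct an explicit invertible natural transformation $\mu'_A$ (whose component at $\varphi'\in\bbang A$, $a\in A$ collapses, via the condition $\bang A[\sqcup\varphi,\varepsilon]$, to $\bang A[\sqcup\varphi',\langle a\rangle]$), unfold the cells $\mu_1,\dots,\mu_5$ of \eqref{equ:mu-diagram} as explicit coend maps in $\Prof$ — and your identifications of all five cells, as well as the formulas for $\coder_A$, $\dig_A$, $\coweak_A$ and $\cocon_A$, agree with the paper — and then verify componentwise that the pasting $\mu_A$ equals $\mu'_A$. The "combinatorial bookkeeping" you flag is exactly the chain of coend calculations the paper writes out, so the proposal is correct and matches the paper's argument.
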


\begin{proof} Let us define the natural transformation $\mu'_A$, parallel to $\mu_A$, by letting its component for $\varphi' \in \bbang A$ and $a \in A$ be the composite
\begin{align*}
\int^{\varphi \in \bbang A} 
  \bbang A \big[\varphi', \langle \langle a \rangle \rangle \sqcup \varphi \big]  \times 
  \bang A [\sqcup \varphi, \varepsilon] 
& \ \Rightarrow\ 
\int^{\varphi \in \bbang A} 
  \bang A [  \sqcup \varphi', 
             \sqcup (  \langle \langle a \rangle \rangle \sqcup \varphi ) ] 
  \times 
  \bang A [  \sqcup \varphi, \varepsilon] 
\\
& \ \cong\ 
\int^{\varphi \in \bbang A} 
  \bang A [  \sqcup \varphi', \langle a \rangle \sqcup (  \sqcup \varphi ) ] 
  \times 
  \bang A [  \sqcup \varphi, \varepsilon] 
\\ 
& \ \Rightarrow\ 
\bang A [  \sqcup \varphi', \langle a \rangle \sqcup \varepsilon] 
\\
& \ \cong\ 
\bang A [ \sqcup \varphi', \langle a \rangle] 
\end{align*}
 Since this is invertible, it suffices to show $\mu_A = \mu'_A$. 
 
 We begin by unfolding explicitly the natural transformations that appear in the definition of $\mu_{A,B}$ in~\eqref{equ:mu-diagram}. 
 The component of the natural transformation $\mu_1$ for $(\alpha_1, \alpha_2) \in \bang A \otimes \bang A$ and $\alpha \in \bang A$  is
 \begin{align*} 
 \bang A[ \alpha_1, \alpha] \times \bang A [\alpha_2, \varepsilon] & \Rightarrow \bang A[ \alpha_1 \sqcup \alpha_2, \alpha \sqcup \varepsilon] \\
 & \cong \bang A[ \alpha_1 \sqcup \alpha_2, \alpha]  .
 \end{align*}
The component of the natural isomorphism $\mu_2$ for $(\varphi_1, \varphi_2) \in \bbang A \otimes \bbang A$ and $\alpha \in \bang A$ is
 \begin{align*}
 &  \int^{\alpha_1, \alpha_2 \in \bang A} \bang A [\sqcup \varphi_1, \alpha_1] \times \bang A[ \sqcup \varphi_2, \alpha_2] \times \bang A[ \alpha_1 \sqcup \alpha_2, \alpha] \\
   & \quad \cong \bang A [ (\sqcup \varphi_1) \sqcup (\sqcup \varphi_2), \alpha] \\
   & \quad  \cong \bang A[ \sqcup (\varphi_1 \sqcup \varphi_2), \alpha] \\
   & \quad  \cong \int^{\varphi \in \bbang A} \bbang A [ \varphi_1 \sqcup \varphi_2, \varphi] \times \bang A[ \sqcup \varphi, \alpha] . 
  \end{align*}
 The component of the natural isomorphism $\mu_3$ for $\alpha' \in \bang A$ and $\alpha \in \bang A$ is
  \begin{align*} 
  \bang A [ \alpha', \alpha] & \cong \bang A[ \sqcup \langle \alpha' \rangle, \alpha] \\
  & \cong \int^{\varphi \in \bbang A} \bbang A [ \langle \alpha' \rangle, \varphi] \times \bang A[ \sqcup \varphi, \alpha] . 
  \end{align*}
  The component of $\mu_4$ for $\varphi' \in \bbang A$ and $\varphi \in \bang A$ is 
  \[
  \int^{\alpha \in \bang A} \bbang A [ \varphi', \langle \alpha \rangle ]  \times \bbang A [ \langle \alpha \rangle, \varphi] \Rightarrow \bbang A [\varphi', \varphi]  . 
  \]
 Finally, the component of $\mu_5$ for $\varphi' \in \bbang A$ and $\varphi \in \bbang A$ is
 \[
 \int^{\varphi_1, \varphi_2} \bbang A[\varphi', \varphi_1 \sqcup \varphi_2] \times \bbang A [\varphi_1 \sqcup \varphi_2, \varphi ]
	\Rightarrow \bbang A [ \varphi' , \varphi]  . 
\]
 
 We can now calculate explicitly the value of some of the natural transformations obtained by pasting together regions of the diagram in~\eqref{equ:mu-diagram}. 
 The component of the one obtained by pasting the three natural transformations in the top left hand side of the diagram for $(\alpha_1, \alpha_2) \in \bang A \otimes \bang A$ and $\alpha \in \bang A$ is
 \begin{align*} 
 \bang A [ \alpha_1, \langle a \rangle] \times \bang A[ \alpha_2, \varepsilon] 
 	& \Rightarrow  \big(  \bang A[ \alpha_1, \langle a \rangle] \times \bang A[ \alpha_2, \varepsilon] \big) +  \big( \bang A[ \alpha_1, \varepsilon ] \times \bang A[ \alpha_2, \langle  a \rangle]  \big) \\
 &	\cong \bang A [ \alpha_1 \sqcup \alpha_2, \langle a \rangle] , 
\end{align*}
where the first map is a coproduct injection.
 The component of the natural transformation obtained by pasting the above with the three natural transformations in the central rectangle of~\eqref{equ:mu-diagram}, for $(\alpha, \varphi) \in \bang A \otimes \bbang A$ and $a \in A$, is
 \begin{align*} 
 \bang A [ \alpha, \langle  a \rangle ] \times \bang A[ \sqcup \varphi, \varepsilon] & \Rightarrow
 \big(  \bang A[ \alpha,  \langle  a \rangle ] \times \bang A[ \sqcup \varphi, \varepsilon]  \big) + \big( \bang A[ \alpha, \varepsilon ] \times \bang A[ \sqcup \varphi,  \langle  a \rangle] \big)  \\
  &   \cong \bang A [ \alpha \sqcup ( \sqcup \varphi),  \langle  a \rangle ] . 
  \end{align*}
  Whiskering the above with $\cocon_{\bang A}$ and $\coder_{\bang A} \otimes \id$ results in the natural transformation whose component at $\varphi' \in \bbang A$ and $a \in A$ is
  \begin{equation}
  \label{equ:useful-chain-of-coends}
  \begin{aligned}
 &  \int^{\varphi \in \bbang A} \bbang A [\varphi', \langle \langle a \rangle \rangle \sqcup \varphi] \times \bang A [ \sqcup \varphi, \varepsilon]  \\
 & \qquad \Rightarrow 
 	\int^{\varphi \in \bbang A} \bbang A [\varphi', \langle \langle a \rangle \rangle \sqcup \varphi] \times \bang A [ \sqcup \varphi, \varepsilon] 
		+ \int^{\varphi \in \bbang A} \bbang A [\varphi', \langle \varepsilon \rangle \sqcup \varphi] \times \bang A [ \sqcup \varphi, \langle a \rangle] \\
 & \qquad \cong 
 	\int^{\varphi \in \bbang A} \int^{\alpha \in \bang A} \bbang A [\varphi', \langle \alpha \rangle \sqcup \varphi] \times \bang A [ \alpha \sqcup ( \sqcup \varphi), \langle a \rangle] \\
 & \qquad \cong 
 	\int^{\varphi \in \bbang A} \int^{\alpha \in \bang A} \bbang A [\varphi', \langle \alpha \rangle \sqcup \varphi] \times \bang A [ \sqcup  ( \langle \alpha \rangle \sqcup  \varphi ), \langle a \rangle] . 
  \end{aligned}
  \end{equation}
  
We now consider the natural transformation obtained by pasting the 2-cells $\mu_4\otimes \id$ and $\mu_5$ in~\eqref{equ:mu-diagram}. Its component for $\varphi' \in \bbang A$ and $\varphi \in \bbang A$ is
\begin{align*}
 & \int^{\alpha \in \bang A} \int^{\varphi_1, \varphi'_1, \varphi'_2 \in \bbang A} \bbang A [ \varphi', \varphi'_1 \sqcup \varphi'_2] \times \bbang A [ \varphi'_1, \langle \alpha \rangle] \times \bbang A [ \langle \alpha \rangle, \varphi_1] 
 	\times \bbang A [\varphi_1 \sqcup \varphi'_2, \varphi] \\
& \qquad \Rightarrow 
	\int^{\varphi_1, \varphi'_1, \varphi'_2 \in \bbang A} \bbang A [ \varphi', \varphi'_1 \sqcup \varphi'_2] \times \bbang A [ \varphi'_1, \varphi_1]  	\times \bbang A [\varphi_1 \sqcup \varphi'_2, \varphi] \\
& \qquad \cong
	\int^{\varphi'_1, \varphi'_2 \in \bbang A} \bbang A [ \varphi', \varphi'_1 \sqcup \varphi'_2] \times \bbang A [\varphi'_1 \sqcup \varphi'_2, \varphi] \\
& \qquad \Rightarrow
	\bbang A [ \varphi', \varphi] . 
\end{align*}
Therefore, the precomposition of this natural transformation with $\dig_A \coder_A$ (\cf the top arrow in~\eqref{equ:mu-diagram}) gives the natural transformation whose component at $\varphi' \in \bbang A$ and $a \in A$ is 
 \begin{align*}
  & 
  \int^{\alpha \in \bang A} \int^{\varphi_1, \varphi'_1, \varphi'_2, \varphi \in \bbang A} 
   \bbang A [ \varphi', \varphi'_1 \sqcup \varphi'_2] \times \bbang A [\varphi'_1, \langle \alpha \rangle] \times \\
    & \qquad \qquad \qquad \qquad  \qquad \qquad \qquad   \bbang A[ \langle \alpha \rangle, \varphi_1]  \times   \bbang A [\varphi_1 \sqcup \varphi'_2, \varphi] \times \bang A[ \sqcup \varphi, \langle a \rangle] \\
    & \qquad \Rightarrow 
     \int^{\varphi_1, \varphi'_1, \varphi'_2, \varphi \in \bbang A}  \bbang A [ \varphi', \varphi'_1 \sqcup \varphi'_2] \times \bbang A [\varphi'_1, \varphi_1] \times  \bbang A [\varphi_1 \sqcup \varphi'_2, \varphi] \times \bang A[ \sqcup \varphi, \langle a \rangle] \\
     & \qquad \cong 
      \int^{\varphi'_1, \varphi'_2, \varphi \in \bbang A}  \bbang A [ \varphi', \varphi'_1 \sqcup \varphi'_2]  \times \bbang A [\varphi'_1 \sqcup \varphi'_2, \varphi] \times \bang A[ \sqcup \varphi, \langle a \rangle] \\
      & \qquad \Rightarrow 
      \int^{\varphi \in \bbang A} \bbang A [ \varphi', \varphi] \times \bang A[ \sqcup \varphi,  \langle  a \rangle] \\
      & \qquad \cong 
      \bang A[ \sqcup \varphi',  \langle  a \rangle]   . 
   \end{align*}
 This equals the following composite: 
 \begin{align*}
  & 
  \int^{\alpha \in \bang A} \int^{\varphi_1, \varphi'_1, \varphi'_2, \varphi \in \bbang A} 
   \bbang A [ \varphi', \varphi'_1 \sqcup \varphi'_2] \times \bbang A [\varphi'_1, \langle \alpha \rangle] \times \\
    &  \qquad \qquad \qquad \qquad  \qquad \qquad \qquad     
    \bbang A[ \langle \alpha \rangle, \varphi_1]  \times   \bbang A [\varphi_1 \sqcup \varphi'_2, \varphi] \times \bang A[ \sqcup \varphi, \langle a \rangle] \\
  & \qquad \cong 
   \int^{\alpha \in \bang A} \int^{\varphi'_2, \varphi  \in \bbang A} \bbang A [ \varphi', \langle \alpha \rangle \sqcup \varphi'_2] \times \bbang A [ \langle \alpha \rangle \sqcup \varphi'_2, \varphi] \times \bang A[ \sqcup \varphi,  \langle  a \rangle] \\
   & \qquad \Rightarrow 
   \int^{\varphi \in \bbang A} \bbang A [ \varphi', \varphi] \times \bang A[ \sqcup \varphi,  \langle  a \rangle] \\
   & \qquad \cong \bang A [ \sqcup \varphi',  \langle  a \rangle] ,
   \end{align*}
which, in turn, equals the composite
 \begin{align*}
  &  \int^{\alpha \in \bang A} \int^{\varphi_1, \varphi'_1, \varphi'_2, \varphi \in \bbang A}  \bbang A [ \varphi', \varphi'_1 \sqcup \varphi'_2] \times \bbang A [\varphi'_1, \langle \alpha \rangle] \times \\
  &  \qquad \qquad \qquad \qquad  \qquad \qquad \qquad   \bbang A[ \langle \alpha \rangle, \varphi_1]  \times   
  				\bbang A [\varphi_1 \sqcup \varphi'_2, \varphi] \times \bang A[ \sqcup \varphi, \langle a \rangle] \\
& \qquad \cong  \int^{\alpha \in \bang A} \int^{\varphi'_2, \varphi \bbang A}  \bbang A [\varphi', \langle \alpha \rangle \sqcup \varphi'_2] \times \bbang A [ \langle \alpha \rangle \sqcup \varphi'_2, \varphi] \times \bang A [ \sqcup \varphi,  \langle  a \rangle] \\
& \qquad \cong \int^{\alpha \in \bang A} \int^{\varphi'_2 \in \bbang A} \bbang A [\varphi', \langle \alpha \rangle \sqcup \varphi'_2] \times \bang A [ \sqcup \big( \langle \alpha \rangle \sqcup \varphi'_2),  \langle  a \rangle]  \\
& \qquad \Rightarrow \int^{\alpha \in \bang A} \int^{\varphi'_2 \in \bbang A} \bang A [\sqcup \varphi', \sqcup ( \langle \alpha \rangle \sqcup \varphi'_2 )] \times \bang A [ \sqcup ( \langle \alpha \rangle \sqcup \varphi'_2),  \langle  a \rangle]  \\
& \qquad \Rightarrow \bang A [ \sqcup \varphi',  \langle  \alpha \rangle ]  . 
\end{align*}

From this and~\eqref{equ:useful-chain-of-coends}, the component of $\mu_A$ for $\varphi' \in \bbang A$ and $a \in A$, is
 \begin{align*}
  & \int^{\varphi \in \bbang A} \bbang A [ \varphi', \langle \langle a \rangle \rangle \sqcup \varphi ] \times \bang A[ \sqcup \varphi, \varepsilon] \\
   & \qquad \Rightarrow \int^{\varphi \in \bbang A}  \bbang A [ \varphi', \langle \langle a \rangle \rangle \sqcup \varphi ] \times \bang A[ \sqcup \varphi, \varepsilon]  + \int^{\varphi \in \bbang A}  \bbang A [ \varphi', 
   \langle \varepsilon \rangle \sqcup \varphi ] \times \bang A[ \sqcup \varphi,  \langle  a \rangle]  \\ 
   & \qquad \cong \int^{\varphi \in \bbang A} \int^{\alpha \in \bang A} \bbang A [ \varphi', \langle \alpha \rangle \sqcup \varphi ] \times \bang A[ \alpha \sqcup (\sqcup \varphi),  \langle  a \rangle]  \\
   & \qquad \cong \int^{\varphi \in \bbang A} \int^{\alpha \in \bang A} \bbang A [ \varphi', \langle \alpha \rangle \sqcup \varphi ] \times \bang A[ \sqcup ( \langle \alpha \rangle  \sqcup \varphi) ,  \langle  a \rangle]  \\ 
   & \qquad \Rightarrow \int^{\varphi \in \bbang A} \int^{\alpha \in \bang A} \bang A [ \sqcup \varphi',  \sqcup \big( \langle \alpha \rangle \sqcup \varphi \big) ] \times \bang A[ \sqcup ( \langle \alpha \rangle  \sqcup \varphi) ,  \langle  a \rangle] \\
   & \qquad \Rightarrow \bang A[ \sqcup \varphi',  \langle  a \rangle] .
   \end{align*} 
This equals 
 \begin{align*}
  & \int^{\varphi \in \bbang A} \bbang A [ \varphi', \langle \langle a \rangle \rangle \sqcup \varphi ] \times \bang A[ \sqcup \varphi, \varepsilon] \\
  & \qquad \Rightarrow \int^{\varphi \in \bbang A} \bang A[ \sqcup \varphi', \sqcup \big( \langle \langle a \rangle \rangle \sqcup \varphi \big) ] \times \bang A[ \sqcup \varphi, \varepsilon] \\
  & \qquad \cong  \int^{\varphi \in \bbang A} \bang A[ \sqcup \varphi', \langle  a  \rangle \sqcup ( \sqcup \varphi ) ] \times \bang A[ \sqcup \varphi, \varepsilon] \\
  & \qquad \Rightarrow \bang A[ \sqcup \varphi', \langle  a  \rangle \sqcup \varepsilon] \\
  & \qquad \cong \bang A [ \sqcup \varphi', \langle  a  \rangle] ,
\end{align*} 
which is invertible.
\end{proof}

\begin{lemma}
\label{thm:lemma3}
For every $A, B \in \Prof$, the strength constraint $\sigma_{A,B}$, in
\[
\begin{tikzcd}[column sep = large] 
A \otimes \bang B 
	\ar[r, "\coder_A \otimes \id_{\bang B}"] 
	\ar[d, "\id_A \otimes \der_B"'] 
	\ar[dr, phantom, description, "\overset{\sigma_{A,B}}{\TwoHor}"]  & 
\bang A \otimes \bang B 
	\ar[d,  "\monn_{A,B}"] \\
A \otimes B 
	\ar[r, "\coder_{A \otimes B}"'] & 
\bang (A \otimes B) ,
\end{tikzcd}
\] is invertible.
\end{lemma}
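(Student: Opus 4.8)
The strategy is identical in spirit to the proofs of \cref{thm:lemma1,thm:lemma2}: I will construct a natural isomorphism $\sigma'_{A,B}$ parallel to $\sigma_{A,B}$ by a chain of coend manipulations, then verify that $\sigma_{A,B} = \sigma'_{A,B}$ by unfolding the pasting diagram that defines $\sigma_{A,B}$ in the proof of \cref{thm:strength-constraint}. Since $\sigma_{A,B}$ invertible is what we want, exhibiting an explicitly invertible $\sigma'_{A,B}$ and identifying it with $\sigma_{A,B}$ suffices.

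\textbf{Step 1: the source and target profunctors.} I will compute the two composites bounding the square. The top-then-right composite $\monn_{A,B} \circ (\coder_A \otimes \id_{\bang B})$, evaluated at $(\chi, (a,b)) \in \bang(A\otimes B)^\op \times (A \otimes \bang B)$, unfolds via the explicit formula for $\monn$ on $\Prof$ (concatenation-style, analogous to how $\dig$ and $\der$ were given in \eqref{equ:dig-and-der-in-prof}) and the definition \eqref{equ:codereliction-def} of $\coder_A$, yielding a coend over $\alpha \in \bang A$, $\beta \in \bang B$ of $\bang(A\otimes B)[\chi, \alpha \otimes \beta] \times \bang A[\alpha, \langle a \rangle] \times \bang B[\beta, \langle b \rangle]$, where $\alpha \otimes \beta$ denotes the interleaving/pairing of the two sequences used in defining $\monn$. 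The co-Yoneda lemma collapses the $\alpha$ factor, and then the $\beta$ factor, leaving $\bang(A \otimes B)[\chi, \langle a \rangle \otimes \langle b \rangle]$, which is $\bang(A \otimes B)[\chi, \langle (a,b)\rangle]$ since the single-letter case of the pairing is the singleton on the pair. This is exactly $\coder_{A\otimes B}(\chi, (a,b))$, since $\id_A \otimes \der_B$ acts on the domain only by the identity-on-$A$ profunctor and $\der_B(\beta, b) = \bang B[\beta, \langle b\rangle]$, which is already absorbed. So I will define $\sigma'_{A,B}$ as this composite of co-Yoneda isomorphisms and the evident identification of $\langle a\rangle \otimes \langle b\rangle$ with $\langle(a,b)\rangle$; each arrow is invertible, so $\sigma'_{A,B}$ is.

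\textbf{Step 2: identifying $\sigma_{A,B}$ with $\sigma'_{A,B}$.} The 2-cell $\sigma_{A,B}$ is built in \cref{thm:strength-constraint} as a pasting of (a) $\eta_A \otimes \der_B$, the first pseudocomonad constraint whiskered with $\der_B$; (b) the monoidality 2-cell $\der^2_{A,B}$ of the counit, as in \eqref{equ:counit-monoidal-2}; and (c) $\varepsilon_{A\otimes B}$, the counit of the adjunction $\coder_{A\otimes B} \dashv \der_{A\otimes B}$. I will unfold each of these on $\Prof$ using the explicit descriptions: $\eta_A$ is the identity-like inclusion $A[a',a] \Rightarrow \bang A[\langle a'\rangle, \langle a\rangle] \cong \int^\alpha \bang A[\langle a'\rangle,\alpha]\times\bang A[\alpha,\langle a\rangle]$ from \cref{thm:lemma1}; $\der^2_{A,B}$ is the canonical isomorphism witnessing that $\der$ is monoidal; and $\varepsilon_{A\otimes B}$ is the co-Yoneda collapse $\int^\gamma \bang(A\otimes B)[\chi,\gamma]\times\bang(A\otimes B)[\gamma,\langle(a,b)\rangle] \Rightarrow \bang(A\otimes B)[\chi,\langle(a,b)\rangle]$ realizing $\der_{A\otimes B} \coder_{A\otimes B} \cong \id$. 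Pasting these and tracking the coend variables shows the composite performs exactly the same collapse as $\sigma'_{A,B}$; this is a bookkeeping computation with coends, entirely parallel to (and simpler than) the one in \cref{thm:lemma2}.

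\textbf{Main obstacle.} The genuine difficulty is purely notational: one must fix, once and for all, the explicit formula for $\monn_{A,B} \co \bang A \otimes \bang B \to \bang(A\otimes B)$ on $\Prof$ (the shuffle/concatenation profunctor coming from the pseudodistributivity of \cite{FioreM:klebrp}), since it is alluded to but not spelled out in the excerpt, and then check that the pairing of singletons $\langle a\rangle \otimes \langle b \rangle$ really is (isomorphic to) the singleton $\langle (a,b)\rangle$ — this is where the combinatorics of the symmetric-group wreath product $\mathfrak{S}_n \wr (-)$ interacts with the product $A \times B$. Once that identification is pinned down, every other step is a routine application of the co-Yoneda lemma, and the proof concludes with the observation $\sigma_{A,B} = \sigma'_{A,B}$, hence $\sigma_{A,B}$ is invertible. \qed
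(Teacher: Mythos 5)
Your overall strategy is the paper's: build an explicitly invertible $\sigma'_{A,B}$ and then check, by unfolding the pasting of \cref{thm:strength-constraint}, that $\sigma_{A,B}=\sigma'_{A,B}$. But the central computation in your Step~1 does not hold up. First, the formula you guess for $\monn_{A,B}$ is wrong: there is no ``interleaving'' $\alpha\otimes\beta$ of two sequences of different lengths, and even for equal lengths the zip formula $\bang(A\otimes B)[\chi,\alpha\otimes\beta]$ differs from the correct constraint, which is the matching formula $\monn_{A,B}(\gamma,(\alpha,\beta))\cong\bang A[\bang\pi_1\gamma,\alpha]\times\bang B[\bang\pi_2\gamma,\beta]$ (this is visible in the paper's proof, whose target expression is $\bang A[\bang\pi_1(\gamma),\langle a\rangle]\times\bang B[\bang\pi_2(\gamma),\beta]$). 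Second, there is a type error: the composites are profunctors out of $A\otimes\bang B$, so they must be evaluated at $(\gamma,(a,\beta))$ with $\beta\in\bang B$, not at $(a,b)\in A\times B$. Consequently your claim that both composites collapse by co-Yoneda to the representable $\coder_{A\otimes B}(\chi,(a,b))$ is ill-typed ($b$ is a bound coend variable) and false: the bottom-left composite is $\int^{b\in B}\bang B[\langle b\rangle,\beta]\times\bang(A\otimes B)[\gamma,\langle(a,b)\rangle]$, whose coend is over $B$ against a singleton-shaped factor and does not reduce by co-Yoneda, while the top-right composite is $\bang A[\bang\pi_1\gamma,\langle a\rangle]\times\bang B[\bang\pi_2\gamma,\beta]$. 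The isomorphism between these two (both vanish unless $\gamma$ and $\beta$ are singletons, where they agree) is exactly the nontrivial content of $\sigma'_{A,B}$, and you never formulate or prove it.

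A further problem affects your Step~2: you describe $\varepsilon_{A\otimes B}$ as a ``co-Yoneda collapse'' realizing $\der\coder\cong\id$. In fact $\varepsilon_{A\otimes B}\co\coder_{A\otimes B}\der_{A\otimes B}\Rightarrow\id_{\bang(A\otimes B)}$ is the counit of the adjunction, given componentwise by the composition map $\int^{c\in A\otimes B}\bang(A\otimes B)[\gamma',\langle c\rangle]\times\bang(A\otimes B)[\langle c\rangle,\gamma]\Rightarrow\bang(A\otimes B)[\gamma',\gamma]$, which is neither a co-Yoneda isomorphism nor invertible in general (compare the paper's $\mu_4$ in the proof of \cref{thm:second-pseudocomonad-constraint}). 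Since the pasting defining $\sigma_{A,B}$ involves the two non-invertible cells $\eta_A\otimes\der_B$ and $\varepsilon_{A\otimes B}$, the whole point of the verification is that their composite with $\der^2_{A,B}$ nonetheless lands on the invertible $\sigma'_{A,B}$; treating $\varepsilon$ as an isomorphism makes this appear to be ``bookkeeping'' when it is precisely the explicit coend computation the paper carries out. So the proposal, as written, has genuine gaps at both the construction of $\sigma'_{A,B}$ and the identification $\sigma_{A,B}=\sigma'_{A,B}$.
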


\begin{proof} Recall that $A \otimes B$ in $\Prof$ is given by the cartesian product $A \times B$ in $\Cat$ and so we have functors $\pi_1 \co A \otimes B \to A$ and $\pi_2 \co A \otimes B \to B$.
We define the natural transformation~$\sigma'_{A,B}$, parallel to $\sigma_{A,B}$, by letting its component for $\gamma \in \bang (A \otimes B)$ and $(a, \beta) \in A \otimes \bang B$ be the composite
  \begin{align*}
 &  \int^{b \in B}  \bang B [ \langle b \rangle, \beta] \times \bang (A \otimes B)[ \gamma, \langle (a,b) \rangle] 
 	\\
 &	\quad  \Rightarrow \int^{b \in B} \bang B[ \langle b \rangle, \beta] \times \bang A [ \bang \pi_1 (\gamma), \bang \pi_1 \langle ( a,b) \rangle ] \times \bang B[ \bang \pi_2 (\gamma), \bang \pi_2 \langle (a,b) \rangle] \\
	&\quad  \cong   \int^{b \in B} \bang B[ \langle b \rangle, \beta] \times \bang A [ \bang \pi_1 (\gamma), \langle  a  \rangle ] \times \bang B[ \bang \pi_2 (\gamma), \langle b \rangle] \\ 
	& \quad \Rightarrow \bang A[ \bang \pi_1 (\gamma), \langle  a  \rangle] \times \bang B [ \bang \pi_2 (\gamma), \beta] .
\end{align*}
This is invertible. Furthermore, an inspection of the construction of the strength constraint in the proof of \cref{thm:strength-constraint} reveals that the component  of $\sigma_{A,B}$ for  $\gamma \in \bang (A \otimes B)$ and~$(a, \beta) \in A \otimes \bang B$ is
\begin{align*}
& \int^{b \in B} \bang B[ \langle b \rangle, \beta] \times \bang (A \otimes B)[ \gamma, \langle (a,b) \rangle]  \\
 & \qquad \cong 
 \int^{(a', b) \in A \otimes B} A[a', a] \times \bang B[ \langle b \rangle, \beta] \times \bang (A \otimes B) [\gamma, \langle (a', b) \rangle] \\
	& \qquad \cong \int^{(a',b) \in A \otimes B} \bang A[ \langle  a'  \rangle, \langle  a  \rangle] \times \bang B [ \langle  b  \rangle, \beta] \times \bang (A \otimes B) [\gamma, \langle  (a', b) \rangle] \\
	& \qquad \cong \int^{(a',b) \in A \otimes B} \int^{\alpha \in \bang A}  \bang A[ \langle  a'  \rangle, \alpha] \times  \bang A[  \alpha, \langle  a  \rangle] \times \bang B [ \langle  b  \rangle, \beta] \times \bang (A \otimes B) [\gamma, \langle (a', b) \rangle] \\
	&\qquad \cong \int^{(a',b) \in A \otimes B} \int^{\alpha \in \bang A} \int^{\gamma' \in \bang (A \otimes B)} \bang A[  \alpha, \langle  a \rangle] \times \bang (A \otimes B)[ \langle (a',b) \rangle, \gamma' ] \ \times \\ 
	& \qquad\qquad\qquad\qquad\qquad\qquad   \bang A[ \bang \pi_1 (\gamma'), \alpha] \times 
		\bang B [ \bang \pi_2 (\gamma'), \beta] \times  \bang (A \otimes B) [\gamma, \langle (a', b) \rangle] \\
	& \qquad \Rightarrow \int^{\alpha \in \bang A} \int^{\gamma' \in \bang (A \otimes B)}  \bang A[  \alpha, \langle  a  \rangle]  \times \bang (A \otimes B)[ \gamma, \gamma' ] \times \bang A[ \bang \pi_1 (\gamma'), \alpha]  \times  \bang B [ \bang \pi_2 (\gamma'), \beta] \\
	& \qquad \cong \bang A [ \bang \pi_1 (\gamma), \langle  a  \rangle] \times \bang B[ \bang \pi_2( \gamma), \beta] . 
\end{align*}
This equals the 2-cell $\sigma'_{A,B}$ constructed above and therefore $\sigma_{A,B}$ is invertible.
\end{proof}

\begin{theorem} \label{thm:sym-has-differentiation}
The pseudonatural transformation $\coder$ is a codereliction operation in $\Prof$.
\end{theorem}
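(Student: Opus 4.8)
The plan is to assemble the statement from the three lemmas just proved, \cref{thm:lemma1}, \cref{thm:lemma2}, and \cref{thm:lemma3}, together with the verifications carried out earlier in this section. First I would recall that, by definition~\eqref{equ:codereliction-def}, the family $\coder_A \co A \to \bang A$ with $\coder_A(\alpha,a) = \bang A[\alpha, \langle a\rangle]$ is precisely the left adjoint of $\der_A \co \bang A \to A$ exhibited in part~(ii) of \cref{thm:prof-satisfies-hyp}; thus it makes sense as a candidate codereliction in the sense of \cref{thm:codereliction}. Before invoking that definition, one must confirm that its hypotheses are in force: \cref{thm:prof-degenerate-model} shows that $\oc(-)$ is a linear exponential pseudocomonad on $\Prof$, and \cref{thm:prof-satisfies-hyp} verifies both clauses of \cref{thm:hypothesis} (the convolution structure induced by the biproducts of $\Prof$ is cocartesian, and every counit component $\der_A$ has a left adjoint). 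By the transfer-of-structure remarks at the start of \cref{sec:prof}, the general theory of \cref{sec:codereliction} applies to $\Prof$ via its symmetric monoidal biequivalence with a Gray monoid.

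Next I would observe that $\coder$ is pseudonatural: its components are adjoint to the components of the pseudonatural transformation $\der$, so pseudonaturality of $\coder$ follows from that of $\der$ by uniqueness of adjoints (mates), a standard argument which I would state but not belabour. With pseudonaturality and adjointness in hand, the definition of a codereliction (\cref{thm:codereliction}) requires exactly the invertibility of the three canonical $2$-cells: the first pseudocomonad constraint $\eta_A$, the second pseudocomonad constraint $\mu_{A,B}$, and the strength constraint $\sigma_{A,B}$. These are established one at a time: \cref{thm:lemma1} gives invertibility of $\eta_A$ for all $A \in \Prof$, \cref{thm:lemma2} gives invertibility of $\mu_{A,B}$ for all $A, B \in \Prof$ (noting that $\mu_{A,B}$ does not in fact depend on $B$), and \cref{thm:lemma3} gives invertibility of $\sigma_{A,B}$ for all $A, B \in \Prof$. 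Each lemma proceeds by exhibiting an explicitly defined natural isomorphism ($\eta'_A$, $\mu'_A$, $\sigma'_{A,B}$) built from the coend/hom-set manipulations native to $\Prof$, and then checking that the canonical $2$-cell coincides with it.

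Combining these three facts, all of the axioms in \cref{thm:codereliction} are satisfied, so $\coder$ is a codereliction operation on $\Prof$, which is the assertion of \cref{thm:sym-has-differentiation}. The main obstacle in this argument is not the final assembly, which is a one-line deduction from the lemmas, but rather the content of \cref{thm:lemma2}: verifying $\mu_A = \mu'_A$ requires unwinding the elaborate pasting diagram~\eqref{equ:mu-diagram} defining the second pseudocomonad constraint, computing the coend expression for each constituent $2$-cell ($\mu_1$ through $\mu_5$), and tracking the composite through several isomorphisms of iterated coends over $\bbang A$ — the bookkeeping of concatenations $\sqcup$ and the interplay with the coproduct injections coming from the cocartesian convolution structure is where the real work lies. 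Once that calculation is complete, the coincidence with the manifestly invertible $\mu'_A$ closes the gap.
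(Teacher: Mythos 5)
Your proposal is correct and follows exactly the paper's route: the theorem is deduced directly from \cref{thm:lemma1}, \cref{thm:lemma2}, and \cref{thm:lemma3}, with the standing hypotheses supplied by \cref{thm:prof-degenerate-model} and \cref{thm:prof-satisfies-hyp}. Your additional remarks on pseudonaturality via adjointness and on the weight of the \cref{thm:lemma2} computation are consistent with, and merely elaborate on, what the paper leaves implicit.
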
  

\begin{proof} The claim follows by \cref{thm:lemma1}, \cref{thm:lemma2}, and \cref{thm:lemma3}.
\end{proof}

\subsection*{Categorical symmetric sequences and analytic functors}  Let us continue to consider
$\Prof$ equipped with the linear exponential pseudocomonad~$(\bang, \dig, \der)$ of \cref{thm:prof-degenerate-model}. By definition, a \myemph{categorical symmetric sequence} $F \co A \to B$ 
is a Kleisli map from $A$ to $B$ for this pseudocomonad, \ie a profunctor $F \co \bang A \to B$.
For $A = B = \mathsf{1}$, we obtain a functor $F \co \PP \to \Set$, \ie a symmetric sequence
in the usual sense~\cite{KellyG:opemay}. The bicategory of categorical symmetric sequences $\Sym$ is defined to be the Kleisli bicategory of the pseudocomonad:
\[
\Sym \defeq \Prof_{\oc} .
\]
Explicitly, the objects of $\Sym$ are small categories and, for small categories $A$ and $B$, we have
\[
\Sym[A,B] \defeq \Prof[\bang A, B]  .
\]

\cref{thm:coKleisli-cartesian-closed} provides an abstract proof of the main result in~\cite{FioreM:carcbg}, namely that the bicategory~$\Sym$ is cartesian closed. The product $A \with B$ of 
$A, B \in \Sym$ is defined as their biproduct 
 in~$\Prof$, \ie $A \with~B \defeq~A \oplus B$, the terminal object $\term$ of $\Sym$ is the zero object 
 of~$\Prof$, \ie $\term \defeq 0$, and the exponential $A \Rightarrow B$ of  $A, B \in \Sym$ is given by~$A \Rightarrow B \defeq \bang A^\op \otimes B$ by \cref{thm:compact-closed-hom} and~\eqref{equ:dual-in-prof}.  All the relevant structure can be given explicitly, see~\cite{FioreM:carcbg} for details.

The codereliction operation in~\eqref{equ:codereliction-def} allows us to define explicitly the derivative of a categorical symmetric sequence. Given $F \co A \to B$ in $\Sym$,  its derivative 
is the profunctor~$\mathrm{d} F \co A \times \oc A \to B$ defined by
\[
\begin{tikzcd}[column sep = large]
A \times \oc A  \ar[r, "\coder_A \times \id_A"] &
\oc A \times \oc A \ar[r, "\cocon_A"] &
\oc A \ar[r, "F"] & B .
\end{tikzcd}
\]
Unfolding the relevant definitions, we obtain
\begin{equation}
\label{equ:derivative-analytic}
\mathrm{d}F (b, (a, \alpha)) \defeq F(b,  \alpha \sqcup \langle a \rangle) .
\end{equation}

We conclude by applying our results to analytic functors~\cite{JoyalA:fonaes}. 
This is of independent interest and also, as suggested in  \cref{sec:intro}, makes more compelling the view of a categorical symmetric sequence $F$ as a non-linear map with $\mathrm{d}F$ as its derivative.
Recall from~\cite{FioreM:carcbg} that the  analytic functor 
 associated to a symmetric sequence $F \co A \to B$ is defined by letting
\[
FX(b) = \coend^{\alpha \in \oc A} F(b,\alpha) \times X^{\alpha} ,
\]
where $X^{\alpha} \defeq X(a_1) \times \ldots \times X(a_n)$, for $\alpha= \langle a_1, \ldots, a_n\rangle$. When
$A = B = \mathsf{1}$ and the symmetric sequences become functors $F \co \PP \to \Set$, we obtain
exactly the analytic functors introduced in~\cite{JoyalA:fonaes}. Analytic functors
between presheaf categories can be regarded as many-variable analytic functions, while analytic functors
on $\Set$ can be seen as single-variable analytic functions.

Taking the derivative as in~\eqref{equ:derivative-analytic} and using the internal hom of the compact
closed structure of $\Prof$ in~\eqref{equ:lin-hom-in-prof}, the profunctor $\mathrm{d}F \co A \otimes \bang A \to B$ may be regarded as a categorical symmetric
 sequence $\mathrm{d} F \co A \to (A \linhom B)$, and thus we can consider its associated analytic functor $\mathrm{d}F \co  \pshA \to \psh(A \linhom B)$, which is given by
 \[
\mathrm{d}FX (b,a) = \coend^{\alpha \in \oc A} F(b,\alpha \sqcup  \langle a \rangle) \times X^{\alpha} .
 \]
This reduces to Joyal's formula for the derivative of an analytic functor~\cite{JoyalA:fonaes} when $A = B = \mathsf{1}$.

\cref{thm:sym-has-differentiation} then implies that the counterparts of the rules for many-variable 
differential calculus, including the Leibniz and chain rules, hold for analytic functors between
presheaf categories, just as the rules for one-variable calculus hold for analytic functors
on $\Set$~\cite{JoyalA:fonaes}. Indeed, the axioms for a codereliction in
\cref{thm:codereliction} and their consequences in
\cref{thm:consequences-of-dereliction} can be stated explicitly and be
identified with familiar rules of differential calculus, \cf \cite{SeelyEtAl,FioreM:difsmi}.

\subsubsection*{Acknowledgements} 
Nicola Gambino wishes to thank Zeinab Galal, Adrian Miranda, and Federico Olimpieri for helpful discussions. We are grateful to Nathanael Arkor and the anonymous referee for helpful comments on the paper.

 Marcelo Fiore acknowledges that this material is based upon work supported by EPSRC via grant EP/V002309/1.
Nicola Gambino acknowledges that this material is based upon work supported by the US Air Force Office for Scientific Research under award number FA9550-21-1-0007,  EPSRC via grant EP/V002325/2, and ARIA via grant MSAI-PR01-P12.

%------
% Insert the bibliography.
%------

\end{document}